\documentclass[11pt]{article}
\usepackage[margin=1in]{geometry}
\usepackage{amsmath,amssymb,amsthm,mathtools,mathrsfs,booktabs,array}
\usepackage{microtype,needspace,enumitem,fancyvrb}
\usepackage[hidelinks]{hyperref}
\hypersetup{pdftitle={Prediction with Five Experts and Geometric Stopping: A Probabilistic Construction and Analytic Verification},
 pdfauthor={Erhan Bayraktar, Ibrahim Ekren, Nikolaos Kolliopoulos},
 pdfsubject={Probabilistic construction, analytic verification, and smoothness},
 pdfkeywords={Prediction with expert advice, regret minimization, geometric stopping,
 Hamilton-Jacobi-Bellman equations, hyperbolic minimum principles, reflected diffusions}}
\allowdisplaybreaks[2]
\setlist{nosep}
\newtheorem{theorem}{Theorem}[section]
\newtheorem{proposition}[theorem]{Proposition}
\newtheorem{lemma}[theorem]{Lemma}
\newtheorem{corollary}[theorem]{Corollary}
\theoremstyle{definition}

\theoremstyle{remark}
\newtheorem{remark}[theorem]{Remark}
\numberwithin{equation}{section}
\newcommand{\dd}{\,\mathrm d}
\newcommand{\R}{\mathbb R}
\newcommand{\cD}{\mathcal D}
\newcommand{\Kop}{\mathcal K}
\newcommand{\Top}{\mathcal T}
\newcommand{\Aop}{\mathcal A}
\newcommand{\csch}{\operatorname{csch}}
\newcommand{\sech}{\operatorname{sech}}

\title{Prediction with Five Experts and Geometric Stopping:\\
A Probabilistic Construction and Analytic Verification}
\author{Erhan Bayraktar\thanks{Department of Mathematics,
University of Michigan. Email: \texttt{erhan@umich.edu}.}
\and Ibrahim Ekren\thanks{Department of Mathematics,
University of Michigan. Email: \texttt{iekren@umich.edu}.}
\and Nikolaos Kolliopoulos\thanks{Department of Mathematics and Statistics,
University of Cyprus. Email: \texttt{kolliopoulos.nikolaos@ucy.ac.cy}.}}
\date{September 15, 2026}
\begin{document}
\maketitle
\begin{abstract}
We present a solution of the limiting five-expert prediction problem
with geometric stopping, based on stochastic calculus and partial
differential equations. If $\delta$ is the stopping probability,
the minimax expected regret from tied initial scores is
$45\pi^2/(512\sqrt{2\delta})+o(\delta^{-1/2})$ as $\delta\downarrow0$.
The adversarial rank control selecting the best and third-best experts
maximizes the limiting Hamiltonian at every state. Following the
four-expert construction of Bayraktar, Ekren, and Zhang~\cite{BEZ},
we represent the value correction as a discounted boundary-local-time expectation for a
degenerate obliquely reflected Brownian motion. The hyperbolic systems
governing its boundary traces are derived and solved explicitly.
To verify the nonlinear
Hamilton--Jacobi--Bellman equation, we combine heat-equation minimum
principles in equality directions, ODE minimum principles applied to
convex combinations of multiple controls, and comparisons across controls.
These arguments reduce the 48 regional control inequalities to a few
lower-dimensional boundary problems, with the remaining signs
established by explicit monotonicity arguments and by factorization of
multivariate integral kernels that are positive polynomials in some of
their variables. We prove global
$C^2$ regularity across both regional interfaces and changes in rank
ordering. We also show that the alternating COMB control attains the
limiting Hamiltonian maximum exactly when the two highest scores
coincide and the third- and fourth-highest scores coincide.
\end{abstract}
\begin{quote}
\small
\textit{2020 Mathematics Subject Classification.}
Primary 35Q91; Secondary 35J70, 68Q32, 91A15.

\smallskip
\noindent\textit{Keywords.}
Prediction with expert advice; regret minimization; geometric stopping;
Hamilton--Jacobi--Bellman equations; hyperbolic minimum principles;
reflected diffusions; smooth fit.
\end{quote}
\tableofcontents

\section{Introduction and the prediction game}\label{paper:sec:setup}

Prediction with expert advice asks a player to compete with the best
of several experts without a probabilistic model for their gains.
Classical weighted-majority and expert-aggregation algorithms provide
regret guarantees in this setting~\cite{LW1994,CBFH1997}; a systematic
account is given by Cesa-Bianchi and Lugosi~\cite{CL2006}.

For a fixed small number of experts, exact minimax analysis gives more
detailed information about the value and the optimal strategies.
Cover's finite-horizon analysis of binary prediction~\cite{Cover1967}
provides the classical two-expert benchmark.
Gravin, Peres, and Sivan~\cite{GPS2016} solved the three-expert problem
with geometric stopping, identified a probability-matching structure
for two and three experts, and obtained improved four-expert bounds.
Their subsequent work~\cite{GPS2017} gives lower bounds, including
sharp results, for multiplicative-weights families under fixed and geometric horizons,
a question distinct from unrestricted minimax optimality.

For geometric stopping, the long-time scaling is described by a
stationary nonlinear PDE.  Drenska and Kohn~\cite{DK} established this
PDE viewpoint.  The four-expert geometric problem was solved asymptotically by
Bayraktar, Ekren, and Yili Zhang~\cite{BEZ}; their Proposition~5.4 is a
prototype of the hyperbolic minimum principles used below.
For a fixed finite horizon, Bayraktar, Ekren, and Xin
Zhang~\cite{BEZX2020} explicitly solved the four-expert limiting PDE
and established its $C^2$ regularity and the asymptotic optimality of
the associated strategies. Related PDE methods have been developed
for restricted adversarial corruption~\cite{BEZX2021} and for regret
bounds under partial monitoring~\cite{BEZX2023}, which modify the
adversary or the information structure of the prediction game.

For five experts, Chase~\cite{Chase2019} gave experimental evidence
against asymptotic comb optimality by comparing finite-horizon regrets
with those of an alternative rank-based strategy.
Numerical work on the geometric-stopping PDE by Calder, Drenska, and
Mosaphir~\cite{CDM} provided evidence for the failure of global comb
optimality with five through ten experts and supported global optimality
of the same alternative five-expert control. In our increasing
rank convention, this control is $00101$, not the alternating comb
control $01010$. The construction and verification below establish its
optimality in the limiting Hamiltonian.

The present work gives a probabilistic construction and an analytic
verification of the nonlinear equation.
Our construction uses a reflected-process representation, two linear
propagation operators, and a one-dimensional diagonal profile. The
verification emphasizes equality-direction reductions, positive
companion-kernel comparison, and cancellation through averaging
controls. The terminal sign arguments use explicit positive
rescalings and finite differentiation. The
regularity argument treats the internal interfaces and the ordering
walls separately; in particular, matching first derivatives at a wall
is used only after regularity up to the closed chamber is established.

Calder and Drenska~\cite{CD2026} independently solve the same
five-expert prediction PDE.\footnote{While preparing this manuscript
for circulation, we became aware of the preprint~\cite{CD2026}, first
submitted to arXiv on 14 September 2026. Its appearance prompted us
to make our independently developed approach available promptly.}
A detailed comparison of the constructions, smoothness arguments,
and nonlinear verifications is given in
Appendix~\ref{paper:sec:CDcomparison}.

\subsection{Discrete game and scaling}
There are five experts.  At each stage the player chooses a probability
vector $\alpha\in\Delta_5$ and the adversary chooses a distribution
$\beta$ on the subsets $J\subseteq\{1,\ldots,5\}$.  The expert gain
vector is $e_J=\sum_{j\in J}e_j$.  The player samples an expert from
$\alpha$ independently of the adversary's current randomization.
Both players observe the past.  The regret state is the vector of
expert cumulative gains minus the player's cumulative gain.

Independently of play, the game stops before the next move with
probability $\delta\in(0,1)$.  Thus the number $T$ of moves has
$\mathbb P(T=t)=\delta(1-\delta)^t$, $t\geq0$.  With terminal payoff
$g(x)=\max_i x_i$, the minimax value $V_\delta$ satisfies
\begin{equation}\label{paper:eq:DPP}
 V_\delta(x)=\delta g(x)+(1-\delta)
 \min_{\alpha\in\Delta_5}\max_{\beta\in\Delta_{32}}
 \sum_J\beta_J\bigl[V_\delta(x+e_J)-\alpha(J)\bigr],
 \qquad \alpha(J)=\sum_{j\in J}\alpha_j.
\end{equation}
Here translation equivariance,
$V_\delta(x+c\mathbf1)=V_\delta(x)+c$, removes the player's realized
gain from the argument of $V_\delta$.

Set
\begin{equation}\label{paper:eq:scaling}
 u_\delta(x)=\sqrt\delta\,V_\delta(x/\sqrt\delta).
\end{equation}
The continuum-limit theorem of~\cite{DK} identifies the locally uniform
limit with the unique viscosity solution of at most linear growth of
\begin{equation}\label{paper:eq:HJB}
 {\quad u(x)-\frac12\max_{\omega\in\{0,1\}^5}
       \omega^{\mathsf T}D^2u(x)\omega=g(x),\qquad x\in\R^5.\quad}
\end{equation}
The factor $1/2$ corresponds to Brownian variance one and discount rate
one.  Formally it comes from the second-order term in
$u_\delta(x+\sqrt\delta e_J)$.  At first order the player's choice
$\alpha=\nabla u$ cancels the adversary's linear contribution; the
limiting value is monotone and translation equivariant, so this
gradient is a probability vector wherever it exists.

\subsection{Ranks, gaps, and the verification problem}
Write $x^{(1)}\leq\cdots\leq x^{(5)}$ for the variables
$x_1,\ldots,x_5$ sorted in ascending order, and define the gap variables
\[
 q_i=x^{(i+1)}-x^{(i)}\geq 0,\qquad i=1,\ldots,4.
\]
A symmetric translation-equivariant candidate has the form
\begin{equation}\label{paper:eq:rankvalue}
 u(x)=x^{(5)}+v(q).
\end{equation}
For a binary rank control $\omega = (\omega_1,\omega_2,\omega_3,\omega_4,\omega_5) \in \{0,1\}^5$ (meaning a control for the ranked process $(X^{(1)}_t,\ldots,X^{(5)}_t)$), let
\begin{equation}\label{paper:eq:residual}
 d(\omega)=(\omega_2-\omega_1,\ldots,\omega_5-\omega_4),\qquad
 \mathcal R_\omega[v]=v-\frac12(d(\omega)\cdot\nabla_q)^2v.
\end{equation}
In the strict chamber, $x^{(1)}<\cdots< x^{(5)}$, where no two of the variables $x_i$ are equal,
the affine term $g(x)=x^{(5)}$ has zero Hessian;
therefore \eqref{paper:eq:HJB} is precisely
\begin{align}
\min_\omega\mathcal R_\omega[v]=0, \label{paper:eq:gapHJBb}
\end{align}
for which we must have
\begin{align}
\label{paper:eq:gapHJBa}
\mathcal R_\omega[v]\geq0\quad\hbox{for every }\omega \in \{0,1\}^5.
\end{align}
Since $d(\mathbf1-\omega)=-d(\omega)$ for $\mathbf1 = (1,1,1,1,1)$,
it suffices to check \eqref{paper:eq:gapHJBa} for one representative of
each of the sixteen complementary pairs $\{\omega,\mathbf1-\omega\}$.
The solution we derive is given by a different formula in each of three
regions that we call A, B, and C. For Regions A and B, we use controls
with at most two ones, while the Region C proof uses controls whose
first coordinate is zero. To obtain \eqref{paper:eq:gapHJBb}, however,
the inequality \eqref{paper:eq:gapHJBa} must hold as an equality for
some choice of $\omega$. We will see that equality is indeed attained for
\begin{equation}\label{paper:eq:star}
 \omega = \omega_* :=(0,0,1,0,1),\qquad
 d(\omega_*)=(0,1,-1,1),
\end{equation}
since the following linear equation is satisfied in all three regions
A, B, and C:
\begin{equation}\label{paper:eq:linear}
 v-\frac12(\partial_{q_2}-\partial_{q_3}+\partial_{q_4})^2v=0.
\end{equation}
Proving that $\omega_*$ is an optimal control also requires all the
inequalities in \eqref{paper:eq:gapHJBa}, along with enough regularity to interpret
them across the regional and ordering interfaces.


\subsection{General approach and main challenges}
\label{paper:sec:valueformula}

Equation \eqref{paper:eq:gapHJBb} belongs to a class of nonlinear PDEs
for which there is no standard method for finding explicit solutions to
boundary value problems. In this case, the solution $v$, which must also
have at most linear growth as $q\to\infty$, is given by a complicated
expression. To determine $v$, we construct a natural candidate by
recalling a known stochastic representation and heuristically rewriting
it in terms of a four-dimensional reflected Brownian motion. We then
verify rigorously that this candidate satisfies \eqref{paper:eq:gapHJBb},
together with the necessary regularity and growth conditions. While
this technique was used in \cite{BEZ} to solve the four-expert counterpart
of our problem, two features make the five-expert problem substantially
more difficult:
\begin{enumerate}
    \item The heuristic derivation of the natural candidate for $v$
    requires rigorously solving linear first-order hyperbolic systems
    with nonconstant coefficients. In the four-expert setting, the
    initial conditions of those systems were simple enough to allow one
    to guess their solutions. With five experts, the corresponding
    initial conditions involve long, complicated expressions, requiring
    us to develop analytic methods for finding the general solutions.

    \item In the four-expert setting, the most difficult part of
    verifying the counterpart of \eqref{paper:eq:gapHJBa} was the
    application of hyperbolic minimum principles arising naturally from
    the hyperbolic systems solved during the derivation of the candidate
    $v$. With five experts, the inequalities arising from
    \eqref{paper:eq:gapHJBa} are too long to fit on a single page, and
    proving them requires identifying hidden patterns. In particular,
    crucial hyperbolic minimum principles can be applied to convex
    combinations of \eqref{paper:eq:gapHJBa} over multiple choices of
    $\omega$. Wolfram Mathematica was used during the derivation to check
    lengthy algebraic and differentiation identities. Optional scripts
    in the accompanying repository provide reproducible checks of
    selected identities. The proofs presented in the paper are
    self-contained and can be verified without running these scripts.
\end{enumerate}

\subsection{The three regions, the auxiliary functions and the value formula}\label{subsec14}
The gap orthant $\mathbb{R}_+^4$, on which $v$ is defined, is partitioned
into Regions A, B, and C, described by
\begin{align}\label{pc:eq:regions}
 A&=\{q_i\geq0:\ q_4\geq q_2,\ 2q_1\geq q_4-q_2\},\notag\\
 B&=\{q_i\geq0:\ q_4\leq q_2\},\\
 C&=\{q_i\geq0:\ q_4\geq q_2,\ 2q_1\leq q_4-q_2\}.
 \notag
\end{align}
The formulas for $v$ in neighbouring regions agree on their shared
interfaces, so the use of closed regions causes no ambiguity. The values
of $v$ on lower-dimensional boundaries determine $v$ throughout Regions
A, B, and C through first-order hyperbolic systems with nonconstant
coefficients. The construction proceeds in three stages, beginning on a
one-dimensional set and then determining the values on higher-dimensional
sets. We first introduce the auxiliary functions from the first two
stages, followed by the propagation operators.
\begin{enumerate}
    \item First stage: the value on $q_1=q_2=q_4=0$.
We define
\begin{align}\label{paper:eq:Uintegral}
 H(t)&=\frac{3}{\sqrt2\sinh^5t\cosh^2t}
 \left(\frac t{16}-\frac{\sinh(2t)}{64}
       -\frac{\sinh(4t)}{64}+\frac{\sinh(6t)}{192}\right),\notag\\
 U(t)&=\cosh t\int_t^\infty H(r)\,dr
\end{align}
for $t>0$. Both functions extend continuously to $t=0$. Indeed, the
bracket in the definition of $H(t)$ equals $t^5/5+O(t^7)$, cancelling
the fifth-order zero of the denominator. The exponential decay of $H$
at infinity also ensures convergence of the integral defining $U$.
As shown below, $U(t)=v(0,0,t/\sqrt2,0)$. The limit
$U(0)=45\pi^2/(512\sqrt2)$, obtained by taking $t\downarrow0$ in
\eqref{paper:eq:Uintegral}, gives the leading constant in the minimax
expected regret from tied initial scores as the stopping probability
$\delta$ tends to zero.

\item Second stage: the values on $q_2=q_4=0$ and $q_1=q_2=0$.
For $s>0$ and $w\geq0$, define
\begin{align}\label{paper:eq:L}
 L(s,w)&=\int_1^{\coth s}
       (r\sinh s-\cosh s)e^{-2wr}g(r)\,dr,\notag\\
 \widetilde L(s,w)&=\frac1{\sqrt2}\arctan(e^{-s})\cosh s+L(s,w),
\end{align}
where the negative function $g$ is given by
\begin{equation}\label{paper:eq:g}
 g(r)=\frac{8-25r^2+15r^4
       -15r(r^2-1)^2\operatorname{arccoth}r}
                 {16\sqrt2\,r\sqrt{r^2-1}},
 \qquad
 \operatorname{arccoth}r=\frac12\log\frac{r+1}{r-1}
\end{equation}
for $r>1$. As shown below,
$\widetilde L(s,w)=v(w/\sqrt2,0,s/\sqrt2,0)$ provides the boundary
data for Regions A and B. Its first term,
$\frac1{\sqrt2}\arctan(e^{-s})\cosh s$, is the corresponding
four-expert boundary function~\cite[proof of Proposition~5.1]{BEZ};
$L(s,w)$ is the perturbation term.
For $y>0$ and $z\geq0$, define
\begin{align}\label{paper:eq:P}
 P(y,z)&=\frac{\sinh^3y}{\sinh^3\left(y+\frac{z}{3}\right)}U\left(y+\frac{z}{3}\right)\notag\\
 &\quad+\frac34e^{-y-\frac{4z}{3}}(e^{\frac{2z}{3}}-1)^2
  \int_{y+\frac{z}{3}}^\infty
  \frac{e^{-2r}(e^{\frac{2z}{3}}-e^{2r})^2U(r)}{\sinh^5r}\,dr\notag\\
 &\quad+\frac32\sinh y\,e^{-\frac{2z}{3}}(e^{\frac{2z}{3}}-1)
  \int_{y+\frac{z}{3}}^\infty
  \frac{e^{-2r}(e^{\frac{2z}{3}}-e^{2r})
       \bigl(e^{\frac{2z}{3}}-(e^{2r}+1)/2\bigr)U(r)}{\sinh^5r}\,dr
\end{align}
which equals $v(0,0,y/\sqrt2,z/\sqrt2)$ and provides the boundary
data for Region C, as established below. These traces agree with the
first stage: $P(t,0)=\widetilde L(t,0)=U(t)$ for $t>0$, with the
values at $t=0$ understood by continuity; see also \eqref{pc:eq:Lzero}.

\item Third stage: the values on three-dimensional boundary faces.
For a bounded and sufficiently smooth function $\phi$ and for $y>0$
and $p\geq0$, define
\begin{align}\label{paper:eq:K}
 (\mathcal K_p\phi)(y)
 &=\frac{\sinh^2y}{\sinh^2\left(y+\frac{p}{2}\right)}\phi\left(y+\frac{p}{2}\right)
   +4\sinh y\sinh \left(\frac{p}{2}\right)\,J_\phi\left(y+\frac{p}{2}\right)\notag\\
 &\quad+6\sinh^2\left(\frac{p}{2}\right)\left(\cosh \left(y+\frac{p}{2}\right)\,J_\phi\left(y+\frac{p}{2}\right)
                         -\sinh \left(y+\frac{p}{2}\right)\,M_\phi\left(y+\frac{p}{2}\right)\right),\\
 J_\phi(t)&=\int_t^\infty\frac{\phi(r)}{\sinh^3r}\,dr,
 \qquad
 M_\phi(t)=\int_t^\infty\frac{\cosh r\,\phi(r)}{\sinh^4r}\,dr.
 \notag
\end{align}
Thus $\mathcal K_0\phi=\phi$. We also define
\begin{equation}\label{paper:eq:T}
 (\mathcal T\phi)(y)=2\sinh^2y
                  \int_y^\infty\frac{\phi(r)}{\sinh^3r}\,dr.
\end{equation}

For the traces on $(A\cup C)\cap\{q_2=0\}$ and
$(A\cup C)\cap\{q_3=0\}$, use parameters $x,z\geq0$ and $y>0$,
and set
\[
 w=x-z/2,\qquad p=\min\{z,2x\}.
\]
We apply the operators $\mathcal K_p$ and $\mathcal T\circ\mathcal K_p$
to the datum
\begin{equation}\label{pc:eq:datum}
 \phi_w(s)=
 \begin{cases}
 \widetilde L(s,w),&w\geq0,\\
 P(s,-2w),&w<0,
 \end{cases}
\end{equation}
so we obtain the functions
\begin{equation}\label{pc:eq:ACpair}
 F(x,y,z)=(\mathcal K_p\phi_w)(y),\qquad
 R(x,y,z)=(\mathcal T\mathcal K_p\phi_w)(y),
\end{equation}
which give the boundary values
\[
 \begin{aligned}
 F(x,y,z)&=v(x/\sqrt2,0,y/\sqrt2,z/\sqrt2),\\
 R(x,y,z)&=v(x/\sqrt2,y/\sqrt2,0,(y+z)/\sqrt2).
 \end{aligned}
\]
For the traces on $B\cap\{q_4=0\}$ and $B\cap\{q_3=0\}$, set
\[
 b_0(s)=\frac1{2\sqrt2}\left(1+\frac{e^{-2s}}3\right),
 \qquad b_1(s)=\frac2{3\sqrt2}e^{-s},\qquad
 \psi_x(s)=\widetilde L(s,x)-b_0(s),
\]
and define the correction terms
\begin{align}\label{correctionterms}
 H_B(x,y,z)&=(\mathcal K_z\psi_x)(y),\qquad
 R_B(x,y,z)=(\mathcal T\mathcal K_z\psi_x)(y),
\end{align}
The full boundary values are
\[
 \begin{aligned}
 H_B(x,y,z)+b_0(y)&=v(x/\sqrt2,z/\sqrt2,y/\sqrt2,0),\\
 R_B(x,y,z)+b_1(y)&=v(x/\sqrt2,(y+z)/\sqrt2,0,y/\sqrt2).
 \end{aligned}
\]
The values on lower-dimensional edges are obtained by continuity.
\end{enumerate}
We can now give the value of $v(q)$ for every $q\in\mathbb{R}_+^4=A\cup B\cup C$:
\begin{enumerate}
    \item For $q\in A\cup C$, we put
\begin{equation}\label{pc:eq:ACcoordinates}
 x=\sqrt2q_1,\qquad y=\sqrt2(q_2+q_3),\qquad
 z=\sqrt2(q_4-q_2),
\end{equation}
and the value of $v(q)$ is given by:
\begin{equation}\label{pc:eq:vAC}
 \boxed{v(q)=F(x,y,z)\frac{\sinh(\sqrt2q_3)}{\sinh y}
             +R(x,y,z)\frac{\sinh(\sqrt2q_2)}{\sinh y}}.
\end{equation}
On $A$, the above formula uses $p=z$ and the interface datum $\widetilde L(\cdot,x-z/2)$; on $C$, it uses $p=2x$ and the face
datum $P(\cdot,z-2x)$. 

\item For $q\in B$, we put instead
\begin{equation}\label{pc:eq:Bcoordinates}
 x=\sqrt2q_1,\qquad y=\sqrt2(q_3+q_4),\qquad
 z=\sqrt2(q_2-q_4),
\end{equation}
and the value of $v(q)$ is given by:
\begin{align}\label{pc:eq:vB}
\boxed{v(q)=\bigl(H_B(x,y,z)+b_0(y)\bigr)
                         \frac{\sinh(\sqrt2q_3)}{\sinh y}+\bigl(R_B(x,y,z)+b_1(y)\bigr)
                         \frac{\sinh(\sqrt2q_4)}{\sinh y}}.
\end{align}
There is no switch of datum inside $B$: its datum parameter is $x$ and its propagation distance is always $z$.
\end{enumerate}

\subsection{Classical verification}
\begin{theorem}[The five-expert value function]\label{paper:thm:verification}
The piecewise function \eqref{pc:eq:vAC}--\eqref{pc:eq:vB}
has a ranked extension \eqref{paper:eq:rankvalue} in $C^2(\mathbb R^5)$
which solves \eqref{paper:eq:HJB}. In particular,
\[
 \lim_{\delta\downarrow0}\sqrt\delta\,V_\delta(0)
 =u(0)=\frac{45\pi^2}{512\sqrt2},
\]
and $00101$ is a pointwise maximizing rank control in the limiting
Hamiltonian.
\end{theorem}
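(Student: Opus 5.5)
The proof is a classical verification in four stages. We first show that the candidate satisfies the linear equation \eqref{paper:eq:linear} in each region. We then establish global $C^2$ regularity of $u(x)=x^{(5)}+v(q)$ together with linear growth. Next we verify the sixteen inequalities \eqref{paper:eq:gapHJBa}, one for each complementary pair, which gives \eqref{paper:eq:gapHJBb}. Finally, uniqueness of viscosity solutions with at most linear growth and the continuum-limit theorem of~\cite{DK} identify $u$ with $\lim\sqrt\delta V_\delta$. Evaluating at the origin gives $u(0)=v(0)=U(0)=45\pi^2/(512\sqrt2)$.

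\textbf{Linear equation and boundary traces.} In the coordinates \eqref{pc:eq:ACcoordinates}, the operator $\partial_{q_2}-\partial_{q_3}+\partial_{q_4}$ leaves $x$, $y$ and $z$ unchanged. Along its characteristics, \eqref{paper:eq:linear} therefore reduces to the ODE $f''=f$ in the variable $\sqrt2q_3$, with $y$ fixed. The sinh-interpolation in \eqref{pc:eq:vAC} between the traces $F$ (at $q_2=0$) and $R$ (at $q_3=0$) is exactly the solution of this ODE with those endpoint values. The same reasoning applies on $B$ with \eqref{pc:eq:Bcoordinates}, now interpolating between $q_4=0$ and $q_3=0$. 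This stage is essentially automatic.

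\textbf{Regularity.} We need $C^2$ across the interfaces $A|B$ ($q_4=q_2$) and $A|C$ ($2q_1=q_4-q_2$), and across the ordering walls $q_i=0$.
\begin{itemize}
\item On the walls, after extending $u$ symmetrically, $C^2$ is equivalent to Neumann-type conditions: the reflection conditions that the traces were built to satisfy. For example, $\partial_{q_1}v=0$ at $q_1=0$ corresponds to a tie between the two lowest experts, and mixed conditions of the form $\partial_{q_{i}}v-\partial_{q_{i+1}}v=0$ arise at interior ties. These follow by differentiating \eqref{paper:eq:K}, \eqref{paper:eq:T}, $U$, $\widetilde L$ and $P$. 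The derivative matching is used only after we have continuity of derivatives up to the closed chamber.
\item On the interfaces, continuity of $v$ is built in: $p=z=2x$ there, and $\widetilde L(\cdot,0)=P(\cdot,0)$. Matching of the first and second normal derivatives comes from two facts. First, $\mathcal K_p$ is a propagation operator with $\mathcal K_0=\mathrm{id}$ and a semigroup-type composition. Second, the data agree to second order at $w=0$, in the sense of $\partial_w\widetilde L$ versus $-2\partial_zP$.
\end{itemize}

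\textbf{Inequalities; this is the main obstacle.} For each representative $\omega$ in each region, we must show $\mathcal R_\omega[v]\ge0$. The plan is as follows.
\begin{enumerate}
\item Write $\mathcal R_\omega[v]=\mathcal R_\omega[v]-\mathcal R_{\omega_*}[v]$. Along the characteristic direction $d(\omega_*)$, the function $\mathcal R_\omega[v]$ again satisfies $f''=f$, because $\mathcal R_\omega$ commutes with the constant-coefficient operator. Hence a minimum principle for $f-f''=0$ reduces nonnegativity to the two boundary faces where the sinh-interpolation is anchored.
\item On those faces, use the first-order hyperbolic systems solved by $\mathcal K_p$ and $\mathcal T$, and apply a hyperbolic (ODE) minimum principle along their characteristics, in the spirit of \cite[Proposition~5.4]{BEZ]}. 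This pushes the sign question down to the two-dimensional traces $\widetilde L$ and $P$, and then to $U$.
\item Some $\omega$ have no sign on the faces individually. For these, apply the minimum principle to a convex combination $\sum\lambda_k\mathcal R_{\omega_k}[v]$ in which the inhomogeneous terms cancel, and recover the individual signs by comparison across controls, using $\mathcal R_{\omega}\ge\min(\cdot)$ relations.
\item The terminal one- and two-variable inequalities are proved by explicit monotonicity. We write the relevant integrals against kernels such as $e^{-2wr}g(r)$ and $1/\sinh^5 r$, and show that after factoring these kernels are polynomials in $e^{-2w}$ or $r$ with positive coefficients. We also use the sign of the negative function $g$.
\end{enumerate}
The bookkeeping of the 48 regional cases, and especially finding the right convex combinations, is where most of the work lies.

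Finally, $\omega_*=00101$ attains equality everywhere by the linear-equation stage. It is therefore a pointwise maximizer of the limiting Hamiltonian, which completes the proof.
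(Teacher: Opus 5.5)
Your architecture matches the paper's: sinh-interpolation along $d(\omega_*)$, then global $C^2$ plus boundedness, then regional inequalities via equality-direction reduction, companion-kernel comparison and averaging, then identification through \cite{DK}. The regularity stage, however, contains a step that would fail.

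For $\mathcal U=x_5+v(q)$ one has $\mathcal U_1=-v_1$, $\mathcal U_i=v_{i-1}-v_i$ for $2\le i\le4$, and $\mathcal U_5=1+v_4$. Gradient matching $\mathcal U_i=\mathcal U_{i+1}$ on $\{x_i=x_{i+1}\}$ therefore means:
\begin{itemize}
\item $2v_1-v_2=0$ on $q_1=0$;
\item $v_{i-1}-2v_i+v_{i+1}=0$ on $q_i=0$ for $i=2,3$;
\item $v_3-2v_4=1$ on $q_4=0$.
\end{itemize}
These are the oblique conditions $(R^{\mathsf T}\nabla v)_i=-\tfrac12\mathbf 1_{\{i=4\}}$; the inhomogeneity comes from $x^{(5)}$. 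The conditions you propose, $v_{q_1}=0$ on $q_1=0$ and $v_{q_i}-v_{q_{i+1}}=0$ at interior ties, are false for this candidate: its full-collision jet gives $\nabla v(0)=-\tfrac15(1,2,3,4)$.

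It is also not true that all wall conditions were built into the traces. The construction imposes reflection on $q_2,q_3,q_4=0$ with $q_1>0$. On $q_1=0$ it imposes only the collapsed relation $v_{q_2}=\tfrac23 v_{q_3}$ at $q_2=0$. So $2v_1=v_2$ at a generic point of $q_1=0$ needs its own proof:
\begin{itemize}
\item in Region B, from $\mathcal A[\widetilde L(\cdot,0)-b_0]=2\partial_w\widetilde L(\cdot,0)$;
\item in Region C, from $p_x(0,y,z)=\tfrac13P_y$ together with a commutator identity for $\mathcal T$.
\end{itemize}

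Your interface mechanisms also do not work as stated. At $A/C$ the datum $\phi_w$ is not even $C^1$ in $w$ at $w=0$: $\partial_w\widetilde L(\cdot,0)=U'/3$, whereas $-2\partial_zP(\cdot,0)=-U'+1/\sqrt2$. This kink is compensated by the kink of $p=\min\{z,2x\}$. What must be proved is the cubic contact $\mathcal K_a[\widetilde L(\cdot,-a/2)]-P(\cdot,a)=O(a^3)$, whose first- and second-order terms involve the generator $\mathcal A$ and the moment identities for $g$.

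You do not treat the $A/B$ interface at all. It joins interpolations anchored on different faces ($q_2,q_3=0$ versus $q_4,q_3=0$) and carries the inhomogeneous pair $(b_0,b_1)$. Its normal two-jet matching rests on two exact identities, not on the semigroup property:
\begin{itemize}
\item $\ell_w+2\coth s\,\ell-2\csch s\,\mathcal T\ell=1/\sqrt2$;
\item $\mathcal T\bigl[F'-\coth y\,F+\csch y\,\mathcal TF\bigr]=(\mathcal TF)'$.
\end{itemize}

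You rightly note that the wall argument needs $D^2v$ continuous up to the closed chamber, but you give no way to obtain it, and differentiating in the open chamber does not. The operators $\mathcal T$ and $\mathcal K_p$ produce Hessian terms proportional to $(\phi''(0)-\phi(0))\,s^2\log s$ at collisions. $C^2$ holds only because each datum satisfies $\phi_{ss}(0,\cdot)=\phi(0,\cdot)$ uniformly in its parameter: $U''(0)=U(0)$ for $U$, the moving-endpoint identity for $L_{ss}-L$, and curvature transport for $P$. This is combined with weighted third-order remainder bounds and a Frobenius-jet treatment of the collapsing weights $\sinh(\cdot)/\sinh y$.

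Finally, as you acknowledge, the inequality stage is only a roadmap of what the paper's Sections~4--6 actually carry out.
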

\begin{proof}
Theorem~\ref{sm:thm:globalC2} gives $u\in C^2(\R^5)$.
Theorems~\ref{ab:thm:complete} and~\ref{rc:thm:regionC} establish \eqref{paper:eq:gapHJBa} and \eqref{paper:eq:gapHJBb} in all three regions.
In the strict chamber $\{(x_1,x_2,x_3,x_4,x_5):x_1<x_2<x_3<x_4<x_5\}$,
away from the regional interfaces,
these relations are equivalent to \eqref{paper:eq:HJB}.
Continuity of $u,D^2u$, and $g$ passes the equation to the interfaces
of Regions A, B, and C, and to ordering walls where $x_i=x_j$ for
some $i\neq j$. Symmetry then passes the equation to every chamber
$\{(x_1,x_2,x_3,x_4,x_5):x_{\sigma(1)}\leq x_{\sigma(2)}\leq
x_{\sigma(3)}\leq x_{\sigma(4)}\leq x_{\sigma(5)}\}$, where
$\sigma:\{1,2,3,4,5\}\to\{1,2,3,4,5\}$ is a permutation. Boundedness of
$v$, established in Section~\ref{pc:sec:construction}, gives at most
linear growth of $u$; viscosity uniqueness and the
convergence theorem in~\cite{DK} identify $u$ with the limiting value.
The value at the origin is $\frac{45\pi^2}{512\sqrt{2}}$ as given in \eqref{pc:eq:Uendpoints}.  The equality
$\mathcal R_{00101}=0$ proves the assertion about the Hamiltonian.
\end{proof}

In the next section we construct $v$ from its values on lower-dimensional
boundaries, and in Section~\ref{sm:section} we prove its smoothness.
Sections~4--6 are devoted to proving the regional inequalities.

\section{Construction of the candidate}\label{pc:sec:construction}

Throughout this section, the coordinates are ranked in increasing order.
We write
\[
 q_i=x^{(i+1)}-x^{(i)},\qquad i=1,\ldots,4,
\]
for the unscaled gaps.  The candidate has the form
\begin{equation}\label{pc:eq:candidate}
 u(x)=x^{(5)}+v(q_1,q_2,q_3,q_4).
\end{equation}
The one-dimensional function $U$ introduced earlier is a boundary profile
of this five-expert problem; it is distinct from the full candidate $u$.
The construction proceeds from the smallest invariant face outward.
Reflection first determines an ordinary differential equation for $U$.
We then solve the interface system on $q_2=q_4$, obtaining its integral
density, and solve the hyperbolic propagation equations that extend
these data into the three regions. This order explains both the
formulas and the conditions that select them. The boundary
differentiations initially guide the construction; regularity of the
resulting explicit candidate is established independently in
Section~\ref{sm:section}.

\subsection{The reflected-process motivation}

By \cite[Proposition~2.2]{BEZ}, the limiting value function $u$ admits
the stochastic control representation
\begin{equation}\label{StConRep}
 u(x)=\sup_{\sigma}\mathbb E\left[\int_0^\infty e^{-t}g(X_t)\,dt\right],
\end{equation}
where $X_t=x+\int_0^t\sigma_s\,dW_s$, $g(X_t)=\max_i X_{i,t}$,
and $W$ is a standard one-dimensional Brownian motion. The supremum is
over progressively measurable controls taking values in $\{0,1\}^5$.
Write $X_t^{(1)}\leq\cdots\leq X_t^{(5)}$ for the ranked coordinates;
their gaps are $X_t^{(i+1)}-X_t^{(i)}$, $i=1,\ldots,4$.

The candidate rank control is
\begin{equation}
 \sigma_*=(0,0,1,0,1)^\top.
\end{equation}
When the scores are distinct, this selects exactly the third and fifth
ranks and gives the gap increment vector $a=(0,1,-1,1)^\top$.
At collisions, semimartingale decompositions of the ranked coordinates
involve local-time terms; see \cite[Theorem~2.3]{BAG08}. Such
decompositions motivate the reflected formulation, but do not themselves
specify a named-particle feedback law at ties. For the construction,
we define an auxiliary reflected gap process directly by the Skorokhod
problem
\begin{equation}\label{pc:eq:skorokhod}
 Y_t=q+aW_t+R\Lambda_t,\qquad
 R=\begin{pmatrix}
 1&-\frac12&0&0\\
 -\frac12&1&-\frac12&0\\
 0&-\frac12&1&-\frac12\\
 0&0&-\frac12&1
 \end{pmatrix},
\end{equation}
where $Y_t\in[0,\infty)^4$, each $\Lambda^i$ is continuous and
nondecreasing, $\Lambda^i_0=0$, and
\[
 \int_0^t\mathbf 1_{\{Y^i_s>0\}}\,d\Lambda^i_s=0.
\]
These conventions specify the normalization of the boundary regulators.
The matrix $R$ is a nonsingular $M$-matrix: the spectral radius of $I-R$
is $\cos(\pi/5)<1$.  By the orthant Skorokhod theorem~\cite{HR}, the Skorokhod map is
well defined and Lipschitz, including for the degenerate driving
covariance $aa^\top$.

Before the first gap reaches zero, $d\Lambda^1=0$ and
$dY^1=-\tfrac12d\Lambda^2$.  Once it reaches zero it remains there,
and $d\Lambda^1=\tfrac12d\Lambda^2$.  On this face, replacing
$\Lambda^2$ by $\widehat\Lambda^2=\tfrac34\Lambda^2$ gives
\begin{align}\label{pc:eq:collapsed}
 dY^2&=dW+d\widehat\Lambda^2-\tfrac12d\Lambda^3,\notag\\
 dY^3&=-dW+d\Lambda^3-\tfrac23d\widehat\Lambda^2
                         -\tfrac12d\Lambda^4,\\
 dY^4&=dW+d\Lambda^4-\tfrac12d\Lambda^3.\notag
\end{align}
The coefficient $2/3$ is therefore forced by the collapse of the first
gap; it is not a change of the control.

The weighted sum of the gaps satisfies
\begin{equation}\label{pc:eq:weighted}
 d\left(\sum_{i=1}^4 iY^i_t\right)
 =3\,dW_t+\frac52\,d\Lambda^4_t.
\end{equation}
Indeed, multiplication of each column of $R$ by $(1,2,3,4)$ gives
zero except in the fourth column, where it gives $5/2$.
To make the corresponding ordered-score interpretation explicit, set
$\Lambda^0=\Lambda^5=0$ and define
\[
 Z_t^i=x^{(i)}+\sigma_{*,i}W_t
                  +\tfrac12\Lambda_t^{i-1}-\tfrac12\Lambda_t^i,
 \qquad i=1,\ldots,5.
\]
Then $Z_t^{i+1}-Z_t^i=Y_t^i\geq0$ and
$\sum_i Z_t^i=\sum_i x_i+2W_t$. In particular,
$g(Z_t)=Z_t^5=x^{(5)}+W_t+\tfrac12\Lambda_t^4$, so
\[
 \mathbb E\!\left[\int_0^\infty e^{-t}g(Z_t)\,dt\right]
 =x^{(5)}+\frac12\mathbb E_q\!\left[\int_0^\infty
                                      e^{-t}\Lambda_t^4\,dt\right].
\]
This auxiliary reflected-process payoff motivates the representation
\begin{equation}\label{pc:eq:localtime}
 v(q)=\frac12\mathbb E_q\!\left[\int_0^\infty e^{-t}
                         \,d\Lambda^4_t\right]
     =\frac12\mathbb E_q\!\left[\int_0^\infty e^{-t}
                         \Lambda^4_t\,dt\right].
\end{equation}
The last equality is integration by parts; the exponential discount
and the growth estimate for the Skorokhod map remove the terminal term.
The auxiliary ordered process $Z$ is defined from the Skorokhod problem;
no equivalence with a named-particle feedback SDE at multiple collisions
is assumed. The representation above is verified using the linear PDE
and the reflection conditions, and identification with the game value
is made by the nonlinear PDE verification.

In the interior, the linear equation suggested by
\eqref{pc:eq:localtime} is
\begin{equation}\label{pc:eq:linearPDE}
 v-\frac12(\partial_{q_2}-\partial_{q_3}
                         +\partial_{q_4})^2v=0.
\end{equation}
On the relative interiors of the reflecting faces with $q_1>0$, the
boundary conditions are
\begin{align}\label{pc:eq:reflections}
 v_{q_2}-\tfrac12(v_{q_1}+v_{q_3})&=0 &&(q_2=0),\notag\\
 v_{q_3}-\tfrac12(v_{q_2}+v_{q_4})&=0 &&(q_3=0),\\
 v_{q_4}-\tfrac12v_{q_3}&=-\tfrac12 &&(q_4=0).\notag
\end{align}
On $q_1=0$, the first of these becomes
$v_{q_2}-\tfrac23v_{q_3}=0$ at $q_2=0$, as in
\eqref{pc:eq:collapsed}.  The smoothness argument will supply the
closed-chamber interpretation of these identities.

\subsection{From the exit decomposition to the boundary equations}

First, we adapt the stopping argument from \cite[Equations (5.7) - (5.8)]{BEZ} to our setting. Before any reflecting face is reached, the gaps evolve as
$(q_1,q_2+W_t,q_3-W_t,q_4+W_t)$. When $q_2\leq q_4$, the
first exit occurs when $W$ leaves $(-q_2,q_3)$; when $q_4\leq q_2$,
it leaves $(-q_4,q_3)$. The discounted exit probabilities solve
$h''/2=h$ and are ratios of hyperbolic sines. Consequently the
interior problem is determined by two boundary traces, not by an
arbitrary four-dimensional boundary function.

For $q_4\geq q_2$, use $x=\sqrt2q_1$,
$y=\sqrt2(q_2+q_3)$, and $z=\sqrt2(q_4-q_2)$, and write
\[
 F(x,y,z)=v(x/\sqrt2,0,y/\sqrt2,z/\sqrt2),\qquad
 R(x,y,z)=v(x/\sqrt2,y/\sqrt2,0,(y+z)/\sqrt2).
\]
The exit formula is
\begin{equation}\label{pc:eq:exitACderive}
 v(q)=\frac{\sinh(\sqrt2q_3)}{\sinh y}F(x,y,z)
       +\frac{\sinh(\sqrt2q_2)}{\sinh y}R(x,y,z).
\end{equation}
At $q_2=0$, its derivatives are
\[
 v_{q_2}=\sqrt2(F_y-F_z-\coth y\,F+\csch y\,R),\quad
 v_{q_1}=\sqrt2F_x,\quad v_{q_3}=\sqrt2F_y.
\]
The first reflection condition in \eqref{pc:eq:reflections} therefore
gives the first equation below, while at $q_3=0$, the identities
\[
 v_{q_3}=\sqrt2(R_y-\coth y\,R+\csch y\,F),\qquad
 v_{q_2}+v_{q_4}=\sqrt2R_y
\]
give the second:
\begin{align}\label{pc:eq:derivedACsystem}
 (-\partial_x+\partial_y-2\partial_z)F
       &=2\coth y\,F-2\csch y\,R,\notag\\
 R_y&=2\coth y\,R-2\csch y\,F.
\end{align}
The two traces propagate in different characteristic directions;
this coupling is the main boundary problem left by the exit formula.

For $q_2\geq q_4$, instead set $y=\sqrt2(q_3+q_4)$,
$z=\sqrt2(q_2-q_4)$ and define
\[
 \widehat H(x,y,z)=v(x/\sqrt2,z/\sqrt2,y/\sqrt2,0),\qquad
 \widehat R(x,y,z)=v(x/\sqrt2,(y+z)/\sqrt2,0,y/\sqrt2).
\]
Now the weights are $\sinh(\sqrt2q_3)/\sinh y$ and
$\sinh(\sqrt2q_4)/\sinh y$. At $q_4=0$,
\[
 v_{q_4}=\sqrt2(\widehat H_y-\widehat H_z
                      -\coth y\,\widehat H+\csch y\,\widehat R),
 \qquad v_{q_3}=\sqrt2\widehat H_y.
\]
The inhomogeneous reflection condition on this face gives
\begin{align}\label{pc:eq:derivedBsystem}
 (\partial_y-2\partial_z)\widehat H
   &=2\coth y\,\widehat H-2\csch y\,\widehat R-1/\sqrt2,\notag\\
 \widehat R_y&=2\coth y\,\widehat R-2\csch y\,\widehat H.
\end{align}
The elementary pair
\[
 b_0(y)=\frac{1+e^{-2y}/3}{2\sqrt2},\qquad
 b_1(y)=\frac{2e^{-y}}{3\sqrt2}
\]
is a particular solution independent of $z$. Subtracting this pair
leaves the same homogeneous $2$--$2$ propagation problem as in
\eqref{pc:eq:derivedACsystem}, but without the derivative in $x$. This explains the
elementary terms in the Region B formula.

The companion equation already determines its integral operator.
Indeed,
\[
 \partial_y\left(\frac{R}{\sinh^2y}\right)
       =-\frac{2F}{\sinh^3y}.
\]
Boundedness at infinity removes the homogeneous term proportional to
$\sinh^2y$ and gives $R=\mathcal TF$, with $\mathcal T$ defined in
\eqref{paper:eq:T}. It remains to determine the initial traces and solve
the first equation, rather than merely to verify a proposed formula.

\subsection{The diagonal profile}

The set $q_1=0$, $q_2=q_4$ is invariant for the reflected dynamics.
Write $V(a,b)=v(0,a,b,a)$ and define
\begin{equation}\label{pc:eq:Udef}
 U(t)=V(0,t/\sqrt2)=v(0,0,t/\sqrt2,0).
\end{equation}
On this invariant set the second and fourth gaps have the same
driving path and remain equal. Their normalized regulators satisfy
$\widehat\Lambda^2=\Lambda^4=\ell$. Writing their common gap as
$A$ and the third gap as $B$, \eqref{pc:eq:collapsed} becomes
\[
 dA=dW+d\ell-\tfrac12d\Lambda^3,\qquad
 dB=-dW+d\Lambda^3-\tfrac76d\ell.
\]
Thus the coefficient $7/6=2/3+1/2$ records both simultaneous
reflections. The boundary conditions for the local-time resolvent are
\[
 V-\tfrac12(\partial_a-\partial_b)^2V=0,\qquad
 V_a(0,b)-\tfrac76V_b(0,b)=-\tfrac12,
 \qquad \tfrac12V_a(a,0)-V_b(a,0)=0.
\]
For $V_2(t)=V(t,0)$ and $V_3(t)=V(0,t)$, the exit decomposition gives
\begin{equation}\label{pc:eq:diagonalexit}
 V(a,b)=\frac{\sinh(\sqrt2 b)}{\sinh(\sqrt2(a+b))}V_3(a+b)
       +\frac{\sinh(\sqrt2 a)}{\sinh(\sqrt2(a+b))}V_2(a+b).
\end{equation}
Differentiating on the two faces yields
\begin{align}\label{pc:eq:diagonalpair}
 -\tfrac12&=\sqrt2\operatorname{csch}(\sqrt2 t)V_2(t)
       -\sqrt2\coth(\sqrt2 t)V_3(t)-\tfrac16V_3'(t),\notag\\
 0&=-\sqrt2\coth(\sqrt2 t)V_2(t)
       +\sqrt2\operatorname{csch}(\sqrt2 t)V_3(t)+\tfrac12V_2'(t).
\end{align}
For clarity, put $Q(t)=V_2(t/\sqrt2)$. The first equation gives
\[
 Q=\cosh t\,U+\tfrac16\sinh t\,U'
                          -\tfrac1{2\sqrt2}\sinh t,
\]
whereas the second reads $Q'=2\coth t\,Q-2\csch t\,U$.
Differentiating the displayed expression for $Q$ and substituting it
in this last equation eliminates the companion trace and gives
\begin{equation}\label{pc:eq:Uode}
 -U''(t)-5\coth t\,U'(t)+6U(t)
             =\frac3{\sqrt2}\coth t.
\end{equation}

The real integral representation introduced in
\eqref{paper:eq:Uintegral} avoids branch choices for dilogarithms
and gives the first-stage formula described in Section~\ref{subsec14}:
\begin{align}\label{pc:eq:Uintegral}
 H(t)&=\frac{3}{\sqrt2\sinh^5t\cosh^2t}
 \left(\frac t{16}-\frac{\sinh(2t)}{64}
       -\frac{\sinh(4t)}{64}+\frac{\sinh(6t)}{192}\right),\notag\\
 U(t)&=\cosh t\int_t^\infty H(r)\,dr.
\end{align}
To derive it, put $U(t)=\cosh t\,\gamma(t)$ in
\eqref{pc:eq:Uode}.  The integrating factor gives
\[
 \bigl(\gamma'(t)\sinh^5t\cosh^2t\bigr)'
   =-\frac3{\sqrt2}\sinh^4t\cosh^2t.
\]
The primitive in parentheses in \eqref{pc:eq:Uintegral} vanishes at
zero and has derivative $\sinh^4t\cosh^2t$. A nonzero integration
constant would give $\gamma'(t)\sim Ct^{-5}$ near zero, contradicting
boundedness. Once this constant is removed, integration of $\gamma'$
leaves only an additive constant, which would contribute a multiple
of $\cosh t$ to $U$. Boundedness at infinity removes that mode and
gives $\gamma(t)=\int_t^\infty H(r)\,dr$.
In particular,
\begin{equation}\label{pc:eq:Uendpoints}
 U(0)=\frac{45\pi^2}{512\sqrt2},\qquad
 U'(0)=-\frac3{5\sqrt2},\qquad U''(0)=U(0),\qquad
 \lim_{t\to\infty}U(t)=\frac1{2\sqrt2}.
\end{equation}
The derivatives at zero follow directly by inserting a Taylor
expansion into \eqref{pc:eq:Uode}; the value $U(0)$ is the evaluation
of the convergent integral in \eqref{pc:eq:Uintegral}.

\subsection{Solving the interface problem on
\texorpdfstring{$q_2=q_4$}{q2 = q4}}\label{subsec24}

For $r>1$, define
\begin{equation}\label{pc:eq:g}
 g(r)=\frac{8-25r^2+15r^4
       -15r(r^2-1)^2\operatorname{arccoth}r}
                 {16\sqrt2\,r\sqrt{r^2-1}},
 \qquad
 \operatorname{arccoth}r=\frac12\log\frac{r+1}{r-1}.
\end{equation}
For $s>0$ and $w\geq0$, put
\begin{align}\label{pc:eq:L}
 L(s,w)&=\int_1^{\coth s}
       (r\sinh s-\cosh s)e^{-2wr}g(r)\,dr,\notag\\
 \widetilde L(s,w)&=\frac1{\sqrt2}\arctan(e^{-s})\cosh s+L(s,w).
\end{align}
These are the data on $q_2=q_4$; their compatibility with the
diagonal construction is
\begin{equation}\label{pc:eq:Lzero}
 \widetilde L(s,0)=U(s).
\end{equation}
One useful identity for checking the kernel is
\begin{equation}\label{pc:eq:Lidentity}
 L_{ss}(s,w)-L(s,w)
 =-e^{-2w\coth s}g(\coth s)\operatorname{csch}^3s.
\end{equation}
It follows by differentiating \eqref{pc:eq:L}: the integrand vanishes
at its upper endpoint in the first differentiation.  Substitution of
\eqref{pc:eq:g} gives \eqref{pc:eq:Lzero}, with the constants fixed by
\eqref{pc:eq:Uintegral}.

The datum on $q_2=q_4$ is itself determined by a two-dimensional
boundary problem.  Write
\[
 \ell(s,w)=v(w/\sqrt2,0,s/\sqrt2,0),\qquad
 \rho(s,w)=v(w/\sqrt2,s/\sqrt2,0,s/\sqrt2).
\]
The exit decomposition on this interface is
\begin{equation}\label{id:eq:interfaceexit}
 v(q_1,q_2,q_3,q_2)
 =\frac{\sinh(\sqrt2q_3)}{\sinh s}\ell(s,w)
  +\frac{\sinh(\sqrt2q_2)}{\sinh s}\rho(s,w),
 \qquad s=\sqrt2(q_2+q_3),\quad w=\sqrt2q_1.
\end{equation}
Indeed, in scaled variables
$\mathcal V(w,a,b)=v(w/\sqrt2,a/\sqrt2,b/\sqrt2,a/\sqrt2)$,
the interior equation and the two reflection conditions are
\[
 \mathcal V-(\partial_a-\partial_b)^2\mathcal V=0,\qquad
 \mathcal V_a(w,0,b)-\mathcal V_b(w,0,b)
           -\tfrac12\mathcal V_w(w,0,b)=-\frac1{2\sqrt2},
\]
\[
 \tfrac12\mathcal V_a(w,a,0)-\mathcal V_b(w,a,0)=0.
\]
The first reflection condition follows by adding the first and third
equations in \eqref{pc:eq:reflections} at $q_2=q_4=0$.
The interior equation yields \eqref{id:eq:interfaceexit}.
Differentiating that formula on its two boundary faces gives
\[
 \mathcal V_a(w,0,s)=\operatorname{csch}s\,\rho
                   -\coth s\,\ell+\ell_s,\qquad
 \mathcal V_b(w,s,0)=\operatorname{csch}s\,\ell
                   -\coth s\,\rho+\rho_s.
\]
Substitution in the reflection conditions gives
\begin{align}\label{id:eq:interfacepair}
 \ell_w&=2\operatorname{csch}s\,\rho
                   -2\coth s\,\ell+\frac1{\sqrt2},\notag\\
 \rho_s&=2\coth s\,\rho-2\operatorname{csch}s\,\ell.
\end{align}
The initial datum is $\ell(s,0)=U(s)$, with $U$ given by
\eqref{pc:eq:Uintegral}.  We solve these equations among functions
bounded as $s\to\infty$ for each fixed $w\geq0$.
This calculation determines the candidate; regularity needed to
justify the boundary differentiations is verified independently below.

We first look for a pair $(b_{*}, \rho_0)$ that is independent of $w$ and solves our system when it replaces $(\ell, \rho)$.
In that case, the first equation gives
\[
 \rho_0(s)=\cosh s\,b_*(s)-\frac1{2\sqrt2}\sinh s,
\]
and the second becomes
\[
 b_*'(s)-\tanh s\,b_*(s)=-\frac1{2\sqrt2}.
\]
Multiplication by $\operatorname{sech}s$ and boundedness at infinity
therefore yield
\begin{align}\label{id:eq:stationarypair}
 b_*(s)&=\frac{\cosh s}{2\sqrt2}
             \int_s^\infty\operatorname{sech}t\,dt
       =\frac1{\sqrt2}\cosh s\,\arctan(e^{-s}),\notag\\
 \rho_0(s)&=\frac1{\sqrt2}
   \left(\cosh^2s\,\arctan(e^{-s})-\frac12\sinh s\right).
\end{align}
In particular $b_*''-b_*=-\tanh s/(2\sqrt2)$.

To identify the remaining part, eliminate $\rho$ from
\eqref{id:eq:interfacepair}.  The first equation gives
\[
 \rho=\cosh s\,\ell+\frac12\sinh s\,\ell_w
                  -\frac1{2\sqrt2}\sinh s.
\]
Substitution in the second gives $\mathscr E[\ell]=0$, where
\begin{equation}\label{id:eq:eliminated}
 \mathscr E[\ell]
 =\ell_{sw}-\coth s\,\ell_w
      +2\coth s\,\ell_s-2\ell+\frac1{\sqrt2}\coth s.
\end{equation}
The useful factorization is
\begin{equation}\label{id:eq:factorization}
 (\partial_s+\coth s)\mathscr E[\ell]
 =(\partial_w+2\coth s)(\ell_{ss}-\ell)+\frac1{\sqrt2}.
\end{equation}
Consequently, if $L=\ell-b_*$ and $D=L_{ss}-L$, then
\begin{equation}\label{id:eq:curvaturetransport}
 D_w+2\coth s\,D=0,\qquad
 D(s,0)=D_0(s):=U''(s)-U(s)+\frac1{2\sqrt2}\tanh s.
\end{equation}
Thus
\begin{equation}\label{id:eq:transportsolution}
 D(s,w)=e^{-2w\coth s}D_0(s).
\end{equation}
This is the origin of the exponential factor in the interface kernel.

The diagonal integral makes $D_0$ elementary.  If
\[
 I(s)=\frac s{16}-\frac{\sinh(2s)}{64}
                     -\frac{\sinh(4s)}{64}
                     +\frac{\sinh(6s)}{192},
\]
then $I'(s)=\sinh^4s\cosh^2s$, and differentiation of
\eqref{pc:eq:Uintegral} gives
\begin{equation}\label{id:eq:initialcurvature}
 D_0(s)=\frac1{\sqrt2}
  \left(\frac{15I(s)}{\sinh^6s}-3\coth s+\frac12\tanh s\right).
\end{equation}
In particular $D_0(s)=O(e^{-2s})$ at infinity.  Inverting
$\partial_s^2-1$ with no $e^{-s}$ homogeneous component gives
\begin{equation}\label{id:eq:greeninterface}
 L(s,w)=\int_s^\infty\sinh(t-s)
                 e^{-2w\coth t}D_0(t)\,dt.
\end{equation}
The integral is absolutely convergent and is $O(e^{-2s})$.
With $r=\coth t$, define
\begin{equation}\label{id:eq:densityfromU}
 g(\coth t)=-\sinh^3t\,D_0(t).
\end{equation}
Since $dt=-dr/(r^2-1)$ and
$\sinh(t-s)=(\cosh s-r\sinh s)/\sqrt{r^2-1}$,
\eqref{id:eq:greeninterface} becomes
\begin{equation}\label{id:eq:kernelinterface}
 L(s,w)=\int_1^{\coth s}
            (r\sinh s-\cosh s)e^{-2wr}g(r)\,dr.
\end{equation}
Substituting \eqref{id:eq:initialcurvature} in
\eqref{id:eq:densityfromU} gives exactly
\[
 g(r)=\frac{8-25r^2+15r^4
       -15r(r^2-1)^2\operatorname{arccoth}r}
                 {16\sqrt2\,r\sqrt{r^2-1}}.
\]
Hence $\ell=\widetilde L$ as defined in \eqref{pc:eq:L}.

It remains to check that the scalar reduction has neither added nor
discarded a solution of the original system.  Define $M=\coth s$ and
\begin{equation}\label{id:eq:companioninterface}
 \rho(s,w)=\rho_0(s)-\sinh^2s
             \int_1^M(r-M)^2e^{-2wr}g(r)\,dr.
\end{equation}
Differentiation of \eqref{id:eq:kernelinterface} and
\eqref{id:eq:companioninterface} verifies both equations in
\eqref{id:eq:interfacepair}; the moving-endpoint terms vanish because
the integrands vanish at $r=M$.  Equivalently, boundedness of $\rho$
and the integrating factor for its equation give
\[
 \rho(s,w)=2\sinh^2s
              \int_s^\infty\frac{\ell(t,w)}{\sinh^3t}\,dt.
\]
At $w=0$, both \eqref{id:eq:greeninterface} and $U-b_*$ solve
$L_{ss}-L=D_0$.  The diagonal integral implies
\[
 U(s)=\frac1{2\sqrt2}+\frac{e^{-2s}}{2\sqrt2}
                     +O(e^{-4s}),\qquad
 b_*(s)=\frac1{2\sqrt2}+\frac{e^{-2s}}{3\sqrt2}
                     +O(e^{-4s}).
\]
Their difference is therefore $o(e^{-s})$, which excludes both
homogeneous modes and proves $\ell(s,0)=U(s)$.

Finally, suppose two bounded solutions of
\eqref{id:eq:interfacepair} have the same initial datum.
By \eqref{id:eq:curvaturetransport}, their first-component difference
has the form $A(w)\cosh s+B(w)\sinh s$.
Boundedness in $s$ reduces this to $C(w)e^{-s}$.
Equation \eqref{id:eq:eliminated} then gives $C'+2C=0$;
the initial condition gives $C(0)=0$, so $C=0$.
The first equation of \eqref{id:eq:interfacepair} determines the
second component uniquely.  In particular, the omitted $e^{-s}$
mode in \eqref{id:eq:greeninterface} is not a free boundary parameter:
the interface system would propagate it as a constant multiple of
$e^{-s-2w}$, and compatibility with $U$ forces that constant to vanish.

As an additional compatibility identity, differentiating
\eqref{pc:eq:Uode} shows that
\[
 (\partial_s^2-1)(U'/3)=-2\coth s\,D_0(s).
\]
Comparison with the $w$-derivative of
\eqref{id:eq:greeninterface}, again excluding the homogeneous modes
by decay, gives
\begin{equation}\label{id:eq:normalcompatibility}
 \ell_w(s,0)=\frac13U'(s),\qquad
 \rho(s,0)=\cosh s\,U(s)+\frac16\sinh s\,U'(s)
                             -\frac1{2\sqrt2}\sinh s.
\end{equation}
These are precisely the compatibility relations with the diagonal
construction.

There is no additional free datum on the collapsed face $s=0$.
The density satisfies $g(r)=O((r-1)^{-1/2})$ as $r\downarrow1$
and $g(r)=O(r^{-4})$ as $r\to\infty$.  Moreover,
$-1\leq r\sinh s-\cosh s\leq0$ for $1\leq r\leq\coth s$.
Dominated convergence in the two kernel formulas therefore gives
\begin{equation}\label{id:eq:collapsedinterface}
 \ell(0,w)=\rho(0,w)
 =\frac{\pi}{4\sqrt2}-\int_1^\infty e^{-2wr}g(r)\,dr.
\end{equation}
At $w=0$ this agrees with the already determined value $U(0)$.

\subsection{Solving the propagation systems}\label{secpropsystems}

For a bounded smooth datum $\phi$ and $y>0$, define
\begin{equation}\label{pc:eq:T}
 (\mathcal T\phi)(y)=2\sinh^2y
                  \int_y^\infty\frac{\phi(r)}{\sinh^3r}\,dr.
\end{equation}
For $p\geq0$, let $d=p/2$, $t=y+d$, and set
\begin{align}\label{pc:eq:K}
 (\mathcal K_p\phi)(y)
 &=\frac{\sinh^2y}{\sinh^2t}\phi(t)
   +4\sinh y\sinh d\,J_\phi(t)\notag\\
 &\quad+6\sinh^2d\bigl(\cosh t\,J_\phi(t)
                         -\sinh t\,M_\phi(t)\bigr),\\
 J_\phi(t)&=\int_t^\infty\frac{\phi(r)}{\sinh^3r}\,dr,
 \qquad
 M_\phi(t)=\int_t^\infty\frac{\cosh r\,\phi(r)}{\sinh^4r}\,dr.
 \notag
\end{align}
Thus $\mathcal K_0\phi=\phi$.  This is a propagation operator for the
two-by-two hyperbolic system.  More precisely, if
$F(y,p)=(\mathcal K_p\phi)(y)$ and
$R(y,p)=(\mathcal T\mathcal K_p\phi)(y)$, then
\begin{align}\label{pc:eq:22system}
 (\partial_y-2\partial_p)F&=2\coth y\,F
                              -2\operatorname{csch}y\,R,\notag\\
 \partial_yR&=-2\operatorname{csch}y\,F+2\coth y\,R.
\end{align}
The second identity is immediate from \eqref{pc:eq:T}; the first
follows by differentiating \eqref{pc:eq:K}, using
$J_\phi'=-\phi\operatorname{csch}^3$ and
$M_\phi'=-\cosh\,\phi\operatorname{csch}^4$.
Its infinitesimal form is
\begin{equation}\label{pc:eq:generator}
 \left.\partial_p\mathcal K_p\phi\right|_{p=0}
 =\frac12\phi'-\coth y\,\phi
                       +\operatorname{csch}y\,\mathcal T\phi.
\end{equation}

The interior and collapsed-face equations can be solved together.
For $m\in\{2,3\}$ consider
\begin{align}\label{pc:eq:m2system}
 F_y-mF_p&=m\coth y\,F-m\csch y\,R,\notag\\
 R_y&=2\coth y\,R-2\csch y\,F,
 \qquad F(y,0)=\phi(y).
\end{align}
We select the bounded solution, with
$R(y,p)/\sinh^2y\to0$ as $y\to\infty$.
The second equation has integrating factor $\sinh^{-2}y$ and therefore
\begin{equation}\label{pc:eq:Tderivation}
 \left(\frac{R}{\sinh^2y}\right)_y
 =-\frac{2F}{\sinh^3y},\qquad
 R(y,p)=2\sinh^2y\int_y^\infty\frac{F(r,p)}{\sinh^3r}\,dr.
\end{equation}
This derives the companion operator $\mathcal T$.

\paragraph{Elimination and transport of the curvature.}
The first equation of \eqref{pc:eq:m2system} gives
\[
 R=\cosh y\,F-\frac{\sinh y}{m}(F_y-mF_p).
\]
Substituting this expression into the second equation gives
\begin{equation}\label{pc:eq:Em}
 \mathscr E_mF:=F_{yy}-mF_{yp}-(m+1)\coth y\,F_y
                 +m\coth y\,F_p+mF=0.
\end{equation}
Indeed, the second-equation residual is
$-(\sinh y/m)\mathscr E_mF$.  The decisive factorization is
\begin{equation}\label{pc:eq:curvaturefactorization}
 (\partial_y+\coth y)\mathscr E_mF
  =(\partial_y-m\partial_p-m\coth y)(F_{yy}-F).
\end{equation}
Consequently, writing $H=F_{yy}-F$, the quantity
$H/\sinh^my$ is constant along the characteristics of
$\partial_y-m\partial_p$.  Such a characteristic meets $p=0$ at
$t=y+p/m$. Hence
\begin{equation}\label{pc:eq:curvaturetransport}
 F_{yy}(y,p)-F(y,p)
  =\frac{\sinh^my}{\sinh^m(y+p/m)}
        (\phi''-\phi)(y+p/m).
\end{equation}

\paragraph{Reconstruction from compactly supported data.}
First take a smooth compactly supported datum $\phi$. Set
$d=p/m$ and $t=y+d$. The solution with compact support in the $y$
variable of \eqref{pc:eq:curvaturetransport} is
\begin{equation}\label{pc:eq:compactGreen}
 F(y,p)=\int_t^\infty
       \sinh(r-t)\left(\frac{\sinh(r-d)}{\sinh r}\right)^m
       (\phi''-\phi)(r)\,dr.
\end{equation}
This is the elementary terminal Green formula for
$\partial_y^2-1$: applying that operator leaves the integrand at
the moving lower endpoint. At $p=0$, two integrations by parts give
$F(y,0)=\phi(y)$.

It is important to recover the original equation, not merely its
differentiated consequence. By \eqref{pc:eq:curvaturefactorization},
the function in \eqref{pc:eq:compactGreen} has
$\mathscr E_mF=c(p)/\sinh y$. It and all its derivatives vanish for
$y$ beyond the support of $\phi$, so $c(p)=0$. Defining $R$ by the
first equation now gives the second equation and the terminal
condition; thus $R=\mathcal TF$ by \eqref{pc:eq:Tderivation}.

\paragraph{Integration by parts and bounded data.}
Let
\[
 W(r)=\sinh(r-t)\left(\frac{\sinh(r-d)}{\sinh r}\right)^m.
\]
For $m\in\{2,3\}$, $p\geq0$, and $y>0$, set $d=p/m$ and $t=y+d$,
and define the propagation operator $\mathcal S^{(m)}_p$ by
\begin{equation}\label{pc:eq:generalpropagator}
 (\mathcal S^{(m)}_p\phi)(y)
 :=\left(\frac{\sinh y}{\sinh t}\right)^m\phi(t)
       +\int_t^\infty V_m(y,d,r)\phi(r)\,dr,
\end{equation}
where
\begin{align}\label{pc:eq:Vm}
 V_m(y,d,r)={}&
 \frac{2m\sinh d\sinh t\sinh^{m-1}(r-d)}{\sinh^{m+2}r}\notag\\
 &+\frac{m(m-1)\sinh^2d\sinh(r-t)\sinh^{m-2}(r-d)}
          {\sinh^{m+2}r}.
\end{align}
For smooth compactly supported $\phi$, the identities $W(t)=0$ and
$W'(t)=(\sinh y/\sinh t)^m$ show that two integrations by parts in
\eqref{pc:eq:compactGreen} give
$F(y,p)=(\mathcal S^{(m)}_p\phi)(y)$.
To obtain this kernel directly, use
\[
 A(r)=\frac{\sinh(r-d)}{\sinh r}
       =\cosh d-\coth r\sinh d,
 \qquad A'(r)=\frac{\sinh d}{\sinh^2r},
\]
and simplify $W''-W$; the result is exactly $V_m$.
Both terms of $V_m$ are nonnegative for $r\ge t\ge d$.
Moreover, for $(y,p)$ in a compact subset with $y>0$, the kernel
and every fixed finite number of its $(y,p)$ derivatives have
integrable exponential bounds as $r\to\infty$.
Thus \eqref{pc:eq:generalpropagator}, unlike the pre-integration
formula \eqref{pc:eq:compactGreen}, applies to bounded smooth data:
cut off $\phi$ at infinity, use dominated convergence in this
formula and its local derivatives, and recover
\eqref{pc:eq:m2system}. This applies in particular to the explicit
data used below, which are smooth for positive arguments and bounded.

\paragraph{Selection of the remaining bounded mode.}
The curvature equation alone admits the addition $c(p)e^{-y}$.
For a bounded solution of the full system this coefficient is not
arbitrary, because
\begin{equation}\label{pc:eq:modeODE}
 \mathscr E_m(c(p)e^{-y})
  =\frac{mc'(p)+(m+1)c(p)}{\sinh y}.
\end{equation}
Hence $c(p)=c(0)e^{-(m+1)p/m}$. Two bounded solutions with the same
initial datum have zero curvature difference by
\eqref{pc:eq:curvaturetransport}; their difference is $c(p)e^{-y}$,
since the other homogeneous mode $e^y$ is unbounded. Their common
initial datum gives $c(0)=0$, so $c(p)=c(0)e^{-(m+1)p/m}=0$,
proving uniqueness. In particular,
\[
 \mathcal S^{(m)}_p(e^{-\cdot})(y)
   =e^{-y-(m+1)p/m},\qquad
 \mathcal T(e^{-\cdot})(y)=e^{-2y}.
\]
The uniqueness argument also gives the semigroup property in $p$.

\paragraph{The interior propagator.}
For $m=2$, $d=p/2$, expansion of \eqref{pc:eq:Vm} gives
\[
 V_2=
 \frac{4\sinh y\sinh d+6\sinh^2d\cosh t}{\sinh^3r}
 -\frac{6\sinh^2d\sinh t\cosh r}{\sinh^4r}.
\]
Substitution in \eqref{pc:eq:generalpropagator} yields precisely
\eqref{pc:eq:K}; thus $\mathcal K_p=\mathcal S^{(2)}_p$.

\subsection{The new boundary datum in Region C}

On the face $q_1=0$, set
$P(y,z)=v(0,0,y/\sqrt2,z/\sqrt2)$ and
$R_0(y,z)=(\mathcal T[P(\cdot,z)])(y)$.
The collapsed reflection equations produce the system
\begin{align}\label{pc:eq:32system}
 (\partial_y-3\partial_z)P&=3\coth y\,P
                       -3\operatorname{csch}y\,R_0,\notag\\
 \partial_yR_0&=-2\operatorname{csch}y\,P+2\coth y\,R_0,
 \qquad P(y,0)=U(y).
\end{align}
The coefficient pair $(3,2)$ distinguishes this boundary system from
the interior propagation system \eqref{pc:eq:22system}.

For $\alpha=z/3$ and $\rho=y+\alpha$, the required datum is
\begin{align}\label{pc:eq:P}
 P(y,z)&=\frac{\sinh^3y}{\sinh^3\rho}U(\rho)\notag\\
 &\quad+\frac34e^{-y-4\alpha}(e^{2\alpha}-1)^2
  \int_\rho^\infty
  \frac{e^{-2r}(e^{2\alpha}-e^{2r})^2U(r)}{\sinh^5r}\,dr\notag\\
 &\quad+\frac32\sinh y\,e^{-2\alpha}(e^{2\alpha}-1)
  \int_\rho^\infty
  \frac{e^{-2r}(e^{2\alpha}-e^{2r})
       \bigl(e^{2\alpha}-(e^{2r}+1)/2\bigr)U(r)}{\sinh^5r}\,dr.
\end{align}
In particular $P(y,0)=U(y)$.  A useful scalar identity satisfied by
this formula is
\begin{equation}\label{pc:eq:Pode}
 P_{yy}(y,z)-P(y,z)
 =\frac{\sinh^3y}{\sinh^3(y+z/3)}
                       (U''-U)(y+z/3).
\end{equation}
\paragraph{The collapsed-face datum.}
For $m=3$, put $p=z$, $d=\alpha=z/3$, $t=\rho=y+\alpha$,
and $\phi=U$. Formula \eqref{pc:eq:generalpropagator} becomes
\begin{align}\label{pc:eq:Pderived}
 P(y,z)={}&\frac{\sinh^3y}{\sinh^3\rho}U(\rho)\notag\\
 &+6\sinh\alpha\int_\rho^\infty
 \frac{\sinh(r-\alpha)
  \{\sinh\rho\sinh(r-\alpha)
       +\sinh\alpha\sinh(r-\rho)\}}{\sinh^5r}
 U(r)\,dr.
\end{align}
This is the same datum as \eqref{pc:eq:P}. To see the equivalence,
put $b=\sinh\alpha$ and $S_r=\sinh(r-\alpha)$: the numerator of
its integral is
\begin{equation}\label{tedioustoverify}
6bS_r\{\sinh\rho S_r+b\sinh(r-\rho)\}
 =12e^{-y}b^2S_r^2
   +6\sinh y\,bS_r(\cosh r-e^{2\alpha-r}).
\end{equation}
For $z>0$, this identity follows by cancelling $bS_r$ and expressing
$b$, $S_r$, and the remaining hyperbolic functions in terms of
exponentials. At $z=0$, $b=0$ and both sides vanish.
Replacing the hyperbolic factors on the right of
\eqref{tedioustoverify} by exponentials gives the two integrands of
\eqref{pc:eq:P}. Together with the computations in
Section~\ref{subsec24}, this establishes the second-stage description
in Section~\ref{subsec14}.
Equation \eqref{pc:eq:Pode} is now derived from
\eqref{pc:eq:curvaturetransport}, rather than used as an isolated
verification. The missing bounded mode is fixed by
$3c'(z)+4c(z)=0$ and the initial datum $P(y,0)=U(y)$, as shown above.

\subsection{Why the three regions occur and the value in each of them}
All the auxiliary functions defined in Section~\ref{subsec14} have now been obtained in the preceding boundary calculations. The
characteristics explain precisely the partition in equations \eqref{pc:eq:vAC} and \eqref{pc:eq:vB}. For the pair
on $A\cup C$, the three-variable system is
\begin{align}\label{pc:eq:ACsystem}
 (-\partial_x+\partial_y-2\partial_z)F
       &=2\coth y\,F-2\operatorname{csch}y\,R,\notag\\
 \partial_yR&=-2\operatorname{csch}y\,F+2\coth y\,R.
\end{align}
The quantity $w=x-z/2$ is invariant under the first differential
operator. Writing $F$ and $R$ in the coordinates $(w,y,p)$, either
choice $p=z$ or $p=2x$ turns that operator into
$\partial_y-2\partial_p$. The discussion in
Section~\ref{secpropsystems} therefore gives the formulas for $F$
and $R$ in \eqref{pc:eq:ACpair}. The backward characteristic reaches
$z=0$ first when $z\leq2x$, and $x=0$ first when $z\geq2x$.
This gives $A$ and $C$, respectively.  In $B$ the first operator is
$\partial_y-2\partial_z$, leaving $x$ fixed, so the formulas in
\eqref{correctionterms} follow from the same propagation kernel.

The weights in \eqref{pc:eq:vAC} and \eqref{pc:eq:vB} are the
discounted exit probabilities for a one-dimensional Brownian motion
between its two nearest reflecting faces, as in
\eqref{pc:eq:diagonalexit}.  At $A\cap C$, compatibility follows from
$P(s,0)=\widetilde L(s,0)=U(s)$.  At $A\cap B$, it follows from
$\mathcal K_0=I$ and $\mathcal T b_0=b_1$.
At vanishing denominators in the exit weights, the expressions are
defined by their continuous limits.  The separate smoothness argument in the next section
establishes the matching of the full second-order derivatives,
including these collision points.  The nonlinear verification then
checks
\[
 v-\tfrac12\left(\sum_{i=1}^4
                    (\omega_{i+1}-\omega_i)\partial_{q_i}\right)^2v
 \geq0\qquad(\omega\in\{0,1\}^5),
\]
with equality for $\omega=\sigma_*$.

\subsection{Single-integral representation in Region C}
\label{pc:sec:singleintegral}

The composition $\mathcal K_{2x}[P(\cdot,z-2x)]$ contains nested
integrals when \eqref{pc:eq:P} is substituted.  They can all be
removed by evaluating elementary primitives.  We give the reduction
explicitly, since the resulting kernels are also useful for checking
the additional equality directions.

Continue to use the coordinates \eqref{pc:eq:ACcoordinates}, with
$z\geq2x$, and set
\[
 \zeta=z-2x,\qquad \alpha=\zeta/3,\qquad
 a=x+y,\qquad \rho=a+\alpha,\qquad D_t=e^{2t}-1.
\]
Here $\zeta$ is the nonnegative distance parameter, whereas the
characteristic invariant is $w=x-z/2=-\zeta/2$.
Let $p=F$ and $r_3=R$ denote the Region C pair in
\eqref{pc:eq:ACpair}. Applying $\mathcal K_{2x}$ in
\eqref{pc:eq:K}, with $d=x$ and $t=y+x=a$, gives
\begin{align}\label{pc:eq:nestedC}
 p(x,y,z)&=C_0P(a,\zeta)+C_1J_1(a,\zeta)+C_2J_2(a,\zeta),
\end{align}
where every prefactor is explicit:
\begin{align}\label{pc:eq:Cprefactors}
 C_0&=\frac{e^{2x}D_y^2}{D_a^2},&
 C_1&=8e^{-x-y}D_xD_y,&
 C_2&=12e^{-3x-y}D_x^2,
\end{align}
and the nested terms are
\begin{align}\label{pc:eq:Jnested}
J_j(a,\zeta)&=\int_a^\infty\Gamma_j^a(s)P(s,\zeta)\,ds,
 \qquad j=1,2,\notag\\
 \Gamma_1^a(s)&=\frac{e^{3s}}{D_s^3},\qquad
 \Gamma_2^a(s)=\frac{e^{3s}}{D_s^3}\frac{D_s-D_a}{D_s}.
\end{align}
Substituting $\mathcal K_{2x}[P(\cdot,\zeta)]$ from \eqref{pc:eq:K}
into the definition of $\mathcal T$ in \eqref{pc:eq:T}, and integrating
by parts to eliminate the double integrals, gives
\begin{align}\label{pc:eq:nestedC2}
 r_3(x,y,z)&=R_1J_1(a,\zeta)+R_2J_2(a,\zeta),
\end{align}
with
\begin{align}\label{pc:eq:Cprefactors2}
 R_1&=4e^{-x-2y}D_aD_y,&
 R_2&=12e^{-3x-2y}D_aD_x.
\end{align}
To shorten the boundary expression, introduce
\begin{align}\label{pc:eq:Fkernels}
 \phi_\alpha(s)&=\frac{\sinh^3s}{\sinh^3(s+\alpha)},\qquad
 c_1=\frac34e^{-4\alpha}(e^{2\alpha}-1)^2,
 \qquad c_2=\frac32e^{-2\alpha}(e^{2\alpha}-1),\notag\\
 \mathsf F_1(r)&=\frac{e^{-2r}(e^{2\alpha}-e^{2r})^2}{\sinh^5r},\notag\\
 \mathsf F_2(r)&=\frac{e^{-2r}(e^{2\alpha}-e^{2r})
             \bigl(e^{2\alpha}-(e^{2r}+1)/2\bigr)}{\sinh^5r}.
\end{align}
Thus \eqref{pc:eq:P} reads
\begin{equation}\label{pc:eq:Pshort}
 P(s,\zeta)=\phi_\alpha(s)U(s+\alpha)
  +c_1e^{-s}\int_{s+\alpha}^\infty\mathsf F_1(r)U(r)\,dr
  +c_2\sinh s\int_{s+\alpha}^\infty\mathsf F_2(r)U(r)\,dr.
\end{equation}
For $b\geq a>0$, define the four elementary primitives
\[
 I_{j,\pm}(a,b)=\int_a^b\Gamma_j^a(s)e^{\pm s}\,ds.
\]
The substitution $D=e^{2s}-1$ gives
\begin{align}\label{pc:eq:Iprimitives}
 I_{1,-}(a,b)&=\frac1{4D_a^2}-\frac1{4D_b^2},\notag\\
 I_{1,+}(a,b)&=\frac1{2D_a}+\frac1{4D_a^2}
                    -\frac1{2D_b}-\frac1{4D_b^2},\notag\\
 I_{2,-}(a,b)&=\frac1{12D_a^2}-\frac1{4D_b^2}
                                      +\frac{D_a}{6D_b^3},\\
 I_{2,+}(a,b)&=\frac1{4D_a}+\frac1{12D_a^2}-\frac1{2D_b}
                -\frac{1-D_a}{4D_b^2}+\frac{D_a}{6D_b^3}.\notag
\end{align}

For $r\geq\rho$, put $b=r-\alpha$ and define
\begin{align}\label{pc:eq:Hkernels}
 \mathsf H_j(a,r;\alpha)
 &=\Gamma_j^a(b)\phi_\alpha(b)
       +c_1\mathsf F_1(r)I_{j,-}(a,b)\notag\\
 &\quad+\frac{c_2}{2}\mathsf F_2(r)
             \bigl(I_{j,+}(a,b)-I_{j,-}(a,b)\bigr),\qquad j=1,2.
\end{align}
Then
\begin{equation}\label{pc:eq:Jsingle}
 J_j(a,\zeta)=\int_\rho^\infty\mathsf H_j(a,r;\alpha)U(r)\,dr.
\end{equation}
To prove this identity, the first term of \eqref{pc:eq:Pshort} uses
the substitution $r=s+\alpha$.  For either of the other two terms,
reverse the order on the triangular domain
$a\leq s<\infty$, $r\geq s+\alpha$:
\begin{align}\label{pc:eq:fubini}
 &\int_a^\infty\Gamma_j^a(s)A(s)
                  \int_{s+\alpha}^\infty f(r)U(r)\,dr\,ds\notag\\
 &\hspace{1cm}=\int_\rho^\infty f(r)U(r)
                    \left(\int_a^{r-\alpha}\Gamma_j^a(s)A(s)\,ds\right)dr.
\end{align}
Here $A(s)=e^{-s}$ or $\sinh s$.  Formula
\eqref{pc:eq:Iprimitives}, together with
$\sinh s=(e^s-e^{-s})/2$, gives \eqref{pc:eq:Hkernels}.
For $a>0$, boundedness of $U$ and exponential decay of the kernels
give absolute integrability, so Fubini is justified.  All limiting
face cases are subsequently obtained by continuity.

Set
\begin{align}\label{pc:eq:finalCkernels}
 B_0(a,r;\alpha)&=c_1e^{-a}\mathsf F_1(r)
                         +c_2\sinh a\,\mathsf F_2(r),\notag\\
 K_p(x,y,z;r)&=C_0B_0+C_1\mathsf H_1+C_2\mathsf H_2,\notag\\
 K_r(x,y,z;r)&=R_1\mathsf H_1+R_2\mathsf H_2,
\end{align}
where the parameters $a,\alpha$ in every kernel are as above.
The Region C pair now has the single-integral representation
\begin{align}\label{pc:eq:pairCsingle}
 p(x,y,z)&=A_0U(\rho)+\int_\rho^\infty K_p(x,y,z;r)U(r)\,dr,\notag\\
 r_3(x,y,z)&=\int_\rho^\infty K_r(x,y,z;r)U(r)\,dr,\qquad
 A_0=C_0\phi_\alpha(a)=\frac{\sinh^2y\sinh a}{\sinh^3\rho}.
\end{align}
In particular, with
$\lambda_p=\sinh(\sqrt2q_3)/\sinh y$ and
$\lambda_r=\sinh(\sqrt2q_2)/\sinh y$,
\begin{equation}\label{pc:eq:valueCsingle}
 v(q)=\lambda_pA_0U(\rho)+\int_\rho^\infty
                (\lambda_pK_p+\lambda_rK_r)U(r)\,dr.
\end{equation}
This is exactly the original boundary expression propagated into
Region C, with the two nested integrations replaced by elementary
functions.  No modification of the candidate is involved.

For use in checking the equality directions, put
$\eta=\sqrt2q_2$, so that
$\lambda_p=\sinh(y-\eta)/\sinh y$ and
$\lambda_r=\sinh\eta/\sinh y$, and regard the expressions above as
functions of the independent variables $(x,y,\alpha,\eta)$.
Define
\[
 Q_B=\partial_x-\partial_y-\partial_\eta,
 \qquad Q_C=\partial_x-\partial_\alpha.
\]
Substitution of the elementary primitives gives the kernel identities
\begin{align}\label{pc:eq:kernelzeros}
 (Q_B^2-1)(\lambda_pA_0)&=0,&
 (Q_B^2-1)(\lambda_pK_p+\lambda_rK_r)&=0,\notag\\
 (Q_C^2-1)A_0&=0,&
 (Q_C^2-1)K_p=(Q_C^2-1)K_r&=0.
\end{align}
These are identities of elementary functions, independent of $U$.
For the local term, they follow immediately from the simplified
formula for $A_0$ in \eqref{pc:eq:pairCsingle}; the kernel checks use
the same differentiation and \eqref{pc:eq:Iprimitives}.
Both operators leave $\rho=x+y+\alpha$ fixed.  Hence the identities
pass through the integrals without moving-endpoint terms, supplying
the two additional linear equalities used in the verification.

\subsection{Boundedness and the fixed-policy expectation}
\label{pc:sec:ito}

The boundedness condition follows directly from the integral kernels.
For $t>0$, the substitution $s=\sinh r$ gives
\[
 \int_t^\infty\frac{dr}{\sinh^3r}
 =\int_{\sinh t}^\infty\frac{ds}{s^3\sqrt{1+s^2}}
 \leq\frac1{2\sinh^2t}.
\]
Consequently, the positive operator $\mathcal T$ satisfies
\begin{equation}\label{pc:eq:Tcontraction}
 0\leq(\mathcal T1)(y)
 =2\sinh^2y\int_y^\infty\frac{dr}{\sinh^3r}\leq1.
\end{equation}
Thus $\|\mathcal T\phi\|_\infty\leq\|\phi\|_\infty$ for every
bounded datum $\phi$.

For the propagation operator in \eqref{pc:eq:generalpropagator},
let $m\in\{2,3\}$, $p\geq0$, $d=p/m$, and $t=y+d$ with $y>0$.
For $r\geq t$,
\[
 0\leq\frac{\sinh(r-d)}{\sinh r}\leq1,\qquad
 \sinh(r-t)\leq\sinh r,\qquad \sinh d\leq\sinh t.
\]
Applying these inequalities to the two nonnegative terms of
$V_m$ in \eqref{pc:eq:Vm}, and using the integral estimate above,
gives
\[
 \int_t^\infty V_m(y,d,r)\,dr
 \leq m\frac{\sinh d}{\sinh t}
       +\frac{m(m-1)}2\frac{\sinh^2d}{\sinh^2t}
 \leq m+\frac{m(m-1)}2.
\]
The coefficient $(\sinh y/\sinh t)^m$ of the local term is at most
one. Hence, for every bounded datum $\phi$,
\begin{equation}\label{pc:eq:propagatorbound}
 \sup_{p\geq0,\,y>0}|(\mathcal S_p^{(m)}\phi)(y)|
 \leq\left(1+m+\frac{m(m-1)}2\right)\|\phi\|_\infty.
\end{equation}
In particular, the propagation bounds are $4\|\phi\|_\infty$ for
$m=2$ and $7\|\phi\|_\infty$ for $m=3$, uniformly in both the
spatial variable and the propagation distance.

The continuous function $U$ is bounded because its limits at zero
and infinity are finite; see \eqref{pc:eq:Uendpoints}. For the
interface datum, $\arctan(e^{-s})\cosh s\leq e^{-s}\cosh s\leq1$,
and $|r\sinh s-\cosh s|\leq e^{-s}\leq1$ on the integration range
of \eqref{pc:eq:L}. The function $g$ is absolutely integrable on
$(1,\infty)$, since its endpoint orders are $O((r-1)^{-1/2})$ at
$1$ and $O(r^{-4})$ at infinity. Thus \eqref{pc:eq:L} gives
\[
 \sup_{s>0,\,w\geq0}|\widetilde L(s,w)|
 \leq\frac1{\sqrt2}+\int_1^\infty|g(r)|\,dr<\infty.
\]
Since $P(y,z)=(\mathcal S_z^{(3)}U)(y)$, estimate
\eqref{pc:eq:propagatorbound} first gives a uniform bound for $P$.
The data families $\phi_w$ and $\psi_x$ are therefore uniformly
bounded as well. Using $\mathcal K_p=\mathcal S_p^{(2)}$ gives
uniform bounds for $\mathcal K_p\phi_w$ and $\mathcal K_z\psi_x$;
\eqref{pc:eq:Tcontraction} bounds their companions. The elementary
terms $b_0,b_1$ added in Region B are also bounded. Finally, the
nonnegative exit weights in each value formula have sum at most one.
Thus $v$ is uniformly bounded wherever these formulas have positive
denominators. The bound also holds on the remaining faces by the
continuous extension established in Section~\ref{sm:section}, so
$v$ is bounded on the whole orthant.

Once the closed-chamber regularity and reflection
identities are established, its interpretation
\eqref{pc:eq:localtime} follows from a short verification argument.
More explicitly, the ordering-wall identities give
\begin{equation}\label{pc:eq:allreflection}
 (R^\top\nabla v)_i=-\tfrac12\mathbf 1_{\{i=4\}}
 \quad\text{on }\{q_i=0\},\qquad i=1,\ldots,4.
\end{equation}
This includes $v_{q_1}-\tfrac12v_{q_2}=0$ on the first face.
Continuity of the gradient makes the same identities valid at
intersections of faces.

Apply It\^o's formula, with localization, to $e^{-t}v(Y_t)$ for the
process \eqref{pc:eq:skorokhod}.  The interior drift vanishes by
\eqref{pc:eq:linearPDE}; continuity extends that identity to all
points of the closed orthant.  The regulator terms are
$-\tfrac12e^{-t}d\Lambda^4_t$ by \eqref{pc:eq:allreflection}.
Taking expectations and removing the localization therefore gives
\begin{equation}\label{pc:eq:itoverification}
 v(q)=\mathbb E_q[e^{-T}v(Y_T)]
            +\frac12\mathbb E_q\!\left[\int_0^T e^{-t}
                                      \,d\Lambda^4_t\right].
\end{equation}
Boundedness of the chosen branch sends the first term to zero as
$T\to\infty$, proving \eqref{pc:eq:localtime}.  In particular $v\geq0$.
This argument uses the linear equation and the independently checked
smoothness and boundary conditions; it does not use the remaining
nonlinear HJB inequalities.

\section{Smoothness across interfaces and ordering walls}\label{sm:section}

Let
\[
 q_i=x^{(i+1)}-x^{(i)},\qquad i=1,\ldots,4,
\]
and write the chamber candidate as
\[
 \mathcal U(x_1,\ldots,x_5)=x_5+v(q_1,q_2,q_3,q_4),
 \qquad x_1\leq\cdots\leq x_5.
\]
The three formula regions are
\[
 \begin{aligned}
 A&=\{q_4\geq q_2,\ 2q_1\geq q_4-q_2\},\\
 B&=\{q_4\leq q_2\},\\
 C&=\{q_4\geq q_2,\ 2q_1\leq q_4-q_2\}.
 \end{aligned}
\]
There are only two codimension-one internal interfaces:
\[
 \Sigma_{AB}=\{q_4=q_2\},\qquad
 \Sigma_{AC}=\{q_4-q_2=2q_1\}.
\]
The closures of $B$ and $C$ meet only on the codimension-two set
$\{q_1=0,q_4=q_2\}$.

We prove that the explicit piecewise candidate extends twice continuously
differentiably through every internal interface and every ordering
collision.  There are two distinct issues.  First, the two-jets of the
regional formulas must agree on $\Sigma_{AB}$ and $\Sigma_{AC}$.  Second,
the one-sided Hessian must have a continuous limit on every stratum of the
closed Weyl chamber, including the full collision.  Cubic contact across
the internal interfaces, weighted third-order endpoint estimates, and the
four ordering-wall smooth-fit identities resolve these two issues.

\begin{theorem}[Global smoothness]\label{sm:thm:globalC2}
The regional formulas define a function
$v\in C^2(\overline{\mathbb R_+^4})$.  The chamber function
$\mathcal U\in C^2(\overline{\mathcal W}_5)$, and its symmetric ranked
extension
\[
 u(x)=\mathcal U(x^{(1)},\ldots,x^{(5)})
\]
belongs to $C^2(\mathbb R^5)$.
\end{theorem}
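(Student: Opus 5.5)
The proof splits into two independent parts. First we show that the regional formulas glue to a $C^2$ function on the open orthant and up to its faces. Then we show that the ranked extension is $C^2$ across ordering walls, including multiple collisions.

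\emph{Step 1: regularity inside each closed region.} The data $U$, $\widetilde L$, $P$ are smooth for positive arguments. The propagators $\mathcal K_p=\mathcal S^{(2)}_p$, $\mathcal S^{(3)}_p$ and $\mathcal T$ have kernels whose derivatives have integrable exponential bounds (Section~\ref{secpropsystems}). Hence $F,R,H_B,R_B$ are $C^\infty$ wherever $y>0$, and the exit-weight formulas \eqref{pc:eq:vAC}, \eqref{pc:eq:vB} are smooth in the interiors of $A$, $B$, $C$. The delicate points are the collapsed faces: $y\to0$, where the exit weights have vanishing denominators, and $t\to0$ in the data, where $H$, $g$ and $\mathcal T$ are singular ($g=O((r-1)^{-1/2})$ near $r=1$). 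There we would use the single-integral representation \eqref{pc:eq:valueCsingle} and its analogues for $A$ and $B$. The plan is to obtain weighted third-order estimates: every third derivative of the regional formula is bounded by $C(1+y^{-\theta})$ with $\theta<1$ near the collapsed faces. With such a bound, the second derivatives are uniformly continuous up to the closed region and extend continuously. For $U$ itself this follows from \eqref{pc:eq:Uode} by a Frobenius expansion at $t=0$: regular singular point, indicial roots $0$ and $-4$, and the bounded branch is analytic in $t^2$ plus an odd correction. The kernel singularities are handled by splitting the $r$-integral near the endpoint. I expect this endpoint analysis at the full collision $q=0$ to be the main obstacle, since there all four formulas and all collapsed-face systems meet. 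If the direct estimates become unwieldy, the fallback is to differentiate the hyperbolic systems \eqref{pc:eq:m2system} and \eqref{pc:eq:32system} along characteristics and propagate bounds for the $2$-jet from the boundary data.

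\emph{Step 2: the internal interfaces.} On $\Sigma_{AB}$ and $\Sigma_{AC}$ the function values already agree, by $\mathcal K_0=I$, $\mathcal T b_0=b_1$ and $P(s,0)=\widetilde L(s,0)$. Both neighbouring formulas satisfy the linear equation \eqref{pc:eq:linear}, and the interfaces are non-characteristic for $\partial_{q_2}-\partial_{q_3}+\partial_{q_4}$. So it suffices to match the value and one transversal first derivative. The second-order jets then agree, because the equation determines the double transversal derivative from tangential ones; third-order contact follows by differentiating once more. For $\Sigma_{AB}$ the first-derivative match comes from the interface system \eqref{id:eq:interfacepair}, namely the $\ell_w$ and $\rho$ relations. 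For $\Sigma_{AC}$ it comes from the compatibility relations \eqref{id:eq:normalcompatibility} together with the semigroup and generator identity \eqref{pc:eq:generator}, applied with $m=3$ versus $m=2$. We would also verify that the two-jets agree on the codimension-two set $B\cap C$ by continuity.

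\emph{Step 3: ordering walls.} On each wall $q_i=0$ the chamber function $\mathcal U$ is $C^2$ up to the boundary by Steps 1–2. The symmetric extension $u$ across $x^{(i)}=x^{(i+1)}$ is the reflection of $\mathcal U$ in that wall. Reflection preserves $C^1$ exactly when the normal derivative $\partial_{x_{i+1}}\mathcal U-\partial_{x_i}\mathcal U$ vanishes on the wall. In gap variables this is precisely $(R^\top\nabla v)_i=-\tfrac12\mathbf 1_{\{i=4\}}$ from \eqref{pc:eq:allreflection}: the $x_5$ term supplies the $-\tfrac12$. These identities follow on the relative interiors by differentiating the exit formulas, reversing the derivations of \eqref{pc:eq:derivedACsystem} and \eqref{pc:eq:derivedBsystem}. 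They extend to intersections by continuity of the gradient from Step 1. For $C^2$, the mixed second derivatives normal and tangential match by differentiating the vanishing normal-derivative identity tangentially. The double normal derivative is even under reflection, so it matches automatically. At points on several walls, the same argument is applied one wall at a time, using continuity of $D^2\mathcal U$ up to the closed chamber. This yields $u\in C^2(\mathbb R^5)$.
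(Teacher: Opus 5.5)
Your Step 2 rests on a false premise. The direction $a=(0,1,-1,1)$ is \emph{tangent} to both internal interfaces, because $a\cdot\nabla q_1=0$ and $a\cdot\nabla(q_4-q_2)=0$. The regions are defined through exactly these two invariants of the $a$-flow, which is why the exit decomposition works region by region. So $\Sigma_{AB}$ and $\Sigma_{AC}$ are characteristic for $v-\frac12(a\cdot\nabla)^2v=0$: that equation involves only derivatives along the interface and gives no information about $\partial_\nu^2v$.

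The other equality directions do not rescue the argument. The direction $b=(1,-1,0,1)$ is also tangent to $\Sigma_{AC}$. The directions transversal to an interface ($b$ in $A$ and $c$ in $B$ across $\Sigma_{AB}$, $(1,0,0,-1)$ in $C$) each hold on one side only. Hence matching the value and one transversal derivative does not give matching Hessians, and cubic contact does not follow by differentiating once more.

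The second normal jets must be matched by explicit computation, and this is the substance of the paper's interface argument:
\begin{itemize}
\item For $A/B$, it differentiates both trace systems twice along the normal. It uses the diagonal identity $\ell_x+Q(\ell,R)=c$, the companion identity $\mathcal T[F'-\frac12Q(F,R)]=P(F,R)$, and $\beta(\coth s-\coth b)=-\alpha\csch s$ to obtain $V_A''(0)=V_B''(0)$.
\item For $A/C$, it computes $\partial_{nn}\widetilde L(y,0)$ from the moment recurrence (the $M_2$ identity). It matches the resulting second jet of $B_a=\mathcal K_a[\widetilde L(\cdot,-a/2)]$ against that of the Region~C datum $P_a$, giving $P_a-B_a=O(a^3)$. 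This contact is then transported by $\mathcal K_{2x+a}=\mathcal K_{2x}\mathcal K_a$ and $\mathcal K_p^*=\mathcal T\mathcal K_p$.
\end{itemize}

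Step 3 has a second gap. Reversing the derivations of the $A\cup C$ and $B$ trace systems gives the smooth-fit identities on $q_2=0$, $q_3=0$ and $q_4=0$ only. Neither system encodes reflection on $q_1=0$, and the collapsed relation $v_{q_2}=\frac23v_{q_3}$ concerns only the face $q_1=q_2=0$. Yet $\{q_1=0\}$ is a three-dimensional face of Regions $B$ and $C$, and $2v_{q_1}-v_{q_2}=0$ there is not a consequence of the other three identities. Citing \eqref{pc:eq:allreflection} is circular, because the paper derives that display from the wall identities proved in this very section. The identity must be checked on the explicit formulas:
\begin{itemize}
\item In $B$, use $\mathcal A[\widetilde L(\cdot,0)-b_0]=2\partial_x\widetilde L(\cdot,0)$, with $\mathcal A$ the generator of $\mathcal K_p$. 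This gives $H_z=2H_x$ and $S_z=2S_x$ at $x=0$ for the Region~$B$ traces.
\item In $C$, use $p_x(0,y,z)=\frac13P_y$ and $r_x(0,y,z)=\frac13\mathcal T(P_y)$, together with $R_y-R_z=\frac23\mathcal T(P_y)$. The last identity comes from the $3$--$2$ boundary system and a commutation identity for $\mathcal T$.
\end{itemize}
Your reflection argument for Hessian matching across a wall, given the gradient identities, is fine.

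Step 1 is in the right spirit but omits the cancellation that makes your bound $C(1+y^{-\theta})$, $\theta<1$, attainable. The operators $\mathcal T$ and $\mathcal K_p$ can generate terms like $-(\phi''(0)-\phi(0))s^2\log s$, whose Hessian is unbounded. So each datum family must satisfy $\phi_{ss}(0,\cdot)=\phi(0,\cdot)$. In addition, the collapsing exit weights require the regular Frobenius jet of the pair and $F(0)=R(0)$ before you divide by $\sinh(a+b)$.
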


\subsection{The propagation operators}

For the bounded data families of Section~\ref{pc:sec:construction},
the operators \eqref{pc:eq:T}--\eqref{pc:eq:K} are well defined.
We recall
\begin{equation}\label{sm:eq:Tdef}
 (\Top\phi)(y)=2\sinh^2y\int_y^\infty
 \frac{\phi(r)}{\sinh^3r}\,dr
\end{equation}
and
\begin{equation}\label{sm:eq:Adef}
 \Aop\phi=\frac12\phi'-\coth y\,\phi
 +\csch y\,\Top\phi.
\end{equation}
The explicit operator $\Kop_p$ in \eqref{pc:eq:K} is the propagation
semigroup generated by $\Aop$:
\begin{equation}\label{sm:eq:semigroup}
 \Kop_{p+r}=\Kop_p\Kop_r,
 \qquad \left.\partial_p\Kop_p\right|_{p=0}=\Aop,
 \qquad \Kop_p^*=\Top\Kop_p.
\end{equation}
These identities follow either by differentiating the displayed kernel or
by uniqueness for the corresponding homogeneous hyperbolic system.
Whenever a negative propagation parameter occurs below, $\Kop_p$ denotes
the local analytic continuation of this kernel near $p=0$; only its Taylor
jet through order two is used.

We shall also use
\begin{align}\label{sm:eq:QP}
 Q(F,R)&=2\coth y\,F-2\csch y\,R,\\
 P(F,R)&=-2\csch y\,F+2\coth y\,R.\notag
\end{align}

\begin{lemma}[Companion identity]\label{sm:lem:companion}
If $R=\Top F$, then
\begin{equation}\label{sm:eq:companionidentity}
 \Top\!\left[F'-\frac12Q(F,R)\right]=P(F,R).
\end{equation}
\end{lemma}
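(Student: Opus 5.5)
The identity is linear in $F$ and involves only the explicit weight $\sinh^{-3}$, so I would prove it by a direct computation with two integrations by parts. The starting point is the companion equation already derived in \eqref{pc:eq:Tderivation}: for $R=\Top F$ we have $R'=2\coth y\,R-2\csch y\,F=P(F,R)$. The claim is therefore equivalent to
\[
 \Top\bigl[F'-\coth y\,F+\csch y\,R\bigr]=R',
\]
because $F'-\tfrac12Q(F,R)=F'-\coth y\,F+\csch y\,R$. I will compute the three pieces of $\Top$ on the left separately. Each is written in terms of the two basic integrals
\[
 G(y)=\int_y^\infty\frac{F(r)}{\sinh^3r}\,dr,\qquad N(y)=\int_y^\infty\frac{\cosh r\,F(r)}{\sinh^4r}\,dr,
\]
so that $R=2\sinh^2y\,G$.

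The first piece is $\Top[F']$. Integrating by parts against $\sinh^{-3}r$, with $(\sinh^{-3})'=-3\cosh\,\sinh^{-4}$, gives
\[
 \Top[F']=-2\csch y\,F+6\sinh^2y\,N.
\]
The second piece is immediate: $\Top[\coth y\,F]=2\sinh^2y\,N$.

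The third piece uses $R/\sinh^4r=2G/\sinh^2r$ and $G'=-F/\sinh^3$. Integrating by parts against $\sinh^{-2}r=-(\coth r)'$ gives
\[
 \int_y^\infty\frac{2G}{\sinh^2r}\,dr=2\coth y\,G(y)-2N(y).
\]
Multiplying by $2\sinh^2y$ yields
\[
 \Top[\csch y\,R]=2\coth y\,R-4\sinh^2y\,N.
\]
Adding the three pieces, the $N$-terms cancel ($6-2-4=0$). What remains is $-2\csch y\,F+2\coth y\,R=P(F,R)$, which is the asserted identity.

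The only point needing care is the boundary terms at infinity. For the bounded data families of Section~\ref{pc:sec:construction}, $F$ is bounded and $G(r)=O(e^{-3r})$. Hence $F(r)\sinh^{-3}r\to0$ and $\coth r\,G(r)\to0$ as $r\to\infty$, and every integral above converges absolutely. I also need $F'\sinh^{-3}$ to be integrable at infinity, or at least the boundary term in the first integration by parts to vanish. I would handle this as in Section~\ref{secpropsystems}: first take $F$ smooth with compact support, then pass to bounded smooth data by cutoff and dominated convergence. Alternatively, $\Top[F']$ can simply be read as the integrated-by-parts expression. The lower endpoint $y>0$ causes no difficulty. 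This limiting justification is the only mild obstacle; the algebra is exact and short.
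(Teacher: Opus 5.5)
Your computation is correct and takes essentially the paper's route: combine $R'=P(F,R)$ with integration by parts against the $\sinh^{-3}$ weight, and let the boundary terms at infinity vanish because $F$ is bounded and the weight decays exponentially. The only difference is bookkeeping. After substituting $F/\sinh^3 r=-\tfrac12\bigl(R/\sinh^2 r\bigr)'$, the whole integrand of $\Top[F'-\coth F+\csch R]$ is the exact derivative $\bigl(F/\sinh^3 r-\coth r\,R/\sinh^2 r\bigr)'$, so the paper's single integration by parts replaces your three-piece split and the cancellation of the $N$-terms.
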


\begin{proof}
Differentiate \eqref{sm:eq:Tdef} to obtain $(\Top F)'=P(F,\Top F)$.
Substitution in the left-hand side of \eqref{sm:eq:companionidentity}, followed
by one integration by parts, gives the same expression.  The boundary term
at infinity is zero because the data are bounded and the weight decays exponentially.
\end{proof}

\subsection{The A/B interface}

Introduce midpoint and signed-normal variables
\begin{equation}\label{sm:eq:ABcoords}
 x=\sqrt2q_1,\quad a=\sqrt2q_3,\quad
 b=\frac{\sqrt2}{2}(q_2+q_4),\quad
 s=a+b,\quad d=\sqrt2(q_4-q_2).
\end{equation}
Thus $\sqrt2q_2=b-d/2$, $\sqrt2q_4=b+d/2$, and $d=0$ is
$\Sigma_{AB}$.

Write
\[
 \ell(s,x)=\widetilde L(s,x),\qquad R(s,x)=\Top\ell(s,x),
 \qquad c=\frac1{\sqrt2}.
\]
The identity that drives the matching is
\begin{equation}\label{sm:eq:diagonalidentity}
 \ell_x+Q(\ell,R)=c.
\end{equation}
Here $\ell_x$ is differentiation in the second argument of $\ell$.
This identity is exact.  Indeed, split $\ell=\ell_0+L$, where
\[
 \ell_0(s)=\frac1{\sqrt2}\arctan(e^{-s})\cosh s.
\]
The elementary companion of $\ell_0$ gives
$Q(\ell_0,\Top\ell_0)=c$.  For the integral part, put $C=\coth s$ and
$h_x(r)=e^{-2xr}g(r)$.  Fubini's theorem gives
\[
 \Top L(s,x)=-\sinh^2s\int_1^C(r-C)^2h_x(r)\,dr,
\]
and hence
\[
 Q(L,\Top L)=2\sinh s\int_1^Cr(r-C)h_x(r)\,dr=-L_x.
\]
This proves \eqref{sm:eq:diagonalidentity}.

For Region $B$, define the corrected pair
\[
 A_0(s)=\frac{1+e^{-2s}/3}{2\sqrt2},\qquad
 B_0(s)=\frac{2e^{-s}}{3\sqrt2},\qquad
 H=h+A_0,\quad S=r_2+B_0.
\]
Direct differentiation gives
\[
 A_0'-Q(A_0,B_0)=-c,\qquad
 B_0'-P(A_0,B_0)=0,\qquad B_0=\Top A_0.
\]
Consequently, the two pairs satisfy
\begin{align}\label{sm:eq:ABsystems}
 -f_x+f_y-2f_z&=Q(f,r),& r_y&=P(f,r),& r&=\Top f,\\
 H_y-2H_z&=Q(H,S)-c,& S_y&=P(H,S),& S&=\Top H.\notag
\end{align}
At $z=0$ their traces agree:
\begin{equation}\label{sm:eq:ABtrace}
 (f,r)=(H,S)=(\ell,R).
\end{equation}

The two analytic continuations along the normal $d$ are
\begin{align*}
 V_A(d)&=\alpha_A(d)f(x,s-d/2,d)
       +\beta_A(d)r(x,s-d/2,d),\\
 V_B(d)&=\alpha_B(d)H(x,s+d/2,-d)
       +\beta_B(d)S(x,s+d/2,-d),
\end{align*}
where
\begin{align*}
 \alpha_A(d)&=\frac{\sinh a}{\sinh(s-d/2)},&
 \beta_A(d)&=\frac{\sinh(b-d/2)}{\sinh(s-d/2)},\\
 \alpha_B(d)&=\frac{\sinh a}{\sinh(s+d/2)},&
 \beta_B(d)&=\frac{\sinh(b+d/2)}{\sinh(s+d/2)}.
\end{align*}
Put $\alpha=\sinh a/\sinh s$ and $\beta=\sinh b/\sinh s$.
Zeroth-order matching follows from \eqref{sm:eq:ABtrace}.

A dot denotes the derivative at $d=0$ along the corresponding displayed
curve.  From \eqref{sm:eq:diagonalidentity} and \eqref{sm:eq:ABsystems},
\begin{equation}\label{sm:eq:ABfirstcomponents}
 \dot f_A=-\frac c2,\qquad
 \dot H_B=-\frac{\ell_x}{2}.
\end{equation}
If $u=f_z=(\ell_y-c)/2$ and
$w=H_z=(\ell_y+\ell_x)/2$, then
$u+w=\ell_y-Q(\ell,R)/2$.  Lemma~\ref{sm:lem:companion} therefore gives
\begin{equation}\label{sm:eq:ABfirstr}
 \dot r_A=\dot S_B=: \rho.
\end{equation}
Using
\[
 \beta\bigl(\coth s-\coth b\bigr)=-\alpha\csch s,
\]
the product rule now yields $V_A'(0)=V_B'(0)$.

For the second derivative, differentiating the systems once more gives
\begin{align}
 \ddot f_A-\ddot H_B
 &=-\coth s\,(\dot f_A+\dot H_B)+2\csch s\,\rho,
 \label{sm:eq:ABsecondf}\\
 \ddot r_A&=\ddot S_B.\label{sm:eq:ABsecondr}
\end{align}
For completeness, \eqref{sm:eq:ABsecondr} follows by applying
Lemma~\ref{sm:lem:companion} to $f_z-H_z$ and differentiating
\eqref{sm:eq:diagonalidentity} in $s$.  Since
$\alpha_B(d)=\alpha_A(-d)$ and $\beta_B(d)=\beta_A(-d)$,
\eqref{sm:eq:ABsecondf}--\eqref{sm:eq:ABsecondr} reduce the remaining difference
to
\[
 V_A''(0)-V_B''(0)
 =2\rho\{\alpha\csch s+
 \beta(\coth s-\coth b)\}=0.
\]
We have proved
\begin{equation}\label{sm:eq:ABcubic}
 V_A(d)-V_B(d)=O(d^3).
\end{equation}
For $s>0$ the estimate is smooth in the tangential variables $(x,a,b)$.
The endpoint $s=0$ is treated separately below.  Therefore, away from that
endpoint,
the values, all first derivatives, and every Hessian entry agree on
$\Sigma_{AB}$.

\subsection{The A/C interface}

Use the linear coordinates
\begin{equation}\label{sm:eq:ACcoords}
 x=\sqrt2q_1,\qquad \eta=\sqrt2q_2,\qquad
 y=\sqrt2(q_2+q_3),\qquad
 a=\sqrt2(q_4-q_2-2q_1).
\end{equation}
Then $a=0$ is $\Sigma_{AC}$, Region $A$ has $a\leq0$, and Region $C$
has $a\geq0$.

Let $U(y)=\widetilde L(y,0)$ and define
\begin{equation}\label{sm:eq:JK}
 J(y)=\int_y^\infty\frac{U(r)}{\sinh^3r}\,dr,
 \qquad
 K(y)=\int_y^\infty\frac{\cosh r\,U(r)}{\sinh^4r}\,dr.
\end{equation}
To calculate the normal derivatives of the Region $A$ datum, introduce
\[
 M_k(y)=\int_1^{\coth y}r^k(r\sinh y-\cosh y)g(r)\,dr.
\]
Differentiation at the moving endpoint gives the exact recurrence
\begin{equation}\label{sm:eq:momentrecurrence}
 \sinh y\,M_{k+1}'-\cosh y\,M_{k+1}
 =\cosh y\,M_k'-\sinh y\,M_k.
\end{equation}
Combining \eqref{sm:eq:momentrecurrence} with
\[
 U''+5\coth y\,U'-6U=-\frac3{\sqrt2}\coth y
\]
and the weighted integral limits at infinity yield
\begin{equation}\label{sm:eq:M12}
 M_1=-\frac16U',\qquad
 M_2=\frac1{36}U''-\frac{U}{6\sinh^2y}
      -\frac{\cosh y}{3}J+\sinh y\,K.
\end{equation}
Since
\[
 \widetilde L(y,n)=U(y)-2nM_1(y)+2n^2M_2(y)+O(n^3),
\]
we obtain
\begin{align}
 \widetilde L(y,0)&=U,\notag\\
 \partial_n\widetilde L(y,0)&=\frac13U',\label{sm:eq:Ljets}\\
 \partial_{nn}\widetilde L(y,0)
 &=\frac19U''-\frac23\csch^2y\,U
   -\frac43\cosh y\,J+4\sinh y\,K.\notag
\end{align}

Now set
\[
 B_a=\Kop_a[\widetilde L(\cdot,-a/2)],
 \qquad P_a(y)=p(0,y,a).
\]
The semigroup Taylor expansion and \eqref{sm:eq:Ljets} give
\begin{align}
 B_0&=U,\notag\\
 B_0'&=\frac13U'-\coth y\,U+2\sinh y\,J,\label{sm:eq:Bjets}\\
 B_0''&=\frac19U''-\frac23\coth y\,U'+U
       +\frac83\cosh y\,J-4\sinh y\,K.\notag
\end{align}

Expanding the Region $C$ boundary formula $P_a$ gives exactly the same
three expressions.  To make this algebra transparent, write $\gamma=a/3$;
the two integrals in that formula satisfy at $\gamma=0$
\[
 I_1(0)=4J,\qquad I_2(0)=2J,
\]
and differentiation of their moving lower endpoints gives the last two
terms in \eqref{sm:eq:Bjets}.  Hence
\begin{equation}\label{sm:eq:PminusB}
 P_a-B_a=O(a^3).
\end{equation}

The propagated first components in the two regions are
\begin{align*}
 F_A&=\Kop_{2x+a}[\widetilde L(\cdot,-a/2)]
     =\Kop_{2x}B_a,\\
 F_C&=\Kop_{2x}P_a.
\end{align*}
Equation \eqref{sm:eq:PminusB} and the semigroup property imply
$F_A-F_C=O(a^3)$, smoothly in $(x,\eta,y)$ for $y>0$.  The companions have the same
contact because $\Kop_p^*=\Top\Kop_p$.  The hyperbolic weights depend only
on $(\eta,y)$, so
\begin{equation}\label{sm:eq:ACcubic}
 v_A-v_C=O(a^3).
\end{equation}
Finally,
\begin{align}\label{sm:eq:derivativemap}
 \partial_{q_1}&=\sqrt2(\partial_x-2\partial_a),&
 \partial_{q_2}&=\sqrt2(\partial_\eta+\partial_y-\partial_a),\notag\\
 \partial_{q_3}&=\sqrt2\partial_y,&
 \partial_{q_4}&=\sqrt2\partial_a.
\end{align}
Thus \eqref{sm:eq:ACcubic} proves equality of every $q$-coordinate Hessian
entry on $\Sigma_{AC}$.  The first possible discrepancy between the two
data is cubic; equality of the data away from the interface is not needed.

There is no third codimension-one calculation.  At the B/C triple junction,
the A/B and A/C two-jets agree, so B/C compatibility follows by transitivity.

\subsection{One-sided regularity on the closed chamber}

We next establish the one-sided endpoint estimates needed at the ordering
faces, including their intersections.  Matching values and gradients alone
would not provide these estimates for the Hessian.

For $m\in\mathbb N_0$, write
$F=P_2+\mathscr R_{3,m}$ at a finite-dimensional cone vertex if $P_2$ is
a polynomial of degree at most two and, with
$\rho=|\xi|_1$ and $R=F-P_2$,
\begin{equation}\label{sm:eq:E3class}
 \left|\partial^\beta R(\xi)\right|
 \leq C_\beta\rho^{3-|\beta|}
       \bigl(1+|\log\rho|\bigr)^m,
 \qquad |\beta|\leq2.
\end{equation}
Every finite $m$ gives a $C^2$ one-sided extension with two-jet $P_2$.
The exponent is retained because one propagation can increase it by one.

\Needspace{13\baselineskip}
\begin{proposition}[Closed-chamber regularity]\label{sm:prop:closedchamber}
The piecewise function $v$ has a $C^2$ one-sided extension to
$\overline{\mathbb R_+^4}$.  Consequently,
\[
 \mathcal U(x_1,\ldots,x_5)=x_5+v(q_1,q_2,q_3,q_4)
\]
belongs to $C^2(\overline{\mathcal W}_5)$.

More precisely, if
\[
 u_0=U(0)=\frac{45\pi^2}{512\sqrt2},
\]
then at the full collision
\begin{equation}\label{sm:eq:fullcollisionjet}
 v(q)=u_0-\frac{q_1+2q_2+3q_3+4q_4}{5}
      +\frac{u_0}{2}q^{\mathsf T}Bq+\mathscr R_{3,2},
\end{equation}
where
\[
 B=\begin{pmatrix}
 \frac43&1&\frac23&\frac13\\
 1&2&\frac43&\frac23\\
 \frac23&\frac43&2&1\\
 \frac13&\frac23&1&\frac43
 \end{pmatrix}.
\]
In particular, $D^2v(q)\to u_0B$ as $q\to0$ inside the closed orthant.
\end{proposition}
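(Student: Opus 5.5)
The plan is to prove the closed-chamber regularity stratum by stratum, building up from the smallest invariant face as in the construction. I will work with the explicit one- and two-variable data, pass the $\mathscr R_{3,m}$ class through the propagation operators, and finally assemble $v$ through the exit weights. The final step, computing the full-collision two-jet \eqref{sm:eq:fullcollisionjet}, is pure algebra once the class is known to be preserved.

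\textbf{Step 1: the one-dimensional data.} From the ODE \eqref{pc:eq:Uode} and the endpoint values \eqref{pc:eq:Uendpoints}, $U$ is smooth on $(0,\infty)$. Near $0$ its Taylor expansion to order two has remainder $O(t^3)$, and the derivatives of the remainder obey the corresponding bounds. The singular coefficient $5\coth t$ is harmless because the regular branch is analytic in $t^2$ after removing the $\cosh t$ factor.

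For $\widetilde L$ the situation is less clean. The density satisfies $g(r)=O((r-1)^{-1/2})$ near $r=1$, and $g(r)=O(r^{-4})$ at infinity, where it interacts with $e^{-2wr}$. Near $s=w=0$ this produces $\log$ terms at third order. I would therefore expand $e^{-2wr}$ in $w$ to second order in the kernel \eqref{pc:eq:L}. The remainder is bounded by $w^3\int r^3|g|\,\min(1,(wr)^{\cdot})$, and this yields $\rho^3(1+|\log\rho|)$ bounds, so $\widetilde L\in P_2+\mathscr R_{3,1}$ in $(s,w)$. The same argument applies to $P=\mathcal S^{(3)}_zU$ in $(y,z)$.

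\textbf{Step 2: propagation lemma.} I will show that if the datum $\phi_w$ lies in $P_2+\mathscr R_{3,m}$ jointly in its variables, then $\mathcal K_p\phi_w$ and $\mathcal T\mathcal K_p\phi_w$ lie in $P_2+\mathscr R_{3,m+1}$ jointly in $(x,y,z)$. The approach uses the kernel form \eqref{pc:eq:generalpropagator}:
\begin{enumerate}
\item The local term $(\sinh y/\sinh t)^m\phi(t)$ is a product of a function that is smooth and homogeneous of degree $0$ near the vertex with the datum.
\item In the integral terms, the kernel $V_m$ near $r\approx0$ behaves like $r^{-m-2}$ times polynomial factors. Integrating the datum remainder $r^3(1+|\log r|)^m$ against this kernel from $t$ to $1$ produces at most one extra logarithm. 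This is the stated reason the exponent may increase by one.
\item The polynomial part $P_2$ is propagated exactly, since the operators map polynomials to explicit elementary functions whose Taylor jets can be read off from the generator \eqref{pc:eq:generator}.
\end{enumerate}
Applying the lemma once, starting from $\mathscr R_{3,1}$ data, gives $\mathscr R_{3,2}$, consistent with the claimed exponent.

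\textbf{Step 3: exit weights and assembly.} The weights $\sinh(\sqrt2q_3)/\sinh y$ are homogeneous of degree $0$ and are not smooth at $y=0$. So I will write $v$ in the form $F+\lambda_r(R-F)$. Near $y=0$ I will show that $R-F$ vanishes to first order on $\{y=0\}$, with the relevant jets controlled via the companion relations \eqref{pc:eq:22system}; at $y=0$ this gives $F=R$. Then $\lambda_r(R-F)$ belongs to the $\mathscr R_{3,m}$ class after subtracting an explicit quadratic.

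\textbf{Step 4: gluing and the full-collision jet.} Cubic contact across $\Sigma_{AB}$ and $\Sigma_{AC}$ is \eqref{sm:eq:ABcubic} and \eqref{sm:eq:ACcubic}. I need to check that these contact estimates hold uniformly up to $s=0$, using the same weighted bounds, so that the pieces glue into a single one-sided $C^2$ function. The full-collision jet then follows by matching Taylor coefficients:
\begin{enumerate}
\item The gradient is forced by the reflection conditions \eqref{pc:eq:allreflection}, and solving $R^\top\nabla v=-\tfrac12e_4$ gives $-(1,2,3,4)/5$.
\item The Hessian equals $u_0B$, determined by the linear PDE together with the differentiated reflection conditions and $U''(0)=U(0)$.
\end{enumerate}

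The main obstacle is Step 2 together with the weight degeneracy in Step 3 at lower-dimensional strata. The propagation lemma must hold uniformly on strata where several of $x,y,z$ vanish simultaneously, and there the kernel singularities and the weight singularity interact. If a direct proof proves unwieldy, I would fall back on a more explicit route: decompose the data into their exact quadratic jet plus the remainder, and bound the remainder's propagation with explicit majorants of $V_m$.
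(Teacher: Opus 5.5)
Your architecture parallels the paper's: data families in $P_2+\mathscr R_{3,1}$, one extra logarithm per propagation, division of the collapsing exit weights using $R=F$ at $y=0$, and gluing by cubic interface contact. Your Step~4 route to the jet is also a legitimate shortcut. Since $RB=\tfrac56I$, the differentiated reflection identities at the corner make $R\,D^2v(0)$ diagonal. Symmetry of $D^2v(0)$ and entrywise positivity of $R^{-1}$ then force $D^2v(0)=\kappa R^{-1}$, and the linear equation with $a^{\mathsf T}Ba=2$ gives $\kappa=5u_0/6$. You must, however, import the separately proved first-face identity $2v_1-v_2=0$.

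The genuine gap is Step~2, item~3: the operators do \emph{not} map the quadratic part of a datum to functions with polynomial two-jets. The weights $\sinh^{-3}r$ and $\sinh^{-4}r$ in $\mathcal T$ and $\mathcal K_p$ act on the constant and quadratic Taylor coefficients and produce second-order logarithms. For example, $\csch^3r=r^{-3}-\tfrac12r^{-1}+O(r)$ gives $\mathcal T1(y)=1+y^2\log y+O(y^2)$. In general,
\[
 \mathcal T\phi(s)=a_0+2a_1s-(a_2-a_0)s^2\log s+O(s^2),\qquad \phi=a_0+a_1s+\tfrac12a_2s^2+O(s^3),
\]
while $\mathcal K_{2d}\phi$ contains $(\phi''(0)-\phi(0))$ times a quadratic in $(y,d)$ times $\log(y+d)$. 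So your lemma already fails for a constant datum, whose image is not $C^2$ at $y=0$.

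The missing hypothesis is the radial compatibility $\phi_{ss}(0,\vartheta)=\phi(0,\vartheta)$, uniformly in the data parameter. You must do two things:
\begin{enumerate}
\item verify it for each family: for $\widetilde L$ via \eqref{pc:eq:Lidentity}, for $\psi_x$ by direct differentiation, and for $P$ via \eqref{pc:eq:Pode} together with $U''(0)=U(0)$;
\item show that propagation preserves it, i.e.\ $(\mathcal K_p\phi)_{yy}(0)=\mathcal K_p\phi(0)$ (e.g.\ from \eqref{pc:eq:curvaturetransport}); otherwise the companion $\mathcal T\mathcal K_p\phi$ regenerates a $y^2\log y$ term.
\end{enumerate}
Thus $U''(0)=U(0)$ is what makes $v$ twice differentiable at the collisions, not merely an input to the value of the Hessian. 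The same omission undermines your claim that the $\widetilde L$ argument carries over to $P$.

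A second gap is in Step~3. To show that $\lambda_r(R-F)$ is $C^2$ uniformly when $y$ is small compared with the tangential variables, you must write $R-F=y\int_0^1(R_y-F_y)(ty,\cdot)\,dt$ and differentiate twice. That requires weighted \emph{third}-order bounds on the propagated remainders, whereas $\mathscr R_{3,m}$ controls derivatives only through order two. The paper carries \eqref{sm:eq:thirdremainder} through the propagation for exactly this purpose. It obtains uniformity for $y+d\ll|\vartheta|_1$ by subtracting the radial Taylor terms at fixed parameter and splitting the local integrals at $r=|\vartheta|_1$.

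Similarly, the interior proof of \eqref{sm:eq:ABcubic} is not uniform at the non-full endpoint $q_2=q_3=q_4=0$, $q_1>0$, and ``the same weighted bounds'' do not settle it. The paper instead compares the two explicit Fubini kernels there: their normal factors $e^{2\delta r}H_\delta(r)$ and $H_{-\delta}(r)$ share the two-jet $(1,r,1)$ at $\delta=0$, and the moving-endpoint terms are controlled separately.
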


\begin{proof}
We divide the endpoint calculation into the data, propagation, and
interpolation steps.

\emph{Step 1: the two data families.}
The profile equation at the origin and the large-$r$ expansion of the
kernel give
\begin{equation}\label{sm:eq:Uandgtails}
 U'(0)=-\frac{3}{5\sqrt2},\qquad U''(0)=U(0)=u_0,
 \qquad
 g(r)=-\frac{1}{14\sqrt2}\,r^{-4}+O(r^{-6}).
\end{equation}
The last estimate, together with its differentiated versions, is obtained by inserting
$\operatorname{arccoth}r=r^{-1}+\frac13r^{-3}+\frac15r^{-5}+\cdots$
in the definition of $g$.

Split the integral defining $\widetilde L$ into a local part and a tail,
subtract the quadratic Taylor polynomial, and use
\eqref{sm:eq:Uandgtails}.  Cutting the tail at
$r\asymp |(s,w)|^{-1}$, the borderline third moment
$r^3g(r)=O(r^{-1})$ contributes at most
$|(s,w)|^3(1+|\log|(s,w)||)$.  Differentiating $k\leq3$ times gives the
corresponding bound with power $3-k$ (third derivatives are taken in
the open parameter cone).  For $r\leq(s+w)^{-1}$ one uses the cubic
Taylor remainder, and on the remaining tail one uses the exponential
bound and $r\leq\coth s$; the differentiated moving-endpoint terms obey
the same estimates.  This gives,
uniformly for $s,w\geq0$ near the origin,
\begin{equation}\label{sm:eq:Lendpointjet}
 \widetilde L(s,w)
 =u_0-\frac{3s+w}{5\sqrt2}
  +u_0\left(\frac{s^2}{2}+\frac{sw}{3}+\frac{w^2}{3}\right)
  +\mathscr R_{3,1}.
\end{equation}
For completeness, the coefficients can be obtained without evaluating any
improper moment: $\widetilde L(s,0)=U(s)$ and
$\partial_w\widetilde L(s,0)=U'(s)/3$.  The recurrence
\eqref{sm:eq:momentrecurrence}, through the $M_2$ identity
\eqref{sm:eq:M12}, gives
\[
 \partial_{ww}\widetilde L(0,0)
 =-4\int_1^\infty r^2g(r)\,dr=\frac{2u_0}{3}.
\]
The moving-endpoint identity
\begin{equation}\label{sm:eq:Lcompatibility}
 \partial_{ss}L(s,w)-L(s,w)
 =-e^{-2w\coth s}g(\coth s)\csch^3s
\end{equation}
shows
$\partial_{ss}\widetilde L(0,w)=\widetilde L(0,w)$.

The Region C datum requires a joint limit.  Put
$P(s,a)=p_0(s,a)$, $\alpha=a/3$, and $\rho=s+\alpha$.  Its displayed
formula is equivalently
\begin{align}\label{sm:eq:Pendpointform}
 P(s,a)
 &=\frac{\sinh^3s}{\sinh^3\rho}U(\rho)
 +12e^{-s}\sinh^2\alpha
   \int_\rho^\infty
   \frac{\sinh^2(r-\alpha)U(r)}{\sinh^5r}\,dr\notag\\
 &\quad
 +6\sinh s\sinh\alpha
   \int_\rho^\infty
   \frac{\sinh(r-\alpha)
   \bigl(\cosh r-e^{2\alpha-r}\bigr)U(r)}{\sinh^5r}\,dr.
\end{align}
Set
\[
 P_2(s,a)=u_0-\frac{3s+4a}{5\sqrt2}
 +u_0\left(\frac{s^2}{2}+\frac{sa}{2}+\frac{a^2}{3}\right),
 \qquad \lambda=\frac\alpha\rho.
\]
The profile equation gives, for $0\leq k\leq2$,
\[
 \left|\partial_r^k\left(U(r)-u_0+\frac{3r}{5\sqrt2}
 -\frac{u_0r^2}{2}\right)\right|\leq Cr^{3-k}.
\]
Here is an explicit expansion of the two integrals, including the
finite part that contributes to the quadratic terms.  In addition to
$J$ and $K$ in \eqref{sm:eq:JK}, put
\[
 N_5(\rho)=\int_\rho^\infty\frac{U(r)}{\sinh^5r}\,dr,
 \qquad u_1=-\frac3{5\sqrt2}.
\]
Their Laurent expansions are
\begin{align}
 J(\rho)&=\frac{u_0}{2\rho^2}+\frac{u_1}{\rho}
                         +\frac{5u_0}{12}+O(\rho),\notag\\
 K(\rho)&=\frac{u_0}{3\rho^3}+\frac{u_1}{2\rho^2}
                         +\frac{u_0}{3\rho}+O(1+|\log\rho|),\notag\\
 N_5(\rho)&=\frac{u_0}{4\rho^4}+\frac{u_1}{3\rho^3}
                         -\frac{u_0}{6\rho^2}+O(\rho^{-1}).
 \label{sm:eq:momentLaurent}
\end{align}
These expansions may be differentiated three times, reducing the
remainder power by the number of derivatives.  The singular terms follow
by expanding the integrands at zero.  The finite part $5u_0/12$ in $J$
follows from \eqref{sm:eq:TUidentity}, which is an identity on $s>0$
and uses only the profile equation and the condition at infinity.

Writing $s_\alpha=\sinh\alpha$ and $c_\alpha=\cosh\alpha$, the two
integrals in \eqref{sm:eq:Pendpointform}, without their exterior
prefactors, are exactly
\begin{align*}
 I_1&=\cosh(2\alpha)J-\sinh(2\alpha)K+s_\alpha^2N_5,\\
 I_2&=(e^{2\alpha}c_\alpha+2e^\alpha s_\alpha^2)J
       -s_\alpha(e^{2\alpha}+2e^\alpha c_\alpha)K
       +2e^\alpha s_\alpha^2N_5,
\end{align*}
where the three moments are evaluated at $\rho$.  Substitute
\eqref{sm:eq:momentLaurent}, $s=(1-\lambda)\rho$ and
$\alpha=\lambda\rho$.  The coefficients of $1,\rho,\rho^2$ become,
respectively,
\[
 u_0,\qquad -\frac{3(1+3\lambda)}{5\sqrt2},\qquad
 \frac{u_0}{2}(1+\lambda+4\lambda^2).
\]
They are precisely the coefficients of $P_2((1-\lambda)\rho,3\lambda\rho)$.
All the coefficients multiplying $J,K,N_5$ vanish to orders at least
$2,3,4$, respectively.  Their omitted terms therefore give
\begin{equation}\label{sm:eq:Prescaledremainder}
 \left|\partial_\rho^i\partial_\lambda^j
  \{P((1-\lambda)\rho,3\lambda\rho)
             -P_2((1-\lambda)\rho,3\lambda\rho)\}\right|
 \leq C\rho^{3-i}(1+|\log\rho|),\qquad i+j\leq3,
\end{equation}
uniformly for $0\leq\lambda\leq1$.  Since
\[
 \partial_s=\partial_\rho-\frac\lambda\rho\partial_\lambda,
 \qquad
 \partial_a=\frac13\partial_\rho+
 \frac{1-\lambda}{3\rho}\partial_\lambda,
\]
\eqref{sm:eq:Prescaledremainder} implies
\[
 |\partial_s^i\partial_a^j(P-P_2)|
\leq C\rho^{3-i-j}(1+|\log\rho|),\qquad i+j\leq3,
\]
Thus, jointly at $(0,0)$,
\begin{equation}\label{sm:eq:Pendpointjet}
 P(s,a)
 =u_0-\frac{3s+4a}{5\sqrt2}
  +u_0\left(\frac{s^2}{2}+\frac{sa}{2}+\frac{a^2}{3}\right)
  +\mathscr R_{3,1}.
\end{equation}
The same coefficients are independently identified by the boundary
system.  Indeed,
\begin{equation}\label{sm:eq:TUidentity}
 \Top U=\cosh s\,U+\frac{\sinh s}{6}U'
       -\frac{\sinh s}{2\sqrt2},
\end{equation}
which follows from the profile ODE, boundedness of $U$, and the exponential integration weights.  At $a=0$, the first boundary
equation gives
\[
 P_a(s,0)=\frac12U'(s)-\frac1{2\sqrt2}.
\]
The regular limit of the same equation at $s=0$ is
$4P_s(0,a)=3P_a(0,a)$.  Together with $P(s,0)=U(s)$, these identities give
\[
 P_s(0,0)=-\frac3{5\sqrt2},\quad
 P_a(0,0)=-\frac4{5\sqrt2},\quad
 P_{ss}(0,0)=u_0,\quad
 P_{sa}(0,0)=\frac{u_0}{2},\quad
 P_{aa}(0,0)=\frac{2u_0}{3}.
\]
Here $P_{sa}$ follows by differentiating the first boundary identity at
$s=0$, and $P_{aa}$ by differentiating the second at $a=0$; these
operations are licensed by the preceding $\mathscr R_{3,1}$ estimate.

\emph{Step 2: propagation and the companion.}
The only possible loss of two derivatives in $\Top$ is explicit.  If
$\phi(s)=a_0+a_1s+a_2s^2/2+O(s^3)$, then
\begin{equation}\label{sm:eq:Tlogterm}
 \Top\phi(s)
 =a_0+2a_1s+O(s^2)
  -(a_2-a_0)s^2\log s.
\end{equation}
Thus the logarithmic Hessian term vanishes precisely when
$\phi''(0)=\phi(0)$.

The same cancellation controls $\Kop$.  If $d=p/2$ and $t=y+d$, its
kernel can be written exactly as
\begin{align}\label{sm:eq:Kendpointform}
 \Kop_{2d}[\phi](y)
 &=\frac{\sinh^2y}{\sinh^2t}\phi(t)
 +4\sinh y\sinh d\,\mathfrak J_\phi(t)\notag\\
 &\quad+6\sinh^2d
 \left(\cosh t\,\mathfrak J_\phi(t)
       -\sinh t\,\mathfrak M_\phi(t)\right),
\end{align}
where
\[
 \mathfrak J_\phi(t)=\int_t^\infty\frac{\phi(r)}{\sinh^3r}\,dr,
 \qquad
 \mathfrak M_\phi(t)=\int_t^\infty
       \frac{\cosh r\,\phi(r)}{\sinh^4r}\,dr.
\]
\medskip
\noindent\textbf{Endpoint propagation estimate for the explicit data.}
We use the estimate only for
\[
 \phi(s,\vartheta)=\widetilde L(s,\vartheta),\qquad
 \widetilde L(s,\vartheta)-\frac{1+e^{-2s}/3}{2\sqrt2},
 \qquad P(s,\vartheta).
\]
The preceding local calculations give
$\phi=P_2+\mathscr R_{3,m}$ with $m\leq1$; moreover their remainders
satisfy the same weighted bound through derivative order three in the
open parameter cone.
The tail hypotheses needed in addition to the local expansion are,
for every fixed small $\vartheta_0>0$ and $i+j\leq4$,
\begin{equation}\label{sm:eq:tailbounds}
 \sup_{s\geq1,\ 0\leq\vartheta\leq\vartheta_0}
 |\partial_s^i\partial_\vartheta^j\phi(s,\vartheta)|<\infty.
\end{equation}
For $\widetilde L$, formula \eqref{pc:eq:L} restricts its inner
variable to $[1,\coth1]$; differentiating its exponential inserts
only bounded powers of that variable, and the moving-endpoint terms
have the same bound.  For $P$, substitute $r=s+\vartheta/3+t$
in \eqref{pc:eq:P}: the denominators stay separated from zero,
and each required differentiated integrand has an integrable
exponential bound in $t$.  The required derivatives of $U$ are
bounded by \eqref{pc:eq:Uode}--\eqref{pc:eq:Uintegral}.
The elementary subtraction preserves these estimates.
The exact radial compatibility is
\[
 \phi_{ss}(0,\vartheta)=\phi(0,\vartheta)
\]
for every nearby $\vartheta$.  Then, jointly in $(y,p,\vartheta)$,
\[
 \Kop_p[\phi_\vartheta](y)=Q_2(y,p,\vartheta)+\mathscr R_{3,m+1},
 \qquad
 \Kop_p^*[\phi_\vartheta](y)=Q_2^*(y,p,\vartheta)+\mathscr R_{3,m+1}
\]
for quadratic polynomials $Q_2,Q_2^*$.  Moreover the propagated first
component satisfies
$\partial_{yy}\Kop_p[\phi](0)=\Kop_p[\phi](0)$.
The propagated remainders also satisfy the third-order bound
\begin{equation}\label{sm:eq:thirdremainder}
 |\partial^\beta\mathscr R_{3,m+1}|
       \leq C(1+|\log\delta|)^{m+1},\qquad |\beta|=3,
 \quad \delta=y+p/2+|\vartheta|_1>0.
\end{equation}

\emph{Proof.}
Use \eqref{sm:eq:Kendpointform}, split $\mathfrak J_\phi$ and
$\mathfrak M_\phi$ at $r=1$, and subtract the local Taylor terms
of the explicit data.  Bound \eqref{sm:eq:tailbounds} licenses
differentiation of the far parts, which contribute ordinary
Taylor coefficients and third-order remainders.  In the local parts set $r=\delta z$, where
$\delta=y+d+|\vartheta|_1$.  For the full local combination in
\eqref{sm:eq:Kendpointform}, including its hyperbolic prefactors,
differentiation of total order $k\leq3$ gives
\[
 C\delta^{3-k}(1+|\log\delta|)^{m+1}.
\]
For uniformity when $y+d$ is smaller than $|\vartheta|_1$, first
subtract the radial Taylor terms at $r=0$ with the parameter fixed,
and split the local integral also at $r=|\vartheta|_1$.  The third-order
bound for the explicit data controls this subtraction on the lower
interval; on the upper interval $r+|\vartheta|_1\leq2r$ gives the same
weighted estimate directly.  This proves the estimate also when the
ratios of the small parameters tend to zero.
The extra logarithm can occur only in $\mathfrak M_\phi$, whose singular
weight is $r^{-4}$; $\mathfrak J_\phi$ has weight $r^{-3}$ and does not
increase the logarithmic power.  For the quadratic part, the only
potential logarithmic term of order two is proportional to
$\phi_{ss}(0,\vartheta)-\phi(0,\vartheta)$, with a homogeneous
quadratic prefactor in $(y,d)$ multiplying $\log t$.
It vanishes by compatibility.  Direct differentiation of the same kernel gives
$(\Kop_p\phi)_{yy}(0)=\Kop_p\phi(0)$.  Finally apply \eqref{sm:eq:Tdef}.
Its only order-two logarithm is
$-(\psi_{yy}(0)-\psi(0))y^2\log y$, hence it vanishes for the propagated
first component; its $r^{-3}$ weight does not further increase $m$.
This proves the asserted estimate for the three displayed families.
In particular these estimates control the differentiated remainders;
they are stronger than a Taylor expansion of function values alone.

The compatibility required here is uniform in every nearby data parameter.
It holds for $\widetilde L$ for every $w\geq0$ by
\eqref{sm:eq:Lcompatibility}, for
$\Psi=\widetilde L-(1+e^{-2s}/3)/(2\sqrt2)$ by direct differentiation,
and for $p_0$ because its reduced ODE reads
\[
 P_{ss}-P=\frac{\sinh^3s}{\sinh^3(s+a/3)}
          (U''-U)(s+a/3),
\]
For each $a>0$ its right-hand side vanishes at $s=0$ because of the
$\sinh^3s$ factor; at $a=0$ the continuous limit is
$U''(0)-U(0)=0$ by \eqref{sm:eq:Uandgtails}.
Thus $\widetilde L$ and $p_0$ start in
$P_2+\mathscr R_{3,1}$ (the elementary subtraction defining $\Psi$ is
analytic), and one propagation produces at worst a
$\mathscr R_{3,2}$ remainder.

\Needspace{6\baselineskip}
\emph{Step 3: the collapsing hyperbolic weights.}
Let a propagated pair satisfy
\begin{align*}
 F_y-DF&=Q(F,R)+\gamma,\\
 R_y&=P(F,R),\qquad F(0)=R(0)=A,
\end{align*}
where $D$ differentiates only tangential parameters and $\gamma$ is
independent of $y$.  Its regular Frobenius jet is
\begin{equation}\label{sm:eq:regularpairjet}
 F_y(0)=\frac{DA+\gamma}{3}=:f_1,\qquad
 R_y(0)=2f_1,\qquad
 F_{yy}(0)=A,\qquad R_{yy}(0)=A+Df_1.
\end{equation}
This follows by inserting Taylor expansions in the two equations and
equating the coefficients of $y^{-1},1,y$.

Using \eqref{sm:eq:regularpairjet} in
\begin{equation}\label{sm:eq:collapsedinterpolation}
 \frac{\sinh b\,F(a+b,\theta-a)
       +\sinh a\,R(a+b,\theta-a)}{\sinh(a+b)}
\end{equation}
shows that its polynomial terms through degree three are divisible by
$a+b$.  The numerator remainder has an additional factor of order
$a+b$ from its hyperbolic weights.  When differentiating the quotient,
use $R(0,\theta)=F(0,\theta)$ to write
\[
 R(y,\theta)-F(y,\theta)
 =y\int_0^1(R_y-F_y)(ty,\theta)\,dt.
\]
The regular pair identities give the polynomial part of this expression;
the remainder estimates through order three in
\eqref{sm:eq:thirdremainder} control its derivatives through order two.
Thus division preserves the $\mathscr R_{3,m}$ estimate, uniformly
also when $a+b$ is smaller than a tangential parameter.  Hence
\eqref{sm:eq:collapsedinterpolation} has a $C^2$ extension at $a=b=0$.
If some tangential gap remains positive, the functions are analytic within
each formula region and the same conclusion is simply Hadamard division:
the numerator vanishes when $a+b=0$ because $F(0)=R(0)$.  At a point that
also lies on an internal interface, the regional two-jets are identified
by the interface calculations above.

\medskip
\noindent\emph{Identification of the collapsing variables.}
For the face $q_2=q_3=0$ (Regions $A$ and $C$), put
\[
 a=\sqrt2q_2,\qquad b=\sqrt2q_3,\qquad
 x=\sqrt2q_1,\qquad \theta=\sqrt2q_4,
\]
so that $y=a+b$ and $z=\theta-a$.  In Region $A$ take
$(F,R)=(f,r_1)$, and in Region $C$ take $(F,R)=(p,r_3)$.  In either case
\[
 v=\frac{\sinh b\,F(x,a+b,\theta-a)
              +\sinh a\,R(x,a+b,\theta-a)}{\sinh(a+b)}.
\]
The governing system is
\[
 F_y-(\partial_x+2\partial_z)F=Q(F,R),\qquad R_y=P(F,R),
\]
so in \eqref{sm:eq:regularpairjet} one has
$D=\partial_x+2\partial_z$ and $\gamma=0$.

For the face $q_3=q_4=0$ (Region $B$), put
\[
 a=\sqrt2q_4,\qquad b=\sqrt2q_3,\qquad
 x=\sqrt2q_1,\qquad \theta=\sqrt2q_2,
\]
again giving $y=a+b$ and $z=\theta-a$.  With
\[
 A_0(y)=\frac{1+e^{-2y}/3}{2\sqrt2},\qquad
 B_0(y)=\frac{2e^{-y}}{3\sqrt2},\qquad
 (F,R)=(h+A_0,r_2+B_0),
\]
one has
\[
 v=\frac{\sinh b\,F(x,a+b,\theta-a)
              +\sinh a\,R(x,a+b,\theta-a)}{\sinh(a+b)}.
\]
Here
\[
 F_y-2F_z=Q(F,R)-\frac1{\sqrt2},\qquad R_y=P(F,R),
\]
and hence $D=2\partial_z$ and $\gamma=-1/\sqrt2$.  These are precisely
the two instances of \eqref{sm:eq:collapsedinterpolation}.

\medskip
\noindent\emph{The non-full endpoint of the $A/B$ interface.}
It remains to justify the endpoint
$q_2=q_3=q_4=0$ with $q_1>0$, where the estimate
\eqref{sm:eq:ABcubic} is not automatically uniform in the variables used in
its interior proof.  Set
\[
 X=\sqrt2q_1,
 \quad \alpha=\sqrt2q_2,
 \quad \beta=\sqrt2q_3,
 \quad \gamma=\sqrt2q_4,
 \quad m=\frac{\alpha+\gamma}{2},
 \quad \delta=\frac{\gamma-\alpha}{2},
 \quad t=\beta+m,
\]
and write $H_\lambda(r)=\cosh\lambda-r\sinh\lambda$ and
$c=1/\sqrt2$.  Elementary hyperbolic identities reduce the part not
containing $L$ in both regions to the single expression
\[
 c\arctan(e^{-t})\cosh t\cosh\delta\cosh m
 +c\operatorname{arctanh}(e^{-t})\sinh t\sinh\delta\sinh m
 -\frac c2\sinh\gamma .
\]
Its only non-analytic term is
$O(t^3(1+|\log t|))$, because $|\delta|\leq m\leq t$, and its
derivatives through order two tend to zero.

Fubini's theorem applied to the $L$-terms gives exactly
\begin{align*}
 V_{L,A}
 &=-\int_1^{\coth t}H_t(r)H_\delta(r)H_m(r)
       e^{-2(X-\delta)r}g(r)\,dr,
 &&\delta\geq0,\\
 V_{L,B}
 &=-\int_1^{\coth t}H_t(r)H_{-\delta}(r)H_m(r)
       e^{-2Xr}g(r)\,dr,
 &&\delta\leq0.
\end{align*}
When $m,\beta,X$ are held fixed, the only distinct normal factors are
\[
 e^{2\delta r}H_\delta(r)\quad\hbox{and}\quad H_{-\delta}(r),
\]
and their value, first derivative, and second derivative at $\delta=0$
are respectively
\[
 (1,r,1)\quad\hbox{and}\quad(1,r,1).
\]
Thus the two integral formulas have the same normal two-jet.  Tangential
and mixed derivatives are also legitimate: two differentiations insert
at most two powers of $r$, while
$g(r)=-(14\sqrt2)^{-1}r^{-4}+O(r^{-6})$, so $r^2g(r)$ is integrable.
Moreover $H_t(\coth t)=0$ kills the first moving-endpoint term, and the
only second-order endpoint contribution is $O(t)$ as $t\downarrow0$.
Consequently the complete $A$ and $B$ two-jets agree at
$q_2=q_3=q_4=0$, for every fixed $q_1>0$.

\medskip
\noindent\emph{The non-full endpoint of the $A/C$ interface.}
Now fix $x=\sqrt2q_1>0$ and consider
$q_2=q_3=0$, $q_4=2q_1$.  Use
\[
 \eta=\sqrt2q_2,\qquad y=\sqrt2(q_2+q_3),\qquad
 \sigma=\sqrt2(q_4-q_2-2q_1).
\]
Thus $\sigma=0$ is the interface.  With
\[
 B_\sigma=\Kop_\sigma[\widetilde L(\cdot,-\sigma/2)],
 \qquad P_\sigma=p_0(\cdot,\sigma),
\]
the data calculation in the preceding section gives
\[
 P_\sigma-B_\sigma=\sigma^3E_0,
\]
where $E_0$ is $C^2$, uniformly on compact subsets of the positive data
variable.  Equivalently, if $\tau$ denotes tangential variables, then
\[
 \left|\partial_\sigma^i\partial_\tau^\beta
 (P_\sigma-B_\sigma)\right|
 \leq C|\sigma|^{3-i},\qquad i+|\beta|\leq2.
\]
After propagation the two first components are
\[
 F_A=\Kop_{2x}B_\sigma,\qquad F_C=\Kop_{2x}P_\sigma.
\]
Since $x>0$, the kernel $\Kop_{2x}$ samples its datum only at arguments
$r\geq x+y\geq x$.  Hence the $O(\sigma^3)$ data contact is uniform as
$y\downarrow0$, and it is preserved by both $\Kop_{2x}$ and its companion
$\Kop_{2x}^*=\Top\Kop_{2x}$.

Writing the propagated differences as
$F_A-F_C=\sigma^3\widehat F$ and
$R_A-R_C=\sigma^3\widehat R$, one has
$\widehat F(0)=\widehat R(0)$.  Therefore
\[
 v_A-v_C
 =\sigma^3
 \frac{\sinh(y-\eta)\widehat F(y,\eta,\sigma)
       +\sinh\eta\widehat R(y,\eta,\sigma)}{\sinh y}
\]
extends smoothly to $y=\eta=0$ by Hadamard division.  In particular,
$v_A-v_C=\sigma^3\widetilde E$ with $\widetilde E\in C^2$ there, and
\[
 \partial_\sigma^i\partial_\tau^\beta(v_A-v_C)
 =O(|\sigma|^{3-i}),\qquad i+|\beta|\leq2.
\]
Thus the $A/C$ two-jets agree at this endpoint.  The remaining case $x=0$
is the full collision and is covered by \eqref{sm:eq:fullcollisionjet}.

For clarity, we record the substitution which produces the matrix in
\eqref{sm:eq:fullcollisionjet}.  Use Region $C$ and set
\[
 X=\sqrt2q_1,\qquad A=\sqrt2q_2,\qquad C=\sqrt2q_3,
 \qquad D_4=\sqrt2q_4,\qquad y=A+C,\qquad z=D_4-A.
\]
Write $p=p(x,y,z)$ and $r=\Top p$.  From \eqref{sm:eq:Pendpointjet},
\[
 p_x(0,y,z)=\frac13P_y(y,z),\qquad
 p_x=\Kop_{2x}\left[\frac13P_y(\cdot,z-2x)\right],
\]
and the regular pair identities, one obtains at $(x,y,z)=(0,0,0)$
\begin{align*}
 \nabla p&=-\frac1{5\sqrt2}(1,3,4),&
 D^2p&=u_0
 \begin{pmatrix}
  \frac23&\frac13&\frac16\\
  \frac13&1&\frac12\\
  \frac16&\frac12&\frac23
 \end{pmatrix},\\[2mm]
 \nabla r&=-\frac1{5\sqrt2}(1,6,4),&
 D^2r&=u_0
 \begin{pmatrix}
  \frac23&\frac23&\frac16\\
  \frac23&\frac73&1\\
  \frac16&1&\frac23
 \end{pmatrix}.
\end{align*}
For example, the only entry in the first matrix not obtained immediately
by differentiating $p_x=P_y/3$ is
\[
 p_{xx}(0)=\frac23\bigl(\Aop P_y(0)-P_{yz}(0)\bigr)
 =\frac23\left(\frac32u_0-\frac12u_0\right)=\frac23u_0,
\]
where $\Aop\phi(0)=3\phi_y(0)/2$ for a regular companion pair.  Since
$r=p$ at $y=0$, their tangential derivatives agree; the remaining entries
follow from \eqref{sm:eq:regularpairjet}.

Expanding the Region $C$ interpolation now gives, uniformly with a
$\mathscr R_{3,2}$ remainder,
\begin{align*}
 v={}&p(X,0,z)+(C+2A)p_y(X,0,z)\\
 &+\frac{A+C}{2}\{C p_{yy}(0)+A r_{yy}(0)\}
   -\frac{u_0}{2}AC+\mathscr R_{3,2}.
\end{align*}
The last displayed quadratic term uses
$(\sinh C+\sinh A)/\sinh(A+C)=1-AC/2+O((A+C)^4)$.
Substitution of the two derivative matrices yields
\begin{align*}
 v={}&u_0-\frac{X+2A+3C+4D_4}{5\sqrt2}\\
 &+u_0\bigg\{
 \frac{X^2}{3}+\frac{A^2}{2}+\frac{C^2}{2}+\frac{D_4^2}{3}
 +\frac{XA}{2}+\frac{XC}{3}+\frac{XD_4}{6}\\
 &\hspace{35mm}
 +\frac{2AC}{3}+\frac{AD_4}{3}+\frac{CD_4}{2}\bigg\}
 +\mathscr R_{3,2}.
\end{align*}
Replacing $(X,A,C,D_4)$ by $\sqrt2(q_1,q_2,q_3,q_4)$ gives precisely
\eqref{sm:eq:fullcollisionjet} and the derivative bound
\eqref{sm:eq:E3class} with $m=2$.

The same quadratic form is obtained directly from the Region $A$ kernel.
Indeed, put
\[
 \alpha=\frac{q_2+2q_3+q_4}{\sqrt2},\quad
 \delta=\frac{q_4-q_2}{\sqrt2},\quad
 \mu=\frac{q_4+q_2}{\sqrt2},\quad
 \lambda=\sqrt2q_1-\delta.
\]
The three convergent moments needed for its two-jet are
\[
 \int_1^\infty g(r)\,dr=\frac{\pi}{4\sqrt2}-u_0,
 \quad \int_1^\infty rg(r)\,dr=-\frac1{10\sqrt2},
 \quad \int_1^\infty r^2g(r)\,dr=-\frac{u_0}{6}.
\]
They follow by setting $s=w=0$ in \eqref{sm:eq:Lendpointjet} and its
first two $w$-derivatives.  Expansion of
$-e^{-2\lambda r}H_\alpha(r)H_\delta(r)H_\mu(r)$, together with the
elementary four-expert term, gives the quadratic part
\[
 \frac{u_0}{2}(\alpha^2+\delta^2+\mu^2)
 +\frac{u_0}{6}\{\alpha\delta+\alpha\mu+\delta\mu
                 +2\lambda(\alpha+\delta+\mu)+2\lambda^2\},
\]
which is $u_0q^{\mathsf T}Bq/2$.
For Region $B$, replace $\delta$ by $-\delta$ and $\lambda$ by
$\sqrt2q_1$ in its integral term; the resulting polynomial is identical.
The remainder estimate follows by splitting the integral at the reciprocal
of $|q|_1$: the third moment has logarithmic growth, and all second-order
moving-endpoint terms are $O(|q|_1)$.  Thus the direct $A/B$ calculation
also gives the same full-collision Hessian.

Away from these collapsing sets, ordinary differentiation under the
integrals is valid.  Finally, \eqref{sm:eq:ABcubic} and \eqref{sm:eq:ACcubic}
identify the one-sided two-jets on the two internal interfaces; at their
intersection the agreement follows by transitivity.  Therefore all
derivatives through order two extend continuously to the entire closed
orthant, proving the proposition.
\end{proof}

\subsection{Ordering walls}

\begin{lemma}[Symmetric $C^2$ extension]\label{sm:lem:ordering}
Let
\[
 \overline{\mathcal W}_5=\{x_1\leq\cdots\leq x_5\}
\]
and suppose $\mathcal U\in C^2(\overline{\mathcal W}_5)$, with one-sided derivatives
through order two continuous at every corner.  If
\begin{equation}\label{sm:eq:smoothfitx}
 (\partial_i-\partial_{i+1})\mathcal U=0
 \quad\hbox{on }\{x_i=x_{i+1}\},\qquad i=1,\ldots,4,
\end{equation}
then
\[
 u(x)=\mathcal U(x^{(1)},\ldots,x^{(5)})
\]
belongs to $C^2(\R^5)$.
\end{lemma}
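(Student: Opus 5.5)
The plan is to show that the symmetrized function $u$ is $C^1$ and that its Hessian extends continuously across every ordering wall. In the interior of each closed chamber $\sigma(\overline{\mathcal W}_5)$, $u=\mathcal U\circ\pi_\sigma$, where $\pi_\sigma$ is the linear coordinate permutation. So $u$ restricted to each closed chamber is $C^2$ up to the boundary, with one-sided derivatives given by permuting those of $\mathcal U$. Since the chambers cover $\R^5$ and $u$ is continuous (the chamber formulas agree on common walls because both equal $\mathcal U$ at the sorted point), it suffices to show the following. At each point $x$, all one-sided $1$- and $2$-jets coming from the chambers containing $x$ coincide. A piecewise-$C^2$ function on a finite polyhedral decomposition whose one-sided $2$-jets agree on all shared faces is $C^2$. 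To justify this, integrate derivatives along segments: first $Du$ is continuous and $u$ is differentiable with that derivative, then the same argument applies to $Du$.

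Compatibility of the $1$-jets is direct. Two adjacent chambers differ by the transposition $\tau=(i\,i{+}1)$ acting on the sorted coordinates. On the wall $x^{(i)}=x^{(i+1)}$, the gradient from the reflected chamber is $\tau\nabla\mathcal U$. It equals $\nabla\mathcal U$ exactly when $\partial_i\mathcal U=\partial_{i+1}\mathcal U$, which is hypothesis \eqref{sm:eq:smoothfitx}. For the $2$-jet on the same wall, the reflected Hessian is $\tau D^2\mathcal U\,\tau$. Its agreement with $D^2\mathcal U$ requires:
\begin{itemize}
\item $\partial_{ij}\mathcal U=\partial_{i+1,j}\mathcal U$ for $j\ne i,i+1$;
\item $\partial_{ii}\mathcal U=\partial_{i+1,i+1}\mathcal U$.
\end{itemize}
The first follows by differentiating the identity $(\partial_i-\partial_{i+1})\mathcal U=0$ tangentially along the wall in the direction $e_j$, or along $e_i+e_{i+1}$ after combining. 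Tangential differentiation is legitimate because $\mathcal U$ is $C^2$ up to the boundary, so $(\partial_i-\partial_{i+1})\mathcal U$ is a $C^1$ function vanishing on the hyperplane piece, and its tangential derivatives vanish there (at relative interior points, then by continuity everywhere). Tangent directions to $\{x_i=x_{i+1}\}$ are $e_j$ for $j\ne i,i+1$, together with $e_i+e_{i+1}$. The latter gives $\partial_{ii}\mathcal U-\partial_{i+1,i+1}\mathcal U=0$, using symmetry of mixed partials. This yields exactly the required equalities.

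At points lying on several walls (the corners), the chambers containing $x$ are connected by chains of adjacent chambers that all contain $x$. Each adjacency step shares a codimension-one face containing $x$ in its closure. By continuity of the one-sided jets at corners, assumed in the lemma, the equalities above hold at $x$ itself. Transitivity along the chain, using that the Weyl chambers around a stratum are connected via adjacent transpositions of the tied block, gives agreement of all $2$-jets at $x$.

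The main obstacle is the corner case. There the wall identities must be applied at points where several walls meet and $\mathcal U$ is only known to have continuous one-sided derivatives, not to be smooth. My first approach would be to argue entirely by continuity. Prove all jet identities on relative interiors of codimension-one walls, pass to limits at corners using the assumed continuity of the second derivatives, and then use the chain of transpositions within the stabilizer of $x$. The final gluing step, that agreement of $2$-jets on a polyhedral cover implies $C^2$, I expect to be routine via the fundamental theorem of calculus along segments crossing finitely many faces.
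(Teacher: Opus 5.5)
Your proposal is correct and follows the paper's proof essentially step for step. Gradient matching is the hypothesis; Hessian matching across $x_i=x_{i+1}$ comes from differentiating $(\partial_i-\partial_{i+1})\mathcal U=0$ tangentially in $e_j$ ($j\neq i,i+1$) and in $e_i+e_{i+1}$, using Schwarz symmetry; and multiple collisions are handled because the adjacent transpositions inside each tied block generate that block's permutation group. Your explicit gluing step, that matching one-sided $2$-jets on a finite convex polyhedral cover imply $C^2$ via the fundamental theorem of calculus along segments, is a detail the paper leaves implicit, and it is correct.
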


\begin{proof}
Across the wall $x_i=x_{i+1}$, the two formulas are $\mathcal U$ and
$\mathcal U\circ s_i$,
where $s_i$ swaps the two coordinates.  Condition \eqref{sm:eq:smoothfitx}
is exactly gradient matching.  Put
$F_i=\mathcal U_i-\mathcal U_{i+1}$.  Since $F_i=0$ on
the wall, tangential differentiation in $e_k$, $k\neq i,i+1$, gives
$\mathcal U_{ik}=\mathcal U_{i+1,k}$.  Differentiation along the common wall parameter
$e_i+e_{i+1}$ gives
\[
 \mathcal U_{ii}+\mathcal U_{i,i+1}
 =\mathcal U_{i+1,i}+\mathcal U_{i+1,i+1}.
\]
Schwarz symmetry cancels the middle terms, hence
$\mathcal U_{ii}=\mathcal U_{i+1,i+1}$.  Therefore
\[
 D^2\mathcal U=s_iD^2\mathcal U s_i
\]
on the wall, which is precisely Hessian matching.

At a multiple collision, the adjacent swaps inside each equal block generate
the full permutation group of that block.  The gradient and Hessian two-jet
is therefore independent of the order chosen inside the block.  No new
codimension-two or higher matching condition is required.
\end{proof}

\begin{remark}
The hypothesis $\mathcal U\in C^2(\overline{\mathcal W}_5)$ is crucial in the
abstract lemma; Proposition~\ref{sm:prop:closedchamber} verifies it for the
present value function.  If one assumes only
\[
 \mathcal U\in C^2(\mathcal W_5)
 =C^2\!\left(\{x_1<\cdots<x_5\}\right),
\]
then the first-order compatibility conditions
\eqref{sm:eq:smoothfitx} do not imply continuity of the second derivatives
up to, or across, the ordering walls.  The proof above uses the existence
and continuity of the one-sided second-order traces on the closed chamber
when it differentiates the smooth-fit identity tangentially along a wall.
\end{remark}

For $\mathcal U=x_5+v(q)$, the chamber derivatives are
\[
 \mathcal U_1=-v_1,\quad \mathcal U_2=v_1-v_2,\quad
 \mathcal U_3=v_2-v_3,\quad
 \mathcal U_4=v_3-v_4,\quad \mathcal U_5=1+v_4.
\]
Thus \eqref{sm:eq:smoothfitx} is equivalent to the four identities
\begin{align}
 2v_1-v_2&=0 &&\text{on }q_1=0,\label{sm:eq:wall1}\\
 v_1-2v_2+v_3&=0 &&\text{on }q_2=0,\label{sm:eq:wall2}\\
 v_2-2v_3+v_4&=0 &&\text{on }q_3=0,\label{sm:eq:wall3}\\
 v_3-2v_4-1&=0 &&\text{on }q_4=0.\label{sm:eq:wall4}
\end{align}
The identities \eqref{sm:eq:wall2}--\eqref{sm:eq:wall4} are exactly
the corresponding reflected boundary equations
\eqref{pc:eq:reflections}, after multiplication by two and adjustment
of signs.  They first hold on relative interiors of the faces and
extend to their intersections by
Proposition~\ref{sm:prop:closedchamber}.  The first identity requires a
separate verification; it is not a consequence of the other three at a
generic point of $q_1=0$.

Here is a direct verification of \eqref{sm:eq:wall1}.  In Region $B$, the
kernel representation is
\[
 H(x,y,z)=\Kop_z[\widetilde L(\cdot,x)-A_0](y)+A_0(y),
 \qquad S=\Top(H-A_0)+B_0.
\]
At $x=0$, \eqref{sm:eq:diagonalidentity}, \eqref{sm:eq:Ljets}, and the elementary
pair $(A_0,B_0)$ give
\[
 \Aop[\widetilde L(\cdot,0)-A_0]
 =2\,\partial_x\widetilde L(\cdot,0).
\]
Consequently $H_z=2H_x$ and $S_z=2S_x$ on $x=0$, and the Region $B$
formula gives \eqref{sm:eq:wall1}, since there
$\partial_{q_1}=\sqrt2\partial_x$ and
$\partial_{q_2}=\sqrt2\partial_z$.

For Region $C$, write at $x=0$
\[
 P(y,z)=p(0,y,z),\qquad R(y,z)=\Top P(y,z).
\]
The boundary hyperbolic system is
\begin{align*}
 P_y-3P_z&=3\coth y\,P-3\csch y\,R,\\
 R_y&=-2\csch y\,P+2\coth y\,R.
\end{align*}
Because $p(x,y,z)=\Kop_{2x}P(\cdot,z-2x)(y)$,
\[
 p_x(0,y,z)=\frac13P_y,\qquad
 r_x(0,y,z)=\frac13\Top(P_y).
\]
The elementary operator identity
\begin{equation}\label{sm:eq:Tderivative}
 \Top(\phi')=(\Top\phi)'
 +\Top\bigl(\coth y\,\phi-\csch y\,\Top\phi\bigr)
\end{equation}
and the boundary system imply
\[
 R_y-R_z=\frac23\Top(P_y).
\]
\Needspace{9\baselineskip}
If
\[
 \lambda=\frac{\sinh(y-\eta)}{\sinh y},\qquad
 \mu=\frac{\sinh\eta}{\sinh y},
\]
the Region $C$ boundary value is $\lambda P+\mu R$.  A direct product-rule
calculation, using
\[
 \partial_{q_1}=\sqrt2\partial_x,\qquad
 \partial_{q_2}=\sqrt2(\partial_\eta+\partial_y-\partial_z)
 \quad\text{on }q_1=0,
\]
now gives
\[
 2v_1-v_2=0.
\]
Thus all four ordering-wall identities hold.

\subsection{Conclusion}

\begin{proof}[Proof of Theorem~\ref{sm:thm:globalC2}]
The $A/B$ and $A/C$ calculations give cubic contact in the respective
normal variables, so the values, gradients, and Hessians of the regional
formulas agree on both codimension-one interfaces.  The closures of
Regions $B$ and $C$ meet only at their common junction with Region $A$;
there the same conclusion follows by transitivity.

Proposition~\ref{sm:prop:closedchamber} proves that all derivatives through order two have continuous one-sided
limits on every collision stratum, including the full collision.  Hence
$v\in C^2(\overline{\mathbb R_+^4})$ and
$\mathcal U\in C^2(\overline{\mathcal W}_5)$.

Finally, the four identities \eqref{sm:eq:wall1}--\eqref{sm:eq:wall4} are
exactly the adjacent-wall smooth-fit conditions for $\mathcal U$.
Lemma~\ref{sm:lem:ordering} therefore gives $u\in C^2(\mathbb R^5)$.
\end{proof}

\section{Hyperbolic comparison through the companion kernel}
\label{paper:sec:comparison}
The verification uses the positivity of the companion kernel.
We first treat a known nonnegative companion, then give a single
comparison argument for both coupled propagation systems.
\begin{lemma}[Comparison with a nonnegative companion]
\label{paper:lem:onecomponent}
Let $m>0$.  Suppose $F,R$ are continuous for $y,z\geq0$ and are
$C^1$ where $y>0$.  Assume $R\geq0$ and
\[
(\partial_y-m\partial_z)F
\leq m\coth y\,F-m\csch y\,R,
\qquad y>0,\ z>0.
\]
If $F(y,0)\geq0$ for all $y\geq0$, then $F\geq0$ throughout
the quadrant.  No separate sign assumption at $y=0$ is needed.
\end{lemma}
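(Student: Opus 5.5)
We will integrate the differential inequality along the characteristics of $\partial_y-m\partial_z$, in the same way the curvature transport \eqref{pc:eq:curvaturetransport} was obtained. Since $R\geq0$ and $\csch y>0$, the inequality gives
\[
(\partial_y-m\partial_z)F\leq m\coth y\,F .
\]
The integrating factor $\sinh^{-m}y$ removes the zeroth-order term:
\[
(\partial_y-m\partial_z)\bigl(F/\sinh^my\bigr)\leq0 .
\]
Fix a point $(y_0,z_0)$ with $y_0>0$, $z_0>0$. We follow the backward characteristic $\tau\mapsto(y(\tau),z(\tau))$ with $y(\tau)=y_0+\tau$ and $z(\tau)=z_0-m\tau$, for $\tau\in[0,z_0/m]$. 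Along this segment $y$ increases and stays at least $y_0>0$, so the curve never meets the degenerate face $y=0$. It reaches the initial line $z=0$ at the point $(y_0+z_0/m,0)$.

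Set $G(\tau)=F(y(\tau),z(\tau))/\sinh^m y(\tau)$. By the chain rule, $G'(\tau)=\bigl[(\partial_y-m\partial_z)(F/\sinh^m y)\bigr](y(\tau),z(\tau))$, and this is $\leq0$ for $\tau\in(0,z_0/m)$. Here we use that $F$ is $C^1$ where $y>0$ and that the curve stays in $y\geq y_0$. At the endpoint $\tau=z_0/m$ we only need continuity of $F$ up to $z=0$, which is assumed. Hence $G$ is nonincreasing, and
\[
F(y_0,z_0)=\sinh^m y_0\,G(0)\geq\sinh^m y_0\,G(z_0/m)
=\frac{\sinh^m y_0}{\sinh^m(y_0+z_0/m)}\,F(y_0+z_0/m,0)\geq0 .
\]
This gives $F\geq0$ on the open quadrant.

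The boundary points are handled by continuity. For $z=0$ the conclusion is the hypothesis. For $y=0$, we have $F(0,z_0)=\lim_{y\downarrow0}F(y,z_0)\geq0$ by continuity of $F$ on the closed quadrant. This is why no separate sign assumption at $y=0$ is needed: the characteristics move away from $y=0$ backward in $z$, so that face never acts as an inflow boundary.

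The only delicate step is the regularity along the characteristic, since the inequality is assumed only for $z>0$ and $F$ is $C^1$ only for $y>0$. Because $y(\tau)\geq y_0>0$ on the whole segment, monotonicity of $G$ on the closed interval follows from the mean value theorem on the open interval together with continuity at its ends. No maximum-principle argument or growth condition at infinity is required, because each characteristic segment is compact.
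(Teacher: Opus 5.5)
Your proof is correct and is essentially the paper's argument: integrate along the backward characteristic $(y+t,z-mt)$ with the integrating factor $\sinh^{-m}$, reach $z=0$ without touching $y=0$, and recover the face $y=0$ by continuity. The only difference is that you discard the $R$-term at the outset. The paper keeps it to obtain the quantitative bound \eqref{paper:eq:onecomponentbound}, which it reuses in Lemma~\ref{ab:lem:two-two}, where the companion is not yet known to be nonnegative.
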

\begin{proof}
Fix $y>0,z\geq0$ and put $T=z/m$,
$f(t)=F(y+t,z-mt)$ for $0\leq t\leq T$.
Multiplication by the positive integrating factor gives
\[
\frac{d}{dt}\left(\frac{f(t)}{\sinh^m(y+t)}\right)
\leq-\frac{mR(y+t,z-mt)}{\sinh^{m+1}(y+t)}.
\]
Integrating from $0$ to $T$ yields the quantitative lower bound
\begin{equation}\label{paper:eq:onecomponentbound}
\frac{F(y,z)}{\sinh^m y}
\geq\frac{F(y+z/m,0)}{\sinh^m(y+z/m)}
+m\int_0^{z/m}
\frac{R(y+t,z-mt)}{\sinh^{m+1}(y+t)}\,dt\geq0.
\end{equation}
Continuity gives the conclusion also at $y=0$.
\end{proof}
There is a complementary implication.  If $F\geq0$ and
\[
R_y-2\coth y\,R+2\csch y\,F\leq0,
\qquad
\liminf_{b\to\infty}\frac{R(b,z)}{\sinh^2b}\geq0,
\]
then integration after multiplication by $\sinh^{-2}y$ gives
\[
R(y,z)\geq2\sinh^2y\int_y^\infty
\frac{F(r,z)}{\sinh^3r}\,dr\geq0,
\]
whenever the displayed integral converges.
Thus one component inequality suffices if the other component's sign
has already been obtained.  The specializations $m=2$ and $m=3$
are exactly the characteristic slopes in the two systems of the
construction.  The next lemma obtains both signs from a lower bound
by the positive companion operator, without prescribing a sign at $y=0$.
The same argument is compatible with averaging controls.  If two
controls have the same first trace and opposite errors in its
hyperbolic equation, their averaged equation has zero error.  A
nonnegative average companion then suffices in
Lemma~\ref{paper:lem:onecomponent}.  No sign assertion about either
error separately is required.
\begin{lemma}[Coupled kernel comparison]\label{ab:lem:two-two}
Let $m>0$ and $0\leq\theta\leq1$.  Suppose $F,G$ are continuous on
the closed quadrant, $F$ is $C^1$ where $y>0$, and $F$ is bounded
below.  Assume the following integral converges absolutely for $y>0$:
\[
(\mathcal TF)(y,z)
=2\sinh^2y\int_y^\infty\frac{F(r,z)}{\sinh^3r}\,dr.
\]
If
\[
F_y-mF_z\leq m\coth y\,F-m\csch y\,G,
\qquad G\geq\theta\mathcal TF,
\qquad F(y,0)\geq0,
\]
then $F,G\geq0$.  No boundary derivatives or sign condition at
$y=0$ are required.
\end{lemma}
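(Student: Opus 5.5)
The plan is to show first that $F\geq0$. The sign of $G$ then follows at once from $G\geq\theta\mathcal TF\geq0$, because $\mathcal T$ is a positive operator. To get $F\geq0$, I would integrate along characteristics exactly as in the proof of Lemma~\ref{paper:lem:onecomponent}. Fix $y>0$, $z\geq0$, set $T=z/m$ and $f(t)=F(y+t,z-mt)$. The integrating factor $\sinh^{-m}(y+t)$ and the hypothesis on $G$ then give
\[
\frac{F(y,z)}{\sinh^my}\geq\frac{F(y+z/m,0)}{\sinh^m(y+z/m)}
+m\theta\int_0^{z/m}\frac{(\mathcal TF)(y+t,z-mt)}{\sinh^{m+1}(y+t)}\,dt .
\]
This is a linear integral inequality for $F$ with a positive kernel: $\mathcal TF$ is a positive weighted average of $F$ at points $(r,z')$ with $r\geq y+t$ and $z'\leq z$. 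The first term on the right is nonnegative.

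The key step is to rule out a negative minimum of a suitably weighted $F$, using that $F$ is bounded below. I would argue by contradiction. Suppose $F<0$ somewhere. Restrict to a strip $0\leq z\leq Z$ and consider the weighted function $\Phi=F/\sinh^m y$, or more robustly $F$ itself. Let $-\mu<0$ be its infimum over the strip, noting that $\mu$ is finite because $F$ is bounded below. The trace on $z=0$ is nonnegative, so near-minimizers have $z>0$. Inserting $F\geq-\mu$ into the integral, and using the bound $0\leq\mathcal T1\leq1$ from \eqref{pc:eq:Tcontraction}, the right-hand side is at least
\[
-\mu\,m\theta\int_0^{z/m}\frac{\sinh^m y}{\sinh^{m+1}(y+t)}\,dt\;\geq\;-\mu\Bigl(1-\frac{\sinh^my}{\sinh^m(y+z/m)}\Bigr)
\]
after multiplying by $\sinh^my$. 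The inequality uses $\theta\leq1$ and $m\int_0^T\coth(y+t)\,\sinh^m y/\sinh^m(y+t)\,dt=1-\sinh^my/\sinh^m(y+T)$, together with $\csch\leq\coth$.

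This produces a strict improvement $F(y,z)\geq-\mu(1-\kappa)$, where $\kappa=\sinh^my/\sinh^m(y+z/m)>0$. The improvement is uniform as long as $y$ stays bounded away from $0$ and from infinity for a fixed strip. That contradicts $\mu$ being the infimum, unless near-minimizers escape to $y\to0$ or $y\to\infty$. Handling this escape is where I expect the main obstacle. For $y\to\infty$, I would use a weight: replace $F$ by $F+\varepsilon e^{\,y}$-type corrections, or work with $\Phi=F/\cosh^{k}$-type weights chosen so the comparison still closes. Alternatively, I would exploit the fact that along large $y$ the factor $\kappa\to e^{-z}$ stays positive for $z\leq Z$, which gives a uniform improvement anyway. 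For $y\downarrow0$, $\kappa\to0$, but continuity of $F$ up to $y=0$ is available. Applying the displayed bound at small positive $y$ and letting $y\downarrow0$ shows that $F(0,z)$ is controlled by values with $y>0$, so the infimum is approached at interior points, where the strict improvement applies. I would iterate this improvement on the strip, which gives $\mu\leq(1-\kappa_{\min})\mu$, hence $\mu=0$. Letting $Z\to\infty$ completes the argument. Continuity then extends $F\geq0$ and $G\geq0$ to $y=0$, with no boundary sign assumption used.
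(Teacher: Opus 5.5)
Your characteristic inequality and the estimate $F(y,z)\geq-\theta\mu(1-\kappa)$, with $\kappa=\sinh^my/\sinh^m(y+z/m)$, are correct. The contraction step, however, does not close as written.

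On the strip, $\kappa\to0$ as $y\downarrow0$. Indeed, the exact factor $m\sinh^my\int_0^{z/m}\sinh^{-(m+1)}(y+t)\,dt$ tends to $1$. So $\kappa_{\min}=0$, and $\mu\leq(1-\kappa_{\min})\mu$ is vacuous. Appealing to continuity at $y=0$ does not repair this. It only produces near-minimizers with small positive $y$, which are exactly the points where the improvement degenerates. Nothing in your argument forces the infimum to be approached at points with $y$ bounded below. For $\theta<1$, keeping $\theta$ gives $F\geq-\theta\mu$ uniformly, and that already closes the argument. For $\theta=1$ it gives nothing, and $\theta=1$ is the case used, for example, for the control $00001$ in Region C. Your proposed weights at $y\to\infty$ address a non-issue, since $\kappa$ is increasing in $y$.

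The missing idea is that $\{y\geq\varepsilon\}$ is closed under every dependency in your inequality:
\begin{itemize}
\item the backward characteristic from $(y,z)$ moves to larger $y$;
\item $(\mathcal TF)(r,z')$ samples $F(s,z')$ only for $s\geq r$.
\end{itemize}
So take the infimum $-\mu_\varepsilon$ only over $\{y\geq\varepsilon,\ 0\leq z\leq Z\}$. There $\kappa\geq(\sinh\varepsilon/\sinh(\varepsilon+Z/m))^m>0$, because $\kappa$ increases in $y$ and decreases in $z$. Your one-step contraction then gives $\mu_\varepsilon=0$. Finally let $Z\to\infty$ and $\varepsilon\downarrow0$, and use continuity at $y=0$.

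This is exactly the localization in the paper, which closes differently. With $n(z)=\sup_{r\geq\varepsilon}(-F(r,z))_+$, positivity of $\mathcal T$ and $\mathcal T1\leq1$ give $G\geq-n$ on $r\geq\varepsilon$. Integration along characteristics then gives the Volterra inequality $n(z)\leq\csch\varepsilon\int_0^z n(s)\,ds$, and Gr\"onwall in $z$ gives $n=0$. That route needs only the crude bound $\csch(y+t)\leq\csch\varepsilon$, rather than your exact identity for $1-\kappa$.
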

\begin{proof}
Fix $\varepsilon>0$ and put
\[
n(z)=\sup_{r\geq\varepsilon}(-F(r,z))_+.
\]
This is a bounded measurable function.  Positivity of $\mathcal T$
and $\mathcal T1\leq1$, proved in \eqref{pc:eq:Tcontraction}, give
$G(r,z)\geq-n(z)$ for $r\geq\varepsilon$.
The characteristic integration in
\eqref{paper:eq:onecomponentbound}, before imposing a sign on $G$,
therefore gives, for $y\geq\varepsilon$,
\[
F(y,z)\geq
-m\int_0^{z/m}
\left(\frac{\sinh y}{\sinh(y+t)}\right)^m
\frac{n(z-mt)}{\sinh(y+t)}\,dt
\geq-\csch\varepsilon\int_0^z n(s)\,ds.
\]
Taking negative parts and the supremum over $y\geq\varepsilon$
yields
\begin{equation}\label{ab:eq:negativepart}
n(z)\leq\csch\varepsilon\int_0^z n(s)\,ds.
\end{equation}
Gr\"onwall's inequality gives $n=0$.  Since $\varepsilon$ was
arbitrary, $F\geq0$ for $y>0$, and $G\geq\theta\mathcal TF\geq0$.
Continuity supplies both signs at $y=0$.
\end{proof}
In particular, the lemma applies with $\theta=1$ whenever
\[
G_y=2\coth y\,G-2\csch y\,F,
\qquad
\liminf_{b\to\infty}\frac{G(b,z)}{\sinh^2b}\geq0,
\]
because the integrating-factor calculation above gives
$G\geq\mathcal TF$.  This covers the homogeneous $2$--$2$ and
$3$--$2$ systems with the same proof.  The nonnegative lower limits
at infinity established for the residuals imply the required lower
bound for $F$.
For two controls with the same first trace, a signed diagonal
companion can also be allowed.  The following argument reduces its
sign to a scalar equation on the common edge.
\begin{lemma}[A paired boundary principle]\label{bo:lem:paired}
Let $F,R,S$ be continuous on the closed quadrant and continuously
differentiable locally up to $y=0$, away from the origin. Suppose that,
for $y,z>0$,
\begin{align}
F_y-2F_z&=2\coth y\,F-\csch y\,(R+S)+e,
& e&\leq0,\label{bo:eq:paired1}\\
S_y&=2\coth y\,S-2\csch y\,F,\label{bo:eq:paired2}
\end{align}
where $e$ has a finite nonpositive limit at $y=0$.
With $t=y+z/2$, assume the representations
\begin{align}
R(y,z)&=\frac{\sinh(z/2)}{\sinh t}F(0,2t)
+\frac{\sinh y}{\sinh t}r(t),\notag\\
S(y,z)&=\frac{\sinh(z/2)}{\sinh t}F(0,2t)
+\frac{\sinh y}{\sinh t}s(t),
\label{bo:eq:companions}
\end{align}
and the signs $r+s\geq0$, $2r+s\geq0$.
If
\[
F(y,0)\geq0,\qquad
\liminf_{z\to\infty}\frac{F(0,z)}{\sinh^3(z/2)}\geq0,
\]
then $F\geq0$ and $R+S\geq0$.
If also $r,s\geq0$, then $R,S\geq0$ individually.
\end{lemma}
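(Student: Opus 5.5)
The plan is to reduce everything to a sign for the edge trace $f(z):=F(0,z)$, and then conclude with Lemma~\ref{paper:lem:onecomponent}. The reason this suffices is that \eqref{bo:eq:companions} gives, with $t=y+z/2$,
\[
R+S=\frac{2\sinh(z/2)}{\sinh t}\,f(2t)+\frac{\sinh y}{\sinh t}\,(r+s)(t).
\]
Hence $f\geq0$ together with $r+s\geq0$ yields $R+S\geq0$. Writing \eqref{bo:eq:paired1} as $F_y-2F_z\leq 2\coth y\,F-2\csch y\,\tfrac12(R+S)$ (using $e\leq0$), Lemma~\ref{paper:lem:onecomponent} with $m=2$ and the nonnegative companion $\tfrac12(R+S)$ then gives $F\geq0$ from $F(y,0)\geq0$. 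If in addition $r,s\geq0$, each representation in \eqref{bo:eq:companions} separately has nonnegative terms, so $R,S\geq0$.

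To obtain the sign of $f$, I will derive a scalar differential inequality on the edge $y=0$. Since $t=z/2$ there, the representations give $R(0,z)=S(0,z)=f(z)$. For fixed $z>0$, I expand both equations as $y\downarrow0$, using that the functions are $C^1$ up to $y=0$ away from the origin. In \eqref{bo:eq:paired2}, the $1/y$ terms cancel because $S=F$ at $y=0$. Comparing the regular terms gives $S_y=2S_y-2F_y$, that is,
\[
S_y(0,z)=2F_y(0,z).
\]
In \eqref{bo:eq:paired1}, the singular term $(2F-R-S)/y$ cancels for the same reason. The regular part then reads
\[
-F_y-2F_z+R_y+S_y=e \qquad\text{at } y=0.
\]
Next I differentiate \eqref{bo:eq:companions} in $y$ at fixed $z$ and set $y=0$:
\[
R_y(0,z)=-\coth(z/2)\,f+2f'+\frac{r(z/2)}{\sinh(z/2)},\qquad
S_y(0,z)=-\coth(z/2)\,f+2f'+\frac{s(z/2)}{\sinh(z/2)}.
\]
Using $F_y=S_y/2$ to eliminate $F_y$, and $F_z(0,z)=f'$, the edge relation becomes
\[
f'(z)-\tfrac32\coth(z/2)\,f(z)
=e(0,z)-\frac{(2r+s)(z/2)}{2\sinh(z/2)}\leq0.
\]
Both hypotheses $e(0,\cdot)\leq0$ and $2r+s\geq0$ enter exactly here.

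The integrating factor $\sinh^{-3}(z/2)$ turns this into $\bigl(f/\sinh^3(z/2)\bigr)'\leq0$ on $(0,\infty)$. So $f/\sinh^3(z/2)$ is nonincreasing, and is therefore bounded below by its $\liminf$ at infinity, which is $\geq0$ by hypothesis. This gives $f\geq0$ for $z>0$, and $f(0)\geq0$ follows by continuity. The argument then finishes as in the first paragraph.

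The main obstacle is the boundary expansion. I must justify passing to $y=0$ in both equations, namely that $(2F-R-S)/y$ and $(S-F)/y$ converge to the corresponding $y$-derivatives. This should follow from $C^1$ regularity up to $y=0$ away from the origin, via the mean value theorem. I must also check that $e$ has its finite nonpositive limit there. The origin is avoided because the edge ODE is only needed on $z>0$.
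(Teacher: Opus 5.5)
Your proposal is correct and follows the paper's proof essentially step by step. You use the constant-term relations at $y=0$ (namely $S_y=2F_y$ and the regular part of the first equation), then differentiate the companion representations to obtain the edge equation $f'=\tfrac32\coth(z/2)\,f-\frac{(2r+s)(z/2)}{2\sinh(z/2)}+e(0,z)$, then apply the integrating factor $\sinh^{-3}(z/2)$ with the terminal $\liminf$, and finish with interpolation and Lemma~\ref{paper:lem:onecomponent} for $m=2$ with companion $(R+S)/2$, exactly as the paper does.
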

\begin{proof}
Put $f(z)=F(0,z)$ and $e_0(z)=e(0,z)$.
Taking constant terms at $y=0$ in the differential equations gives
\[
S_y(0,z)=2F_y(0,z),\qquad
R_y(0,z)+F_y(0,z)-2f'(z)=e_0(z).
\]
Differentiating \eqref{bo:eq:companions} at $y=0$ and eliminating
the normal derivatives yields
\begin{equation}\label{bo:eq:edge-ode}
f'(z)=\frac32\coth(z/2)f(z)
-\frac{2r(z/2)+s(z/2)}{2\sinh(z/2)}+e_0(z).
\end{equation}
Therefore
\[
\left(\frac{f(z)}{\sinh^3(z/2)}\right)'
=-\frac{2r(z/2)+s(z/2)}{2\sinh^4(z/2)}
+\frac{e_0(z)}{\sinh^3(z/2)}\leq0.
\]
The terminal condition gives $f\geq0$.
Interpolation and $r+s\geq0$ now give $R+S\geq0$.
Lemma~\ref{paper:lem:onecomponent}, with $m=2$ and companion
$(R+S)/2$, proves $F\geq0$.
When $r,s\geq0$, interpolation gives the individual companion signs.
\end{proof}

\section{Verification in regions A and B}
\label{sec:ab-verification}

Write $q_i=x_{i+1}-x_i$ for the consecutive gaps, and set
\[
 A=\{q\in[0,\infty)^4:q_4\ge q_2,\ 2q_1\ge q_4-q_2\},
 \qquad
 B=\{q\in[0,\infty)^4:q_2\ge q_4\}.
\]
For $w\in\{0,1\}^5$, let $d(w)_i=w_{i+1}-w_i$ and define
\[
 R_w(q)=v(q)-\frac12\bigl(d(w)\cdot\nabla_q\bigr)^2v(q).
\]
The verification in these regions combines the exact equality directions,
the four-expert boundary values, and one-dimensional sign certificates.
Table~\ref{ab:case-numbering} records sixteen control representatives.
The complementary control $1-w$ gives the same residual, because its
gap direction is $-d(w)$. Thus these sixteen representatives account for
all thirty-two controls.
\begin{table}[htbp]
\centering
\begin{tabular}{ccl@{\qquad}ccl}
\hline
Case & $w$ & $d(w)$ & Case & $w$ & $d(w)$\\
\hline
1 & $00000$ & $(0,0,0,0)$ & 9 & $10010$ & $(-1,0,1,-1)$\\
2 & $10000$ & $(-1,0,0,0)$ & 10 & $10001$ & $(-1,0,0,1)$\\
3 & $01000$ & $(1,-1,0,0)$ & 11 & $01100$ & $(1,0,-1,0)$\\
4 & $00100$ & $(0,1,-1,0)$ & 12 & $01010$ & $(1,-1,1,-1)$\\
5 & $00010$ & $(0,0,1,-1)$ & 13 & $01001$ & $(1,-1,0,1)$\\
6 & $00001$ & $(0,0,0,1)$ & 14 & $00110$ & $(0,1,0,-1)$\\
7 & $11000$ & $(0,-1,0,0)$ & 15 & $00101$ & $(0,1,-1,1)$\\
8 & $10100$ & $(-1,1,-1,0)$ & 16 & $00011$ & $(0,0,1,0)$\\
\hline
\end{tabular}
\caption{Control numbering: A$j$ and B$j$ mean case $j$ in the
respective region.}
\label{ab:case-numbering}
\end{table}

\subsection{Reduction along equality directions}

The common equality direction is
\[
 a=(0,1,-1,1)=d(00101).
\]
The additional equality directions are
\[
 b=(1,-1,0,1)=d(01001)\quad\hbox{in }A,
 \qquad
 c=(0,1,0,-1)=d(00110)\quad\hbox{in }B.
\]
Thus the linear equations supplied by the construction are
\begin{equation}\label{ab:eq-directions}
 (a\cdot\nabla_q)^2v=2v,
 \qquad
 (b\cdot\nabla_q)^2v=2v\ \hbox{in }A,
 \qquad
 (c\cdot\nabla_q)^2v=2v\ \hbox{in }B.
\end{equation}
For the first identity, one can also see the equation directly from the
interpolation formula for $v$: along an $a$-characteristic its two face
functions and their common denominator are constant, while each numerator
is a hyperbolic sine with argument varying at speed $\sqrt2$.

\begin{proposition}[Boundary reduction]\label{ab:boundary-reduction}
Suppose the residuals extend continuously to the indicated faces, and the
candidate is smooth in each open region. To prove $R_w\ge0$ in $A$, it
suffices to prove it on
\[
 \{q_2=0\}\cap A
 \quad\hbox{and}\quad
 \{q_3=0,\ q_2=q_4\}.
\]
To prove $R_w\ge0$ in $B$, it suffices to prove it on
\[
 \{q_4=0\}\cap B
 \quad\hbox{and}\quad
 \{q_3=0,\ q_2=q_4\}.
\]
\end{proposition}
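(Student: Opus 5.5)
The approach is a minimum principle along the equality direction $a=(0,1,-1,1)$, since any constant-coefficient second-order operator commutes with differentiation along $a$. Fix $w$ and put $\Phi=R_w$. Because $(a\cdot\nabla_q)^2v=2v$ on each open region, and $(d(w)\cdot\nabla_q)^2$ commutes with $a\cdot\nabla_q$, we get
\[
(a\cdot\nabla_q)^2\Phi=2\Phi
\]
in the open regions $A$ and $B$. Since $v\in C^2$ only, I would derive this either from the explicit formulas, which are smooth inside each region, or in a weak/mollified sense along lines. Along the line $t\mapsto q+ta$, $\phi(t)=\Phi(q+ta)$ then satisfies $\phi''=2\phi$. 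Hence $\phi$ is a combination of $e^{\pm\sqrt2 t}$, and $\phi\ge0$ at both endpoints of a segment forces $\phi\ge0$ on the segment, because $\phi''=2\phi$ excludes an interior negative minimum.

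Next I would identify where the $a$-lines exit each region. Along $a$, $q_1$ and $q_4-q_2$ are constant, while $q_2$ increases and $q_3$ decreases. So the lines preserve the defining inequalities of $A$ and $B$, and no line crosses an internal interface. In $A$, running backward the line exits where $q_2=0$ (since $q_4\ge q_2$ keeps $q_4\ge0$), and forward it exits where $q_3=0$. In $B$, running backward it exits where $q_4=0$, and forward at $q_3=0$. The segments are compact, of length $q_2+q_3$ or $q_4+q_3$, so the endpoint signs give the sign on the whole segment by the maximum-principle argument above.

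It then remains to reduce the forward faces $\{q_3=0\}\cap A$ and $\{q_3=0\}\cap B$ to the codimension-two set $\{q_3=0,q_2=q_4\}$. For this I would use the second equality direction in each region: $b=(1,-1,0,1)$ in $A$, and $c=(0,1,0,-1)$ in $B$. Both are tangent to $\{q_3=0\}$, and $(b\cdot\nabla)^2\Phi=2\Phi$ by the same commutation argument.

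In $B$, a $c$-line inside $\{q_3=0\}$ keeps $q_1$ and $q_2+q_4$ fixed. Running it forward it reaches $q_4=0$, which is already covered; running backward it reaches $q_2=q_4$. This gives a compact segment, and the same argument applies.

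In $A\cap\{q_3=0\}$, along $b$ we have $q_1\mapsto q_1+t$, $q_2\mapsto q_2-t$, $q_4\mapsto q_4+t$. The quantity $2q_1-(q_4-q_2)$ is then constant, and $q_4-q_2$ increases. Running backward (decreasing $t$), $q_4-q_2$ decreases until it reaches $q_2=q_4$, which stays within $A$ while $q_1\ge0$. Running forward, the line reaches $q_2=0$, which is covered.

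The main obstacle is this bookkeeping on the faces: I must check that each $b$- or $c$-segment stays inside the closed region and does not hit $q_1=0$ (or other faces) first. The conservation of $2q_1-q_4+q_2$ is what I would use to rule this out. I also need to justify the continuity of $\Phi$ at the endpoints, which is where the hypothesis that the residuals extend continuously enters.
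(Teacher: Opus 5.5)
Your proposal is correct and follows the paper's proof. You commute the residual operator with the equality-direction equations, use $a$-characteristics to reduce to the faces $q_2=0$ (resp.\ $q_4=0$) and $q_3=0$, and then use $b$ (resp.\ $c$) inside $q_3=0$ to reach $q_2=q_4$, with the invariant $2q_1-(q_4-q_2)$ keeping $q_1\ge0$. The only cosmetic difference is that you exclude an interior negative minimum of $\phi''=2\phi$, whereas the paper writes the explicit interpolation with nonnegative $\sinh$ weights.
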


\begin{proof}
The operators have constant coefficients. Commuting
$1-\tfrac12(d(w)\cdot\nabla_q)^2$ with each equation in
\eqref{ab:eq-directions} shows that the residual satisfies the same
directional equation. Along any one of these characteristics its
restriction $r$ therefore satisfies $r''=2r$.
If the characteristic has endpoints $t_-$ and $t_+$, then
\[
 r(t)=
 \frac{\sinh(\sqrt2(t_+-t))}{\sinh(\sqrt2(t_+-t_-))}r(t_-)
 +\frac{\sinh(\sqrt2(t-t_-))}{\sinh(\sqrt2(t_+-t_-))}r(t_+).
\]
Both coefficients are nonnegative.

In $A$, the $a$-characteristic through $q$ remains in $A$ for
$-q_2\le t\le q_3$. Its endpoints lie on $q_2=0$ and $q_3=0$.
On the latter face, the $b$-characteristic has endpoints at
$t=-(q_4-q_2)/2$ and $t=q_2$. These lie on $q_2=q_4$ and $q_2=0$,
respectively; the defining inequality of $A$ ensures $q_1+t\ge0$.
This proves the first assertion.

In $B$, the $a$-characteristic has endpoints at $t=-q_4$ and $t=q_3$,
on $q_4=0$ and $q_3=0$. On the latter face, the $c$-characteristic
runs from $t=-(q_2-q_4)/2$ to $t=q_4$, meeting $q_2=q_4$ and
$q_4=0$. The same interpolation proves the second assertion.
Boundary cases follow by continuity.
\end{proof}

This reduction uses the linear equality equations and does not presume the
nonlinear HJB inequality that is being proved. In particular, continuity
across an interface supplies only the values on that interface; the
characteristic argument supplies the interior conclusion.

\subsection{The sign of the common kernel}

The integral representations in $A$ and $B$ use
\begin{equation}\label{ab:kernel}
 g(r)=\frac{8-25r^2+15r^4
       -15r(r^2-1)^2\operatorname{arccoth}r}
 {16\sqrt2\,r\sqrt{r^2-1}},\qquad r>1.
\end{equation}
The following series gives the kernel signs needed in the polynomial and
monotonicity arguments.

\begin{lemma}\label{ab:kernel-signs}
The function $G=-g$ is completely monotone on $(1,\infty)$:
$(-1)^kG^{(k)}(r)>0$ for every integer $k\ge0$. In particular,
$g<0$, $g'>0$, and $g''<0$.
\end{lemma}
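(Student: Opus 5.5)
The plan is to expand the numerator of $g$ in inverse powers of $r$. I expect all the coefficients to come out with a single sign. Each resulting term will then be a product of elementary completely monotone factors, and the lemma follows by summing.

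\emph{Step 1: series for the numerator.} For $r>1$ we have $\operatorname{arccoth}r=\sum_{k\ge0}r^{-(2k+1)}/(2k+1)$. Hence
\[
 15r(r^2-1)^2\operatorname{arccoth}r
 =15\sum_{k\ge0}\frac{r^{4-2k}-2r^{2-2k}+r^{-2k}}{2k+1}.
\]
Collecting the coefficient of $r^{4-2j}$ gives $15$ for $j=0$ and $15(\tfrac13-2)=-25$ for $j=1$. For $j=2$ it gives $15(\tfrac15-\tfrac23+1)=8$. These exactly cancel the polynomial $8-25r^2+15r^4$. For $j\ge3$ the coefficient is the second difference of the convex function $1/x$ at odd integers:
\[
 15\Bigl(\frac1{2j+1}-\frac2{2j-1}+\frac1{2j-3}\Bigr)
 =\frac{120}{(2j+1)(2j-1)(2j-3)}>0 .
\]
I will verify this small identity by direct algebra. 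It yields
\[
 G(r)=-g(r)=\frac{15}{2\sqrt2}\sum_{j\ge3}
 \frac{1}{(2j+1)(2j-1)(2j-3)}\,
 \frac{r^{3-2j}}{\sqrt{r^2-1}},\qquad r>1,
\]
with strictly positive coefficients. As a consistency check, the $j=3$ term gives $G\sim\frac{1}{14\sqrt2}r^{-4}$, which matches \eqref{sm:eq:Uandgtails}.

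\emph{Step 2: complete monotonicity termwise.} On $(1,\infty)$, the functions $r^{-n}$ ($n\ge1$), $(r-1)^{-1/2}$ and $(r+1)^{-1/2}$ are each completely monotone with strict signs. Products of completely monotone functions are completely monotone, by the Leibniz rule: every term of $(-1)^k(fh)^{(k)}$ is a product of nonnegative factors, and the term $f\,(-1)^kh^{(k)}$ is strictly positive. Therefore each summand $r^{3-2j}(r-1)^{-1/2}(r+1)^{-1/2}$ satisfies $(-1)^k\partial_r^k(\cdot)>0$.

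\emph{Step 3: passing to the sum.} On $[1+\varepsilon,\infty)$ the $k$-th derivative of the $j$-th term is bounded by $C_{k,\varepsilon}\,j^k(1+\varepsilon)^{-2j}$. The $k$-fold differentiated series therefore converges uniformly on such sets, and termwise differentiation is legitimate. Summing the strictly positive quantities $(-1)^kG_j^{(k)}$ gives $(-1)^kG^{(k)}>0$ on $(1,\infty)$ for every $k\ge0$. Taking $k=0,1,2$ gives $g<0$, $g'>0$ and $g''<0$.

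The main obstacle is Step 1, the exact cancellation of the $j\le2$ coefficients together with the identification of the $j\ge3$ coefficients as a positive second difference. Everything after that is standard closure properties of completely monotone functions.
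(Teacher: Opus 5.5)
Your proposal is correct and is essentially the paper's proof. It uses the same $\operatorname{arccoth}$ expansion, the same cancellation of the $r^4,r^2,r^0$ terms, the same positive coefficients $120/((2j+1)(2j-1)(2j-3))$ (the paper packages this as an identity for $(1-t)^2S(t)$ with $t=r^{-2}$), and the same termwise differentiation; the only cosmetic difference is that you get strict complete monotonicity of $r^{3-2j}(r^2-1)^{-1/2}$ from the Leibniz rule applied to $r^{3-2j}(r-1)^{-1/2}(r+1)^{-1/2}$, whereas the paper expands $(1-r^{-2})^{-1/2}$ binomially so that $G$ becomes a series in negative powers of $r$ with positive coefficients.
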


\begin{proof}
The expansion $\operatorname{arccoth}r=\sum_{n\ge0}r^{-2n-1}/(2n+1)$
gives
\begin{equation}\label{ab:kernel-series}
 G(r)=\frac{15}{2\sqrt2}\sum_{n=3}^{\infty}
 \frac{r^{3-2n}}{(2n-3)(2n-1)(2n+1)\sqrt{r^2-1}}.
\end{equation}
Indeed, if $S(t)=\sum_{n\ge0}t^n/(2n+1)$, then
\[
 (1-t)^2S(t)=1-\frac53t+\frac8{15}t^2+
 \sum_{n=3}^{\infty}
 \frac{8t^n}{(2n-3)(2n-1)(2n+1)}.
\]
Substituting $t=r^{-2}$ in \eqref{ab:kernel} proves
\eqref{ab:kernel-series}. Furthermore,
\[
 \frac{r^{3-2n}}{\sqrt{r^2-1}}
 =\sum_{k=0}^{\infty}\frac{\binom{2k}{k}}{4^k}
   r^{-(2n+2k-2)}.
\]
All coefficients are positive, all exponents are strictly negative, and
these series and their derivatives converge locally uniformly for $r>1$.
Termwise differentiation proves the assertion.
\end{proof}

For example, if $P\le0$ on $[1,M]$, $E\ge0$, and
$\int_1^M(-P(r))\,dr\ge E$, then, for every $\lambda\ge0$,
\begin{align*}
 &\int_1^M e^{-2\lambda r}g(r)P(r)\,dr
       +e^{-2\lambda M}g(M)E\\
 &\qquad\ge
 e^{-2\lambda M}G(M)
 \left(\int_1^M(-P(r))\,dr-E\right)\ge0.
\end{align*}
Here both $G$ and the exponential factor decrease. This is an exact
way to control a negative endpoint term; knowing the sign of the integral
alone would not suffice.

\subsection{Integral representation of the residuals}
\label{ad:sec:residuals}

Write $V_4(a,b,c)$ for the four-expert correction in consecutive gaps.
An explicit expression is
\begin{align}
 V_4(a,b,c)={}&\frac1{\sqrt2}\arctan(e^{-t})
       \cosh\frac{c-a}{\sqrt2}\cosh t\cosh\frac{c+a}{\sqrt2}
       -\frac{\sqrt2}{4}\sinh(\sqrt2c)\notag\\
 &+\frac1{\sqrt2}\operatorname{arctanh}(e^{-t})
       \sinh\frac{c-a}{\sqrt2}\sinh t\sinh\frac{c+a}{\sqrt2},
 \qquad t=\frac{a+2b+c}{\sqrt2}.
 \label{ad:eq:four-expert}
\end{align}
At the origin the right-hand side is interpreted by continuity.
Its four-expert residuals are nonnegative by~\cite{BEZ}.

For $q\in A$, put
\[
 x=\sqrt2q_1,\quad Y=\sqrt2(q_2+q_3),\quad
 z=\sqrt2(q_4-q_2),\quad
 \alpha=Y+\frac z2,\quad\lambda=x-\frac z2,
 \quad M=\coth\alpha.
\]
For $n\geq1$, define
\begin{equation}\label{ad:eq:moments}
 I_n(\lambda,\alpha)=\int_1^M
       \frac{(M-r)^{n-1}}{(n-1)!}e^{-2\lambda r}g(r)\,dr,
 \qquad I_0(\lambda,\alpha)=e^{-2\lambda M}g(M).
\end{equation}
Here $\lambda\geq0$, and $g$ is given by \eqref{ab:kernel}.
Writing $u=\sinh Y$, $h=\sinh(z/2)$, $S=\sinh\alpha$, the
candidate takes the form
\begin{align}
 v_A(q)&=V_4(q_2,q_3,q_4)+C(q),\notag\\
 C(q)&=\frac{\sinh(\sqrt2q_3)}{\sinh Y}
       \left(-\frac{u^2}{S}I_2-4uhI_3-6Sh^2I_4\right)\notag\\
 &\quad+\frac{\sinh(\sqrt2q_2)}{\sinh Y}
       \left(-2SuI_3-6S^2hI_4\right).
 \label{ad:eq:correction}
\end{align}
The quotients at $Y=0$ are understood through the continuous extension
proved in Section~\ref{sm:section}. For region $B$, set
\begin{equation}\label{ad:eq:Btransform}
 Tq=\left(q_1+\frac{q_2-q_4}{2},q_4,q_3,q_2\right).
 \qquad v_B(q)=V_4(q_2,q_3,q_4)+C(Tq).
\end{equation}
Thus $Tq\in A$ whenever $q\in B$.

The formulas follow from the propagation representation by reversing the
order of integration and substituting $r=\coth s$. All directional
derivatives required below can be evaluated using only
\begin{equation}\label{ad:eq:recurrence}
 \partial_\lambda I_n=-2(MI_n-nI_{n+1})\quad(n\geq1),
 \qquad
 \partial_\alpha I_n=-\csch^2\alpha\,I_{n-1}\quad(n\geq1).
\end{equation}
In particular, no antiderivative of $g$ is needed.

For completeness, there is an equivalent single-kernel formula. Let
$D=M-r$ and put
\begin{align*}
 K_f&=-\frac{u^2}{S}D-2uhD^2-Sh^2D^3,
 &K_r&=-SuD^2-S^2hD^3,\\
 K&=\frac{\sinh(\sqrt2q_3)}{\sinh Y}K_f
   +\frac{\sinh(\sqrt2q_2)}{\sinh Y}K_r.
\end{align*}
Then $C=\int_1^M e^{-2\lambda r}g(r)K(q,r)\,dr$.
For a constant direction $d$, write $L=d\cdot\nabla_q$ and let $L$
act on $K$ while holding $r$ fixed. Since $K(q,M)=0$ and $L^2\lambda=0$,
Leibniz' rule gives
\begin{align}
 C-\frac12L^2C
  ={}&\int_1^M e^{-2\lambda r}g(r)
       \left[K-\frac12(L-2rL\lambda)^2K\right]dr\notag\\
    &-\frac12e^{-2\lambda M}g(M)(LK)(q,M)\,LM.
 \label{ad:eq:residualformula}
\end{align}
The endpoint term must be retained when differentiating a residual.

Every coefficient of $K_f$ and $K_r$ is nonpositive on $A$. Since
$g<0$, differentiation twice in $q_1$ gives
\begin{equation}\label{ad:eq:first-convexity}
 C_{q_1q_1}=8\int_1^M r^2 e^{-2\lambda r}g(r)K(q,r)\,dr\geq0.
\end{equation}
Consequently the residual of $C$ is concave as a function of the first
component of its direction.

The integral kernels also give the uniform limits needed for comparison
on unbounded faces. Set
\[
 \delta=\frac{q_4-q_2}{\sqrt2},\qquad
 \mu=\frac{q_4+q_2}{\sqrt2},\qquad
 H_l(r)=\cosh l-r\sinh l.
\]
The kernel factors as $K=-H_\alpha H_\delta H_\mu$, with
$0\leq\delta\leq\mu\leq\alpha$. For $\alpha\geq1$ and
$1\leq r\leq\coth\alpha$,
\[
 |H_l(r)|+|\partial_lH_l(r)|\leq C e^{-l}
 \quad(0\leq l\leq\alpha),\qquad
 \int_1^{\coth\alpha}|g(r)|\,dr\leq C e^{-\alpha}.
\]
Indeed, $r-1=O(e^{-2\alpha})$ and
$g(r)=O((r-1)^{-1/2})$ at the lower endpoint.
Since $H_l''=H_l$, the same bounds control the first two directional
derivatives of the product. At the upper endpoint,
$H_\alpha(M)=0$, $\partial_\alpha H_\alpha(M)=-\csch\alpha$,
and $LM=O(e^{-2\alpha})$. Formula~\eqref{ad:eq:residualformula}
therefore gives, for each fixed direction $d$,
\begin{equation}\label{ad:eq:uniform-tail}
 \left|C-\tfrac12(d\cdot\nabla)^2C\right|
 \leq C_d e^{-2\alpha-\delta-\mu},\qquad \alpha\geq1,
 \quad\lambda\geq0.
\end{equation}
In particular, the correction residuals tend to zero uniformly as the
face variables escape to infinity. The nonnegative four-expert
residuals supply the corresponding lower limits for the full residuals.

\subsection{The comb inequality in region A}

The comb residual admits a signed-kernel factorization on both
interpolation faces.

Use the scaled variables
\[
 x=\sqrt2q_1,\qquad Y=\sqrt2(q_2+q_3),\qquad
 z=\sqrt2(q_4-q_2),\qquad \lambda=x-z/2\ge0,
\]
and abbreviate
\[
 \alpha=Y+z/2,\quad S=\sinh\alpha,\quad M=\coth\alpha,
 \quad h=\sinh(z/2),\quad u=\sinh Y.
\]
The region-A formula is
\[
 v=f(x,Y,z)\frac{\sinh(\sqrt2q_3)}{\sinh Y}
    +r_1(x,Y,z)\frac{\sinh(\sqrt2q_2)}{\sinh Y}.
\]
Only the following parts of $f$ and $r_1$ depend on $x$:
\begin{align}
 f_{\rm dep}&=-\frac{u^2}{S}I_2-4uh I_3-6Sh^2I_4,\notag\\
 (r_1)_{\rm dep}&=-2Su I_3-6S^2h I_4,\label{ab:comb-dependent}
\end{align}
where
\[
 I_n=\int_1^M\frac{(M-\rho)^{n-1}}{(n-1)!}
                  e^{-2\lambda\rho}g(\rho)\,d\rho.
\]
These formulas follow by substituting $\rho=\coth s$ in the nested
integrals of the original representation and reversing their order.
The remaining terms are the $x$-independent four-expert terms.

\begin{proposition}[Case A12]\label{ab:comb-positive}
The residual of the comb control $w=(0,1,0,1,0)$ is nonnegative
throughout region $A$.
\end{proposition}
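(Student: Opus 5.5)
We reduce Case A12 to the two interpolation faces and then sign the residual there with the kernel of Section~\ref{ad:sec:residuals}. By Proposition~\ref{ab:boundary-reduction}, it suffices to prove $R_{01010}\ge0$ on $\{q_2=0\}\cap A$ and on the edge $\{q_3=0,\ q_2=q_4\}$. The comb direction is $d=(1,-1,1,-1)$. We split $v_A=V_4(q_2,q_3,q_4)+C(q)$ as in \eqref{ad:eq:correction}. The residual of $V_4$ in the projected direction $(-1,1,-1)$ is the four-expert comb residual. It is nonnegative by~\cite{BEZ}, and on the four-expert side it vanishes identically, since the comb is optimal for four experts. We therefore only need to control the correction residual $C-\frac12(d\cdot\nabla)^2C$. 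For this we use \eqref{ad:eq:residualformula} with $L=d\cdot\nabla_q$. In the variables $(x,Y,z)$ we have $L\lambda=\sqrt2\,(1+1)=2\sqrt2$ and $LY=0$. The comb direction moves $z$, so $L\alpha\neq0$ and $LM\neq0$. Hence the endpoint term $-\frac12 e^{-2\lambda M}g(M)(LK)(q,M)\,LM$ must be kept.

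On each face we will write the integrand bracket $K-\frac12(L-2rL\lambda)^2K$ as an explicit polynomial $P(r)$ in $r$. Its coefficients are hyperbolic functions of the face variables. Because $K=-H_\alpha H_\delta H_\mu$, the polynomial factors over $r$: each $H_l$ is linear in $r$, and $L$ acting on $H_l$ gives $\pm(\sinh l-r\cosh l)$ times a constant. Hence $P$ is a cubic in $D=M-r$ after the change of variable. On the edge $q_3=0$, $q_2=q_4$ we have $\delta=0$, $\alpha=\mu$ and $H_\delta\equiv1$, so the bracket collapses to a quadratic in $r$. There we will check directly that $P\le0$ on $[1,M]$, i.e.\ that it has the correct sign relative to $g<0$. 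The endpoint term has sign $-g(M)(LK)(M)LM$. We intend to dispose of it with the endpoint-absorption inequality stated after Lemma~\ref{ab:kernel-signs}: show $\int_1^M(-P)\,dr\ge E$, using that $G=-g$ and $e^{-2\lambda r}$ are both decreasing. On the face $q_2=0$ we have $Y=\sqrt2q_3$, $z=\sqrt2q_4$ and $\lambda=x-z/2\ge0$; the same structure applies, with a genuinely cubic $P$.

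If the direct sign of $P$ fails on part of $[1,M]$, the fallback is the second paragraph of the plan. We would integrate by parts against $g$, moving one factor of $D$ onto a primitive $I_{n+1}$ through the recurrences \eqref{ad:eq:recurrence}. This trades the indefinite cubic for a combination $c_2I_2+c_3I_3+c_4I_4$ with signed coefficients. We then use complete monotonicity of $G$ (Lemma~\ref{ab:kernel-signs}): since $g'>0$ and $g''<0$, a Chebyshev-type inequality compares $I_n$ with $I_{n+1}$ and shows that the negative parts are dominated. Alternatively, we would use the convexity \eqref{ad:eq:first-convexity} in the first component of the direction. The comb residual is then bounded below by an average of residuals of directions with first component $0$ and $2$. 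This decomposition is dubious because $d$ must stay a control direction, so it is only a secondary option.

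At infinity along the faces, the uniform tail bound \eqref{ad:eq:uniform-tail} makes the correction residual vanish. The four-expert residual is nonnegative, so no boundary condition is lost. The main obstacle is the sign of the cubic bracket together with the endpoint term on the face $q_2=0$. Two features make this hard: both $z$ and $\lambda$ vary there, and the factor $e^{-2\lambda r}$ couples with the $r$-dependence of $L-2rL\lambda$. We would first try to factor $P(r)$ as $-H_\alpha(r)$ times a quadratic that is nonnegative on $[1,M]$. The factor $H_\alpha$ vanishes at $r=M$, which also kills most of the endpoint contribution. The leftover endpoint term would then be absorbed by the displayed monotonicity estimate.
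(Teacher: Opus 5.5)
There is a genuine gap. The entire content of the proposition is the sign of the bracket $K-\frac12(L-2rL\lambda)^2K$, together with any endpoint term, and your proposal never computes it. Moreover, the derivative data you intend to feed into it are wrong.

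For $d=(1,-1,1,-1)$ one has $Lz=\sqrt2(d_4-d_2)=0$. Hence $L\lambda=Lx-\tfrac12Lz=\sqrt2$, not $2\sqrt2$. Also $L\alpha=L\delta=0$, so $LM=0$. The comb direction moves only $\lambda$ and $\mu=(q_2+q_4)/\sqrt2$, with $L\mu=-\sqrt2$.

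Consequently there is no endpoint term at all. The indefinite cubic, the endpoint absorption, and the fallback comparisons of the $I_n$ all respond to an artifact of this miscomputation. The same holds for the convexity averaging, which you already flag as dubious. With your values you would be trying to sign a polynomial that is not the comb residual's kernel.

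Done correctly, your route closes at once and is shorter than the paper's. Only $H_\mu$ is differentiated, so $L^2K=2K$, and
\[
 K-\tfrac12\bigl(L-2\sqrt2\,r\bigr)^2K=2\sqrt2\,r\,LK-4r^2K
 =-4r(r^2-1)\sinh\mu\,H_\alpha(r)H_\delta(r)\le0
 \qquad(1\le r\le M).
\]
The sign holds because $0\le\delta\le\mu\le\alpha$. Combine this with $g<0$ and your correct observation that the four-expert comb residual vanishes. This gives $R_w\ge0$ at every point of $A$ directly, with no face reduction needed.

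The restrictions of this formula to $q_2=0$ (where $\mu=\delta=z/2$) and to $q_3=0$ (where $\mu=\alpha$) are exactly the factorizations \eqref{ab:comb-factorizations}.

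The paper reaches them differently. It writes $d(w)=e_1-a$ and uses $(a\cdot\nabla_q)^2v=2v$ to reduce the residual to $-\frac12v_{q_1q_1}+\partial_{q_1}(a\cdot\nabla_qv)$. That expression involves only $x$-derivatives, so $V_4$ and any endpoint contribution drop out automatically. The paper then differentiates the interpolation formula on the two faces $q_2=0$ and $q_3=0$ and propagates the sign along $a$.
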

\begin{proof}
We first work where $Y>0$. Since $d(w)=e_1-a$, the common equality
equation gives
\[
 R_w=-\tfrac12v_{q_1q_1}+\partial_{q_1}(a\cdot\nabla_qv).
\]
Directly differentiating the interpolation formula on its two faces
therefore gives
\begin{align*}
 R_w\big|_{q_2=0}
   &=-2\coth Y\,f_x+2\operatorname{csch}Y\,(r_1)_x-f_{xx},\\
 R_w\big|_{q_3=0}
   &=-2\operatorname{csch}Y\,f_x+2\coth Y\,(r_1)_x-(r_1)_{xx}.
\end{align*}
Substitution of \eqref{ab:comb-dependent} yields the factorizations
\begin{align}
 R_w\big|_{q_2=0}
  &=-4\int_1^M e^{-2\lambda\rho}g(\rho)\rho(\rho^2-1)(M-\rho)
       \bigl[Sh^2(M-\rho)+uh\bigr]\,d\rho,\notag\\
 R_w\big|_{q_3=0}
  &=-4\int_1^M e^{-2\lambda\rho}g(\rho)\rho(\rho^2-1)(M-\rho)
       \bigl[S^2h(M-\rho)+Su\bigr]\,d\rho.
       \label{ab:comb-factorizations}
\end{align}
For example, the kernels of $f_{\rm dep}$ and $(r_1)_{\rm dep}$,
apart from $e^{-2\lambda\rho}g(\rho)$, are
\[
 K_f=-\frac{u^2}{S}D-2uhD^2-Sh^2D^3,
 \qquad K_r=-SuD^2-S^2hD^3,\qquad D=M-\rho.
\]
The first factorization is the polynomial identity
\[
 4\rho\coth Y K_f-4\rho\operatorname{csch}Y K_r-4\rho^2K_f
   =-4\rho(\rho^2-1)D(Sh^2D+uh);
\]
the second follows by exchanging the two face coefficients, giving
$4\rho\operatorname{csch}Y K_f-4\rho\coth Y K_r-4\rho^2K_r$.
These identities use only $S=\sinh(Y+z/2)$ and $M=\coth(Y+z/2)$.

All factors in the brackets of \eqref{ab:comb-factorizations} are
nonnegative, and $g<0$ by Lemma~\ref{ab:kernel-signs}. Thus both face
residuals are nonnegative. The $a$-characteristic interpolation from
the proof of Proposition~\ref{ab:boundary-reduction} propagates this
sign to all of $A$. Degenerate face points follow by the continuous
extension of the residual established in the regularity analysis.
\end{proof}

\subsection{A one-crossing integral argument}

The following principle reduces a minimization problem to its endpoints without requiring
the derivative to have a fixed sign.

\begin{lemma}\label{ab:single-crossing}
Suppose $H$ is continuously differentiable on $[x_0,\infty)$,
has a finite limit at infinity, and
\[
 H'(x)=\int_1^M e^{-2x\rho}k(\rho)\,d\rho,
\]
where the integrals are absolutely convergent. If, for some
$\rho_0\in[1,M]$, $k\le0$ on $(1,\rho_0)$ and $k\ge0$ on
$(\rho_0,M)$, then
\[
 H(x)\ge\min\{H(x_0),\lim_{t\to\infty}H(t)\}
 \qquad (x\ge x_0).
\]
\end{lemma}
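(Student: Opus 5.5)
The plan is to show that $H'$ changes sign at most once on $[x_0,\infty)$, and only from negative to positive. Then $H$ decreases and afterwards increases, so its infimum on $[x_0,\infty)$ is attained at $x_0$ or approached at infinity.

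The key step is a Laplace-transform sign-change argument. For $x\ge x_0$ I would write
\[
 e^{2x\rho_0}H'(x)=\int_1^M e^{-2x(\rho-\rho_0)}k(\rho)\,d\rho .
\]
On $(1,\rho_0)$ the factor $e^{-2x(\rho-\rho_0)}$ is at least one and increases in $x$, while $k\le0$ there. On $(\rho_0,M)$ the same factor is at most one and decreases in $x$, while $k\ge0$ there. In both pieces the integrand $e^{-2x(\rho-\rho_0)}k(\rho)$ is therefore nonincreasing in $x$. Hence $\Phi(x):=e^{2x\rho_0}H'(x)$ is nonincreasing on $[x_0,\infty)$. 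Absolute convergence of the integrals at each $x$, together with monotonicity of the integrand in $x$ on each piece, justifies this pointwise comparison for $x_1<x_2$ without differentiating under the integral. Since $H'$ has the same sign as $\Phi$, the set $\{x\ge x_0:H'(x)>0\}$ is an initial interval $[x_0,x_1)$, possibly empty or all of $[x_0,\infty)$. On the complementary interval $H'\le0$.

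Wait—the monotonicity direction needs care. Nonincreasing $\Phi$ means $H'$ is positive first and nonpositive afterwards, so $H$ increases and then decreases. This is exactly the shape that yields the minimum at an endpoint. On $[x_0,x_1]$, where $H$ increases, we get $H(x)\ge H(x_0)$. On $[x_1,\infty)$, where $H$ is nonincreasing, we get $H(x)\ge\lim_{t\to\infty}H(t)$, and this limit exists by hypothesis. In either case $H(x)\ge\min\{H(x_0),\lim_{t\to\infty}H(t)\}$. The degenerate cases $x_1=x_0$ (so $H$ is nonincreasing throughout) and $x_1=\infty$ (so $H$ is nondecreasing throughout) are covered by the same two inequalities.

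I expect the main obstacle to be the orientation bookkeeping and the role of $M$. If $M$ varies with $x$ in the intended applications, the monotonicity of $\Phi$ could fail. I would read the statement with $M$ fixed, as in the displayed formula where $M=\coth\alpha$ is independent of $x$, and note that $x$ plays the role of $\lambda$. A second point to check is the boundary case $\rho_0\in\{1,M\}$, where one of the two pieces is empty. Then $k$ has a fixed sign, $H$ is monotone, and the argument above still applies.
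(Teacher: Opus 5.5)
Your argument is correct and is essentially the paper's proof: both show that $e^{2\rho_0x}H'(x)$ is nonincreasing, so $H$ is first nondecreasing and then nonincreasing, and its infimum is attained at $x_0$ or at infinity. You obtain the monotonicity from pointwise monotonicity of the integrand in $x$, whereas the paper differentiates under the integral to get $-2\int_1^M(\rho-\rho_0)e^{-2x(\rho-\rho_0)}k(\rho)\,d\rho\le0$. Delete your opening paragraph: its ``negative to positive'' orientation is wrong, and it contradicts your own corrected conclusion, which is the right one.
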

\begin{proof}
The function $e^{2\rho_0x}H'(x)$ is nonincreasing, since its
derivative is
\[
 -2\int_1^M(\rho-\rho_0)e^{-2x(\rho-\rho_0)}k(\rho)\,d\rho
 \le0.
\]
Consequently $H'$ can change sign only from positive to negative.
The function $H$ is first nondecreasing and then nonincreasing,
with either part possibly absent. Its infimum is therefore attained
at an endpoint, with infinity understood through the stated limit.
\end{proof}

\subsection{The first five nonequality controls in region A}
\label{a26:section}

We give the verification for controls A2--A6. The order of the proof
is A2, A3, A6, followed by the comparisons establishing A4 and A5.
Throughout, the four-expert verification is used for the nonnegativity
of its individual residuals. The comparisons between those residuals
that are needed below are proved explicitly.

Put
\[
 \alpha=\frac{q_2+2q_3+q_4}{\sqrt2},\quad
 \delta=\frac{q_4-q_2}{\sqrt2},\quad
 \mu=\frac{q_4+q_2}{\sqrt2},\quad
 \lambda=\sqrt2q_1-\delta,\quad M=\coth\alpha.
\]
Thus $\lambda,\delta\ge0$ in $A$. For an integration variable $\rho$,
define
\[
 H_l(\rho)=\cosh l-\rho\sinh l,
 \qquad K(q,\rho)=-H_\alpha(\rho)H_\delta(\rho)H_\mu(\rho).
\]
The integral part of the candidate is
\begin{equation}\label{a26:kernel-representation}
 v_A(q)=V_4(q_2,q_3,q_4)
       +\int_1^M e^{-2\lambda\rho}g(\rho)K(q,\rho)\,d\rho.
\end{equation}
Here $V_4$ is the four-expert correction, with the normalization used
in the construction. For $d_j=d(w_j)$ and $D_j=d_j\cdot\nabla_q$, set
\begin{align}
 P_j(q,\rho)&=K(q,\rho)
       -\tfrac12(D_j-2\rho D_j\lambda)^2K(q,\rho),\notag\\
 E_j(q)&=-\tfrac12(D_jM)(D_jK)(q,M),\notag\\
 N_j(q)&=V_4(q_2,q_3,q_4)-\tfrac12D_j^2V_4(q_2,q_3,q_4).
 \label{a26:coefficients}
\end{align}
In differentiating $K$, the variable $\rho$ is held fixed. Since
$K(q,M)=0$, differentiation under the integral gives the exact formula
\begin{equation}\label{a26:residual}
 R_j(q)=N_j(q)+\int_1^M e^{-2\lambda\rho}g(\rho)P_j(q,\rho)\,d\rho
              +e^{-2\lambda M}g(M)E_j(q).
\end{equation}
This formula also fixes all normalizations in the calculations below.

We write the two characteristic face traces as
\begin{align*}
 F_j(\lambda,Y,z)
 &=R_j\left(\frac{\lambda+z/2}{\sqrt2},0,
                         \frac Y{\sqrt2},\frac z{\sqrt2}\right),\\
 G_j(\lambda,Y,z)
 &=R_j\left(\frac{\lambda+z/2}{\sqrt2},\frac Y{\sqrt2},0,
                         \frac{Y+z}{\sqrt2}\right).
\end{align*}
The variables $\lambda,Y,z$ are nonnegative. The equality-direction
interpolation reduces each inequality to these two faces; the second
equality direction further reduces the second face to $z=0$ when
needed.

\begin{proposition}\label{a26:case2}
The residual $R_2$ of the control $(1,0,0,0,0)$ is nonnegative in $A$.
\end{proposition}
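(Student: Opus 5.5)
By Proposition~\ref{ab:boundary-reduction}, it suffices to prove $R_2\ge0$ on the two faces, that is, to show $F_2\ge0$ and $G_2\ge0$ at $z=0$. The direction is $d_2=(-1,0,0,0)$, so $D_2=-\partial_{q_1}$. The four-expert part $V_4(q_2,q_3,q_4)$ does not depend on $q_1$, hence $N_2=V_4\ge0$ by~\cite{BEZ}. Since $D_2M=0$, the endpoint term $E_2$ vanishes. For the integral part, $D_2\lambda=-\sqrt2$ and $D_2K=0$, so
\[
 P_2=K\bigl(1-\tfrac12(2\sqrt2\rho)^2\bigr)=K(1-4\rho^2).
\]
This gives
\[
 R_2=V_4+\int_1^M e^{-2\lambda\rho}g(\rho)(1-4\rho^2)K\,d\rho
    =V_4+C-\tfrac12C_{q_1q_1}.
\]
On $A$ we have $K\le0$ and $g<0$, so $gK\ge0$; since $1-4\rho^2<0$, the integral is nonpositive. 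Its sign therefore works against us, and we must show that the four-expert term $V_4$ dominates it.

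The plan is to treat $R_2$ as a function $H(\lambda)$ of $\lambda$ with the face variables fixed, and apply Lemma~\ref{ab:single-crossing}. Differentiating in $\lambda$ gives a kernel $-2\rho(1-4\rho^2)g(\rho)K$. This kernel has constant sign, which is a degenerate case of the one-crossing hypothesis, and it makes $H$ monotone nondecreasing in $\lambda$. As $\lambda\to\infty$, the integral tends to zero and the limit is $V_4\ge0$. The binding endpoint is therefore $\lambda=0$, i.e.\ the interface $\Sigma_{AC}$ (where $2q_1=q_4-q_2$). It remains to show
\[
 V_4+\int_1^M g(\rho)(1-4\rho^2)K\,d\rho\ge0\qquad\text{at }\lambda=0.
\]

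At $\lambda=0$ I would use the structure of the data rather than estimate directly. The datum there is $\widetilde L(\cdot,0)=U$, and the Region~A formula at $\lambda=0$ coincides with the Region~C boundary propagation. Using the comparison lemmas of Section~\ref{paper:sec:comparison} is the natural route. On the face $q_2=0$, let $F$ be the trace of $R_2$ and $G$ the companion trace on $q_3=0$. These satisfy the $2$--$2$ propagation system in $(y,p)$ up to an inhomogeneity, because the constant-coefficient operator $1-\tfrac12 D_2^2$ commutes with the construction's linear equations except for the reflection source terms. I would apply Lemma~\ref{ab:lem:two-two} with $m=2$ and $\theta=1$, with initial trace at $z=0$ (the A/B interface, $q_2=q_4$), after checking that the inhomogeneity has the favorable sign. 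On that interface the initial trace is $\widetilde L(s,w)-\tfrac12\cdot2\,\partial_{ww}\widetilde L$ (up to the $\sqrt2$ scaling). By \eqref{id:eq:kernelinterface}, this is
\[
 \tfrac1{\sqrt2}\arctan(e^{-s})\cosh s+\int_1^{\coth s}(r\sinh s-\cosh s)(1-4r^2)e^{-2wr}g(r)\,dr.
\]
The integral part is $\le0$, with magnitude bounded by $\int_1^\infty 4r^2|g|\,dr\cdot e^{-s}$-type quantities. The lower limits at infinity required by the lemma follow from \eqref{ad:eq:uniform-tail}.

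The main obstacle is this last one-dimensional sign check: showing that the elementary four-expert term beats the weighted moment of $g$ uniformly in $s,w\ge0$. I would first reduce to $w=0$ by monotonicity in $w$ (the integrand carries $e^{-2wr}$ with a fixed sign). At $w=0$ the moment $\int r^2 g$ is explicit, namely $-u_0/6$ at the origin. For general $s$, I would compare using the complete monotonicity of $-g$ from Lemma~\ref{ab:kernel-signs}, combined with $|r\sinh s-\cosh s|\le e^{-s}\cdot(\coth s-r)\sinh s$, reducing to an explicit inequality between elementary functions of $s$. That inequality I would verify by series expansion near $s=0$ and by exponential asymptotics at large $s$.
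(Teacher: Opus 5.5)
You handle everything up to the last step the way the paper does. You compute $E_2=0$ and $P_2=(1-4\rho^2)K$, and use monotonicity in $\lambda$ to reduce to the A/C interface $\lambda=0$. You then apply Lemma~\ref{ab:lem:two-two} with $m=2$, $\theta=1$ to reach $z=0$. Two small points there: the ``inhomogeneity'' is identically zero, because the Region A trace system has no reflection source, and your extra ``reduction to $w=0$'' is vacuous, since $w=\lambda=0$ already. Everything then rests on the one-variable inequality
\[
 \varphi(t):=\frac{\arctan(e^{-t})\cosh t}{\sqrt2}
 +\int_1^{\coth t}(4\rho^2-1)(\cosh t-\rho\sinh t)\,g(\rho)\,d\rho\;\geq\;0,
 \qquad t>0,
\]
where $\varphi(t)=F_2(0,t,0)$. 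This is the substance of the proposition, and you do not prove it. The auxiliary bound you propose, $|r\sinh s-\cosh s|\le e^{-s}(\coth s-r)\sinh s$, is false: on the integration range the left side equals $(\coth s-r)\sinh s$ exactly. Moreover, checking an unspecified elementary inequality by a series at $s=0$ and by asymptotics as $s\to\infty$ does not establish it for intermediate $s$.

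The paper closes this step with a unimodality argument. It sets $J=\varphi/\cosh t$, so that $J'=\operatorname{sech}^2t\,Q$ with $Q(t)=\int_1^{\coth t}\rho(1-4\rho^2)g\,d\rho-\cosh t/(2\sqrt2)$. Here $Q$ is strictly decreasing, so $J$ can only switch from increasing to decreasing and attains its infimum at an endpoint. At those endpoints $J(\infty)=0$, and $J(0+)=u_0-\tfrac12v_{q_1q_1}(0)=u_0/3$ from the full-collision jet.

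Your own ingredients would also close the gap if you used them globally rather than asymptotically. On $1\le\rho\le\coth t$ one has $0\le\cosh t-\rho\sinh t\le e^{-t}$ and $g<0$. The moments $\int_1^\infty g=\frac{\pi}{4\sqrt2}-u_0$ and $\int_1^\infty\rho^2g=-\frac{u_0}{6}$ then show that the integral is at least $-e^{-t}\bigl(\frac{\pi}{4\sqrt2}-\frac{u_0}{3}\bigr)$. Concavity of $\arctan$ and $\cosh t\ge1$ give $\arctan(e^{-t})\cosh t\ge\frac{\pi}{4}e^{-t}$. Hence $\varphi(t)\ge\frac{u_0}{3}e^{-t}>0$, and this bound is sharp at $t=0$. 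With either argument supplied, your remaining steps complete the proof as in the paper: the companion formula, Lemma~\ref{ab:lem:two-two}, monotonicity in $\lambda$, and the boundary reduction.
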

\begin{proof}
Here $E_2=0$ and $P_2=(1-4\rho^2)K\ge0$: indeed,
$0\le\delta\le\mu\le\alpha$ implies $H_l(\rho)\ge0$ for
$l\in\{\alpha,\delta,\mu\}$ and $1\le\rho\le\coth\alpha$.
Since $g<0$, \eqref{a26:residual} is nondecreasing in $\lambda$.
It suffices to set $\lambda=0$.

Direct substitution in \eqref{a26:coefficients} gives
\begin{equation}\label{a26:homogeneous-system}
 (\partial_Y-2\partial_z)F_2
       =2\coth Y F_2-2\operatorname{csch}Y G_2,
 \qquad
 \partial_YG_2=2\coth Y G_2-2\operatorname{csch}Y F_2.
\end{equation}
The homogeneous comparison principle, Lemma~\ref{ab:lem:two-two},
reduces the question to $z=0$. Its conditions at infinity follow
from \eqref{a26:kernel-representation}: for fixed $z$ the first trace
tends to $e^{-z}/(2\sqrt2)$ as $Y\to\infty$, and the second tends to
zero; the limits as $z\to\infty$ are zero.

Set $t=Y$, and let $U(t)=F_2(0,t,0)$ and $W(t)=G_2(0,t,0)$.
Formula \eqref{a26:residual} yields
\begin{align*}
 U(t)&=\frac{\arctan(e^{-t})\cosh t}{\sqrt2}
       +\int_1^{\coth t}(4\rho^2-1)(\cosh t-\rho\sinh t)g(\rho)\,d\rho,\\
 W(t)&=\frac{\arctan(e^{-t})\cosh^2t-\tfrac12\sinh t}{\sqrt2}
       +\int_1^{\coth t}(4\rho^2-1)(\cosh t-\rho\sinh t)^2g(\rho)\,d\rho.
\end{align*}
They satisfy $W'=2\coth t W-2\operatorname{csch}t U$ and
$W(t)/\sinh^2t\to0$ as $t\to\infty$. Thus $U\ge0$ implies
\[
 W(t)=2\sinh^2t\int_t^\infty\frac{U(s)}{\sinh^3s}\,ds\ge0.
\]
It remains to prove $J(t):=U(t)/\cosh t\ge0$. Differentiation gives
\[
 J'(t)=\operatorname{sech}^2t\,Q(t),\qquad
 Q(t)=\int_1^{\coth t}\rho(1-4\rho^2)g(\rho)\,d\rho
                         -\frac{\cosh t}{2\sqrt2}.
\]
Both terms in $Q'$ are strictly negative: the integrand in $Q$ is
positive, the upper endpoint decreases, and $\cosh t$ increases.
Consequently $J$ can change monotonicity only from increasing to
decreasing, so its infimum is at an endpoint.

We have $J(\infty)=0$. To compute the other endpoint, recall that
$d(10000)=(-1,0,0,0)$ and hence
\[
J(t)=\frac{F_2(0,t,0)}{\cosh t}
=\frac{1}{\cosh t}
\left(v-\frac12v_{q_1q_1}\right)
\left(0,0,\frac{t}{\sqrt2},0\right).
\]
The closed-chamber regularity and the full-collision jet
\eqref{sm:eq:fullcollisionjet} give
$v(0)=u_0$ and $v_{q_1q_1}(0)=4u_0/3$. Therefore
\begin{equation}\label{a26:A2-origin}
J(0+)=u_0-\frac12\frac{4u_0}{3}
=\frac{u_0}{3}
=\frac{15\pi^2}{512\sqrt2}>0.
\end{equation}
Since the infimum of $J$ occurs at an endpoint, $J\geq0$.
The companion formula and the homogeneous comparison principle
then complete the proof.
\end{proof}

\begin{proposition}\label{a26:case3}
The residual $R_3$ of the control $(0,1,0,0,0)$ is nonnegative in $A$.
\end{proposition}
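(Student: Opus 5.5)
The plan is to follow the template of Proposition~\ref{a26:case2}, applied to $d_3=d(01000)=(1,-1,0,0)$. First I compute the ingredients of \eqref{a26:residual} for this direction. Write $D_3=\partial_{q_1}-\partial_{q_2}$. Then
\[
 D_3\alpha=D_3\mu=-\tfrac1{\sqrt2},\qquad D_3\delta=\tfrac1{\sqrt2},\qquad D_3\lambda=\sqrt2-\tfrac1{\sqrt2}=\tfrac1{\sqrt2}.
\]
Consequently $D_3M=\tfrac1{\sqrt2}\csch^2\alpha\neq0$, so the endpoint term $E_3$ survives. Using $H_\alpha(M)=0$, I find
\[
 E_3=-\tfrac14\csch^2\alpha\,\csch\alpha\,H_\delta(M)H_\mu(M)\le0 .
\]
Since $g(M)<0$, the product $e^{-2\lambda M}g(M)E_3$ is a nonnegative contribution.

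The kernel $P_3$ comes from expanding $(D_3-\sqrt2\rho)^2$ acting on $-H_\alpha H_\delta H_\mu$, using $H_l''=H_l$ and $\partial_lH_l=\sinh l-\rho\cosh l$. I expect $P_3$ not to have a fixed sign. So rather than arguing monotonicity in $\lambda$ directly, I would do the following. Apply Lemma~\ref{ab:single-crossing} to $\lambda\mapsto R_3$ along lines of fixed $(\alpha,\delta,\mu)$. For this I need to check that $\partial_\lambda$ of the integrand, namely $-2\rho\,g\,P_3$ together with the endpoint term, has a single sign change in $\rho$; this should follow because $P_3$ is a low-degree polynomial in $\rho$ times the nonnegative factors $H_l$. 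This reduces the problem to $\lambda=0$ and $\lambda\to\infty$. As $\lambda\to\infty$ the integral vanishes and $R_3$ tends to the four-expert residual $N_3$, which is nonnegative by~\cite{BEZ}.

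At $\lambda=0$, i.e.\ on the interface $\Sigma_{AC}$, I would use Proposition~\ref{ab:boundary-reduction} to restrict to the faces $q_2=0$ and $\{q_3=0,q_2=q_4\}$. I then check that the traces $F_3,G_3$ satisfy a perturbed version of \eqref{a26:homogeneous-system}. The direction $d_3$ differs from $-d_2$ only by $-e_2$, so the error term should be a combination of $v_{q_2}$-type quantities. I would try to show this error has the sign $e\le0$ required by Lemma~\ref{bo:lem:paired}. If that fails, I would average with the control A2, whose trace system is homogeneous, as described after Lemma~\ref{paper:lem:onecomponent}. Then Lemma~\ref{ab:lem:two-two} reduces everything to $z=0$, i.e.\ to the one-variable functions $U_3(t)=F_3(0,t,0)$ and $W_3=\mathcal T U_3$.

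For $U_3$ I would again divide by $\cosh t$ and show that the derivative is $\sech^2t$ times a function $Q_3$ that is monotone, using Lemma~\ref{ab:kernel-signs} ($g<0$, $g'>0$). This places the infimum at an endpoint. At $t=\infty$ the limit is $0$. At $t=0$, the full-collision jet \eqref{sm:eq:fullcollisionjet} gives
\[
 v(0)-\tfrac12(B_{11}-2B_{12}+B_{22})u_0=u_0-\tfrac12\bigl(\tfrac43-2+2\bigr)u_0=\tfrac{u_0}{3}>0 .
\]

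The main obstacle is the $z$-reduction, because $d_3$ has a $q_2$ component and the face traces no longer obey the clean homogeneous $2$--$2$ system. The endpoint term and the signed error must be controlled, which is where Lemma~\ref{bo:lem:paired} or control averaging will be needed.
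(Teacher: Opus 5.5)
There is a genuine gap, and it starts with a sign error that hides the real difficulty. Since $H_\alpha(M)=0$ and $\partial_\alpha H_\alpha(M)=\sinh\alpha-\coth\alpha\cosh\alpha=-\csch\alpha$, one gets $(D_3K)(q,M)=-\tfrac1{\sqrt2}\csch\alpha\,H_\delta(M)H_\mu(M)$. With your correct value $D_3M=\tfrac1{\sqrt2}\csch^2\alpha$, this gives
\[
 E_3=-\tfrac12(D_3M)(D_3K)(q,M)=\tfrac14\csch^3\alpha\,H_\delta(M)H_\mu(M)\geq0,
\]
which on $q_2=0$ equals $\tfrac14\csch^5\alpha\sinh^2Y$. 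In general $E_j=\tfrac12(D_j\alpha)^2\csch^3\alpha\,H_\delta(M)H_\mu(M)\geq0$ for every control. Hence $e^{-2\lambda M}g(M)E_3\leq0$: the endpoint term is the obstruction, not a favourable contribution, and nothing in your plan controls it.

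The remaining steps do not close the gap. The single-crossing step in $\lambda$ only moves the problem to $\lambda=0$, where this domination must still be proved. Its justification is also inaccurate, since $(D_3-\sqrt2\rho)^2K$ contains the factors $\partial_lH_l=\sinh l-\rho\cosh l<0$, not only the $H_l$ times a polynomial. At $\lambda=0$ the $z$-reduction is left open. Because $d_3$ is transversal to $q_2=0$, the first trace equation carries an error whose sign you do not determine. Averaging with A2 is not available, because that device needs two controls with the same first trace and opposite errors; A2 and A3 already differ in their endpoint coefficients, $E_2=0\neq E_3$. The last step, monotonicity of $U_3/\cosh t$, is asserted only by analogy with A2, where $E_2=0$ and $P_2=(1-4\rho^2)K$.

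The missing idea is that no reduction in $\lambda$ or $z$ is needed on the first face. On $q_2=0$ one has $\delta=\mu=z/2$, the mixed terms cancel, and $P_3\leq0$ pointwise on $[1,\coth\alpha]$. The paper writes $P_3=-8L\mathcal P(a,b,c)/(a+b+c)^5$, where $\mathcal P$ has nonnegative coefficients and
\[
 a=e^z-1,\qquad b=e^{2Y+z}-e^z,\qquad c=\tfrac{\rho+1}{\rho-1}-e^{2Y+z}\geq0,\qquad L=e^{-Y-3z/2}/32 .
\]
The decisive input is the exact mass identity
\[
 \int_1^{\coth\alpha}(-P_3)\,d\rho-E_3=\frac{4L\,\mathcal Q(a,b)}{15(a+b)^5}\geq0,
\]
where $\mathcal Q$ has positive coefficients; it is obtained by polynomial integration in $c$. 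Since $w(\rho)=-e^{-2\lambda\rho}g(\rho)$ is positive and decreasing (Lemma~\ref{ab:kernel-signs}), the correction part of $R_3$ satisfies, for every $\lambda\geq0$,
\[
 \int_1^Mw(-P_3)\,d\rho-w(M)E_3\geq w(M)\Bigl(\int_1^M(-P_3)\,d\rho-E_3\Bigr)\geq0 .
\]
Together with $N_3\geq0$ this gives $F_3\geq0$ on the whole face. On the second face no system for the first trace is needed. Because $d_3$ has no $q_3$-component, the exact companion equation $\partial_YG_3=2\coth Y\,G_3-2\csch Y\,F_3$ holds, so $G_3=\mathcal TF_3\geq0$. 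Proposition~\ref{ab:boundary-reduction} then finishes the proof.
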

\begin{proof}
On the first face write $\alpha=Y+z/2$ and introduce
\[
 a=e^z-1,\qquad b=e^{2Y+z}-e^z,\qquad
 c=\frac{\rho+1}{\rho-1}-e^{2Y+z},\qquad
 L=\frac{e^{-Y-3z/2}}{32}.
\]
Thus $a,b,c\ge0$ on the integration interval. Expanding
\eqref{a26:coefficients} gives
\[
 P_3=-\frac{8L\,\mathcal P(a,b,c)}{(a+b+c)^5},\qquad
 E_3=\frac{64L b^2(1+a+b)^2}{(a+b)^5},
\]
where the following polynomial has only nonnegative coefficients:
\begin{align*}
 \mathcal P={}&8a^4c
 +(b+c)^2\{8b(1+b)(2+b)+(8+32b+19b^2)c
                                  +2(4+7b)c^2+3c^3\}\\
 &+2a(b+c)\{4b(2+6b+3b^2)+(16+52b+35b^2)c
                                  +2(14+17b)c^2+11c^3\}\\
 &+8a^3\{2c+(b+c)(b+4c)\}\\
 &+a^2\{8c+(b+c)(24b(1+b)+(64+67b)c+43c^2)\}.
\end{align*}
The sign $P_3\le0$ makes the integral positive, whereas the endpoint
term is negative. The required domination is the exact identity
\begin{equation}\label{a26:A3-domination}
 \int_1^{\coth\alpha}(-P_3(\rho))\,d\rho-E_3
       =\frac{4L\,\mathcal Q(a,b)}{15(a+b)^5}\ge0,
\end{equation}
where
\begin{align*}
 \mathcal Q(a,b)={}&165a^4+6a^3(28+115b)
       +10b^2(4+8b+9b^2)\\
 &+4ab(44+130b+135b^2)+a^2(56+608b+975b^2).
\end{align*}
To verify \eqref{a26:A3-domination}, use
$d\rho=-2(a+b+c)^{-2}dc$ and, for $0\le k\le5$,
\[
 \int_0^\infty\frac{c^k\,dc}{(a+b+c)^7}
 =\frac{k!(5-k)!}{6!(a+b)^{6-k}}.
\]
No estimate is involved in this polynomial integration.

The function $e^{-2\lambda\rho}(-g(\rho))$ is decreasing by
Lemma~\ref{ab:kernel-signs}. Thus \eqref{a26:A3-domination} implies
that the sum of the integral and endpoint terms in
\eqref{a26:residual} is nonnegative. The four-expert term $N_3$ is
nonnegative by its verification. Hence $F_3\ge0$.

The second face satisfies the exact equation
\[
 \partial_YG_3=2\coth Y G_3-2\operatorname{csch}Y F_3.
\]
Since $G_3/\sinh^2Y\to0$ at infinity,
\[
 G_3(\lambda,Y,z)
   =2\sinh^2Y\int_Y^\infty
             \frac{F_3(\lambda,s,z)}{\sinh^3s}\,ds\ge0.
\]
The equality-direction interpolation completes the proof. Cases with
$a=0$ or $b=0$ follow directly from the displayed polynomial formulas,
and their common degenerate endpoint follows by continuity.
\end{proof}

\begin{proposition}\label{a26:case6}
The residual $R_6$ of the control $(0,0,0,0,1)$ is nonnegative in $A$.
\end{proposition}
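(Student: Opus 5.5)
Control $(0,0,0,0,1)$ has $d_6=(0,0,0,1)$, so $D_6=\partial_{q_4}$. I will follow the scheme used for A2 and A3: use the exact formula \eqref{a26:residual} on the two characteristic faces, settle the first face $F_6$ directly, and obtain the second face $G_6$ either from the companion integral or from Lemma~\ref{ab:lem:two-two}. Proposition~\ref{ab:boundary-reduction} then propagates the sign into $A$.

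\emph{Step 1: the residual in kernel form.} In $A$ one has
\[
\partial_{q_4}\alpha=\partial_{q_4}\delta=\partial_{q_4}\mu=\tfrac1{\sqrt2},\qquad
\partial_{q_4}\lambda=-\tfrac1{\sqrt2}.
\]
Hence
\[
D_6-2\rho D_6\lambda=\tfrac1{\sqrt2}\bigl(\partial_\alpha+\partial_\delta+\partial_\mu+2\rho\bigr)
\]
acting on $K=-H_\alpha H_\delta H_\mu$. Two facts make this tractable: $H_l''=H_l$, and $\partial_l H_l(\rho)=\sinh l-\rho\cosh l\le0$ on the integration range. With them, $P_6$ becomes an explicit trilinear expression in $(H_l,H_l')$ with coefficients polynomial in $\rho$.

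The endpoint term is
\[
E_6=-\tfrac12(D_6M)(D_6K)(q,M).
\]
Since $H_\alpha(M)=0$, only the $\partial_\alpha H_\alpha(M)=-\csch\alpha$ term survives. Together with $D_6M=-\csch^2\alpha/\sqrt2$, this gives an explicit sign for $E_6$, which I expect to be negative, as in A3.

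\emph{Step 2: the face $q_2=0$ (trace $F_6$).} I will pass to the exponential variables $a=e^z-1$, $b=e^{2Y+z}-e^z$, $c=\frac{\rho+1}{\rho-1}-e^{2Y+z}$ from the proof of Proposition~\ref{a26:case3}. The target is to write $-P_6$ as $L\,\mathcal P_6(a,b,c)/(a+b+c)^5$ with $\mathcal P_6$ having nonnegative coefficients.

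Next I will compute $\int_1^M(-P_6)\,d\rho-E_6$ exactly using the beta-type identity $\int_0^\infty c^k(a+b+c)^{-7}\,dc$. This should give a polynomial $\mathcal Q_6(a,b)\ge0$. The monotonicity of $e^{-2\lambda\rho}(-g)$ from Lemma~\ref{ab:kernel-signs} then yields integral plus endpoint $\ge0$. The four-expert residual $N_6$ is nonnegative by~\cite{BEZ}, so $F_6\ge0$.

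If $P_6$ fails to have one sign on $[1,M]$, I will instead treat the trace as a function of $\lambda$. Lemma~\ref{ab:single-crossing} applies provided $\rho P_6$ changes sign at most once. That reduces the claim to $\lambda=0$ and $\lambda\to\infty$, and the latter limit is $N_6\ge0$ plus the uniform tail \eqref{ad:eq:uniform-tail}.

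\emph{Step 3: the face $q_3=0$ (trace $G_6$).} Since $\partial_{q_4}$ commutes with the propagation structure, I expect the exact companion relation $\partial_YG_6=2\coth Y\,G_6-2\csch Y\,F_6$, as for $R_3$. Decay $G_6/\sinh^2Y\to0$ comes from \eqref{ad:eq:uniform-tail}. Given these, $G_6=\Top F_6\ge0$.

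If the $q_4$-derivative breaks exact companionship, the fallback is the $b$-direction reduction of Proposition~\ref{ab:boundary-reduction}, which moves the problem to the edge $q_2=q_4$, $q_3=0$. There the datum is $\widetilde L$ and its companion, and Lemma~\ref{ab:lem:two-two} with $\theta=1$ applies.

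\emph{Main obstacle.} The hard part is Step 2: obtaining a positive-coefficient factorization of $\mathcal P_6$ and showing that the integrated excess $\mathcal Q_6$ dominates the negative endpoint term. The first thing I will try is the A3 substitution verbatim, then checking the coefficients of $\mathcal Q_6$. I will handle degenerate cases $a=0$ or $b=0$ by continuity, using the closed-chamber regularity of Proposition~\ref{sm:prop:closedchamber}.
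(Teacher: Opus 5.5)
\textbf{There is a genuine gap in Step~2, and Step~2 carries the whole difficulty of this case.} The A3-type domination identity you hope for is false.

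Take the edge $z=0$ of the face $q_2=0$, that is, $q_2=q_4=0$, $t=\sqrt2q_3$, $M=\coth t$. There $\delta=\mu=0$, so $H_\delta=H_\mu=1$ and $H_\delta'=H_\mu'=-\rho$. Your operator $\tfrac1{\sqrt2}(\partial_\alpha+\partial_\delta+\partial_\mu+2\rho)$ then gives exactly
\[
 P_6=-\tfrac14(1+2\rho^2)(\cosh t-\rho\sinh t),\qquad E_6=\tfrac14\csch^3t .
\]
Using $\sinh t\,(M-1)=e^{-t}$, one finds, for every $t>0$,
\[
 \frac{\int_1^M(-P_6)\,d\rho}{E_6}
 =\frac{e^{-2t}}{6}\bigl(\cosh^2t+\sinh 2t+6\sinh^2t\bigr)
 =\frac16\Bigl(\frac94-\frac52e^{-2t}+\frac54e^{-4t}\Bigr)\in\Bigl(\frac16,\frac38\Bigr).
\]
So $\int_1^M(-P_6)\,d\rho-E_6<0$ along this whole edge. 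Two consequences follow:
\begin{itemize}
\item In the A3 variables the would-be polynomial satisfies $\mathcal Q_6(0,b)<0$, so no nonnegative $\mathcal Q_6$ exists.
\item Bounding $e^{-2\lambda\rho}(-g)$ below by its value at $M$ gives only a negative lower bound.
\end{itemize}
This is not a loose constant. As $t\to\infty$, the integral term and the endpoint term cancel at their common leading order $O(e^{-2t})$. Hence no argument using only the monotonicity of $-g$ can close the gap; the sign depends on the precise behaviour of $g$ near $r=1$. The paper never separates this correction from the four-expert part.

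Your fallback does not rescue Step~2. Its trigger never fires, because $P_6$ does have one sign here. Even if you apply Lemma~\ref{ab:single-crossing}, treating the endpoint term as a point mass at $\rho=M$, it only reduces matters to $\lambda=0$. That is the A/C interface, where exactly the same unproved inequality remains, and you give no method for it.

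\textbf{What the paper uses instead.} First, $(F_6,G_6)$ satisfies the homogeneous $2$--$2$ system, as you correctly anticipate in Step~3. Lemma~\ref{ab:lem:two-two} therefore reduces the entire case to the edge $z=0$, that is, to proving $U(x,y)=8R_6(x,0,y,0)\ge0$ for each $x\ge0$. You never need $F_6\ge0$ on the whole face by a direct argument.

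Second, on that edge the four-expert residual and the correction are kept together in $U$. Positivity comes from a one-dimensional minimum principle for the profile-type operator $-\partial_{yy}-5\sqrt2\coth(\sqrt2y)\partial_y+12$, in three steps:
\begin{itemize}
\item Set $B=\sinh t\{-U_{yy}-5\sqrt2\coth t\,U_y+12U\}$ and $J=e^{2\chi M}\partial_yB$. Differentiation removes every integral from $J$, and $J<0$ follows from explicit Taylor certificates in $x$, namely the signs of $p_0,\dots,p_3$ together with $J_{xxxx}<0$.
\item Combined with $B(x,\infty)=0$, this gives $B>0$.
\item The endpoint values $U(x,\infty)=0$ and $U(x,0+)>0$ then exclude a negative interior minimum.
\end{itemize}
Without an argument of this kind, or some other genuinely finer use of $g$, your Step~2 does not establish $F_6\ge0$. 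Your Step~3 companion argument is fine once $F_6\ge0$ is in hand.
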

\begin{proof}
The pair $(F_6,G_6)$ satisfies the homogeneous system
\eqref{a26:homogeneous-system}, with the subscripts changed to $6$.
The nonnegative conditions at infinity follow directly from the
integral representation. Lemma~\ref{ab:lem:two-two} reduces the proof
to $z=0$.

For this boundary calculation use the unscaled variables $x=q_1$ and
$y=q_3$ on the first face, and set
\[
 U(x,y)=8R_6(x,0,y,0),\qquad W(x,y)=8R_6(x,y,0,y),\qquad t=\sqrt2y.
\]
Writing $M=\coth t$ and $\chi=\sqrt2x$, formula
\eqref{a26:residual} gives
\begin{align}
 U(x,y)={}&2\int_1^M e^{-2\chi\rho}(1+2\rho^2)
                        (-\cosh t+\rho\sinh t)g(\rho)\,d\rho\notag\\
 &+2e^{-2\chi M}\operatorname{csch}^3t\,g(M)
 +\frac{2\arctan(e^{-t})\cosh t
              -4\operatorname{arctanh}(e^{-t})\sinh t+\tanh t}{\sqrt2},
 \label{a26:U6}\\
 W(x,y)={}&\int_1^M e^{-2\chi\rho}g(\rho)
 \{3(\rho^2-1)+(1-7\rho^2)\cosh(2t)
                           +2(\rho+2\rho^3)\sinh(2t)\}\,d\rho\notag\\
 &+\frac{\arctan(e^{-t})(3-\cosh(2t))+5\sinh t
                   -4\operatorname{arctanh}(e^{-t})\sinh(2t)}{\sqrt2}.
 \label{a26:W6}
\end{align}
The equation
$W_y=2\sqrt2\coth t\,W-2\sqrt2\operatorname{csch}t\,U$
and $W/\sinh^2t\to0$ show that $U\ge0$ suffices.

Define the second-order differential expression and its scaled derivative
\begin{align}
 B(x,y)&=\sinh t\{-U_{yy}-5\sqrt2\coth t\,U_y+12U\},\notag\\
 J(x,y)&=e^{2\chi M}\partial_y B(x,y).
 \label{a26:BJ}
\end{align}
We shall prove $J<0$ and $B(x,\infty)=0$. This proves $B>0$,
after which an ordinary one-dimensional minimum principle gives $U\ge0$.

The following explicit form of $B$ verifies the elimination of the
integrals in $J$ and will also give its limit at infinity:
\begin{align}
 B(x,y)={}&-20\int_1^M e^{-2\chi\rho}\rho(1+2\rho^2)g(\rho)\,d\rho\notag\\
 &-2\sqrt2\{5\log\tanh(t/2)+5\operatorname{sech}t
                                      +\operatorname{sech}^3t\}\notag\\
 &+\frac{e^{-2\chi M}}{2\sinh^6t}
 \Bigl[(25-32\chi^2-40\cosh(2t)+15\cosh(4t)
                    +24\chi\sinh(2t))g(M)\notag\\
 &\hspace{39mm}+4(8\chi-3\sinh(2t))g'(M)-8g''(M)\Bigr].
 \label{a26:B-expanded}
\end{align}
It follows by differentiating \eqref{a26:U6} twice. The identities
$\partial_tM=-\operatorname{csch}^2t$ and
$\operatorname{arccoth}(\coth t)=t$ make all its terms elementary.

To give a finite sign certificate for $J$, introduce
\begin{align*}
 p_0(t)&=120t\cosh t-80\sinh t-15\sinh(3t)+\sinh(5t),\\
 p_1(t)&=120t(26+15\cosh(2t))-2415\sinh(2t)
                              -12\sinh(4t)-7\sinh(6t),\\
 p_2(t)&=3240t+4320t\cosh(2t)+1080t\cosh(4t)\\
       &+256\cosh t(4+\cosh(2t))\sinh^7t
                +3\{-584\sinh(2t)-394\sinh(4t)
                                  -24\sinh(6t)+\sinh(8t)\},\\
 p_3(t)&=-360t+256\sinh(2t)-40\sinh(4t)+\sinh(8t).
\end{align*}
Differentiating \eqref{a26:B-expanded} gives the identities
\begin{align}
 J(0,y)&=-\frac{5p_0(t)}{2\sinh^8t},
 &J_x(0,y)&=\frac{p_1(t)}{4\sqrt2\sinh^9t},\notag\\
 J_{xx}(0,y)&=-\frac{p_2(t)}{8\sinh^{10}t\cosh^3t},
 &J_{xxx}(0,y)&=-\frac{\sqrt2p_3(t)}{\sinh^{11}t},\notag\\
 J_{xxxx}(x,y)&=-256e^{2\chi M}(4+\cosh(2t))\operatorname{csch}^5t.
 &&\label{a26:J-certificates}
\end{align}
Here every identity is in the original variable $x$, not in $\chi$.

For clarity, all the one-variable signs needed in
\eqref{a26:J-certificates} are established next. They follow from
\begin{align*}
 p_0^{(5)}(t)&=10\sinh t\{12t-104\sinh(2t)+625\sinh(4t)\}>0,\\
 p_1^{(5)}(t)&=-192\sinh t
       \{-600t\cosh t+695\sinh(3t)+567\sinh(5t)\}<0,\\
 p_2^{(8)}(t)&=1280\Bigl[864t\{\cosh(2t)+64\cosh(4t)\}
             +3086\sinh(2t)+54784\sinh(4t)\\
 &\hspace{30mm}-137781\sinh(6t)+65536\sinh(8t)
                                      +78125\sinh(10t)\Bigr]>0,\\
 p_3'(t)&=512(3+\cosh(2t))\sinh^6t>0.
\end{align*}
The second sign uses $t\le\sinh t$, hence
$600t\cosh t\le300\sinh(2t)$.
For the third, $65536+78125=143661>137781$ and
$\sinh(10t),\sinh(8t)\ge\sinh(6t)$.
The endpoint data are
\[
 p_i^{(k)}(0)=0\quad(i=0,1,\ 0\le k\le4),\qquad
 p_2^{(k)}(0)=0\ (0\le k\le6),\quad
 p_2^{(7)}(0)=5898240,\quad p_3(0)=0.
\]
Taylor's formula with integral remainder proves
$p_0,p_2,p_3>0$ and $p_1<0$ on $(0,\infty)$.
Thus the first four Taylor coefficients of $J(\cdot,y)$ at $0$
are negative, and its fourth derivative is negative everywhere.
A final application of Taylor's formula gives $J(x,y)<0$ for $x\ge0$.

Every term in \eqref{a26:B-expanded} tends to zero as $y\to\infty$.
For example, $M-1=O(e^{-2t})$ and
$g^{(k)}(M)=O(e^{(2k+1)t})$ for $k=0,1,2$; the endpoint term is
$O((1+x^2)e^{-t})$ at fixed $x$.
Consequently $B(x,y)>0$ and
\[
 -U_{yy}-5\sqrt2\coth(\sqrt2y)U_y+12U\ge0.
\]
The limits of $U$ at the two endpoints are
\[
 U(x,\infty)=0,\qquad
 U(x,0+)=\frac{\pi}{2\sqrt2}
          -2\int_1^\infty e^{-2\sqrt2x\rho}(1+2\rho^2)g(\rho)\,d\rho>0.
\]
The integral converges even for $x=0$, since $g(\rho)=O(\rho^{-4})$.
A negative value of $U$ would therefore give an interior negative
minimum, contradicting the displayed differential inequality.
Hence $U\ge0$, then $W\ge0$ by its first-order equation, and the
homogeneous comparison principle proves the assertion in $A$.
\end{proof}

\begin{proposition}\label{a26:case4}
The residual $R_4$ of the control $(0,0,1,0,0)$ is nonnegative in $A$.
\end{proposition}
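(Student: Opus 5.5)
Case A4 has direction $d_4=d(00100)=(0,1,-1,0)$. The paper's ordering is A2, A3, A6, followed by comparisons for A4 and A5. So I would prove A4 by comparing its residual with residuals already known to be nonnegative, and with the equality residual $R_{15}=0$. Two exact identities drive this. The first is
\[
 d_4+d(00001)=(0,1,-1,1)=a.
\]
Since $R_{15}=0$ and the residuals are affine in $v$ and quadratic in the direction, the parallelogram law gives
\[
 R_{d_4}+R_{d_6}=2R_{15}+\tfrac14\bigl((d_4-d_6)\cdot\nabla\bigr)^2v\quad\hbox{(up to normalization)}.
\]
Written precisely,
\[
 R_{d_4}=2v-\tfrac12\bigl[(a-d_6)\cdot\nabla\bigr]^2v=-R_6+2v+\ldots
\]
The cross term $(a\cdot\nabla)(d_6\cdot\nabla)v=\partial_{q_4}(a\cdot\nabla v)$ is what I would control. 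Using $(a\cdot\nabla)^2v=2v$, one gets
\[
 R_4=-\tfrac12v_{q_4q_4}+\partial_{q_4}(a\cdot\nabla_q v).
\]
This is the exact analogue of the comb computation in Proposition~\ref{ab:comb-positive}, with $e_1$ replaced by $-e_4$: there $d(w)=e_1-a$, here $d_4=a-e_4$.

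So the plan copies the proof of Proposition~\ref{ab:comb-positive}. First, I would use Proposition~\ref{ab:boundary-reduction} to restrict to the face $q_2=0$ and the set $q_3=0,\ q_2=q_4$. On each face I would differentiate the interpolation formula \eqref{pc:eq:vAC} in the normal direction along $a$. This writes $R_4$ as a combination of $\partial_{q_4}$ and $\partial_{q_4}^2$ of the face traces $f,r_1$, with the weights $\coth Y$ and $\csch Y$. Here $\partial_{q_4}=\sqrt2(\partial_z+\tfrac12\partial_\lambda\cdot(-1))$ in the $(\lambda,Y,z)$ variables, so unlike the comb case the $x$-independent four-expert part $V_4$ also contributes. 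I would split accordingly:
\[
 R_4=N_4+\bigl(\hbox{integral of }e^{-2\lambda\rho}g P_4\bigr)+e^{-2\lambda M}g(M)E_4,
\]
as in \eqref{a26:residual}. Here $N_4\ge0$ is the four-expert residual for the corresponding control (00100 restricted to the four top ranks), by~\cite{BEZ}.

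For the correction part I would aim for a signed-kernel factorization like \eqref{ab:comb-factorizations}. Using $K=-H_\alpha H_\delta H_\mu$ and the identity $H_l''=H_l$, I would compute
\[
 P_4=K-\tfrac12(D_4-2\rho D_4\lambda)^2K.
\]
On A this should factor as $-(\rho^2-1)$ times a product of nonnegative $H$-factors, possibly plus a negative piece. If a negative endpoint term $E_4$ remains, I would dominate it exactly as in Proposition~\ref{a26:case3}. That means showing $\int_1^M(-P_4)\,d\rho\ge E_4$ by a rational substitution $c=(\rho+1)/(\rho-1)-e^{2Y+z}$ and polynomial integration, then invoking the monotonicity of $e^{-2\lambda\rho}(-g)$ from Lemma~\ref{ab:kernel-signs}. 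On the second face I would instead use the companion equation $\partial_YG_4=2\coth Y\,G_4-2\csch Y\,F_4$ if $(F_4,G_4)$ inherits the homogeneous system, as in \eqref{a26:homogeneous-system}. Otherwise I would fall back on Lemma~\ref{ab:lem:two-two}, after checking the limits at infinity via \eqref{ad:eq:uniform-tail}.

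The main obstacle is that $d_4$ has a component in $\partial_{q_4}$ that moves the datum parameter $\lambda=x-z/2$ as well as $z$. Consequently $D_4\lambda\neq0$, and the endpoint term does not vanish. If the direct factorization fails to have a sign, I would first try averaging. I would take the convex combination $\tfrac12(R_4+R_6)$, whose error in the hyperbolic equation cancels, apply the averaged-control remark after Lemma~\ref{paper:lem:onecomponent}, and then use $R_6\ge0$ only through the comparison $R_4-R_6$ on the boundary edge $z=0$. That edge reduces to a one-variable sign in $t$, which I would settle by a Taylor/derivative certificate as in Proposition~\ref{a26:case6}.
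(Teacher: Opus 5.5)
You have correctly reduced to the two sets of Proposition~\ref{ab:boundary-reduction} and derived $R_4=-\tfrac12v_{q_4q_4}+\partial_{q_4}(a\cdot\nabla_qv)$. On $q_2=0$ your fallback of redoing the A3 domination would work. But the proposal has no valid argument on the second set $\{q_3=0,\ q_2=q_4\}$.

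\textbf{The companion equation is not available.} The equation $\partial_YG=2\coth Y\,G-2\operatorname{csch}Y\,F$ is the trace form of the reflection condition $\mathcal{B}w:=w_{q_3}-\tfrac12(w_{q_2}+w_{q_4})=0$ on $q_3=0$. A residual inherits it from $v$ automatically when its direction is tangent to $\{q_3=0\}$, which is why it holds for A2, A3 and A6. But $d_4=(0,1,-1,0)$ has a normal component. With $\Phi=\mathcal{B}v$, which vanishes on the face but whose normal derivatives do not, one gets $\mathcal{B}R_4=-\tfrac12(d_4\cdot\nabla)^2\Phi$ there. Writing $d_4=a-e_4$ and interpolating $\Phi$ along $a$, this equals $-2\operatorname{csch}Y\,\partial_z\Phi_F$, where $\Phi_F$ is the $q_2=0$ trace of $\Phi$. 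It is nonzero; for large $Y$ it behaves like $\tfrac12\operatorname{csch}Y\,e^{-z}$. So $(F_4,G_4)$ satisfies neither the homogeneous system nor any bound $G_4\ge\theta\mathcal{T}F_4$, and Lemma~\ref{ab:lem:two-two} cannot be applied.

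\textbf{Averaging with A6 does not help.} That device needs two controls with a \emph{common} first trace and opposite errors. On $q_2=0$ the control sharing A4's trace is A3 (tied ranks $2,3$), not A6. Moreover $(F_6,G_6)$ satisfies the homogeneous system exactly. Hence $\tfrac12(R_4+R_6)$ simply carries half of each error of $R_4$, since $d_4$ is transversal to $\{q_2=0\}$ as well, and nothing cancels.

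\textbf{The missing idea: a pointwise comparison with A6.} Your last sentence points toward this but does not supply the mechanism. On $\{q_3=0,\ q_2=q_4=t/\sqrt2\}$, write $R_4-R_6$ via \eqref{a26:residual}.
\begin{itemize}
\item The endpoint terms vanish there: on $q_3=0$ one has $\mu=\alpha$, and $H_\alpha(\coth\alpha)=0$.
\item The kernel difference is $P_4-P_6=-\tfrac12(\rho^2-1)\{1-\cosh2t+\rho\sinh2t\}\le0$ on $[1,\coth t]$, because the brace is at least $1-e^{-2t}$. Against $g<0$, the integral difference is therefore nonnegative for every $\lambda$. The set is two-dimensional, and its $\lambda$-dependence is handled by this pointwise sign, not by a one-variable Taylor certificate in $t$.
\item The four-expert difference is $N_4-N_6=\frac{1-e^{-2t}}{4\sqrt2}\mathcal H(e^t)$, with $\mathcal H(s)=16\sum_{k\ge0}\frac{(k+1)s^{-4k-3}}{(4k+3)(4k+5)}>0$.
\end{itemize}
Hence $R_4\ge R_6\ge0$ on that set.

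\textbf{The first face is simpler than you planned.} The product factorization you anticipate does not occur: $P_4$ has no factor $\rho^2-1$, since it equals $-\tfrac34e^{-Y-3z/2}$ at $\rho=1$. Instead, swapping the tied ranks $2,3$ maps $00100$ to $01000$. So $R_4=R_3$ on $q_2=0$ (equivalently $P_4=P_3$ and $E_4=E_3$), and Proposition~\ref{a26:case3} closes that face directly. Proposition~\ref{ab:boundary-reduction} then gives $R_4\ge0$ throughout $A$.
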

\begin{proof}
On $q_2=0$, the exact coefficient identities are $P_4=P_3$ and
$E_4=E_3$. The argument in Proposition~\ref{a26:case3} proves that
their combined integral and endpoint contribution is nonnegative.
The individual four-expert residual $N_4$ is nonnegative as well.

On $q_3=0$, $q_2=q_4=y$, write $t=\sqrt2y$. The difference from
case A6 has no endpoint term, and its integral polynomial is
\begin{equation}\label{a26:diagonal-comparison}
 P_4-P_6=-\tfrac12(\rho^2-1)
                      \{1-\cosh(2t)+\rho\sinh(2t)\}\le0.
\end{equation}
Indeed, the expression in braces is at least $1-e^{-2t}\ge0$.
The corresponding four-expert difference equals
\begin{equation}\label{a26:N4-diagonal}
 N_4-N_6=\frac{1-e^{-2t}}{4\sqrt2}\,\mathcal H(e^t),
 \quad
 \mathcal H(s)=(s^2-1)\arctan(s^{-1})
                 +(s^2+1)\operatorname{arctanh}(s^{-1})-2s.
\end{equation}
For $s>1$,
\begin{equation}\label{a26:H-positive}
 \mathcal H(s)=16\sum_{k=0}^{\infty}
       \frac{(k+1)s^{-4k-3}}{(4k+3)(4k+5)}>0.
\end{equation}
Thus $R_4\ge R_6\ge0$ on this second reduced face. The boundary
reduction proposition proves the claim.
\end{proof}

\begin{proposition}\label{a26:case5}
The residual $R_5$ of the control $(0,0,0,1,0)$ is nonnegative in $A$.
\end{proposition}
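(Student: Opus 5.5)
The proof of $R_5\ge0$ in $A$ reuses the scheme of Propositions~\ref{a26:case3}--\ref{a26:case4}. By Proposition~\ref{ab:boundary-reduction}, it suffices to prove nonnegativity on the first face $\{q_2=0\}\cap A$ and on the diagonal edge $\{q_3=0,\ q_2=q_4\}$. The direction is $d(00010)=(0,0,1,-1)$. Its components $D_5\lambda$ and $D_5M$ come from $\lambda=\sqrt2q_1-\delta$ and $\alpha=(q_2+2q_3+q_4)/\sqrt2$. Here $D_5\delta=-1$, so $D_5\lambda=1$, and $D_5\alpha=1/\sqrt2$. Thus both the integral polynomial $P_5$ and the endpoint term $E_5$ in \eqref{a26:residual} are present.

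\emph{First face.} We try to compare with an already settled case instead of certifying $R_5$ directly. A natural choice is the pair $\{4,5\}$. Their directions $(0,1,-1,0)$ and $(0,0,1,-1)$ sum to $(0,1,0,-1)$, and on $q_2=0$ the $q_2$-derivative terms enter through the face relations. We therefore expand $P_5$ and $E_5$ in the variables $a,b,c$ of Proposition~\ref{a26:case3}. We expect, as there, $P_5=-8L\,\mathcal P_5(a,b,c)/(a+b+c)^5$ with a positive-coefficient polynomial $\mathcal P_5$, together with an endpoint $E_5$ of the same rational form. The domination identity $\int_1^{\coth\alpha}(-P_5)\,d\rho-E_5=4L\mathcal Q_5(a,b)/(15(a+b)^5)$ then follows from the beta-integral formula used there. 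The monotone weight $e^{-2\lambda\rho}(-g)$ converts this into nonnegativity of the integral-plus-endpoint contribution. The four-expert part $N_5$ is nonnegative by~\cite{BEZ}. If $\mathcal P_5$ fails to have a sign, the fallback is the averaging device noted after Lemma~\ref{paper:lem:onecomponent}: combine $R_5$ with $R_{15}=0$ (or with $R_{16}$), so that the first-trace hyperbolic errors cancel. Lemma~\ref{ab:lem:two-two} then applies with a nonnegative average companion, which reduces the question to $z=0$, that is, to the edge $q_1$ and $q_3$ only. The limits at infinity needed there come from \eqref{ad:eq:uniform-tail} plus the four-expert residual.

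\emph{Diagonal edge.} With $t=\sqrt2y$, we compute $P_5-P_6$ and $E_5-E_6$ explicitly, as in \eqref{a26:diagonal-comparison}. We hope the endpoint difference vanishes, because $D_5M$ and $D_6M$ agree up to the factor $D_j\alpha$, and that the polynomial difference is a nonpositive multiple of $(\rho^2-1)$ times a hyperbolic bracket bounded below by $1-e^{-2t}$. The four-expert difference $N_5-N_6$ should reduce to an elementary function of $s=e^t$ involving $\arctan(1/s)$ and $\operatorname{arctanh}(1/s)$. We would prove its sign with a positive power series in $s^{-1}$, as in \eqref{a26:H-positive}. Then $R_5\ge R_6\ge0$ on the edge by Proposition~\ref{a26:case6}. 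If instead the sign goes the wrong way, we compare with $R_4$ and $R_3$, or with the average $\tfrac12(R_4+R_6)$.

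\emph{Main obstacle.} The main obstacle is the first face. The endpoint term $E_5$ is negative and carries the factor $\operatorname{csch}^3$ from $D_5M$, so the exact domination polynomial $\mathcal Q_5$ has to be found and shown to have nonnegative coefficients. We would first try a direct symbolic expansion in $(a,b,c)$. If coefficients of both signs appear, we would switch to the averaged-control comparison with $00101$ via Lemma~\ref{ab:lem:two-two}.
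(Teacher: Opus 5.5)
\textbf{Diagonal edge.} On $\{q_3=0,\ q_2=q_4\}$ your plan is the paper's: compare with A6, note $E_5=E_6$ because $D_5\alpha=D_6\alpha=1/\sqrt2$, and use \eqref{a26:diagonal-comparison} and \eqref{a26:N4-diagonal}. There is a small normalization slip: $D_5\delta=-1/\sqrt2$ and $D_5\lambda=1/\sqrt2$, not $-1$ and $1$.

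\textbf{First face: the proposed certificate cannot exist.} The A3-type certificate you propose for $\{q_2=0\}$ is impossible. At $z=0$ (the edge $q_2=q_4=0$, where ranks $4$ and $5$ tie and $R_5=R_6$), \eqref{a26:coefficients} gives $P_5=P_6=\tfrac14(1+2\rho^2)(\rho\sinh t-\cosh t)$ and $E_5=E_6=\tfrac14\operatorname{csch}^3t$. Using $M=\coth t$ and $e^{-2t}=(M-1)/(M+1)$, a direct integration gives
\[
 \int_1^M(-P_5)\,d\rho-E_5=-\frac{5M(M-1)(M+2)}{24(M+1)\sinh t}<0
 \qquad(t>0).
\]
Consequences:
\begin{itemize}
\item No $\mathcal Q_5$ with $\mathcal Q_5(0,b)\ge0$ exists, so the monotone-weight argument cannot make the correction part nonnegative.
\item Using ``$N_5\ge0$'' separately does not close anything. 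The four-expert term has to be combined with the singular weight $g$, which is exactly why the proof of Proposition~\ref{a26:case6} needs the $B$, $J$ certificates.
\end{itemize}

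\textbf{The fallback does not repair this.} No control shares the first trace of $00010$ on $q_2=0$: the tie there is between ranks $2$ and $3$, whose bits agree. Also $R_{15}\equiv0$, so averaging with it does nothing. The first hyperbolic equation for $F_5$ is already exact, because $d_5$ is tangent to $\{q_2=0\}$. The real obstruction to Lemma~\ref{ab:lem:two-two} is the companion equation: $d_5$ is transverse to $\{q_3=0\}$, and you give no bound $G_5\ge\theta\mathcal TF_5$. Even if you reduced to $z=0$, you would still need $R_5=R_6$ there together with Proposition~\ref{a26:case6}.

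\textbf{Missing idea: compare with A6 on the first face too.} On $q_2=0$ one has $\delta=\mu=\ell:=z/2$ and $\alpha=Y+z/2$. Again $E_5=E_6$. With $J_l(\rho)=\sinh l-\rho\cosh l$, the definitions in \eqref{a26:coefficients} give
\[
 P_5-P_6=\partial_\alpha(\partial_\delta+\partial_\mu+2\rho)K
 =2(\rho^2-1)\sinh\ell\,J_\alpha(\rho)H_\ell(\rho)\le0
 \qquad(1\le\rho\le\coth\alpha).
\]
The sign holds because $J_\alpha<0$ there, and $H_\ell\ge0$ since $\ell\le\alpha$. This is the paper's $(\rho^2-1)D(\rho)$. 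Since $g<0$, the integral contribution to $R_5-R_6$ is nonnegative. The four-expert difference is $N_5-N_6=\frac{\sinh z}{2\sqrt2e^\alpha}\mathcal H(e^\alpha)\ge0$ by \eqref{a26:H-positive}. Hence $R_5\ge R_6\ge0$ on both reduced faces, and Proposition~\ref{ab:boundary-reduction} finishes the proof. All the hard analysis is inherited from A6; none of it is redone for A5.
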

\begin{proof}
We compare with case A6 on both reduced faces. On $q_2=0$, use
$Y=\sqrt2q_3$, $z=\sqrt2q_4$, $\alpha=Y+z/2$ and $M=\coth\alpha$.
There is no endpoint difference, $E_5-E_6=0$, and
\[
 P_5-P_6=(\rho^2-1)D(\rho),
\]
where $D$ is the quadratic determined by the following exact data:
\begin{align*}
 D(1)&=-e^{-Y-3z/2}(e^z-1),\\
 D(M)&=-2\operatorname{csch}^2\alpha\sinh Y\sinh(z/2),\\
 D''(\rho)&=\tfrac12e^{-Y-3z/2}(e^z-1)^2(1+e^{2Y+z})\ge0.
\end{align*}
These identities are obtained by expanding
\eqref{a26:coefficients}; they specify every coefficient of $D$.
Convexity and its nonpositive endpoint values imply $D\le0$ on
$[1,M]$. Since $g<0$, the integral contribution to $R_5-R_6$ is
nonnegative. The four-expert difference on the same face is
\[
 N_5-N_6=\frac{\sinh z}{2\sqrt2e^\alpha}\,\mathcal H(e^\alpha)\ge0
\]
by \eqref{a26:H-positive}.

On $q_3=0$, $q_2=q_4$, the differences $P_5-P_6$ and $N_5-N_6$
are exactly the right sides of \eqref{a26:diagonal-comparison} and
\eqref{a26:N4-diagonal}, respectively, and $E_5-E_6=0$.
Thus $R_5\ge R_6\ge0$ on both reduced faces. The boundary reduction
again proves the inequality everywhere in $A$.
\end{proof}

\subsection{Three elementary cases and two polynomial factorizations}

The residual in case A1 is $v_A\geq0$, by the fixed-policy
representation of the candidate as a discounted local-time expectation.
Cases A13 and A15 are equalities by \eqref{ab:eq-directions}.
We next treat A11, A14 and A16. In the following formulas put
\[
 S=\sinh\alpha,\quad h=\sinh(z/2),\quad u=\sinh Y,
 \quad M=\coth\alpha,\quad D=M-r,
 \quad \beta=-\frac{u}{Sh}.
\]
Expressions containing $\beta$ are used initially for $h>0$;
their products have continuous limits at $h=0$.

\begin{proposition}[Cases A11 and A16]\label{a716:prop:A11A16}
The residuals in cases A11 and A16 are nonnegative in $A$.
\end{proposition}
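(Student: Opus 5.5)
The plan uses the boundary reduction of Proposition~\ref{ab:boundary-reduction} together with the exact residual formula \eqref{a26:residual}. Case A11 has $d=(1,0,-1,0)$ and case A16 has $d=(0,0,1,0)$. In both, the direction has no $q_2$ or $q_4$ component, so $D\delta=D\mu=0$.

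For A11, $D_j\lambda=\sqrt2$ and $D_j\alpha=-1$. For A16, $D_j\lambda=0$ and $D_j\alpha=1$. So $P_j$ only involves differentiating the factor $H_\alpha$, up to second order, with the exponential weight producing $-2\rho D\lambda$. Because $H_\alpha''=H_\alpha$ and $H_\alpha(M)=0$, the polynomial $P_j$ takes the form $-H_\delta H_\mu\,Q_j(\rho)$. Here $Q_j$ is an explicit polynomial in $\rho$ and in $D=M-\rho$, with coefficients in $S,h,u$. The endpoint term is $E_j=-\tfrac12(D_jM)(D_jK)(q,M)=\tfrac12\csch^2\alpha\,(D_j\alpha)\,\partial_\alpha H_\alpha(M)H_\delta H_\mu(M)$, which is $\le0$ or $\ge0$ depending on the sign of $D_j\alpha$.

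On the reduced face $q_2=0$, I write $K$ through the face kernels $K_f,K_r$ with $\beta=-u/(Sh)$. I expect the integrand polynomial to factor as $(\rho^2-1)D\cdot(\text{nonnegative bracket})$, exactly as in the comb factorization \eqref{ab:comb-factorizations}. These are the two polynomial factorizations announced in the section title.

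For A16, the weight is independent of $\lambda$. I expect $P_{16}\le0$ on $[1,M]$, while $E_{16}$ carries the sign opposite to the integral. I then use the endpoint-domination device after Lemma~\ref{ab:kernel-signs}: integrate $-P_{16}$ exactly and show it dominates $E_{16}$. The tool is the same substitution $c=(\rho+1)/(\rho-1)-e^{2\alpha}$ used in Proposition~\ref{a26:case3}, giving a polynomial with nonnegative coefficients in $a,b$.

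For A11, I first exploit the concavity \eqref{ad:eq:first-convexity} in the $q_1$-component of the direction. Since $d_{11}=e_1-(0,0,1,0)$, the A11 residual is bounded below by a convex combination of the residuals along $\pm e_1$-shifted directions. The cleaner route, however, is direct: compute $Q_{11}$ and check it factors with nonnegative bracket, the $\rho$-factor coming from $-2\rho\sqrt2$. Where $Q_{11}$ changes sign once in $\rho$, I apply Lemma~\ref{ab:single-crossing} in $\lambda$. That lemma reduces the claim to $\lambda=0$ and $\lambda=\infty$; at $\lambda=\infty$ only $N_{11}\ge0$ (the four-expert residual) survives.

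On the second reduced face, $q_3=0$, $q_2=q_4$, we have $\delta=0$ and $H_\delta\equiv1$, and $t=\sqrt2 q_2$ is the only geometric variable. Here I compare with A6, as in Propositions~\ref{a26:case4}--\ref{a26:case5}. I compute $P_j-P_6$ and $N_j-N_6$ and hope for a sign of the form $(\rho^2-1)\times(\text{explicit nonpositive})$, with the four-expert difference a multiple of $\mathcal H(e^t)\ge0$ from \eqref{a26:H-positive}. If the comparison with A6 fails for A11, I will instead use the companion equation $\partial_Y G=2\coth Y\,G-2\csch Y\,F$, whenever the pair for the direction is homogeneous. Then Lemma~\ref{ab:lem:two-two} reduces everything to the first face.

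The main obstacle is the endpoint term in A16 (and in A11 on $q_2=0$). Its sign opposes the integral, so the exact moment identity certifying domination must be found and verified as a positive polynomial. The factorization itself should be routine, though lengthy, algebra.
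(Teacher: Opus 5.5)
Your plan for A16 cannot work as stated, and the cause is a scaling slip. Since $\alpha=(q_2+2q_3+q_4)/\sqrt2$, you have $D_{16}\alpha=\sqrt2$ and $D_{11}\alpha=-\sqrt2$, not $\pm1$. Because $D_{16}\lambda=D_{16}\delta=D_{16}\mu=0$, the operator $D_{16}$ acts on $K=-H_\alpha H_\delta H_\mu$ as $\sqrt2\,\partial_\alpha$. Then $H_\alpha''=H_\alpha$ gives $P_{16}=K-\tfrac12\cdot 2K\equiv0$ throughout $A$. The endpoint coefficient does not vanish: $E_j=\tfrac12(D_j\alpha)^2\csch^3\alpha\,H_\delta(M)H_\mu(M)$, which is quadratic, not linear, in $D_j\alpha$, and on $q_2=0$ it equals $u^2/S^5>0$. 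Hence $\int_1^M(-P_{16})\,d\rho-E_{16}=-u^2/S^5<0$. The positive moment identity you intend to find does not exist, and the entire correction contribution to $R_{16}$ on that face is the negative term $e^{-2\lambda M}g(M)u^2/S^5$.

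What must absorb this term is the four-expert residual, which on $q_2=0$ is exactly $N_{16}=u^2\tanh\alpha/(2\sqrt2 S^2)$. Using $e^{-2\lambda M}\le1$, the face residual is at least $(u^2/S^5)B(\alpha)$, where $B(t)=g(\coth t)+\sinh^3t\tanh t/(2\sqrt2)$. The missing ingredient is a one-variable proof that $B\ge0$, i.e.\ a comparison of the kernel value at the moving endpoint with an elementary function. The paper obtains it from $B(0)=0$ and a chain of derivative identities ending in $32(19+21\cosh2t)\sinh^4t\ge0$. Your proposal never uses $N_{16}$ quantitatively, so this step is absent. On $q_3=0$ nothing is needed: there $\mu=\alpha$, so $H_\mu(M)=0$, $E_{16}=0$, and $R_{16}=N_{16}\ge0$.

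For A11 your route is needlessly indirect. The same computation gives $P_{11}=-4\rho(\rho^2-1)\sinh\alpha\,H_\delta H_\mu\le0$ on all of $A$. It never changes sign, so the single-crossing lemma and the comparison with A6 are not needed. Moreover $E_{11}=E_{16}$, and $N_{11}=N_{16}$ because the last three components of the two directions differ only by sign. Hence $R_{11}=R_{16}+\int_1^M e^{-2\lambda\rho}g(\rho)P_{11}\,d\rho\ge R_{16}$, and A11 reduces to A16; this is the paper's argument.

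Your endpoint-domination device would in fact close A11 on its own on $q_2=0$. There $H_{z/2}(\rho)=h(M-\rho)+u/S\ge u/S$, so
\[
\int_1^M(-P_{11})\,d\rho\ge(u^2/S)\int_1^M4\rho(\rho^2-1)\,d\rho=u^2/S^5=E_{11}.
\]
A16, which has no integral term at all, still requires the certificate $B\ge0$.
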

\begin{proof}
Let $P_j^F(D)$ and $P_j^R(D)$ denote the polynomial multiplying
$e^{-2\lambda r}g(r)$ in the correction residual on $q_2=0$ and
$q_3=0$, respectively. Formula \eqref{ad:eq:residualformula} gives
\begin{align}
 P_{16}^F=P_{16}^R&=0,\notag\\
 P_{11}^F(D)&=4Sh^2(D-M)(D-M+1)(D-M-1)(D-\beta)^2,\notag\\
 P_{11}^R(D)&=4S^2hD(D-M)(D-M+1)(D-M-1)(D-\beta).
 \label{a716:eq:11-polynomials}
\end{align}
For both controls the endpoint coefficients are
\[
 E^F=\frac{u^2}{S^5},\qquad E^R=0.
\]
These identities can be checked without any integration: insert the
two kernels in \eqref{ad:eq:residualformula}, differentiate, and collect
powers of $D$. In particular, the last quadratic factor in $P_{11}^F$
is the square $(D+u/(Sh))^2$.

On $0\leq D\leq M-1$ the two polynomials in
\eqref{a716:eq:11-polynomials} are nonpositive. Their integrals against
$g<0$ are consequently nonnegative. The four-expert parts of A11 and
A16 agree, since their last three gap directions differ only by sign.
It therefore suffices to prove A16 on the two faces.
On $q_3=0$ its correction residual vanishes. On $q_2=0$ its full
residual is exactly
\begin{equation}\label{a716:eq:16-face}
 \frac{u^2}{S^5}
 \left[e^{-2\lambda M}g(M)
       +\frac{\sinh^3\alpha\tanh\alpha}{2\sqrt2}\right].
\end{equation}
Because $g(M)<0$, the bracket is bounded below by $B(\alpha)$, where
\[
 B(t)=g(\coth t)+\frac{\sinh^3t\tanh t}{2\sqrt2}.
\]
Here $B(0)=0$ and
\begin{align*}
 B'(t)&=\frac{3}{128\sqrt2}\csch^4t\,B_1(t),\\
 B_1(t)&=120t\cosh t-115\sinh t+6\sinh3t-6\sinh5t+\sinh7t,\\
 B_1'(t)&=2\sinh t\,B_2(t),\\
 B_2(t)&=60t-5\sinh2t-23\sinh4t+7\sinh6t,\\
 B_2'(t)&=32(19+21\cosh2t)\sinh^4t.
\end{align*}
The functions $B_1,B_2$ vanish at zero. Thus $B_2,B_1,B\geq0$.
This proves \eqref{a716:eq:16-face}; the boundary reduction completes
both cases. All degenerate faces follow by continuity.
\end{proof}

\begin{proposition}[Case A14]\label{a716:prop:A14}
The residual in case A14 is nonnegative in $A$.
\end{proposition}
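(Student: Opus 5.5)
The plan follows the template of Propositions~\ref{a716:prop:A11A16} and~\ref{a26:case5}. The control is $w=00110$ with $d(w)=(0,1,0,-1)=c$. By Proposition~\ref{ab:boundary-reduction}, it suffices to prove $R_{14}\geq0$ on the two reduced faces $\{q_2=0\}\cap A$ and $\{q_3=0,\ q_2=q_4\}$. On each face we split the residual as in \eqref{a26:residual} into the four-expert part $N_{14}$, the integral against $e^{-2\lambda r}g(r)$, and the endpoint term $e^{-2\lambda M}g(M)E_{14}$. The direction $c$ has zero first component, so $D_{14}\lambda=-\sqrt2\cdot\tfrac{-2}{2}$ only through $\delta$; more precisely, $\lambda=\sqrt2q_1-\delta$ gives $D_{14}\lambda=-D_{14}\delta=\sqrt2$. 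The polynomials $P_{14}$ are therefore explicit cubics-times-quadratics in $D=M-r$, as in \eqref{a716:eq:11-polynomials}.

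\textbf{The easier face.} On $q_3=0$, $q_2=q_4=y$, the direction $c$ is tangent to the interface $q_2=q_4$ up to sign of $\delta$. I expect the comparison used in Proposition~\ref{a26:case4} to work here. I would compute $P_{14}-P_6$ and $N_{14}-N_6$ in $t=\sqrt2y$ and aim for a factorization of the form $-(\rho^2-1)\times(\text{nonnegative hyperbolic factor})$ for the former, together with a nonnegative multiple of $\mathcal H(e^t)$ from \eqref{a26:H-positive} for the latter. Proposition~\ref{a26:case6} then gives $R_{14}\geq R_6\geq0$. If the endpoint term does not cancel against case A6, I would instead use that on the diagonal $c$ is an equality direction in $B$: by the $C^2$ matching of Theorem~\ref{sm:thm:globalC2}, $R_{14}$ should vanish there to the order allowed by continuity of the Hessian. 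This is the first thing I would check, since it would make this face trivial.

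\textbf{The main face.} On $q_2=0$, with $(\lambda,Y,z)$ as in Section~\ref{a26:section}, I would expand \eqref{ad:eq:residualformula} with the kernels $K_f,K_r$. The goal is a factorization $P_{14}^F(D)=S h(\cdots)(D-M)(D-M+1)(D-M-1)\,\Pi(D)$ with $\Pi$ of fixed sign on $0\leq D\leq M-1$, mirroring $P_{11}^F$. If $P_{14}^F\leq0$ there, the integral term is nonnegative because $g<0$. The endpoint coefficient $E_{14}$ is expected to be a negative multiple of $u^2/S^5$ or of $uh/S^4$, and it has to be dominated. Here I would proceed as in \eqref{a26:A3-domination}: substitute $c=(r+1)/(r-1)-e^{2Y+z}$ and prove the exact identity $\int_1^M(-P_{14})\,dr-E_{14}\geq0$ via a polynomial with nonnegative coefficients. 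The remark after Lemma~\ref{ab:kernel-signs}, together with monotonicity of $e^{-2\lambda r}(-g)$, then controls the endpoint term for every $\lambda\geq0$. The four-expert residual $N_{14}$ is nonnegative by~\cite{BEZ}.

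\textbf{Main obstacle and fallback.} The main obstacle is this domination on $q_2=0$. If $P_{14}^F$ changes sign, I would not use the domination route. Instead I would treat the full face residual as a function of $\lambda$ and apply Lemma~\ref{ab:single-crossing} to reduce to $\lambda=0$ and $\lambda=\infty$. At $\lambda=\infty$ only $N_{14}\geq0$ survives. At $\lambda=0$ the face residual becomes a one-variable function of $(Y,z)$, and I would use Lemma~\ref{ab:lem:two-two} in $z$ to reduce further to $z=0$, where a sign certificate like $B_1,B_2$ in Proposition~\ref{a716:prop:A11A16} should close the argument. Degenerate faces ($h=0$, $Y=0$) would follow by continuity from Section~\ref{sm:section}.
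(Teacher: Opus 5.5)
Your outline (reduce to the two faces, show the correction polynomial is nonpositive so that $g<0$ makes the integral nonnegative, and use the four-expert residual) is the paper's route. However, you misplace the difficulty, and your primary plan on the diagonal face is false.

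\textbf{The main face.} There is no endpoint term to dominate. The direction $c=(0,1,0,-1)$ leaves $\alpha=(q_2+2q_3+q_4)/\sqrt2$ fixed, so $D_{14}M=0$ and $E_{14}=-\tfrac12(D_{14}M)(D_{14}K)(q,M)$ vanishes identically. The ``main obstacle'' you name is therefore absent. So is the need for the single-crossing and Lemma~\ref{ab:lem:two-two} fallback, which would anyway require a hyperbolic system for the A14 traces that you have not derived.

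What you leave as a hoped-for factorization is a two-line computation. One has $D_{14}\alpha=D_{14}\mu=0$, $D_{14}\delta=-\sqrt2$ and $D_{14}\lambda=\sqrt2$. Hence $D_{14}^2K=2K$, and
\[
 P_{14}=4\rho H_\alpha H_\mu\bigl(\partial_\delta H_\delta+\rho H_\delta\bigr)
 =-4\rho(\rho^2-1)\sinh\delta\,H_\alpha(\rho)H_\mu(\rho)\leq0
 \qquad(1\leq\rho\leq M).
\]
On $q_2=0$ this is exactly the paper's $4Sh^2D(D-M)(D-M+1)(D-M-1)(D-\beta)$ with $\beta=-u/(Sh)\leq0$. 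On $q_3=0$ it is $4S^2hD^2(D-M)(D-M+1)(D-M-1)$.

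Also $N_{14}\equiv0$. In \eqref{ad:eq:four-expert} the direction $(1,0,-1)$ fixes $t$ and $(c+a)/\sqrt2$, while it moves $(c-a)/\sqrt2$ and $\sqrt2c$ at speed $-\sqrt2$. So it is an equality direction for $V_4$. Since the displayed sign holds at every interior point of $A$, the face reduction is not even needed.

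\textbf{The diagonal face.} On $\{q_3=0,\ q_2=q_4\}$ the comparison $R_{14}\geq R_6$ fails, and not because of endpoint terms. The factor $\sinh\delta$ vanishes there, so $R_{14}=0$. By contrast, $R_6(0)=u_0-\tfrac12u_0B_{44}=u_0/3>0$ by \eqref{sm:eq:fullcollisionjet}, so the inequality is violated near the full collision. Only your fallback is correct on this face: $R_{14}\equiv0$ in $B$, transferred by the $C^2$ matching across $\Sigma_{AB}$. It agrees with the kernel formula above.
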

\begin{proof}
There is no endpoint term. The two correction polynomials are
\begin{align*}
 P_{14}^F(D)&=4Sh^2D(D-M)(D-M+1)(D-M-1)(D-\beta),\\
 P_{14}^R(D)&=4S^2hD^2(D-M)(D-M+1)(D-M-1).
\end{align*}
As in \eqref{a716:eq:11-polynomials}, both are nonpositive for
$0\leq D\leq M-1$. The correction residual and the four-expert
residual are therefore nonnegative on both faces. Apply
Proposition~\ref{ab:boundary-reduction}.
\end{proof}

\subsection{A common one-dimensional certificate for A9 and A10}

For $\xi\geq0$ and $t>0$, write
\[
 m=\coth t,\qquad a(t)=\arctan(e^{-t}),\qquad
 b(t)=\operatorname{arctanh}(e^{-t}).
\]
Define
\begin{align}
 F(\xi,t)={}&2\int_1^m e^{-2\xi r}g(r)
  \bigl[(-1+6r^2)\cosh t+r(9-14r^2)\sinh t\bigr]dr\notag\\
 &+2e^{-2\xi m}\csch^3t\,g(m)
 +\frac{2a(t)\cosh t-4b(t)\sinh t+\tanh t}{\sqrt2},
 \label{a716:eq:F910}\\
 G_{10}(\xi,t)={}&\int_1^m e^{-2\xi r}g(r)
  \bigl[-3(8r^4-9r^2+1)+(24r^4-15r^2+1)\cosh2t\notag\\
 &\hspace{44mm}+2r(9-14r^2)\sinh2t\bigr]dr\notag\\
 &+\frac{-a(t)(-3+\cosh2t)+5\sinh t-4b(t)\sinh2t}{\sqrt2},
 \label{a716:eq:G10}\\
 G_9(\xi,t)={}&\int_1^m e^{-2\xi r}g(r)
 \bigl[1+7r^2-8r^4
  +(-3+8r^2)((1+r^2)\cosh2t-2r\sinh2t)\bigr]dr\notag\\
 &+\frac{a(t)(-1+3\cosh2t)-3\sinh t}{\sqrt2}.
 \label{a716:eq:G9}
\end{align}
These are actual residuals, with a common positive normalization:
\begin{align}
 F(\xi,t)&=8R_{10}(\xi/\sqrt2,0,t/\sqrt2,0)
          =8R_9(\xi/\sqrt2,0,t/\sqrt2,0),\notag\\
 G_j(\xi,t)&=8R_j(\xi/\sqrt2,t/\sqrt2,0,t/\sqrt2),
 \qquad j=9,10.
 \label{a716:eq:910-residual-link}
\end{align}
In particular, the same function $F$ suffices for the two cases.

\begin{lemma}\label{a716:lem:F-positive}
For every $\xi\geq0$ and $t>0$, $F(\xi,t)\geq0$.
\end{lemma}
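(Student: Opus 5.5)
The plan is to copy the A6 argument (Proposition~\ref{a26:case6}), since $F$ has the same structure as $U$ in \eqref{a26:U6}. The elementary part of \eqref{a716:eq:F910} is identical to the elementary part of \eqref{a26:U6}, and so is the endpoint term $2e^{-2\xi m}\csch^3t\,g(m)$. Only the polynomial in the integral changes:
\[
(-1+6r^2)\cosh t+r(9-14r^2)\sinh t
\quad\hbox{instead of}\quad
(1+2r^2)(-\cosh t+r\sinh t).
\]
Write $F=U_6+\Delta$, where $U_6(\xi,t)=U(\xi/\sqrt2,t/\sqrt2)$ is the A6 trace and
\[
\Delta(\xi,t)=2\int_1^m e^{-2\xi r}g(r)(r^2-1)\bigl[4\cosh t+r(8-12r^2)\sinh t\bigr]\,dr .
\]
First I would check directly whether $\Delta\ge0$. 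This holds if the bracket $\bigl[4\cosh t+r(8-12r^2)\sinh t\bigr]$ is $\le0$ on $[1,m]$. At $r=1$ it equals $4\cosh t-4\sinh t>0$, so the sign is not uniform and the direct comparison fails near $r=1$. In that case I would drop the comparison and follow the A6 route on $F$ itself.

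The A6 route has three steps.
\begin{enumerate}
\item Form the second-order expression $B_F=\sinh t\{-F_{tt}-5\coth t\,F_t+6F\}$, in the variable $t$ with the same normalization as \eqref{a26:BJ}. The operator is chosen so that the elementary terms in $F$ collapse to the logarithmic expression in \eqref{a26:B-expanded}. Differentiating twice under the integral and using $\partial_t m=-\csch^2 t$ leaves an integral of $e^{-2\xi r}g(r)$ times an explicit polynomial in $r$, plus endpoint terms in $g(m),g'(m),g''(m)$.
\item Show $B_F>0$ by setting $J_F=e^{2\xi m}\partial_tB_F$ and Taylor expanding in $\xi$ at $\xi=0$. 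Each Taylor coefficient $\partial_\xi^kJ_F(0,t)$ is an elementary function of $t$ of the type $p_i(t)/\sinh^n t$. I would certify the sign of each by repeated differentiation: find a derivative order with an evident sign, and check that all lower derivatives vanish at $0$, as was done for $p_0,\dots,p_3$. A top derivative in $\xi$ must have a sign for every $\xi$. This should come from the factor $e^{2\xi m}$: once enough $\xi$-derivatives are taken, only the moving-endpoint contributions survive, so the last term is a single explicit expression analogous to $J_{xxxx}$. Together with $B_F(\xi,\infty)=0$, negativity of $J_F$ gives $B_F>0$.
\item Conclude $F\ge0$ by the one-dimensional minimum principle, using two endpoint values. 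At $t=\infty$, $F\to0$ by the tail estimate \eqref{ad:eq:uniform-tail}. At $t=0^+$, the limit can be read from \eqref{sm:eq:fullcollisionjet} for $\xi=0$, or written as an explicit integral of $g$ for $\xi>0$, and must be shown positive. The integral is controlled using $g<0$ and the fact that the polynomial at $r$ large is $-14r^3\sinh t\to$ favorable sign as $t\downarrow 0$.
\end{enumerate}

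The main obstacle is the sign certificate for $J_F$. The Taylor coefficients in $\xi$ may not all share one sign, as they did for A6. If they do not, I would fall back on Lemma~\ref{ab:single-crossing} in the variable $\xi$. Since $\partial_\xi F=\int_1^m e^{-2\xi r}k(r)\,dr$ plus an endpoint term, the first task is to show that the $\rho$-polynomial $-4r\,g(r)[\dots]$, combined with the endpoint term (handled via the decreasing-kernel argument after Lemma~\ref{ab:kernel-signs}), changes sign only once. The lemma then reduces the claim to $F(0,t)\ge0$ and $F(\infty,t)$. Here $F(\infty,t)$ is the elementary part, which is nonnegative by the A6 boundary computation. $F(0,t)$ is a one-variable function, to be handled by the $B$/$J$ certificate at $\xi=0$ alone.
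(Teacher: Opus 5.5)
Your main route is the paper's proof. With $\mathcal L=-\partial_{tt}-5\coth t\,\partial_t+6$, the paper sets $Z=\sinh t\,\mathcal LF$ and $E=e^{2\xi\coth t}\partial_tZ$; these are your $B_F$ and $J_F$. It shows $E\le0$ by Taylor's formula in $\xi$, using explicit one-variable signs of $E_\xi,E_{\xi\xi},E_{\xi\xi\xi}$ at $\xi=0$ together with $E_{\xi\xi\xi\xi}=-16\sqrt2e^{2\xi\coth t}(4+\cosh2t)\csch^5t<0$. It then integrates from $t=\infty$ to get $\mathcal LF\ge0$ and closes with the ODE minimum principle.

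The Taylor coefficients do share a sign, so your fallback is not needed. It would not have worked anyway: the kernel of $\partial_\xi F$ is positive near $r=1$ and negative near $r=m$ (the bracket equals $5e^{-t}$ and $-8\cosh t\csch^2t$ there), which is the reverse of the pattern Lemma~\ref{ab:single-crossing} requires. Separately, your $\Delta$ is miscomputed: the difference of the two polynomials is $8r^2\cosh t+8r(1-2r^2)\sinh t$, with no factor $r^2-1$. Your conclusion that this difference changes sign still holds.

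The step you have wrong is the endpoint $t\downarrow0$, which you say "must be shown positive." That is false at $\xi=0$: there $E(0,t)=0$ and $F(0,0^+)=0$. In fact $F(0,\cdot)\equiv0$, because on $q_1=q_2=0$ the control $10001$ is obtained from the equality control $00101$ by permuting tied ranks. The jet agrees: $d^{\mathsf T}Bd=2$ for $d=(-1,0,0,1)$, so $R_{10}(0)=0$.

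The minimum principle only needs $F(\xi,0^+)\ge0$, and your stated mechanism does not give it. The $r(9-14r^2)\sinh t$ term and the endpoint term both vanish in the limit; the former is $O(t\log(1/t))$. What remains is
\[
F(\xi,0^+)=\frac{\pi}{2\sqrt2}+2\int_1^\infty e^{-2\xi r}(-1+6r^2)g(r)\,dr,
\]
and its integral is negative. The correct argument is monotonicity:
\begin{itemize}
\item Since $(-1+6r^2)g<0$, the integral term is nondecreasing in $\xi$.
\item At $\xi=0$ it equals $-\pi/(2\sqrt2)$, because $\int_1^\infty(-1+6r^2)g\,dr=-\pi/(4\sqrt2)$, equivalently the jet value above.
\item Hence $F(\xi,0^+)\ge0$, with equality exactly at $\xi=0$.
\end{itemize}
Correspondingly, your anticipated strict inequality $J_F<0$ fails at $\xi=0$; you only get $E\le0$. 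This is harmless, since a negative interior minimum of $F$ already contradicts $\mathcal LF\ge0$.
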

\begin{proof}
Set
\[
 \mathcal L=-\partial_{tt}-5\coth t\,\partial_t+6,
 \qquad Z=\sinh t\,\mathcal LF,
 \qquad E=e^{2\xi\coth t}\partial_tZ.
\]
Differentiating \eqref{a716:eq:F910}, including its moving endpoint,
eliminates every integral from $E$. The resulting exact certificates are
\begin{align}
 E(0,t)&=0,\notag\\
 E_\xi(0,t)&=-\frac{3\csch^9t}{16\sqrt2}\,C_1(t),\notag\\
 E_{\xi\xi}(0,t)&=\frac{\csch^{10}t}{4\sqrt2}\,C_2(t),\notag\\
 E_{\xi\xi\xi}(0,t)&=-\frac{\csch^{11}t}{4\sqrt2}\,C_3(t),\notag\\
 E_{\xi\xi\xi\xi}(\xi,t)
 &=-16\sqrt2e^{2\xi\coth t}(4+\cosh2t)\csch^5t,
 \label{a716:eq:E-jets}
\end{align}
where
\begin{align*}
 C_1(t)&=-120t(2+\cosh2t)+185\sinh2t-4\sinh4t+\sinh6t,\\
 C_2(t)&=-600t\cosh t+435\sinh t+54\sinh3t+2\sinh5t-\sinh7t,\\
 C_3(t)&=-360t+256\sinh2t-40\sinh4t+\sinh8t.
\end{align*}
For explicit sign certificates put
\[
 D_1=-120t\cosh t+120\sinh t-5\sinh3t+3\sinh5t,
 \qquad
 D_2=300t-165\sinh2t-3\sinh4t+7\sinh6t.
\]
Direct differentiation gives
\begin{align*}
 C_1'&=4\sinh t\,D_1,&
 D_1'&=30\sinh t(\sinh4t-4t),\\
 C_2'&=-2\sinh t\,D_2,&
 D_2'&=96(13+7\cosh2t)\sinh^4t,\\
 C_3'&=512(3+\cosh2t)\sinh^6t.
\end{align*}
All five functions vanish at zero. Thus $C_1,C_3\geq0$ and
$C_2\leq0$. Taylor's formula in $\xi$, with integral remainder,
applied to \eqref{a716:eq:E-jets}, proves $E\leq0$.

For clarity, every step back from this certificate has a specified
endpoint. The representation \eqref{a716:eq:F910} and the kernel
expansions imply, for each fixed $\xi\geq0$,
\[
 \lim_{t\to\infty}Z(\xi,t)=0,
 \qquad \lim_{t\to\infty}F(\xi,t)=0.
\]
Since $Z_t=e^{-2\xi\coth t}E\leq0$, the first limit gives
$Z\geq0$, hence $\mathcal LF\geq0$. At the other endpoint,
\begin{equation}\label{a716:eq:F-zero-endpoint}
 F(\xi,0)=\frac\pi{2\sqrt2}
  +2\int_1^\infty e^{-2\xi r}(-1+6r^2)g(r)\,dr\geq0.
\end{equation}
Indeed $g<0$ makes the integral term nondecreasing in $\xi$, and at
$\xi=0$ one has $\int_1^\infty(-1+6r^2)g(r)\,dr=-\pi/(4\sqrt2)$,
so that $F(0,0)=0$, in agreement with the vanishing of the residual
of every two-expert control at the full collision.
Convergence and the endpoint passage follow from
$g(r)=-1/(14\sqrt2r^4)+O(r^{-6})$ at infinity and
$g(r)=O((r-1)^{-1/2})$ at one. The one-dimensional minimum
principle for $\mathcal L$, first on compact intervals and then using
the two endpoint limits, yields $F\geq0$.
\end{proof}

\begin{lemma}\label{a716:lem:G-positive}
The functions $G_9$ and $G_{10}$ are nonnegative.
\end{lemma}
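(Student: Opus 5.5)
The aim is to prove $G_9,G_{10}\ge0$ on the reduced face $\{q_3=0,\ q_2=q_4\}$. I will reuse the structure that gave Lemma~\ref{a716:lem:F-positive}. The key fact is that the diagonal traces are companions of $F$. The equality direction $a$ relates the two faces: the exit formula on the face $q_2=q_4$, i.e.\ the interface pair \eqref{id:eq:interfacepair}, already gives
\[
 \rho_s=2\coth s\,\rho-2\operatorname{csch}s\,\ell .
\]
Since $R_9$ and $R_{10}$ commute with $a$-propagation, each residual pair (first face, diagonal face) should inherit a first-order relation of the same form. I would first check, by differentiating \eqref{a716:eq:G10} and \eqref{a716:eq:G9} with moving endpoint $m=\coth t$, whether
\[
 \partial_tG_j=2\coth t\,G_j-2\operatorname{csch}t\,F+e_j(\xi,t)
\]
holds with an explicit remainder $e_j$. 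The remainder comes from the fact that, unlike A6, the controls $10010$ and $10001$ have a nonzero $q_1$-component, so $\xi$-derivatives enter.

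If $e_j\le0$, the complementary implication after Lemma~\ref{paper:lem:onecomponent} finishes the argument. Multiplying by $\sinh^{-2}t$ and integrating from $\infty$, using $G_j/\sinh^2t\to0$ (which follows from the kernel tail bound \eqref{ad:eq:uniform-tail}), gives
\[
 G_j\ge2\sinh^2t\int_t^\infty F/\sinh^3\ge0
\]
by Lemma~\ref{a716:lem:F-positive}. The work is then to identify $e_j$. I expect it to be a multiple of $\partial_\xi F$, or an integral of $e^{-2\xi r}g(r)$ against a polynomial in $r$ that is sign-definite on $[1,m]$. If it is an integral, I would factor the polynomial in the variables $D=m-r$ and $r\pm1$, as in \eqref{a716:eq:11-polynomials}, and read off the sign from $g<0$.

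If the remainder is not sign-definite, I would fall back on a direct certificate modelled on Proposition~\ref{a26:case6}. Set $Z_j=\sinh t\,\mathcal L G_j$ with a suitable second-order operator $\mathcal L$. This operator is fixed by requiring that the $\arctan$ and $\operatorname{arctanh}$ terms be annihilated, which for the $\cosh2t$-type factors means adjusting the coefficient $5\coth t$ and the constant $6$. Then put $E_j=e^{2\xi m}\partial_tZ_j$ and compute its $\xi$-jets at $\xi=0$ up to order three, together with a globally signed fourth $\xi$-derivative. Each jet is a one-variable function of the form $\sum c_k t^{i}\sinh(kt)$, and I would certify its sign by repeated differentiation down to a manifestly signed expression, as for $C_1,C_2,C_3$ and $p_0,\dots,p_3$.

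The endpoints are needed in either route. At $t\to\infty$ every term decays. At $t=0$, $G_j(\xi,0)=F(\xi,0)$ by continuity at the collapsed face, because both residuals coincide at $q_2=q_3=q_4=0$; this value is nonnegative by \eqref{a716:eq:F-zero-endpoint}. A one-dimensional minimum principle then yields $G_j\ge0$. A cheaper alternative for one of the two cases is a comparison $G_9-G_{10}\ge0$ or the reverse, reducing to a single function. On the diagonal the four-expert parts differ by a multiple of $\mathcal H(e^t)$ from \eqref{a26:H-positive}, and the integral parts differ by a polynomial I would test for sign on $[1,m]$.

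The main obstacle is obtaining an exact sign for the coupling remainder $e_j$, or failing that, the $\xi$-dependence of the certificate. The jets in $\xi$ must all share one sign, and the top derivative must be globally signed. Verifying this requires lengthy but finite hyperbolic-polynomial sign arguments, which I would organize exactly as in Lemma~\ref{a716:lem:F-positive}.
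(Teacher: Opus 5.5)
Your primary route is the one the paper takes: compare $\partial_tG_j$ with the companion equation $\partial_tG=2\coth t\,G-2\csch t\,F$, and integrate from $t=\infty$ using $G_j/\sinh^2t\to0$. For $G_{10}$ this is the entire proof. The remainder vanishes identically, so $G_{10}=2\sinh^2t\int_t^\infty F(\xi,s)\sinh^{-3}s\,ds\geq0$.

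For $G_9$ the remainder is $e_9=-2H$, with $H$ as in \eqref{a716:eq:H9}, and the mechanism you propose for its sign fails. $H$ is not a multiple of $\partial_\xi F$. Its integral part $12\int_1^m e^{-2\xi r}r(r^2-1)g(r)\,dr$ is \emph{nonpositive}: factoring the kernel and using $g<0$ gives exactly the wrong sign. The inequality $H\geq0$ holds only because this term is outweighed by the positive four-expert term $(4b(t)-2\sech t)/\sqrt2$ and the positive endpoint term $-4e^{-2\xi m}\csch^4t\,g(m)$. No termwise reading can close it.

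Your alternative comparison $G_9\geq G_{10}$ is the same problem in disguise. Indeed $G_9-G_{10}=2\sinh^2t\int_t^\infty H(\xi,s)\sinh^{-2}s\,ds$. The integral kernel of $G_9-G_{10}$ is $8(r^2-1)\sinh t\,\{3r\cosh t-(4r^2-1)\sinh t\}$, and its bracket equals $3e^{-t}>0$ at $r=1$ but $-\csch t<0$ at $r=\coth t$. Only the four-expert difference has a sign; it is indeed a positive multiple of $\mathcal H(e^t)$, as you anticipated.

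The missing idea is to differentiate the remainder once more. The kernel $12r(r^2-1)$ of $H$ does not depend on $t$, so $\partial_tH$ contains only the moving-endpoint contribution and elementary terms. Explicitly, $H_t=e^{-2\xi m}\csch^8t\,\sech^2t\,N(\xi,t)/(16\sqrt2)$ with $N$ elementary. One proves $N\leq0$ from three facts:
\begin{itemize}
\item concavity in $\xi$, i.e.\ $N_{\xi\xi}<0$;
\item $N(0,t)=-2\cosh^2t\,V(t)\leq0$;
\item $N_\xi(0,t)=Q(t)/\sinh t\leq0$.
\end{itemize}
Here $V\geq0$ and $Q\leq0$ follow from Taylor jets vanishing at $t=0$ through order six together with signed seventh derivatives. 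Since $H\to0$ as $t\to\infty$, $H_t\le0$ gives $H\geq0$, and hence $G_9=2\sinh^2t\int_t^\infty[F\sinh^{-3}s+H\sinh^{-2}s]\,ds\geq0$.

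Your fallback, a second-order certificate applied to $G_j$ itself in the style of case A6, is not worked out and does not supply this step. Note also how the A6 expression defining $B$ in \eqref{a26:BJ} is chosen. Its purpose is to make the integral kernel independent of $t$, so that one further $t$-derivative removes the integral. It is not chosen to annihilate the four-expert terms: $B$ in \eqref{a26:B-expanded} still contains $\log\tanh(t/2)$.
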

\begin{proof}
For $G_{10}$ the identity is exact:
\[
 -\tfrac12\partial_tG_{10}+\coth t\,G_{10}-\csch t\,F=0.
\]
Since $G_{10}(\xi,t)/\sinh^2t\to0$ at infinity, integration gives
\[
 G_{10}(\xi,t)=2\sinh^2t\int_t^\infty
                  \frac{F(\xi,s)}{\sinh^3s}\,ds\geq0.
\]
For $G_9$, put $H=-\tfrac12G_{9,t}+\coth tG_9-\csch tF$.
The polynomial terms in \eqref{a716:eq:F910}--\eqref{a716:eq:G9}
give the explicit reduction
\begin{equation}\label{a716:eq:H9}
 H=12\int_1^m e^{-2\xi r}r(r^2-1)g(r)\,dr
   +\frac{4b(t)-2\sech t}{\sqrt2}
   -4e^{-2\xi m}\csch^4t\,g(m).
\end{equation}
Its derivative has no integrals:
\[
 H_t=\frac{e^{-2\xi m}\csch^8t\sech^2t}{16\sqrt2}
       N(\xi,t),
\]
where
\begin{align*}
 N(\xi,t)={}&-5(19\xi+24t)\cosh t
 -30(\xi+4t)\cosh2t\cosh t
 +30\sinh t+65\sinh3t+3\sinh5t\\
 &+\xi\csch t\,[120t\cosh^2t+\cosh t\sinh5t]
 -32\sinh^7t\,e^{2\xi\coth t}.
\end{align*}
The following elementary derivative test proves $N\leq0$:
\begin{align*}
 N_{\xi\xi}&=-128\cosh^2t\sinh^5t\,e^{2\xi\coth t}<0,\\
 N(0,t)&=-2\cosh^2t\,V(t),\qquad
 N_\xi(0,t)=Q(t)/\sinh t,\\
 V(t)&=120t\cosh t-80\sinh t-15\sinh3t+\sinh5t,\\
 Q(t)&=\sinh t\,[\cosh t(-94-28\cosh2t+2\cosh4t)
                +\cosh t(120t\coth t-64\sinh^6t)].
\end{align*}
For $0\leq k\leq6$, $V^{(k)}(0)=Q^{(k)}(0)=0$, whereas
\begin{align*}
 V^{(7)}(t)&=5[152\cosh t-6561\cosh3t+15625\cosh5t
                         +24t\sinh t]>0,\\
 Q^{(7)}(t)&=64[339\cosh2t-3584\cosh4t+15309\cosh6t
                         -16384\cosh8t+120t\sinh2t]<0.
\end{align*}
The first sign follows from monotonicity of $\cosh$; for the second,
$120t\sinh2t\leq30\cosh4t$, and the remaining positive terms
are dominated by the displayed negative terms. Thus $V\geq0$ and
$Q\leq0$, and concavity in $\xi$ gives $N\leq0$.

Finally \eqref{a716:eq:H9} gives $H(\xi,t)\to0$ as $t\to\infty$.
Consequently $H_t\leq0$ implies $H\geq0$, and
\[
 G_9(\xi,t)=2\sinh^2t\int_t^\infty
 \left[\frac{F(\xi,s)}{\sinh^3s}
       +\frac{H(\xi,s)}{\sinh^2s}\right]ds\geq0.
\]
\end{proof}

\begin{proposition}[Case A10]\label{a716:prop:A9A10}
The residual in case A10 is nonnegative throughout $A$.
\end{proposition}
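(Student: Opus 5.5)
The proof is the boundary reduction of Proposition~\ref{ab:boundary-reduction} applied to $R_{10}$, combined with the two one-dimensional certificates just established. The first step is to confirm that $R_{10}$ extends continuously to the faces. This follows from the global regularity in Theorem~\ref{sm:thm:globalC2}. Proposition~\ref{ab:boundary-reduction} then says it suffices to prove $R_{10}\geq0$ on two sets: the face $\{q_2=0\}\cap A$ and the diagonal edge $\{q_3=0,\ q_2=q_4\}$.

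The diagonal edge is immediate. By \eqref{a716:eq:910-residual-link}, $R_{10}$ there equals $G_{10}(\xi,t)/8$ with $\xi=\sqrt2q_1$ and $t=\sqrt2q_2$. This is nonnegative by Lemma~\ref{a716:lem:G-positive}, and the point $t=0$ follows by continuity.

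The face $q_2=0$ is the real work, because Lemma~\ref{a716:lem:F-positive} only covers $q_4=0$. So I will use the characteristic structure once more. With $x=\sqrt2q_1$, $Y=\sqrt2q_3$, $z=\sqrt2q_4$ and $\lambda=x-z/2\geq0$, I form the trace pair $(F_{10},G_{10})$ defined as before. I expect it to satisfy the homogeneous $2$--$2$ system \eqref{a26:homogeneous-system}, exactly as in cases A2 and A6. The reason is that the candidate satisfies \eqref{pc:eq:ACsystem}, and applying the constant-coefficient residual operator preserves the interior equation \eqref{pc:eq:linearPDE} and the reflection structure. One point to check is that $d(10001)$ involves $\partial_{q_1}$. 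The system is written in the characteristic variables $(\lambda,Y,z)$, where $x$ enters only through $\lambda$, so the $-\partial_x$ term is absorbed and no problem arises.

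Given the system, the plan is to apply Lemma~\ref{ab:lem:two-two} with $\theta=1$. That lemma reduces positivity on the face to positivity at $z=0$, and the $z=0$ trace is exactly $F(\xi,t)/8\geq0$ from Lemma~\ref{a716:lem:F-positive}. The companion bound $G\geq\mathcal TF$ comes from the second equation together with the decay $G/\sinh^2Y\to0$. The lower bound on $F_{10}$ and the limits at infinity come from the uniform tail estimate \eqref{ad:eq:uniform-tail} and the nonnegativity of the four-expert residual.

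The main obstacle is verifying that $(F_{10},G_{10})$ really satisfies the homogeneous system with zero forcing. If instead a signed error term $e$ appears, I would try two fallbacks. First, check whether $e\leq0$, so that Lemma~\ref{paper:lem:onecomponent} still applies. Second, average with the complementary case A9: the two controls share the same first trace $F$, so if their hyperbolic errors are opposite, the averaged equation has zero error, and the paired principle, Lemma~\ref{bo:lem:paired}, would supply the sign. Once positivity holds on both reduced faces, Proposition~\ref{ab:boundary-reduction} extends it to all of $A$, with degenerate points handled by continuity.
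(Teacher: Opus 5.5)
Your route is the paper's: the boundary reduction, the $z=0$ traces $F/8$ and $G_{10}/8$ from \eqref{a716:eq:910-residual-link}, Lemmas~\ref{a716:lem:F-positive} and~\ref{a716:lem:G-positive}, and Lemma~\ref{ab:lem:two-two} applied to a homogeneous $2$--$2$ system for the face traces. The one step you leave open is that homogeneous system; the paper obtains it by direct differentiation using \eqref{ad:eq:recurrence}.

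The reason you give for it is false as a general principle. Applying $1-\frac12(d\cdot\nabla)^2$ does not preserve the reflection conditions in general: A7 and A8 have opposite, nonzero first-equation errors, and so do B5/B6 and B9/B10. What makes the principle hold for A10 is not the $q_1$-component of $d(10001)=(-1,0,0,1)$. It is that $d_2=d_3=0$, so $d\cdot\nabla$ is tangent to both faces $q_2=0$ and $q_3=0$. To make this precise, put $N_2=\partial_{q_2}-\frac12(\partial_{q_1}+\partial_{q_3})$ and $N_3=\partial_{q_3}-\frac12(\partial_{q_2}+\partial_{q_4})$. The identities $N_2v=0$ on $q_2=0$ and $N_3v=0$ on $q_3=0$ from \eqref{pc:eq:reflections} hold identically on those faces, and the Region A formula is smooth up to them where $Y>0$. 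Since $N_i$ commutes with $d\cdot\nabla$ and tangential derivatives of a function vanishing on a face vanish there, you get $N_iR_{10}=N_iv-\frac12(d\cdot\nabla)^2N_iv=0$ on the corresponding face. Also $(a\cdot\nabla)^2R_{10}=2R_{10}$, so $R_{10}$ has the same exit decomposition as $v$. The derivation of \eqref{pc:eq:derivedACsystem} then applies verbatim and gives \eqref{a26:homogeneous-system}. The same tangency explains A2 and A6, and this is a tidy alternative to the kernel computation.

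With this observation your fallbacks are unnecessary. The second one would not be available in $A$ anyway. A9 is not the complement of A10, and $R_9$, $R_{10}$ share a first trace only at $z=0$, where ranks $4$ and $5$ tie. Averaging $9$ with $10$ is the Region B mechanism, not the Region A one.
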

\begin{proof}
At fixed $\lambda$, its two face traces satisfy the homogeneous
$2$--$2$ hyperbolic system of Lemma~\ref{ab:lem:two-two}; these
identities follow directly from \eqref{ad:eq:recurrence}.
Their $z=0$ traces are $F/8$ and $G_{10}/8$ by
\eqref{a716:eq:910-residual-link}. Lemmas~\ref{a716:lem:F-positive}
and \ref{a716:lem:G-positive}, followed by that hyperbolic minimum
principle and Proposition~\ref{ab:boundary-reduction}, prove A10.
Case A9 will be propagated together with A7 and A8 below.
\end{proof}

\subsection{The averaged boundary certificates for A7 and A8}

Define the sum of the two edge residuals by
\begin{align}
 F_{78}(\xi,t)={}&\frac12\int_1^m e^{-2\xi r}g(r)
  [(-1+6r^2)\cosh t-r(3+2r^2)\sinh t]dr\notag\\
 &+\frac12e^{-2\xi m}\csch^3t\,g(m)
 +\frac{2a(t)\cosh t+4b(t)\sinh t+\tanh t}{4\sqrt2},
 \label{a716:eq:F78}\\
 G_{78}(\xi,t)={}&\frac14\int_1^m e^{-2\xi r}g(r)
 [-1+5r^2-4r^4+(-1+7r^2+4r^4)\cosh2t-10r^3\sinh2t]dr
 \notag\\
 &+\frac{2a(t)\cosh^2t-3\sinh t+2b(t)\sinh2t}{4\sqrt2}.
 \label{a716:eq:G78}
\end{align}
In this normalization,
\[
 F_{78}=(R_7+R_8)(\xi/\sqrt2,0,t/\sqrt2,0),\qquad
 G_{78}=(R_7+R_8)(\xi/\sqrt2,t/\sqrt2,0,t/\sqrt2).
\]

\begin{lemma}\label{a716:lem:78-edges}
Both $F_{78}$ and $G_{78}$ are nonnegative.
\end{lemma}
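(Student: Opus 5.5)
I will follow the template of Lemmas~\ref{a716:lem:F-positive} and \ref{a716:lem:G-positive}: first establish $F_{78}\geq0$ by a one-dimensional minimum principle for the profile operator
\[
\mathcal L=-\partial_{tt}-5\coth t\,\partial_t+6,
\]
and then obtain $G_{78}$ from a first-order companion equation. Set $Z=\sinh t\,\mathcal LF_{78}$ and $E=e^{2\xi\coth t}\partial_tZ$. Differentiating \eqref{a716:eq:F78} twice, including the moving endpoint $m=\coth t$ and using $\partial_t m=-\csch^2t$, the integral terms in $E$ should cancel exactly, as they did for $F$. This works because the polynomial $(-1+6r^2)\cosh t-r(3+2r^2)\sinh t$ is built from the same hyperbolic building blocks, and $\mathcal L$ annihilates the profile part after the factor $\sinh t$. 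I then expect $E(0,t)$ to vanish or have a sign, and the $\xi$-derivatives $E_\xi(0,t)$, $E_{\xi\xi}(0,t)$, $E_{\xi\xi\xi}(0,t)$ to be $\csch^k t$ times explicit combinations of $t\cosh(jt)$ and $\sinh(jt)$. The fourth derivative $E_{\xi\xi\xi\xi}$ should again be a pure exponential multiple of $-\csch^5t$ times a positive factor, since only the $g''(m)$ endpoint contribution survives at fourth order in $\xi$.

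Each one-variable certificate $C_i(t)$ will be signed by repeated differentiation until it factors as $\sinh^k t$ times a manifestly signed term, together with vanishing Taylor data at $t=0$, exactly as for $C_1,C_2,C_3$. Taylor's formula in $\xi$ then gives $E\leq0$. Since $Z\to0$ as $t\to\infty$ by the kernel tail bounds, this yields $Z\geq0$, hence $\mathcal LF_{78}\geq0$.

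For the endpoint values, the limit $F_{78}(\xi,\infty)=0$ follows from $M-1=O(e^{-2t})$. At $t=0$,
\[
F_{78}(\xi,0)=\frac{\pi}{8\sqrt2}+\frac12\int_1^\infty e^{-2\xi r}(-1+6r^2)g\,dr,
\]
which is nondecreasing in $\xi$ because $g<0$. It equals $0$ at $\xi=0$ by the moment identity used in \eqref{a716:eq:F-zero-endpoint}. This matches the vanishing of the two-expert residuals at the full collision. The minimum principle for $\mathcal L$ then gives $F_{78}\geq0$.

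For $G_{78}$, I form $H=-\frac12\partial_tG_{78}+\coth t\,G_{78}-\csch t\,F_{78}$ and reduce it using the polynomial terms, as in \eqref{a716:eq:H9}. If $H\equiv0$, the integrating-factor formula gives $G_{78}=\mathcal T F_{78}\geq0$ directly. Otherwise I show $H\geq0$ by verifying $H_t\leq0$ through an integral-free expression $N(\xi,t)$, which I sign via concavity in $\xi$ and Taylor certificates at $\xi=0$, together with $H\to0$ at infinity. Then
\[
G_{78}=2\sinh^2t\int_t^\infty\bigl(F_{78}/\sinh^3s+H/\sinh^2s\bigr)\,ds\geq0.
\]

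The main obstacle is the sign of the certificates at $\xi=0$. Because $F_{78}$ is an average, some of the Taylor coefficients $E_{\xi^k}(0,t)$ may have the wrong sign, in which case the simple Taylor argument fails. If that happens, I would first try Lemma~\ref{ab:single-crossing}: write $\partial_\xi F_{78}$ as a kernel integral with a single sign change in $r$, and reduce to the endpoints $\xi=0$ and $\xi=\infty$. The $\xi=\infty$ limit is the four-expert averaged residual, which is nonnegative by \cite{BEZ}. The $\xi=0$ trace would then need a separate $\mathcal L$-certificate of the kind described above.
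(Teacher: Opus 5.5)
Your plan has a genuine gap: both inequalities it aims to prove are false. Your main route is built to produce $\mathcal{L}F_{78}\ge0$, so no arrangement of the certificates $E_{\xi^k}(0,t)$ can give $E\le0$.

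For $F$, the operator $\mathcal{L}$ works because of the term $-4b(t)\sinh t$ in \eqref{a716:eq:F910}. In \eqref{a716:eq:F78} the corresponding term has the opposite sign, $+4b(t)\sinh t/(4\sqrt2)$. For fixed $\xi>0$ the integrals in \eqref{a716:eq:F78} converge as $m=\coth t\to\infty$, and their moving-endpoint contributions are exponentially small in $1/t$. Hence $\partial_tF_{78}=\frac{\log(1/t)}{2\sqrt2}+O(1)$ and $\partial_{tt}F_{78}=O(t^{-1})$, so
\[
\mathcal{L}F_{78}(\xi,t)=-\frac{5\log(1/t)}{2\sqrt2\,t}+O(t^{-1})\longrightarrow-\infty\qquad(t\downarrow0).
\]
At $\xi=0$ the divergent moment $\int_1^m r^3g\,dr$ only reinforces this, since $F_{78}(0,t)\sim\frac{4}{7\sqrt2}\,t\log(1/t)$. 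Thus $Z=\sinh t\,\mathcal{L}F_{78}\to-\infty$ at $t=0$ while $Z\to0$ at infinity, so $E$ is positive somewhere for every $\xi$. The same computation rules out the separate $\mathcal{L}$-certificate at $\xi=0$ in your fallback.

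Your $G_{78}$ step fails as well. Letting $\xi\to\infty$ in \eqref{a716:eq:F78}--\eqref{a716:eq:G78} gives
\[
-\tfrac12\partial_tG_{78}+\coth t\,G_{78}-\csch t\,F_{78}\longrightarrow-\frac{2b(t)+\sech t}{4\sqrt2}<0 .
\]

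The missing observation is that no minimum principle in $t$ is needed. Both traces are nondecreasing in $\xi$, because their kernels do not change sign at all; Lemma~\ref{ab:single-crossing} is therefore unnecessary.
\begin{itemize}
\item For $F_{78}$, the cubic $p(r)=(-1+6r^2)\cosh t-r(3+2r^2)\sinh t$ has negative leading coefficient, with $p(-1)=5e^t$, $p(0)=-\cosh t$, $p(1)=5e^{-t}$ and $p(m)=4\coth t\csch t$. Hence $p>0$ on $[1,m]$. Differentiating in $\xi$ replaces $g<0$ by $-2rg>0$ and makes the endpoint term positive.
\item For $G_{78}$, the kernel is $Dq(D)/4$ with $D=m-r$. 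Locating the three roots of $q$ from $q(0),q(m-1),q(m),q(m+1)$ shows $q>0$ on $[0,m-1]$, and there is no endpoint term.
\end{itemize}

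This reduces everything to $\xi=0$, where the traces are elementary. One has $F_{78}(0,t)=\frac{\sinh t}{112\sqrt2}\bigl[112b+8\log\coth(t/2)+\frac32\csch^7t\,K(t)\bigr]$ with $K(0)=0$ and $K'=192\sinh^6t$. The function $G_{78}(0,t)/\sinh^2t$ is convex and decays at infinity, which gives its sign.

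Alternatively, once $F_{78}\ge0$ is known, $G_{78}$ follows from the exact decomposition $G_{78}=\frac12\mathcal{T}F_{78}+\frac18G_9$. This comes from $h(\cdot,0)=\frac12F_{78}$, $g_8=g_9=G_9/8$ and the $r_7$ equation in Proposition~\ref{a716:prop:A7A8}. It can also be checked directly from \eqref{a716:eq:F78}, \eqref{a716:eq:G78} and \eqref{a716:eq:G9}. Combined with Lemma~\ref{a716:lem:G-positive}, it gives $G_{78}\ge0$. Equivalently, your $H$-construction works if the coefficient in front of $\csch t\,F_{78}$ is $\frac12$ rather than $1$.
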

\begin{proof}
First, both functions are nondecreasing in $\xi$. For $F_{78}$
the polynomial
\[
 p(r)=(-1+6r^2)\cosh t-r(3+2r^2)\sinh t
\]
has values $p(-1)=5e^t$, $p(0)=-\cosh t$, $p(1)=5e^{-t}$,
and $p(m)=4\coth t\csch t$. Its leading coefficient is negative.
It therefore has two roots below one and its third root above $m$;
in particular $p>0$ on $[1,m]$. Differentiation in $\xi$ replaces
$g(r)<0$ by $-2rg(r)>0$, and also makes the endpoint term positive.

For $G_{78}$ set $D=m-r$. Its polynomial, including the factor
$1/4$, equals $Dq(D)/4$, where
\[
 q(D)=4(\cosh2t-1)D^3-6\sinh2tD^2+(\cosh2t-1)D+4\coth t.
\]
Its values at $0,m-1,m,m+1$ are, respectively,
\[
 4\coth t,\quad5(1-e^{-2t}),\quad-\sinh2t,\quad5(e^{2t}-1).
\]
The positive leading coefficient and these values show that its
three roots lie in $(-\infty,0)$, $(m-1,m)$ and $(m,m+1)$.
Thus $q>0$ on $[0,m-1]$, proving monotonicity of $G_{78}$ in $\xi$.

It remains to check $\xi=0$. Put
\[
 K(t)=-60t+45\sinh2t-9\sinh4t+\sinh6t.
\]
Then $K(0)=0$ and
$K'(t)=192\sinh^6t\geq0$. The first endpoint is
\begin{equation}\label{a716:eq:F78-zero}
 F_{78}(0,t)=\frac{\sinh t}{112\sqrt2}
 \left[112b(t)+8\log\coth(t/2)+\frac32\csch^7t\,K(t)\right]
 \geq0.
\end{equation}
For the second endpoint define, directly from \eqref{a716:eq:G78},
\[
 J(t)=1376256\sqrt2\,\frac{G_{78}(0,t)}{\sinh^2t}.
\]
Differentiation and substitution of \eqref{ab:kernel} give
\begin{align}
 J''(t)={}&\frac{786432e^{2t}(1+e^{2t})}{(e^{2t}-1)^{10}}
 \Bigl[(e^{2t}-1)^7
       \{14b(t)+5\log\coth(t/2)\}
       +36e^{7t}K(t)\Bigr]\geq0.
 \label{a716:eq:J78-second}
\end{align}
The defining integral gives $J(t),J'(t)\to0$ at infinity.
Consequently $J(t)=\int_t^\infty(s-t)J''(s)ds\geq0$.
Equations \eqref{a716:eq:F78-zero} and \eqref{a716:eq:J78-second}
are obtained from the displayed edge integrals themselves; their
normalizing factors are positive. Monotonicity in $\xi$ completes
the proof.
\end{proof}

\subsection{Coupled propagation of A7, A8 and A9}

For fixed $\lambda\geq0$, set
\begin{align*}
 f_j(Y,z)&=R_j((\lambda+z/2)/\sqrt2,0,Y/\sqrt2,z/\sqrt2),\\
 r_j(Y,z)&=R_j((\lambda+z/2)/\sqrt2,Y/\sqrt2,0,(Y+z)/\sqrt2),
 \qquad g_j(t)=r_j(t,0).
\end{align*}
All derivatives in the following argument hold $\lambda$ fixed.
The common equality direction $b$ gives the exact interpolation
\begin{equation}\label{a716:eq:companion-interpolation}
 r_j(Y,z)=\frac{\sinh Y}{\sinh\alpha}g_j(\alpha)
       +\frac{\sinh(z/2)}{\sinh\alpha}f_j(0,2\alpha),
 \qquad\alpha=Y+z/2.
\end{equation}

\begin{proposition}[Cases A7, A8 and A9]\label{a716:prop:A7A8}
The three residuals are nonnegative throughout $A$.
\end{proposition}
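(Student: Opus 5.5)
The plan is to reduce everything to the reduced faces of Proposition~\ref{ab:boundary-reduction} and to propagate signs in $z$ at fixed $\lambda\geq0$ with the comparison lemmas of Section~\ref{paper:sec:comparison}. The $z=0$ edge data are exactly the one-dimensional functions already certified: $F$ and $G_9$ for A9 (Lemmas~\ref{a716:lem:F-positive} and~\ref{a716:lem:G-positive}), and $F_{78}$ and $G_{78}$ for the sum $R_7+R_8$ (Lemma~\ref{a716:lem:78-edges}). Throughout I use the interpolation \eqref{a716:eq:companion-interpolation}. It writes each second-face trace $r_j$ in the form \eqref{bo:eq:companions}, with edge function $g_j$ and diagonal value $f_j(0,2\alpha)$. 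So the hypotheses of Lemma~\ref{bo:lem:paired} reduce to signs of $g_j$ on the edge and of $f_j$ on the collapsed edge $Y=0$.

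\emph{Step 1 (exact first-order system).} Using the recurrence \eqref{ad:eq:recurrence} and the kernel formula \eqref{ad:eq:residualformula}, compute $(\partial_Y-2\partial_z)f_j-2\coth Y\,f_j$ for $j=7,8,9$. I expect the following structure. For A9 the system is homogeneous, with companion $r_9$, as for A10. For A7 and A8 the first-face traces satisfy
\[
(\partial_Y-2\partial_z)f_j=2\coth Y\,f_j-\csch Y\,(r_j+r_{j'})+e_j ,
\]
where the cross term comes from the rank collision coupling the two controls. The averaged pair $f_{78}=f_7+f_8$ should then carry an error $e_7+e_8$ that cancels, leaving a nonpositive remainder. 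This is the ``averaging controls'' mechanism announced before Lemma~\ref{ab:lem:two-two}. The identity I expect is $e\leq0$ with a finite limit at $Y=0$; I will read its sign off as a polynomial in $D=M-r$ against $g<0$, as in the A11/A14 factorizations.

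\emph{Step 2 (the sum and A9).} Apply Lemma~\ref{bo:lem:paired} to $F=f_7+f_8$ with the companions $R,S$ built from $r_7,r_8$, taking $r+s=G_{78}$. The required edge signs $r+s\geq0$ and $2r+s\geq0$ should follow from $G_{78}\geq0$ together with $g_9=G_9/8\geq0$ or $G_{10}\geq0$, after identifying one of $g_7,g_8$ with an already certified trace on the diagonal. The initial sign $F(Y,0)=F_{78}\geq0$ is given. The terminal condition $\liminf_{z\to\infty}F(0,z)/\sinh^3(z/2)\geq0$ follows from the uniform tail bound \eqref{ad:eq:uniform-tail} plus nonnegativity of the four-expert residuals. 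A9 is handled directly by Lemma~\ref{ab:lem:two-two} with $\theta=1$, exactly as A10.

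\emph{Step 3 (individual signs).} To separate $R_7$ and $R_8$, I first try the one-component Lemma~\ref{paper:lem:onecomponent}. For each $j$, I bound the cross companion from below by the already nonnegative quantity ($(R+S)/2$, or the A9 companion), so that the inequality for $f_j$ has a nonnegative companion. If that fails, I compare $R_7$ and $R_8$ with $R_9$ on both reduced faces by coefficient differences $P_j-P_9$, $E_j-E_9$, $N_j-N_9$, as in Propositions~\ref{a26:case4}--\ref{a26:case5}. The interpolation of Proposition~\ref{ab:boundary-reduction} then finishes.

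\emph{Main obstacle.} The main obstacle is Step 1: identifying the exact coupled system and confirming that the averaged error has a sign. This needs lengthy but mechanical differentiation of the kernel $K=-H_\alpha H_\delta H_\mu$, including the moving-endpoint term $E_j$. The second risk is the condition $2r+s\geq0$, which is not covered by Lemma~\ref{a716:lem:78-edges} alone.
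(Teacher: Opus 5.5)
\paragraph{Gap: A9 is not an A10-type case.} Your Step~2 treats A9 ``directly by Lemma~\ref{ab:lem:two-two} with $\theta=1$, exactly as A10.'' This is where the argument breaks.

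Only the first A9 equation is homogeneous: with $j=f_9$,
\[
j_Y-2j_z=2\coth Y\,j-2\csch Y\,r_9 .
\]
The A9 companion does \emph{not} satisfy the exact equation $(r_9)_Y=2\coth Y\,r_9-2\csch Y\,j$. Already at $z=0$ its defect is the function $H$ of Lemma~\ref{a716:lem:G-positive}, which is not identically zero, since $H_t$ is a positive multiple of the nontrivial expression $N$. That is exactly why $G_9$ needed a separate argument there while $G_{10}$ did not.

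Consequently nothing in hand gives $r_9\geq\mathcal Tj$ for $z>0$, so $\theta=1$ is unavailable. Taking $\theta=0$ instead needs $r_9\geq0$. By \eqref{a716:eq:companion-interpolation}, that requires the sign of $j(0,\cdot)$ on the collapsed edge, which is part of what you are trying to prove. Your Step~3 fallback also leans on A9, and the comparisons $R_7,R_8\geq R_9$ are unsupported in any case.

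\paragraph{The missing idea: rank symmetry, and doing A7/A8 first.} The three controls are tied together by rank symmetry of the $C^2$ extension, not by kernel computation.
\begin{itemize}
\item On $q_2=0$, ranks $2,3$ tie, so $f_7=f_8=:h$. There is nothing to separate on the first face.
\item On $q_3=0$, ranks $3,4$ tie, so $g_8=g_9$.
\item At the triple tie $q_2=q_3=0$, $h(0,z)=j(0,z)$. With $g_8=g_9$, interpolation then gives $r_8=r_9$.
\end{itemize}

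Now apply Lemma~\ref{bo:lem:paired} with $F=h$, $R=r_8$, $S=r_7$, $r=g_8$, $s=g_7$, and $e=0$; the averaged error cancels exactly, so no sign check of $e$ is needed. Take $F$ to be the common trace $h$, not the sum $f_7+f_8$, so that both companions carry the diagonal datum $F(0,2t)$ required in \eqref{bo:eq:companions}. The diagonal signs are
\[
g_7+g_8=G_{78}\geq0,\qquad 2g_8+g_7=G_{78}+G_9/8\geq0 .
\]
Hence $h\geq0$, which settles A7 and A8 on the first face.

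Only now does A9 follow. From $j(0,\cdot)=h(0,\cdot)\geq0$ and $g_9\geq0$, interpolation gives $r_9=r_8\geq0$, and then Lemma~\ref{paper:lem:onecomponent} gives $j\geq0$. The last individual companion sign comes from the exact equation for $r_7$, which gives $r_7=\mathcal Th\geq0$. Proposition~\ref{ab:boundary-reduction} then finishes all three cases.

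Treating A7/A8 before A9 is essential. Your plan inverts that order, and your Step~1 tries to derive by lengthy kernel differentiation identities that follow at once from symmetry.
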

\begin{proof}
The residual identities obtained from \eqref{ad:eq:residualformula} are
\begin{gather}
f_7=f_8=:h,\qquad f_9=:j,\qquad g_8=g_9,\notag\\
(2h)_Y-2(2h)_z=2\coth Y(2h)-2\csch Y(r_7+r_8),\notag\\
j_Y-2j_z=2\coth Yj-2\csch Yr_9,\qquad
(r_7)_Y=2\coth Yr_7-2\csch Yh.
\label{a716:eq:coupled-system}
\end{gather}
The identities $f_7=f_8$ and $g_8=g_9$ follow by interchanging the
tied ranks $2,3$ and $3,4$, respectively, using the symmetric $C^2$
extension. Thus no kernel calculation is needed for these two identities.
The first equation for $2h$ is the cancellation of the two opposite
first-equation errors, not an assertion about the equations for each
control separately.
At the triple tie $q_2=q_3=0$, permutation invariance and the
continuous Hessian give
\begin{equation}\label{a716:eq:triple-tie}
h(0,z)=j(0,z)=:h_0(z).
\end{equation}
In fact the three controls choose one of the three tied ranks, and
the Hessian quadratic forms agree under those permutations.
Together with $g_8=g_9$, \eqref{a716:eq:companion-interpolation}
also gives $r_8=r_9$.
The edge inequalities already proved say
\begin{align*}
h(Y,0)&=\tfrac12F_{78}(\lambda,Y)\geq0,
&j(Y,0)&=\tfrac18F(\lambda,Y)\geq0,\\
g_7+g_8&=G_{78}(\lambda,\cdot)\geq0,
&g_9&=G_9(\lambda,\cdot)/8\geq0.
\end{align*}
Apply Lemma~\ref{bo:lem:paired} with
\[
F=h,\qquad R=r_8,\qquad S=r_7,\qquad
r=g_8,\qquad s=g_7,\qquad e=0.
\]
The two required diagonal signs are
$r+s=g_7+g_8\geq0$ and
$2r+s=(g_7+g_8)+g_9\geq0$.
The formulas are differentiable up to $Y=0$ for $z>0$, since the
kernel endpoint $\coth(Y+z/2)$ stays finite there.
The normalized terminal condition follows from
\eqref{ad:eq:uniform-tail} and the nonnegative four-expert residual.
The lemma gives $h\geq0$, and hence $h_0\geq0$.
Interpolation gives $r_8=r_9\geq0$, so
Lemma~\ref{paper:lem:onecomponent} gives $j\geq0$.
Finally, the exact equation for $r_7$ and its zero normalized terminal
value give
\[
r_7(Y,z)=2\sinh^2Y\int_Y^\infty
\frac{h(s,z)}{\sinh^3s}\,ds\geq0.
\]
Proposition~\ref{ab:boundary-reduction} proves all three cases.
\end{proof}

\subsection{The inequalities in region B}
\label{bo:sec:B}
We apply the paired boundary principle to the Region B face residuals.

For the rest of this subsection, write $W=V_4$ for the four-expert
correction in \eqref{ad:eq:four-expert}, fix $X\ge0$, and define the two traces
\begin{align}
 F_j(y,z)&=R_{B_j}\left(\frac X{\sqrt2},\frac z{\sqrt2},
                               \frac y{\sqrt2},0\right),\notag\\
 G_j(y,z)&=R_{B_j}\left(\frac X{\sqrt2},\frac{y+z}{\sqrt2},
                                     0,\frac y{\sqrt2}\right),\notag\\
 h_j(t)&=R_{B_j}\left(\frac X{\sqrt2},\frac t{\sqrt2},
                                     0,\frac t{\sqrt2}\right).
 \label{bo:eq:traces}
\end{align}
The equality direction $c$ in \eqref{ab:eq-directions} yields
\begin{equation}\label{bo:eq:Ginterpolation}
 G_j(y,z)=\frac{\sinh(z/2)}{\sinh(y+z/2)}F_j(0,2y+z)
       +\frac{\sinh y}{\sinh(y+z/2)}h_j(y+z/2).
\end{equation}
The diagonal traces $h_j$, and the values $F_j(y,0)$, lie on the
$A$--$B$ interface.  The matching of the second derivatives in
\eqref{sm:eq:ABsecondf}--\eqref{sm:eq:ABsecondr} identifies them with
the corresponding region-$A$ residuals.  They are therefore nonnegative
by the region-$A$ inequalities proved above.

Here are explicit algebraic formulas for the identities used below.
They also specify all endpoint terms in the differentiations.  Set
\[
 t=y+z/2,\qquad \ell=z/2,\qquad M=\coth t,
 \qquad H_s=\cosh s-r\sinh s,\quad J_s=\sinh s-r\cosh s.
\]
The transformed directions for the six cases of interest are
\begin{equation}\label{bo:eq:directions}
\begin{array}{c|c@{\qquad}c|c}
 j&\delta_j&j&\delta_j\\ \hline
 5&(\frac12,-1,1,0)&8&(-\frac12,0,-1,1)\\
 6&(-\frac12,1,0,0)&9&(-\frac12,-1,1,0)\\
 7&(-\frac12,0,0,-1)&10&(-\frac32,1,0,0).
\end{array}
\end{equation}
For $\delta=(\delta_1,\ldots,\delta_4)$ write
\[
 \beta=\delta_1+\frac{\delta_2-\delta_4}{2},\qquad
 \tau=\delta_3+\frac{\delta_2+\delta_4}{2},\qquad
 \eta=\frac{\delta_4-\delta_2}{2},\qquad
 \mu=\frac{\delta_2+\delta_4}{2}.
\]
In the following polynomial take $m=\ell$ on the $F$ face and $m=t$
on the $G$ face:
\begin{align}
 P_\delta={}&(\tau^2+\eta^2+\mu^2-1+4r^2\beta^2)H_tH_\ell H_m
       +2\tau\eta J_tJ_\ell H_m
       +2\tau\mu J_tH_\ell J_m
       +2\eta\mu H_tJ_\ell J_m\notag\\
 &-4r\beta\bigl(\tau J_tH_\ell H_m
                +\eta H_tJ_\ell H_m+\mu H_tH_\ell J_m\bigr).
 \label{bo:eq:polynomial}
\end{align}
Thus the correction to the four-expert residual on the two faces is
\begin{align}
 F_j-F_j^{(4)}
   &=\int_1^M e^{-2Xr}g(r)P_{\delta_j}^{F}(r)\,dr
       +\tau_j^2\csch^5t\,\sinh^2y\,e^{-2XM}g(M),\notag\\
 G_j-G_j^{(4)}
   &=\int_1^M e^{-2Xr}g(r)P_{\delta_j}^{G}(r)\,dr.
 \label{bo:eq:residualkernel}
\end{align}
Here $F_j^{(4)},G_j^{(4)}$ mean the residual of $W(c,b,a)$ in the
last three components of $\delta_j$, with the coordinates scaled by
$\sqrt2$.  Formula \eqref{bo:eq:polynomial} follows by differentiating
$-e^{-2Xr}H_tH_\ell H_m$ twice; it is also an immediate form of
\eqref{ad:eq:residualformula}.

Define
\[
 E_{1,j}=(\partial_y-2\partial_z)F_j-2\coth y\,F_j
                          +2\csch y\,G_j,
 \qquad
 E_{2,j}=\partial_yG_j-2\coth y\,G_j+2\csch y\,F_j.
\]
Substitution in \eqref{bo:eq:residualkernel} gives the finite list of
identities
\begin{align}
 F_5&=F_6,& F_9&=F_{10},& G_8&=G_9,\label{bo:eq:traceidentities}\\
 E_{1,5}+E_{1,6}&=0,& E_{1,9}+E_{1,10}&=0,
       &E_{1,7}=E_{1,8}&=-\frac1{\sqrt2},\label{bo:eq:firsterrors}\\
 E_{2,6}&=0,&E_{2,7}&=0,&E_{2,10}&=0.
 \label{bo:eq:seconderrors}
\end{align}
For clarity, the integral coefficients in the nonzero first errors,
before adding the four-expert term, are
\begin{align}
 p_5&=-(r^2-1)(r\cosh t-\sinh t),&p_6&=-p_5,\notag\\
 p_9&=-(r^2-1)(4r^2\sinh t-3r\cosh t-\sinh t),&p_{10}&=-p_9.
 \label{bo:eq:errorpolynomials}
\end{align}
The corresponding four-expert terms for $j=5,9$ are both
\[
 \frac{\operatorname{arctanh}(e^{-t})\cosh t
              +\arctan(e^{-t})\sinh t-1}{\sqrt2},
\]
and those for $j=6,10$ are their negatives.  All endpoint coefficients
in these first errors vanish.  For $j=7,8$ the integral and endpoint
coefficients vanish separately.  In \eqref{bo:eq:seconderrors}, the
integral coefficient, the endpoint coefficient, and the four-expert
term each vanish.  These assertions follow by using
$H_s'=J_s$, $J_s'=H_s$, and $M'=-\csch^2t$ in
\eqref{bo:eq:polynomial}; hence the identities require no evaluation
of an integral.

The regularity and normalized terminal condition in Lemma~\ref{bo:lem:paired}
can be checked directly here.  If $z>0$, the upper endpoint $M$ is
finite and the formulas are differentiable up to $y=0$.  When
$t\to\infty$, $M-1=O(e^{-2t})$ and
$g(r)=O((r-1)^{-1/2})$ near $1$.  Uniformly for $1\le r\le M$ and
$0\le s\le t$, $|H_s|+|J_s|=O(e^{-s})$.  Consequently both the
integral and endpoint corrections in the first line of
\eqref{bo:eq:residualkernel} are $O(e^{-2t})$, uniformly for $X\ge0$.
Since the four-expert residual is nonnegative, it follows that
$\liminf_{y+z\to\infty}F_j(y,z)\ge0$, which supplies the required
normalized edge limit.

\begin{proposition}\label{bo:prop:pairedcases}
The inequalities B5, B6, B7, B8, B9, and B10 hold throughout region $B$.
\end{proposition}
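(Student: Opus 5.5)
By the boundary reduction (Proposition~\ref{ab:boundary-reduction}), it suffices to prove each residual nonnegative on two sets. The first is the face $q_4=0$, where the trace is $F_j$. The second is the diagonal $q_3=0$, $q_2=q_4$, where the trace is $h_j$. The diagonal traces are already nonnegative, since they coincide with the region-$A$ residuals proved above. The face $q_4=0$ will be handled by propagating in $z$ from the $A$--$B$ interface $z=0$ with the paired principle, Lemma~\ref{bo:lem:paired}, or with Lemma~\ref{paper:lem:onecomponent}. The companion $G_j$ is given by \eqref{bo:eq:Ginterpolation}, which has exactly the form \eqref{bo:eq:companions} with $s$ or $r$ equal to $h_j$. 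The initial values $F_j(y,0)$ lie on the interface and are nonnegative, again by region $A$. The normalized terminal condition in $z$ follows from the $O(e^{-2t})$ tail bound and the nonnegative four-expert residual, as noted just before the proposition.

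The controls are treated in three pairs.

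\emph{Pair $\{5,6\}$.} Here $F_5=F_6=:F$ and $E_{1,5}+E_{1,6}=0$. Averaging the two first equations gives
\[
F_y-2F_z=2\coth y\,F-\csch y\,(G_5+G_6),
\]
which is \eqref{bo:eq:paired1} with $e=0$. Take $S=G_6$, since $E_{2,6}=0$ supplies \eqref{bo:eq:paired2}, and take $R=G_5$, so that $r=h_5$ and $s=h_6$. The diagonal values $h_5,h_6$ are nonnegative region-$A$ residuals. Hence $r,s\geq0$, and the lemma yields $F\geq0$ together with $G_5,G_6\geq0$.

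\emph{Pair $\{9,10\}$.} This is handled identically, with $S=G_{10}$ (using $E_{2,10}=0$), $R=G_9$, and $r=h_9$, $s=h_{10}$.

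\emph{Pair $\{7,8\}$.} Each control has $E_{1,j}=-1/\sqrt2\leq0$, and $E_{2,7}=0$. For $j=7$, apply Lemma~\ref{bo:lem:paired} with $F=F_7$, $R=S=G_7$ and $e=-1/\sqrt2$; then $r=s=h_7\geq0$. The edge ODE \eqref{bo:eq:edge-ode} still holds, because $e_0$ enters only with a favourable sign. For $j=8$, first rewrite $G_8=G_9\geq0$, which is already known from the previous pair. Then the inequality $(F_8)_y-2(F_8)_z\leq2\coth y\,F_8-2\csch y\,G_8$ with $G_8\geq0$ gives $F_8\geq0$ by Lemma~\ref{paper:lem:onecomponent} with $m=2$, provided the initial values $F_8(y,0)\geq0$ hold from region $A$.

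The main obstacle is twofold. First, the diagonal and interface identifications must carry exactly the right region-$A$ control labels after the transformation $T$. This means checking that $h_j$ and $F_j(\cdot,0)$ equal region-$A$ residuals whose nonnegativity is already proved, using the second-derivative matching \eqref{sm:eq:ABsecondf}--\eqref{sm:eq:ABsecondr}. Second, the differentiability hypotheses of Lemma~\ref{bo:lem:paired} up to $y=0$ must be verified. I would first tabulate the interface images of the transformed directions \eqref{bo:eq:directions} to confirm the identifications, and treat the case $z=0$, $y=0$ by continuity.
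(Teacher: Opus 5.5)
Your proposal is correct and follows the paper's proof. You use the paired boundary principle (Lemma~\ref{bo:lem:paired}) with $(F,R,S)=(F_5,G_5,G_6)$ and $(F_9,G_9,G_{10})$ with $e=0$, then with $F=F_7$, $R=S=G_7$, $e=-1/\sqrt2$, and you obtain $F_8\geq0$ from Lemma~\ref{paper:lem:onecomponent} via $G_8=G_9\geq0$, with the equality-direction reduction extending the face signs to all of Region~$B$. The label identification you flag as an obstacle is immediate: $v$ is $C^2$ across the A/B interface, and the residual of each control is the same differential operator applied to $v$ on both sides, so $h_j$ and $F_j(\cdot,0)$ are the Region~A residuals of the same control.
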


\begin{proof}
For B5 and B6, apply Lemma~\ref{bo:lem:paired} with
\[
 F=F_5=F_6,\qquad R=G_5,\qquad S=G_6,\qquad e=0.
\]
The averaged first equation follows from
\eqref{bo:eq:firsterrors}, and the companion equation is
$E_{2,6}=0$.  Formula \eqref{bo:eq:Ginterpolation} supplies the
two representations \eqref{bo:eq:companions}, with
$r=h_5\ge0$ and $s=h_6\ge0$.  This proves all three trace inequalities.
Exactly the same argument, with $(F,R,S)=(F_9,G_9,G_{10})$, proves
B9 and B10, using $E_{2,10}=0$.

For B7 use $F=F_7$, $R=S=G_7$ and $e=-1/\sqrt2$.
All hypotheses again follow from the displayed identities.
Finally, \eqref{bo:eq:traceidentities} and the already proved
case B9 give $G_8=G_9\geq0$. The Region A verification and
the A/B two-jet matching give $F_8(y,0)\geq0$. Moreover,
\eqref{bo:eq:firsterrors} yields
\[
\begin{aligned}
(\partial_y-2\partial_z)F_8
&=2\coth y\,F_8-2\operatorname{csch}y\,G_8-\frac1{\sqrt2}\\
&\leq2\coth y\,F_8-2\operatorname{csch}y\,G_8.
\end{aligned}
\]
Lemma~\ref{paper:lem:onecomponent}, with $m=2$, $F=F_8$
and $R=G_8$, therefore gives $F_8\geq0$.
Proposition~\ref{ab:boundary-reduction} extends these trace
inequalities to the whole of Region B.
\end{proof}

\subsubsection{The remaining direct comparisons}

It remains to identify the cases transferred directly from region $A$.
Let $(a,b,c)=\sqrt2(q_4,q_3,q_2)$, and set
$m=(a+c)/2$, $\ell=(c-a)/2$, and $t=b+m$.  Thus
$t\ge m\ge\ell\ge0$.  The difference between the two four-expert
terms in \eqref{ad:eq:Btransform} is
\begin{align}
 \Delta(a,b,c)
 &=W(c/\sqrt2,b/\sqrt2,a/\sqrt2)
        -W(a/\sqrt2,b/\sqrt2,c/\sqrt2)\notag\\
 &=\frac{\sinh\ell}{\sqrt2}
       \bigl(\cosh m-L(t)\sinh t\sinh m\bigr),
 \qquad L(t)=2\operatorname{arctanh}(e^{-t}).
 \label{bo:eq:Delta}
\end{align}
Since $\operatorname{arctanh}u\le u/(1-u^2)$ for $0<u<1$,
$L(t)\sinh t\le1$, and hence $\Delta\ge0$.
Direct differentiation, using $L'=-\csch t$, gives
\begin{align}
 (1-\partial_b^2)\Delta
   &=\frac{\sinh\ell\sinh b}{\sqrt2\sinh t}\ge0,\notag\\
 \bigl(1-(\partial_a-\partial_c)^2\bigr)\Delta&=0,
 \qquad
 \bigl(1-(-\partial_a+\partial_b-\partial_c)^2\bigr)\Delta=0.
 \label{bo:eq:Deltaidentities}
\end{align}
All limiting cases follow by continuity.

Under $T$ the gap direction $d$ becomes
\[
 \delta=\left(d_1+\frac{d_2-d_4}{2},d_4,d_3,d_2\right).
\]
The correction term is the same on both sides of the comparison, so
\eqref{bo:eq:Delta}--\eqref{bo:eq:Deltaidentities} yield
\begin{equation}\label{bo:eq:directcomparisons}
 \begin{array}{c|ccccc}
 \text{case in }B&2&11&16&12&13\\ \hline
 \text{case in }A\text{ at }Tq&2&11&16&12&14\\
 \text{difference of residuals}&\Delta&
 \dfrac{\sinh\ell\sinh b}{\sqrt2\sinh t}&
 \dfrac{\sinh\ell\sinh b}{\sqrt2\sinh t}&0&0.
 \end{array}
\end{equation}
The region-$A$ inequalities therefore imply B2, B11, B12, B13,
and B16.  B1 follows from $v_B=W+C(Tq)\ge0$, since the four-expert
correction is nonnegative and $C\ge0$ by
\eqref{ad:eq:correction} and $g<0$.  B14 and B15 are precisely
the two equalities in \eqref{ab:eq-directions}.  Together with
Proposition~\ref{bo:prop:pairedcases} and the comparisons for B3 and B4,
this proves every inequality in region $B$.

\subsection{Two comparisons in region B}
\label{b34:section}

The following comparisons reduce the third and fourth controls to the
seventh and eighth, respectively. The second comparison uses the sign of
an integrated kernel rather than the sign of the kernel itself.

\begin{proposition}\label{b34:comparison}
Throughout $B$,
\[
 R_3\geq R_7,\qquad R_4\geq R_8.
\]
\end{proposition}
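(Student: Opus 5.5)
The plan is to reduce both inequalities to sign statements about the correction kernel alone. The four-expert part cancels, and the rest is a one-variable sign problem in the integration variable $r$.

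\emph{Step 1: reduce to mixed derivatives.} From Table~\ref{ab:case-numbering}, $d(01000)=d(11000)+e_1$ and $d(10100)=d(00100)-e_1$. Write $D_7=d(11000)\cdot\nabla_q=-\partial_{q_2}$ and $D_4=d(00100)\cdot\nabla_q=\partial_{q_2}-\partial_{q_3}$. Expanding the squares gives
\[
 R_3-R_7=-\tfrac12v_{q_1q_1}-\partial_{q_1}D_7v,\qquad
 R_4-R_8=\tfrac12v_{q_1q_1}-\partial_{q_1}D_4v .
\]
In $B$ we have $v_B=V_4(q_2,q_3,q_4)+C(Tq)$ by \eqref{ad:eq:Btransform}. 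Every term above carries a $q_1$-derivative and $V_4$ does not depend on $q_1$, so both differences involve only $C(Tq)$.

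\emph{Step 2: use the kernel representation.} Under $T$ one has $\delta=(q_2-q_4)/\sqrt2$. Hence $\lambda=\sqrt2\bigl(q_1+\tfrac{q_2-q_4}2\bigr)-\delta=\sqrt2q_1$, so in $B$ the exponent $e^{-2\lambda r}$ depends on $q_1$ alone. The remaining quantities $M=\coth\alpha$, $\delta$, $\mu$ depend only on $(q_2,q_3,q_4)$. Write $C=\int_1^Me^{-2\lambda r}g(r)K(q,r)\,dr$ with $K=-H_\alpha H_\delta H_\mu$.
\begin{itemize}
\item A $q_1$-derivative multiplies the integrand by $-2\sqrt2r$ and produces no endpoint term.
\item A derivative in $q_2$ or $q_3$ that hits $M$ produces the endpoint factor $K(q,M)$, which vanishes because $H_\alpha(M)=0$.
\end{itemize}
Therefore
\[
 R_3-R_7=\int_1^Me^{-2\lambda r}g(r)\,\Pi_3(r)\,dr,\qquad
 R_4-R_8=\int_1^Me^{-2\lambda r}g(r)\,\Pi_4(r)\,dr,
\]
where
\[
 \Pi_3=-4r^2K-2\sqrt2\,r\,\partial_{q_2}K,\qquad
 \Pi_4=4r^2K+2\sqrt2\,r\,(\partial_{q_2}-\partial_{q_3})K .
\]
Here $\partial_{q_i}K$ is computed through $\alpha,\delta,\mu$ using $\partial_lH_l=J_l$, with the $B$-version of these variables obtained from $T$. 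This is an exact identity; no regularity beyond Section~\ref{sm:section} is needed.

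\emph{Step 3: signs.} Since $g<0$, it suffices in the first case to show $\Pi_3\le0$ on $[1,M]$. I expect this to hold pointwise. I would factor $\Pi_3=-r\,H_\cdot H_\cdot\cdot(\dots)$, extracting the common factors $H_\alpha$ and one further $H$. The remaining factor is then a linear form in $r$ with nonnegative coefficients, controlled by $0\le\ell\le m\le t$ as in the direct comparisons. Nonpositivity on $[1,M]$ would follow from its values at $r=1$ and $r=M$.

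For $R_4\ge R_8$ the proposition statement already warns that only an integrated sign is available. Set $w(r)=-e^{-2\lambda r}g(r)$, which is positive and decreasing by Lemma~\ref{ab:kernel-signs}, and $\Phi(\rho)=\int_1^\rho(-\Pi_4)\,dr$. Integrating by parts gives
\[
 R_4-R_8=w(M)\Phi(M)+\int_1^M(-w'(r))\Phi(r)\,dr ,
\]
so it suffices to prove $\Phi\ge0$ on $[1,M]$. $\Phi$ is an explicit polynomial in $r$ with hyperbolic coefficients. I would substitute $D=M-r$ as in the A11/A14 factorizations and aim to write $\Phi$ as a product of factors of fixed sign, or to locate its roots from its values at $D=0$, $D=M-1$, and one further point, as in the proof of Lemma~\ref{a716:lem:78-edges}.

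The main obstacle is this last step, finding a sign certificate for $\Phi$. If the root-location argument fails, my fallback is to split $\Pi_4$ into a pointwise-nonpositive part plus a multiple of $(\rho^2-1)$ times a quadratic that changes sign once. I would then treat that second part with the single-crossing idea of Lemma~\ref{ab:single-crossing}.
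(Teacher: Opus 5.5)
Your Steps~1--2 are correct and match the paper's reduction. In $B$ one has $\lambda=\sqrt2q_1$, the four-expert part and all endpoint terms drop out, and both differences equal $\int_1^Me^{-2\lambda r}g(r)\Pi_j(r)\,dr$ with exactly your kernels. Your integration by parts against $w=-e^{-2\lambda r}g$ is also the paper's device for $R_4\geq R_8$.

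What you do not supply is the content of the proposition, namely the two sign certificates. For $\Pi_3$, the factorization you plan cannot exist. $\Pi_3$ has degree five in $r$, coming from $4r^2H_\alpha H_\delta H_\mu$, whereas $-r\,H\,H\,(\text{linear})$ has degree four. Moreover $\Pi_3(M)=-2M H_\delta(M)H_\mu(M)/\sinh\alpha\neq0$ in general, so $H_\alpha$ is not a factor. The claim itself does hold on all of $B$. Put $r=\coth\theta$ with $\theta\geq\alpha$; then $H_l=\sinh(\theta-l)/\sinh\theta$ and $\partial_lH_l/H_l=-\coth(\theta-l)$, so
\[
 \Pi_3=2rH_\alpha H_\delta H_\mu\bigl[2\coth\theta-\coth(\theta-\alpha)-\coth(\theta-\delta)-\coth(\theta-\mu)\bigr]\leq-2r^2H_\alpha H_\delta H_\mu\leq0,
\]
because $0\leq\delta\leq\mu\leq\alpha\leq\theta$.

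The genuine gap is $\Phi\geq0$, which you leave open while requiring it at every point of $B$. No pointwise argument can help, since the same substitution gives
\[
 -\Pi_4=2rH_\alpha H_\delta H_\mu\bigl[2\coth\theta+\coth(\theta-\alpha)-\coth(\theta-\delta)-\coth(\theta-\mu)\bigr].
\]
When $q_3=q_4=0$ this bracket is $2\coth\theta-\coth(\theta-\alpha)$, which is negative near $\theta=\alpha$.

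The missing idea is to reduce to faces before touching the kernels. Each residual in $B$ satisfies the equality-direction equations for $a$ and $c$, and hence so do the differences $R_3-R_7$ and $R_4-R_8$. Proposition~\ref{ab:boundary-reduction} therefore reduces both comparisons to the faces $\{q_4=0\}$ and $\{q_3=0,\ q_2=q_4\}$.
\begin{itemize}
\item On the diagonal face, with $A=e^{2Y}$, one has $\Pi_4=-\frac{r^2}{2A}\{(r-1)A-r-1\}^2\leq0$ pointwise.
\item On $q_4=0$ one has $\delta=\mu=z/2$. With $A=e^{2Y}$ and $B=e^z$, the primitive $4\sqrt A\,B^{3/2}\int_1^r\Pi_4$ is affine in $A$ on the admissible interval $1\leq A\leq(r+1)/((r-1)B)$. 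Its two endpoint values are explicit polynomials in $r$ and $k=1-(r-1)(B-1)/2\in[0,1]$, and both are visibly nonpositive.
\end{itemize}
This face certificate is exactly what your argument lacks.

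Your global formulation can be rescued. In the $\theta$-variable, the integrand of $\Phi$ is nonincreasing in $\delta$ at fixed $\alpha,\mu$, which reduces $\Phi\geq0$ to $\delta=\mu$, i.e.\ to the face $q_4=0$. That face certificate must still be proved.

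Your root-location and single-crossing fallbacks are not shown to apply. As written, $R_4\geq R_8$ is unproved.
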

\begin{proof}
It suffices to prove the two comparisons on the faces in
Proposition~\ref{ab:boundary-reduction}. On $q_4=0$, write
\[
 q=\frac1{\sqrt2}(\lambda,z,Y,0),\qquad
 A=e^{2Y},\quad B=e^z,\quad M=\coth(Y+z/2).
\]
Under \eqref{ad:eq:Btransform}, the four directions become
\[
 \widetilde d_3=(\tfrac12,0,0,-1),\quad
 \widetilde d_7=(-\tfrac12,0,0,-1),\quad
 \widetilde d_4=(\tfrac12,0,-1,1),\quad
 \widetilde d_8=(-\tfrac12,0,-1,1).
\]
For each pair the four-expert contributions agree. The endpoint terms
in \eqref{ad:eq:residualformula} also agree, since $K$ and $M$ are
independent of the first gap. Hence
\begin{equation}\label{b34:integraldifferences}
 R_3-R_7=\int_1^M e^{-2\lambda r}g(r)\Pi_3(r)\,dr,
 \qquad
 R_4-R_8=\int_1^M e^{-2\lambda r}g(r)\Pi_4(r)\,dr.
\end{equation}
Differentiating the factored kernel
$K=-H_\alpha H_\delta H_\mu$ in
\eqref{a26:kernel-representation} gives
\[
 \Pi_3=-\frac{r\eta Q_3}{4\sqrt A B^{3/2}},\qquad
 \Pi_4=\frac{r\eta Q_4}{4\sqrt A B^{3/2}},\qquad
 \eta=(r-1)B-r-1,
\]
where
\begin{align*}
 Q_3={}&AB\{B(2r^3-r^2-4r+3)-2r^3+r^2+2r-1\}\\
       &+B(-2r^3-r^2+2r+1)+2r^3+r^2-4r-3,\\
 Q_4={}&AB\{B(2r^3-3r^2+1)-2r^3+3r^2+2r-3\}\\
       &+B(-2r^3-3r^2+2r+3)+2r^3+3r^2-1.
\end{align*}
These are polynomials in $r$ and affine functions of $A$.

For $1<r\leq M$, the constraints are
\[
 1\leq B\leq\frac{r+1}{r-1},\qquad
 1\leq A\leq A_*:=\frac{r+1}{(r-1)B}.
\]
Put $h=(r-1)(B-1)/2\in[0,1]$. At the two endpoints of the
admissible interval for $A$,
\[
 Q_3\big|_{A=1}=4(h-1)\{(2r+3)h+r\}\leq0,
 \qquad
 Q_3\big|_{A=A_*}=4(h-1)(r+1)\leq0.
\]
Affineness implies $Q_3\leq0$ throughout the interval. Since
$\eta\leq0$, this proves $\Pi_3\leq0$ and therefore $R_3\geq R_7$.

For the second comparison define the polynomial primitive
\[
 J(r)=4\sqrt A B^{3/2}\int_1^r\Pi_4(s)\,ds.
\]
The parameters $A,B$ are held fixed in this integral. For each fixed
$r$, $J(r)$ is affine in $A$. Set $k=1-h\in[0,1]$. Direct polynomial
integration gives its endpoint values
\begin{align*}
 J(r)\big|_{A=1}
 &=-\frac{2(r-1)}3
       \{1+k+k^2+(2r+1)^2k^3\},\\
 J(r)\big|_{A=A_*}
 &=-\frac{2(r-1)}{15}
       \{5+(4r^2+12r+14)k+(16r^2+28r+11)k^2\}.
\end{align*}
Both are nonpositive, so $J(r)\leq0$ for every $1\leq r\leq M$.
By Lemma~\ref{ab:kernel-signs},
$w(r)=-e^{-2\lambda r}g(r)$ is positive and nonincreasing. Integration
by parts yields
\[
 \int_1^M e^{-2\lambda r}g(r)\Pi_4(r)\,dr
 =\frac{-w(M)J(M)+\int_1^Mw'(r)J(r)\,dr}
          {4\sqrt A B^{3/2}}\geq0.
\]
The lower endpoint term vanishes: $J(r)=O(r-1)$ and
$g(r)=O((r-1)^{-1/2})$ as $r\downarrow1$.

On the second face, write
$q=2^{-1/2}(\lambda,Y,0,Y)$ and $A=e^{2Y}$.
The same cancellation of the four-expert and endpoint terms holds,
and the difference kernels reduce to
\begin{align*}
 \Pi_3&=\frac r{2A}\{(r-1)A-r-1\}
             \{(r-1)(r+2)A-(r-2)(r+1)\}\leq0,\\
 \Pi_4&=-\frac{r^2}{2A}\{(r-1)A-r-1\}^2\leq0.
\end{align*}
Indeed, the first bracket in $\Pi_3$ is nonpositive for
$r\leq\coth Y$, while the second is at least $2r>0$ because $A\geq1$.
Thus both comparisons hold on this face as well. Continuity treats
the degenerate endpoints, and the equality-direction interpolation
propagates the comparisons to all of $B$.
\end{proof}

\subsection{Completion of the regional verification}

\begin{theorem}[Region A and B inequalities]\label{ab:thm:complete}
For every binary control $w\in\{0,1\}^5$,
\[
 v(q)-\frac12\bigl(d(w)\cdot\nabla_q\bigr)^2v(q)\geq0
 \qquad(q\in A\cup B).
\]
The controls $01001$ and $00101$ are equality cases in $A$;
the controls $00110$ and $00101$ are equality cases in $B$.
\end{theorem}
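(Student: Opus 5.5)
The proof is an assembly of the case-by-case results already established in this section, organized by Table~\ref{ab:case-numbering}. Since $d(\mathbf 1-w)=-d(w)$ and the residual is quadratic in the direction, the residual of $\mathbf 1-w$ coincides with that of $w$. It therefore suffices to treat the sixteen representatives in each of the two regions. Throughout, we use the global $C^2$ regularity of Theorem~\ref{sm:thm:globalC2}, so that every residual is continuous on the closed regions. This justifies the boundary reduction of Proposition~\ref{ab:boundary-reduction} and the passage to degenerate faces by continuity.

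\emph{Region A.} We go through the cases in order.
\begin{itemize}
\item A1 is $v\ge0$, which follows from the local-time representation \eqref{pc:eq:itoverification}.
\item A2--A6 are Propositions~\ref{a26:case2}--\ref{a26:case5}.
\item A7, A8 and A9 are Proposition~\ref{a716:prop:A7A8}.
\item A10 is Proposition~\ref{a716:prop:A9A10}.
\item A11 and A16 are Proposition~\ref{a716:prop:A11A16}.
\item A12 is Proposition~\ref{ab:comb-positive}.
\item A14 is Proposition~\ref{a716:prop:A14}.
\item A13 and A15 vanish identically by \eqref{ab:eq-directions}, with directions $b$ and $a$.
\end{itemize}
These vanishing cases give the asserted equality controls $01001$ and $00101$. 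One point needs checking: the A7--A9 argument uses the A9 edge sign $G_9\ge0$ from Lemma~\ref{a716:lem:G-positive} and the $F$-sign from Lemma~\ref{a716:lem:F-positive}, both proved independently. This ordering introduces no circularity.

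\emph{Region B.} Here the logical order is the delicate point.
\begin{itemize}
\item B14 and B15 vanish by the directional equations with directions $c$ and $a$, giving the equality controls $00110$ and $00101$.
\item B1 follows from $v_B=V_4+C(Tq)$ with $C\ge0$.
\item B2, B11, B12, B13 and B16 follow from the transfer table \eqref{bo:eq:directcomparisons}. Each of these residuals equals a Region A residual at $Tq\in A$ plus a nonnegative term, namely $\Delta$, $\sinh\ell\sinh b/(\sqrt2\sinh t)$, or $0$, by \eqref{bo:eq:Delta}--\eqref{bo:eq:Deltaidentities}. The Region A results then apply.
\item B5, B6, B7, B9 and B10 come from Proposition~\ref{bo:prop:pairedcases}. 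That proposition needs nonnegative diagonal traces $h_j$ and edge values $F_j(y,0)$ on the A--B interface. These are identified with Region A residuals through the second-order matching \eqref{sm:eq:ABsecondf}--\eqref{sm:eq:ABsecondr}, so Region A must be completed first.
\item B8 is handled inside the same proposition, after B9, via $G_8=G_9$.
\item B3 and B4 come last, from Proposition~\ref{b34:comparison}: $R_3\ge R_7\ge0$ and $R_4\ge R_8\ge0$.
\end{itemize}

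The main obstacle is ensuring there is no circular dependence. Specifically, the Region B face arguments invoke Region A signs on the shared interface, and B8 depends on B9 while B3 and B4 depend on B7 and B8. Fixing the order A (all cases), then B1, B2, B11--B16, then B5--B10, then B3 and B4 resolves this. We also have to confirm that the interface identification is legitimate at the degenerate endpoint $q_2=q_3=q_4=0$. This is supplied by the non-full endpoint analysis in Proposition~\ref{sm:prop:closedchamber}.
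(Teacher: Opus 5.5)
Your proposal is correct and follows the paper's own proof, which is likewise a direct assembly of the case results: Propositions for A2--A12, A14 and A16, the equality directions for A13/A15 and B14/B15, the paired boundary principle for B5--B10, the transfer table for B2, B11, B12, B13 and B16, and the comparisons $R_3\ge R_7$, $R_4\ge R_8$ for B3/B4, closed by complementation and continuity. Your explicit ordering (all of Region A before the Region B face arguments, B9 before B8, and B7 and B8 before B3 and B4) only spells out a dependency that the paper leaves implicit.
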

\begin{proof}
In $A$, cases A2--A6 were proved in
Propositions~\ref{a26:case2}--\ref{a26:case5}.
The constant control and the two equality controls were treated
directly. Proposition~\ref{ab:comb-positive} treats A12;
Propositions~\ref{a716:prop:A11A16} and~\ref{a716:prop:A14}
treat A11, A14 and A16. Case A10 follows from
Proposition~\ref{a716:prop:A9A10}, and the coupled argument in
Proposition~\ref{a716:prop:A7A8} treats A7--A9.

The preceding region-B reductions treat all cases except B3 and B4,
which follow from Proposition~\ref{b34:comparison} and the
nonnegativity of B7 and B8. Complementation covers the remaining
sixteen controls. Continuity extends the inequalities to all
regional and ordering boundaries.
\end{proof}

\section{Verification in Region C}\label{rc:section}
The purpose of this section is to verify every Hamilton--Jacobi--Bellman
inequality for the Region C branch of the explicit candidate in
Section~\ref{pc:sec:construction}.  The proof combines
three exact equality directions, a two-face reduction, a
kernel comparison, three-control averaging, and a final
one-variable monotonicity argument.  No numerical sign check enters the
proof.
\subsection{Region C, residuals, and the main result}
Let $q=(q_1,q_2,q_3,q_4)$ denote the ordered gaps and write
\[
\overline\Omega_C=
\left\{q_i\geq0:\ q_4\geq q_2,\quad
2q_1\leq q_4-q_2\right\}.
\]
For a control $\omega=(\omega_1,\ldots,\omega_5)\in\{0,1\}^5$, put
\[
d(\omega)=(\omega_2-\omega_1,\ldots,
\omega_5-\omega_4),\qquad
\mathcal R_\omega[v]
=v-\frac12\bigl(d(\omega)\mathbin\cdot\nabla\bigr)^2v.
\]
Complementary controls give the same residual, so it is enough to use the
sixteen representatives with $\omega_1=0$.
\begin{theorem}[Region C inequalities]\label{rc:thm:regionC}
Assume the Region A residual inequalities, including their continuous
traces on the A/C interface.  Then, for every control $\omega$,
\[
\mathcal R_\omega[v]\geq0\qquad\hbox{on }\overline\Omega_C.
\]
Moreover,
\[
\mathcal R_{00101}[v]
=\mathcal R_{01001}[v]
=\mathcal R_{01110}[v]=0
\qquad\hbox{throughout }\overline\Omega_C.
\]
\end{theorem}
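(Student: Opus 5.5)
My plan follows the Region A/B template. I would first prove the three equality statements, then reduce every residual to lower-dimensional faces of $\overline\Omega_C$ through characteristic interpolation, and finally establish signs there by comparison with Region A traces and by averaging controls.

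\emph{Equalities.} The identity $\mathcal R_{00101}=0$ is the interior equation \eqref{pc:eq:linear}. In Region C this identity is just \eqref{pc:eq:exitACderive} with $F,R$ constant along $a$. For $01001$ and $01110$ I would use the kernel identities \eqref{pc:eq:kernelzeros}. In the coordinates $(x,y,\alpha,\eta)$ the operator $Q_C=\partial_x-\partial_\alpha$ corresponds (up to the $\sqrt2$ scaling) to $d(01001)=(1,-1,0,1)$. Since $\rho$ is fixed and $U$ enters only through $U(\rho)$ and $\int_\rho^\infty\cdots U(r)\,dr$, one has $(Q_C^2-1)v=0$ with no moving-endpoint terms. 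Likewise $Q_B=\partial_x-\partial_y-\partial_\eta$ corresponds to $d(01110)=(1,0,0,-1)$, and \eqref{pc:eq:valueCsingle} together with the $Q_B$ identities gives $\mathcal R_{01110}=0$. I would check the coordinate change \eqref{pc:eq:ACcoordinates} carefully so that both directions come out with exactly unit speed $\sqrt2$.

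\emph{Reduction.} As in Proposition~\ref{ab:boundary-reduction}, each residual satisfies $r''=2r$ along all three equality directions. This lets me interpolate with nonnegative hyperbolic weights. Along $a$, an interior point reduces to $\{q_2=0\}$ and $\{q_3=0\}$. On these faces I would run the $b$- or $c$-characteristic, $(1,-1,0,1)$ or $(1,0,0,-1)$, which decreases $q_1$ or moves toward $2q_1=q_4-q_2$. The endpoints then land either on the A/C interface, where the Region A inequalities and the $C^2$ matching of Theorem~\ref{sm:thm:globalC2} give the sign, or on the wall $q_1=0$. So everything reduces to the two-dimensional traces on $\{q_1=0\}\cap\{q_2=0\}$ and $\{q_1=0\}\cap\{q_3=0\}$, where the datum is $P$ and its companion $\mathcal T P$ in the variables $(y,z)$.

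\emph{Signs on the $q_1=0$ faces.} On these faces the residual pair for each control satisfies a $3$--$2$ system like \eqref{pc:eq:32system}, possibly with an error term. My plan is as follows. For controls whose error vanishes, I would apply Lemma~\ref{ab:lem:two-two} with $m=3$, reducing to $z=0$, where the trace is a Region A/C interface value and is already nonnegative. For controls whose errors are opposite, I would group them in pairs or triples with equal first traces, using permutation symmetry at ties as in Proposition~\ref{a716:prop:A7A8}, and apply the averaged comparison or Lemma~\ref{bo:lem:paired}. Where an error term has a definite sign, I would use Lemma~\ref{paper:lem:onecomponent}. The conditions at infinity would follow from the single-integral kernels \eqref{pc:eq:pairCsingle} and the boundedness of $U$.

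\emph{Main obstacle.} I expect the hardest part to be the controls whose first-equation error on $q_1=0$ has no sign and no natural partner. For these I would try a three-control average, chosen so that the error combination cancels. What remains is a one-variable inequality in $y$ along the edge $q_1=q_2=0$ (equivalently $\alpha$). I would attack that inequality by a monotonicity argument for a function like $P(y,z)/\sinh^3$ along the edge, using the profile ODE \eqref{pc:eq:Uode} and the endpoint values \eqref{pc:eq:Uendpoints}.
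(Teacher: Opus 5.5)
Your equality step and two-face reduction are sound and match Proposition~\ref{rc:prop:zeromodes} and Lemma~\ref{rc:lem:facereduction}; you only traverse the characteristics in the other order. You have swapped the labels, though. The direction $d(01001)=(1,-1,0,1)$ fixes $\alpha$, $a=x+Y$ and $\rho$, so it is $\sqrt2Q_B$. The direction $d(01110)=(1,0,0,-1)$ fixes $Y$, $\eta$ and $\rho$, so it is $\sqrt2Q_C$. Both kernel identities in \eqref{pc:eq:kernelzeros} hold, so this is harmless. The three-control averaging you anticipate is also what the paper uses.

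The genuine gap is your description of what remains after averaging. The first-equation errors are not the obstacle. Every nonzero error has partners with the same first trace: $E_{1,00100}=-2E_{1,01000}=-2E_{1,01111}$, and likewise for the classes of $00110$ and $00111$. So the averaged first equations are exact. Lemma~\ref{ab:lem:two-two} then needs the lower bound $\overline G_i\geq\theta_i\mathcal Tf_i$. Only the companions with $E_{2,\omega}=0$ are equal to $\mathcal Tf_i$. The remaining ones, $G_{00010}=G_{00100}$ and $G_{01010}=G_{01011}=G_{01100}=G_{01101}$, must be shown nonnegative on the entire two-dimensional face $\{q_1=q_3=0\}$. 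Moreover, $G_{00010}\geq0$ is exactly the companion input that Lemma~\ref{paper:lem:onecomponent} needs for $00010$.

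No equality direction of Region C is tangent to this face: $a$ moves $q_3$, and the other two move $q_1$. Consequently:
\begin{itemize}
\item there is no interpolation analogous to \eqref{bo:eq:Ginterpolation};
\item Lemma~\ref{bo:lem:paired} is unavailable, since it is an $m=2$ statement resting on exactly such an interpolation;
\item none of your comparison lemmas reduces these traces to an edge.
\end{itemize}
A monotonicity argument for $P/\sinh^3$ along characteristics controls first traces, not these companions. As stated, your plan leaves ten controls open.

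The missing idea is an exact two-variable structure of these two traces. Set $q=e^{-2(y+z/3)}$ and $s=e^{-2y}$. Inserting $U(t)=\cosh t\int_t^\infty h$ and reversing the order of integration shows that $H_j=s\widehat G_j$ is exactly quadratic in $s$; see \eqref{rc:eq:Hrepresentation}. Its leading coefficient is negative. Indeed, $J_j=-q^2\partial_s^2H_j>0$ follows from the explicit sign chains for $B_0$ and $B_1$ after two or three moving-endpoint differentiations.

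Concavity in $s$ then reduces positivity to two one-variable edges.
\begin{itemize}
\item The edge $s=q$ is $z=0$, on the A/C interface. There two-jet matching identifies the traces with the Region~A residuals A5 and A12.
\item The edge $s=1$ is $y=0$. There $L_j=q^{-2}\widehat G_j(q,1)$ is decreasing, by $K_j^{(5)}<0$ (the $M_0,M_1$ certificates). Its terminal values $L_0(1)=U(0)/3>0$ and $L_1(1)=0$ come from the full-collision Hessian.
\end{itemize}
Some argument of this kind, controlling $G_{00010}$ and $G_{01010}$ in two variables, is needed to close Region C.
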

Introduce the variables used by the Region C representation,
\[
x=\sqrt2q_1,\qquad Y=\sqrt2(q_2+q_3),\qquad
z=\sqrt2(q_4-q_2),
\]
and write
\begin{equation}\label{rc:eq:regionCvalue}
v=\lambda_p p(x,Y,z)+\lambda_r r_3(x,Y,z),\qquad
\lambda_p=\frac{\sinh(\sqrt2q_3)}{\sinh Y},\quad
\lambda_r=\frac{\sinh(\sqrt2q_2)}{\sinh Y}.
\end{equation}
The values at $Y=0$ are understood by continuity.  The functions
$p,r_3$ are precisely the two components in \eqref{pc:eq:ACpair}
on Region C; their explicit single-integral form is
\eqref{pc:eq:pairCsingle}.
\subsection{Equality modes and reduction to two faces}
\begin{proposition}[Three equality directions]\label{rc:prop:zeromodes}
The three residuals corresponding to
\[
00101,\qquad01001,\qquad01110
\]
vanish identically in Region C.
\end{proposition}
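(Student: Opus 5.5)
The plan is to translate each of the three gap directions into the Region C variables $(x,Y,\eta,\alpha)$ of Section~\ref{pc:sec:singleintegral}. Here $\eta=\sqrt2q_2$ and $\alpha=(z-2x)/3$. In these variables each residual should reduce either to an elementary identity for the hyperbolic weights or to the kernel identities \eqref{pc:eq:kernelzeros}. I would work in the open region, where $Y>0$ and $z>2x$, and use the single-integral representation \eqref{pc:eq:valueCsingle}. The closed set $\overline\Omega_C$ would then follow from the $C^2$ extension of Theorem~\ref{sm:thm:globalC2}, since each residual is a continuous function vanishing on a dense set.

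\emph{Control $00101$.} The direction is $d=(0,1,-1,1)$. Along it, $dx=0$, $dY=0$ and $dz=0$, hence $d\alpha=0$, while $d\eta=\sqrt2$. Therefore $d\cdot\nabla=\sqrt2\,\partial_\eta$. In \eqref{rc:eq:regionCvalue} only the weights $\lambda_p=\sinh(Y-\eta)/\sinh Y$ and $\lambda_r=\sinh\eta/\sinh Y$ depend on $\eta$, and both satisfy $\partial_\eta^2\lambda=\lambda$. It follows that $(d\cdot\nabla)^2v=2v$, so $\mathcal R_{00101}[v]=0$. This is just the linear equation \eqref{paper:eq:linear} written in these coordinates.

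\emph{Control $01001$.} The direction is $d=(1,-1,0,1)$. The increments are $dx=\sqrt2$, $d\eta=-\sqrt2$, $dY=-\sqrt2$ and $dz=2\sqrt2$, so $d\alpha=(2\sqrt2-2\sqrt2)/3=0$. Thus $d\cdot\nabla=\sqrt2\,Q_B$ with $Q_B=\partial_x-\partial_Y-\partial_\eta$, and the residual equals $-(Q_B^2-1)v$.

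\emph{Control $01110$.} The direction is $d=(1,0,0,-1)$. The increments are $dx=\sqrt2$, $d\eta=dY=0$ and $dz=-\sqrt2$, so $d\alpha=-\sqrt2$. Hence $d\cdot\nabla=\sqrt2\,Q_C$ with $Q_C=\partial_x-\partial_\alpha$. Because the weights $\lambda_p,\lambda_r$ depend only on $(\eta,Y)$, they commute with $Q_C$.

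For the last two controls the argument is the same. Both $Q_B$ and $Q_C$ annihilate $\rho=x+Y+\alpha$. Consequently the factor $U(\rho)$ and the lower limit of integration in \eqref{pc:eq:valueCsingle} are constant along the corresponding directions. Applying $Q^2-1$ therefore produces no moving-endpoint terms, and no derivatives of $U$ appear:
\[
(Q^2-1)v=U(\rho)\,(Q^2-1)(\lambda_pA_0)+\int_\rho^\infty (Q^2-1)(\lambda_pK_p+\lambda_rK_r)\,U(r)\,dr .
\]
By \eqref{pc:eq:kernelzeros} this is zero for $Q=Q_B$. For $Q=Q_C$ one uses $(Q_C^2-1)A_0=(Q_C^2-1)K_p=(Q_C^2-1)K_r=0$ together with the commutation of $Q_C$ with the weights.

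Differentiation under the integral sign needs a justification. The kernels, being built from $C_j,R_j$, $\mathsf H_j$ and $B_0$, decay like $e^{-2r}$ together with their $(x,Y,\eta,\alpha)$-derivatives, locally uniformly for $a=x+Y>0$. Since $U$ is bounded, dominated convergence applies.

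The main obstacle is the kernel identities \eqref{pc:eq:kernelzeros} themselves, above all $(Q_B^2-1)(\lambda_pK_p+\lambda_rK_r)=0$. This identity mixes the weights with the kernels, and the kernels $\mathsf H_j$ contain the primitives $I_{j,\pm}(a,r-\alpha)$. I would take these identities as established in Section~\ref{pc:sec:singleintegral}. If an independent check were wanted, I would first try the conceptual route. Along $Q_B$ the Region C value is $\lambda_p\,\mathcal K_{2x}P+\lambda_r\,\mathcal T\mathcal K_{2x}P$ with $\zeta$ fixed, and the hyperbolic system \eqref{pc:eq:22system} together with the exit structure then gives $(Q_B^2-1)v=0$ exactly as in the $b$-direction interpolation of Region A. Similarly, along $Q_C$ the quantities $\rho$ and $a+\alpha$ stay fixed, and the semigroup property $\mathcal K_{p+r}=\mathcal K_p\mathcal K_r$ combined with \eqref{pc:eq:32system} gives the identity. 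Only if this route failed would I fall back on the direct check, substituting $D_s=e^{2s}-1$ into the elementary formulas \eqref{pc:eq:Iprimitives} and collecting terms.
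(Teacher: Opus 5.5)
Your proposal is correct and follows the paper's proof essentially line for line. The $00101$ direction fixes $(x,Y,z)$ and acts only on the hyperbolic-sine weights. The $01001$ and $01110$ directions become $\sqrt2Q_B$ and $\sqrt2Q_C$, both of which fix $\rho$, so the kernel identities \eqref{pc:eq:kernelzeros} pass under the integral in \eqref{pc:eq:valueCsingle} without endpoint terms. Your added dominated-convergence justification and the continuity extension to $\overline\Omega_C$ are harmless refinements that the paper leaves implicit.
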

\begin{proof}
For $00101$, move along
\[
(q_1,q_2,q_3,q_4)\longmapsto
(q_1,q_2+t,q_3-t,q_4+t).
\]
The variables $(x,Y,z)$ in \eqref{rc:eq:regionCvalue} remain fixed.  Only
the two weights change, and both satisfy $\lambda''=2\lambda$.
Consequently $v''=2v$, which is exactly
$\mathcal R_{00101}[v]=0$.
For the other two controls, set
\[
\eta=\sqrt2q_2,\qquad w=z-2x,\qquad
\alpha=\frac w3,\qquad a=x+Y,\qquad \rho=a+\alpha,
\]
Regard $(x,Y,\eta,\alpha)$ as independent coordinates and introduce
\[
Q_B=\partial_x-\partial_Y-\partial_\eta,
\qquad Q_C=\partial_x-\partial_\alpha.
\]
In the $01001$ direction, $\alpha,a,$ and $\rho$ are fixed; in the
$01110$ direction, $Y,\eta$ and $\rho$ are fixed.
The local-term and kernel identities \eqref{pc:eq:kernelzeros}
therefore pass directly under the integral in
\eqref{pc:eq:valueCsingle}: its lower endpoint $\rho$ is fixed,
and $U(r)$ is independent of the moving coordinates.
They give $(Q_B^2-1)v=(Q_C^2-1)v=0$.
The corresponding gap derivatives are $\sqrt2Q_B$ and
$\sqrt2Q_C$, so both identities are precisely
$(d(\omega)\cdot\nabla)^2v=2v$.
\end{proof}
\begin{lemma}[Commutation and face reduction]\label{rc:lem:facereduction}
For every control $\omega$, it is enough to prove
\begin{equation}\label{rc:eq:twofaces}
\mathcal R_\omega[v](0,0,q_3,q_4)\geq0,
\qquad
\mathcal R_\omega[v](0,q_2,0,q_4)\geq0.
\end{equation}
The Region A trace on $q_4-q_2=2q_1$ is taken from the already verified
Region A formula; the equality of the two traces of the residual follows
from the A/C two-jet matching proved in the smoothness proof in Section~\ref{sm:section}.
\end{lemma}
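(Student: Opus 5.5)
The plan mirrors Proposition~\ref{ab:boundary-reduction}: propagate the residual along the three equality directions of Proposition~\ref{rc:prop:zeromodes}. Their gap directions are
\[
a=(0,1,-1,1),\qquad b=(1,-1,0,1),\qquad c'=d(01110)=(1,0,0,-1).
\]
The operators $1-\tfrac12(d(\omega)\cdot\nabla)^2$ have constant coefficients, so each commutes with $(a\cdot\nabla)^2-2$, $(b\cdot\nabla)^2-2$ and $(c'\cdot\nabla)^2-2$. Hence on the open region the residual $r=\mathcal R_\omega[v]$ satisfies $r''=2r$ along every segment in any of these directions. Such a segment is a convex combination of its endpoint values, with nonnegative hyperbolic weights $\sinh(\sqrt2(t_+-t))/\sinh(\sqrt2(t_+-t_-))$ and $\sinh(\sqrt2(t-t_-))/\sinh(\sqrt2(t_+-t_-))$. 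Nonnegativity therefore passes from segment endpoints to the interior, and in each step I will check that the segment stays in $\overline\Omega_C$.

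\textbf{Step 1: reduce to two faces via $a$.} Through $q\in\overline\Omega_C$, the segment $t\mapsto q+ta$ with $-q_2\le t\le q_3$ keeps $q_4-q_2$ and $q_1$ fixed, so it stays in $C$. Its endpoints lie on $\{q_2=0\}$ and $\{q_3=0\}$. This reduces the claim to these two faces, with $q_1$ still free.

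\textbf{Step 2: remove $q_1$ on the face $q_3=0$ via $b$.} Starting from $(q_1,q_2,0,q_4)$, move by $tb$ for $t\in[-q_1,\,q_2]$. Along this segment, $q_3=0$ is preserved and $q_4-q_2-2q_1$ changes by $2t-2t=0$, so the segment stays on the face inside $C$. At $t=-q_1$ it reaches $(0,q_2+q_1,0,q_4-q_1)$, the second face in \eqref{rc:eq:twofaces}. At $t=q_2$ it reaches $\{q_2=0,q_3=0\}$, which belongs to the $q_2=0$ face.

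\textbf{Step 3: remove $q_1$ on the face $q_2=0$ via $c'$.} Starting from $(q_1,0,q_3,q_4)$, move by $tc'$. This preserves $q_2=q_3$-independence and keeps the quantity $q_4-2q_1$ evolving as $(q_4-t)-2(q_1+t)$, so the constraint $2q_1\le q_4-q_2$ is not invariant. The segment therefore runs from $t=-q_1$, landing at $(0,0,q_3,q_4+q_1)$, which is the first face in \eqref{rc:eq:twofaces}, up to the value of $t$ where $2(q_1+t)=q_4-t$. That second endpoint lies on the A/C interface.

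At the interface endpoint the residual is the Region~A residual. This uses the A/C two-jet matching \eqref{sm:eq:ACcubic} together with the endpoint analysis of Proposition~\ref{sm:prop:closedchamber}. Since Region~A residuals are nonnegative by Theorem~\ref{ab:thm:complete}, and the hypothesis of the theorem being proved supplies exactly these traces, this endpoint is nonnegative. Degenerate endpoints and points with $Y=0$ are handled by continuity of $D^2v$ from Theorem~\ref{sm:thm:globalC2}.

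\textbf{Main obstacle.} I expect the delicate point to be justifying $r''=2r$ along segments that touch the boundary or the interface. I would apply the ODE on the open subsegment, where $v$ is $C^\infty$ by the integral formula \eqref{pc:eq:valueCsingle}, and then pass to the endpoints by continuity of the residual. I also need to confirm that $Q_C$ and $Q_B$ correspond to $c'$ and $b$, up to complementation of controls, as the proof of Proposition~\ref{rc:prop:zeromodes} indicates. If the segment in Step~3 leaves $C$ through a face $q_i=0$ instead of the interface, I would instead combine $b$ and $c'$ in sequence, in the same manner.
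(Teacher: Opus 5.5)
Your argument is correct and uses the paper's mechanism: commutation of $1-\tfrac12(d(\omega)\cdot\nabla)^2$ with the equality operators, nonnegative hyperbolic-sine interpolation along segments, and the Region A residual as the trace on the A/C interface. The sequencing differs from the paper's.

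\emph{The paper's order.} It first moves along $d(01110)=(1,0,0,-1)$ from an arbitrary point of $\overline\Omega_C$. The segment $-q_1\le t\le (q_4-q_2-2q_1)/3$ ends on $q_1=0$ or on the interface. Only then does it use $a$, inside the face $q_1=0$. Two segments and two equality directions suffice.

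\emph{Your order.} Starting with $a$ leaves $q_1$ free on both faces $q_2=0$ and $q_3=0$, which forces a third step. The $b$-step is dispensable: $(1,0,0,-1)$ also fixes $q_2$ and $q_3$. It would therefore send $(q_1,q_2,0,q_4)$ directly to $(0,q_2,0,q_4+q_1)$ or to the interface, exactly as in your Step 3.

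If you keep $b$, state the logical order explicitly. The $b$-segment ends at $(q_1+q_2,0,0,q_4+q_2)$, which lies on $q_2=0$ with positive first gap. So your Step 3 must be established first, then Step 2, then Step 1.

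\emph{Trade-off.} Your route invokes the interface trace only on its slice $q_2=0$; the paper's route gains economy. Since the Region A residual is nonnegative on the whole interface anyway, the paper's ordering is the natural one.

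Two small wording points:
\begin{itemize}
\item The weights $\sinh(\sqrt2(t_+-t))/\sinh(\sqrt2(t_+-t_-))$ and $\sinh(\sqrt2(t-t_-))/\sinh(\sqrt2(t_+-t_-))$ sum to at most one. The interpolation is therefore a nonnegative combination, not a convex one; nonnegativity is all you use.
\item In Step 3 the phrase ``preserves $q_2=q_3$-independence'' should simply say that the move fixes $q_2$ and $q_3$.
\end{itemize}
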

\begin{proof}
The regional kernels are smooth in the interior, so the four
derivatives used here are legitimate.  Constant-coefficient
directional operators commute.  Hence each equality
direction $d_*$ in Proposition~\ref{rc:prop:zeromodes} gives
\[
\left(1-\frac12(d_*\cdot\nabla)^2\right)
\mathcal R_\omega[v]=0.
\]
First take $d_*=(1,0,0,-1)$ and follow
\[
q(t)=q+t(1,0,0,-1),\qquad
-q_1\leq t\leq\frac{q_4-q_2-2q_1}{3}.
\]
The endpoints lie on $q_1=0$ and on the Region A/C interface.  Along the
segment, $f(t)=\mathcal R_\omega[v](q(t))$ satisfies
$f-f''/2=0$; the one-dimensional minimum principle reduces the sign to
the two endpoints.
On $q_1=0$, use $d_*=(0,1,-1,1)$ and the segment
\[
(0,q_2+t,q_3-t,q_4+t),\qquad -q_2\leq t\leq q_3.
\]
Its endpoints are exactly the two faces in \eqref{rc:eq:twofaces}.  A second
application of the same minimum principle proves the lemma.
\end{proof}
\subsection{The controls 00000, 00011, and 00001}
The constant control is immediate:
\[
\mathcal R_{00000}[v]=v\geq0,
\]
where the sign follows from the nonnegative boundary regulator in
the fixed-policy representation \eqref{pc:eq:localtime}.
The five-expert boundary profile $U$ is defined by
\eqref{pc:eq:Udef}.  For use below, write $h=H$, with $H$ as in
\eqref{pc:eq:Uintegral}; thus
\begin{align}
U(t)&=\cosh t\int_t^\infty h(r)\dd r,\label{rc:eq:Uprofile}\\
h(t)&=
\frac{3\left(t/16-\sinh(2t)/64-\sinh(4t)/64
+\sinh(6t)/192\right)}
{\sqrt2\,\sinh^5t\cosh^2t}.\label{rc:eq:hprofile}
\end{align}
\begin{proposition}\label{rc:prop:00011}
$\mathcal R_{00011}[v]\geq0$ throughout Region C.
\end{proposition}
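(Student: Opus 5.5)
The control $00011$ has $d(\omega)=(0,0,1,0)$, so its residual is $\mathcal R=v-\tfrac12 v_{q_3q_3}$. By Lemma~\ref{rc:lem:facereduction} we only need the sign on the two faces $q=(0,0,q_3,q_4)$ and $q=(0,q_2,0,q_4)$. Both faces lie on $q_1=0$, so $x=0$ and $p=P(Y,z)$, $r_3=\Top P(\cdot,z)$. Every trace is therefore expressed through the collapsed-face datum $P$ of \eqref{pc:eq:P} and its companion.

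\textbf{Traces as a $3$--$2$ pair.} On the first face, $\lambda_r=0$ and $\lambda_p=1$, with $Y=\sqrt2 q_3$ and $z=\sqrt2 q_4$. Differentiating the interpolation formula \eqref{rc:eq:regionCvalue} twice in $q_3$ at $q_2=0$ should give
\[
F(Y,z):=\mathcal R(0,0,Y/\sqrt2,z/\sqrt2)=P-P_{YY},
\]
up to weight-derivative terms. These terms vanish because $\partial_{q_3}$ moves $Y$ but not $\eta$. On the second face one gets an analogous combination $G$ built from $\Top P$, $P$ and their $Y$-derivatives. I then plan to check, from the boundary system \eqref{pc:eq:32system} and the commutation of $1-\tfrac12\partial_{q_3}^2$ with the $00101$ and $01110$ equality operators, that $(F,G)$ satisfies
\[
(\partial_Y-3\partial_z)F=3\coth Y\,F-3\csch Y\,G,\qquad
G_Y=2\coth Y\,G-2\csch Y\,F,
\]
possibly up to an error term of one sign. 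Lemma~\ref{ab:lem:two-two} with $m=3$ and $\theta=1$ then reduces everything to the edge $z=0$, provided two things hold. First, $F$ must be bounded below, which follows from \eqref{pc:eq:propagatorbound}. Second, the normalized limit of $G$ at infinity must be nonnegative, which follows from $U\to 1/(2\sqrt2)$.

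\textbf{Reduction to the profile $U$.} At $z=0$ we have $P(Y,0)=U(Y)$, so $F(Y,0)=U-U''$. This is exactly the trace $\mathcal R_{00011}$ on $q_1=q_2=q_4=0$, and it also lies on the A/C interface. A hypothesis of the theorem lets me already treat it as nonnegative from the Region A residual (case A16, Proposition~\ref{a716:prop:A11A16}). As a self-contained check, I will use \eqref{pc:eq:Uode} to write $U-U''=5\coth t\,U'-5U+\tfrac3{\sqrt2}\coth t$, then substitute \eqref{rc:eq:Uprofile}. This expresses $U'$ through $\int_t^\infty h$ and $h(t)$. The resulting one-variable function should be positive by differentiating it finitely many times, in the style of the $B_1,B_2$ certificate. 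The endpoint values are $U(0)-U''(0)=0$ (by \eqref{pc:eq:Uendpoints}) and the limit $0$ at infinity.

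\textbf{Main obstacle.} The hardest step is the exact first-order system for $(F,G)$. Applying $\partial_{q_3}^2$ to the $P$ and $\Top P$ traces produces commutators between $\partial_Y^2$ and the coefficients $\coth Y$ and $\csch Y$ in \eqref{pc:eq:32system}. These commutators may leave a residual error term. If they do, I will instead average with a second control sharing the same first trace, as in Lemma~\ref{bo:lem:paired}; $00001$ is the natural partner. Failing that, I will show the error is nonpositive, using $U''(0)=U(0)$ and the $\sinh^3$ transport identity \eqref{pc:eq:Pode}. That identity gives
\[
P-P_{YY}=\frac{\sinh^3Y}{\sinh^3(Y+z/3)}\,(U-U'')(Y+z/3),
\]
and this would in fact directly yield $F\ge0$ on the first face. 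The second face then follows from the companion integral $G\ge\Top F\ge0$.
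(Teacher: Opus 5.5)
There is a genuine gap, and it is on the second face. Your first-face argument is sound once you use your fallback, and it is the paper's argument. On $q_1=q_2=0$ the weights are $\lambda_p\equiv1$ and $\lambda_r\equiv0$ as functions of $Y$. The trace is therefore $P-P_{YY}=\frac{\sinh^3Y}{\sinh^3\rho}(U-U'')(\rho)$ by \eqref{pc:eq:Pode}, and everything reduces to $U-U''\ge0$. Appealing to case A16 at $(0,0,t/\sqrt2,0)$ is legitimate under the hypothesis of Theorem~\ref{rc:thm:regionC}. Indeed $\sinh^3t\,(U-U'')(t)=g(\coth t)+\sinh^3t\tanh t/(2\sqrt2)$, which is the function $B$ of Proposition~\ref{a716:prop:A11A16}. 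The paper instead uses the closed form $U-U''=\frac{3\sqrt2}{2\sinh^6t}\int_0^t\sinh^6r\,dr$, which your unexecuted certificate would have to reproduce. The hyperbolic-system detour is unnecessary for this face.

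On the second face $q_1=q_3=0$, your proposed companion equation is false, and the closing claim $G\ge\mathcal T F\ge0$ is wrong. At $\eta=Y$ one has $\lambda_p=0$, $\partial_Y\lambda_p=\csch Y$, $\partial_Y^2\lambda_p=-2\coth Y\csch Y$, $\partial_Y\lambda_r=-\coth Y$, and $\partial_Y^2\lambda_r=\coth^2Y+\csch^2Y$. Inserting $R_Y=2\coth Y\,R-2\csch Y\,P$ and its $Y$-derivative gives $v_{YY}=R=v$, so $G\equiv0$. Consequently $G_Y-2\coth Y\,G+2\csch Y\,F=2\csch Y\,F\ge0$, which is an error of the wrong sign: integrating it gives $G\le\mathcal TF$, not $G\ge\mathcal TF$. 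Since $F>0$ for $Y>0$, you have $\mathcal TF>0=G$. So Lemma~\ref{ab:lem:two-two} with $\theta=1$ does not apply, and with $\theta=0$ it would presuppose the very sign you want. (The first equation of your system does hold with $G=0$, because $F/\sinh^3Y$ is constant along $\rho=\mathrm{const}$.)

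The missing idea is the tie symmetry the paper uses. On $q_3=0$ ranks $3$ and $4$ coincide, and the Hessian of the $C^2$ symmetric extension is invariant under swapping them (Lemma~\ref{sm:lem:ordering}). The swap turns $00011$ into the equality control $00101$, so $\mathcal R_{00011}=\mathcal R_{00101}=0$ on that face. With both face traces nonnegative, Lemma~\ref{rc:lem:facereduction} concludes directly.
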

\begin{proof}
On the first face in \eqref{rc:eq:twofaces}, put
$y=\sqrt2q_3$, $z=\sqrt2q_4$, $\rho=y+z/3$.  The boundary equation gives
\[
\mathcal R_{00011}[v](0,0,q_3,q_4)
=\frac{\sinh^3y}{\sinh^3\rho}
\bigl(U(\rho)-U''(\rho)\bigr).
\]
Direct differentiation of \eqref{rc:eq:Uprofile}--\eqref{rc:eq:hprofile}
yields
\[
U(t)-U''(t)=\frac{\sqrt2}{128\sinh^6t}
\bigl(\sinh(6t)-9\sinh(4t)+45\sinh(2t)-60t\bigr).
\]
Writing the numerator as $K(t)$, the elementary identity
$K'(t)=192\sinh^6t$ and $K(0)=0$ give the positive representation
\[
U(t)-U''(t)
=\frac{3\sqrt2}{2\sinh^6t}\int_0^t\sinh^6r\,dr\geq0.
\]
Thus the first face residual is nonnegative.  On the second face the
third and fourth ranks collide; exchanging their control bits turns
$00011$ into the equality control $00101$.  Lemma~\ref{rc:lem:facereduction}
completes the proof.
\end{proof}
\begin{proposition}\label{rc:prop:00001}
$\mathcal R_{00001}[v]\geq0$ throughout Region C.
\end{proposition}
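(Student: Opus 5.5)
The plan is to use Lemma~\ref{rc:lem:facereduction} and treat the two faces in \eqref{rc:eq:twofaces} through the boundary hyperbolic system \eqref{pc:eq:32system}, rather than through the nested kernel. Here $d(00001)=(0,0,0,1)$, so $\mathcal R_{00001}[v]=v-\frac12v_{q_4q_4}$.

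\emph{First face.} On $q_1=q_2=0$ put $y=\sqrt2q_3$ and $z=\sqrt2q_4$. Then $v=P(y,z)$ and $\partial_{q_4}=\sqrt2\partial_z$, so the residual equals $P-P_{zz}$.
\begin{itemize}
\item The coefficients of \eqref{pc:eq:32system} depend only on $y$. Hence $\partial_z$ commutes with the system, and $(P_z,\Top P_z)$ and $(P_{zz},\Top P_{zz})$ solve the same homogeneous $3$--$2$ system.
\item Consequently the pair $(P-P_{zz},\,R_0-\partial_z^2R_0)$, with $R_0=\Top P$, solves \eqref{pc:eq:32system} with companion $\Top(P-P_{zz})$.
\item Lemma~\ref{ab:lem:two-two} with $m=3$ and $\theta=1$ then reduces nonnegativity to the edge $z=0$. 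Its lower bound at infinity comes from the uniform exponential kernel bounds of Section~\ref{pc:sec:ito}.
\end{itemize}

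\emph{Second face.} On $q_1=q_3=0$ the Region C value is $v=r_3(0,Y,z)=(\Top P(\cdot,z))(Y)$, with $\partial_{q_4}=\sqrt2\partial_z$. Therefore
\[
\mathcal R_{00001}[v](0,q_2,0,q_4)=\Top\bigl(P-P_{zz}\bigr)(\cdot,z)(Y).
\]
This is nonnegative once the first face is done, by positivity of $\Top$. (It is also part of the conclusion of Lemma~\ref{ab:lem:two-two}.)

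\emph{The edge $z=0$.} The remaining task is the one-variable inequality $U(y)-P_{zz}(y,0)\ge0$.
\begin{itemize}
\item The first equation of \eqref{pc:eq:32system}, together with \eqref{sm:eq:TUidentity}, gives $P_z(y,0)=\frac12U'-\frac1{2\sqrt2}$.
\item Differentiate that equation once in $z$ and use the second equation, $\partial_z R_0=\Top P_z$. This gives
\[
3P_{zz}(\cdot,0)=\partial_yP_z-3\coth y\,P_z+3\csch y\,\Top P_z ,
\]
which contains $\Top(U')$.
\item Eliminate $\Top(U')$ with the operator identity \eqref{sm:eq:Tderivative} and then \eqref{sm:eq:TUidentity} again. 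This should leave $U-P_{zz}(\cdot,0)$ as an explicit combination of $U,U',U''$ and elementary terms.
\item Use the profile ODE \eqref{pc:eq:Uode} to lower the order to $U,U'$. Then use \eqref{rc:eq:Uprofile}--\eqref{rc:eq:hprofile} to write the result as $\cosh y$ times an integral of $h$ plus elementary terms in $y$ only.
\end{itemize}

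\emph{Endpoints and the main obstacle.} At $y=0$, the full-collision jet \eqref{sm:eq:fullcollisionjet} gives $v=u_0$ and $v_{q_4q_4}=\frac43u_0$, so the value is $u_0-\frac23u_0=u_0/3>0$. As $y\to\infty$ the value tends to $\frac1{2\sqrt2}-\frac12\cdot0\ge0$, since $U\to\frac1{2\sqrt2}$ and its derivatives decay. The main obstacle is the interior sign. I would first try a derivative certificate as in Proposition~\ref{rc:prop:00011}: differentiate the normalized function (divided by $\cosh y$, or multiplied by $\sinh^k y$) until the integral of $h$ disappears. The result should be an elementary function whose sign follows from repeated differentiation and vanishing Taylor data at $0$, as with $K'(t)=192\sinh^6t$. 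A one-crossing argument in the style of Lemma~\ref{ab:single-crossing} would then place the minimum at an endpoint. If a single certificate fails, I would fall back on comparison with $00011$, which was already proved. The difference $(U-P_{zz})-(U-U'')(y)$ should reduce to a multiple of $U'+\frac{1}{\sqrt2}$, whose sign the profile ODE controls.
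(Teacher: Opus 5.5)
Up to the edge $z=0$ your argument is the paper's. You commute $1-\partial_z^2$ with \eqref{pc:eq:32system}, apply Lemma~\ref{ab:lem:two-two} with $m=3$, $\theta=1$, and treat the second face as $\mathcal T(P-P_{zz})$. (For the lower bound on $P-P_{zz}$ cite \eqref{rc:eq:facetails}; Section~\ref{pc:sec:ito} bounds $P$, not $P_{zz}$.)

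The gap is the edge inequality itself. You leave it as a programme, and both concrete claims you make about it are wrong. Carrying out your elimination with \eqref{sm:eq:TUidentity} and \eqref{sm:eq:Tderivative} gives
\[
U-P_{zz}(\cdot,0)=\frac47U-\frac5{21}U''-\frac{4}{7\sqrt2}\sinh y\,\operatorname{arctanh}(e^{-y}).
\]
This tends to $0$, not $1/(2\sqrt2)$, as $y\to\infty$. The reason is that $P_{zz}(y,0)\to1/(2\sqrt2)$, since $P(y,z)\approx e^{-z}/(2\sqrt2)$ for large $y$. This agrees with \eqref{rc:eq:facetails}, whose leading coefficient vanishes for $00001$. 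So there is no slack at infinity. For the same reason the fallback is false. The $00011$ edge residual $U-U''$ tends to $1/(2\sqrt2)$, so $\mathcal R_{00001}<\mathcal R_{00011}$ for large $y$, and no comparison with $00011$ can give the sign.

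The idea you are missing is that this edge is not a Region~C problem. The points $(0,0,q_3,0)$ lie on the A/C interface. There the two-jet matching of Section~\ref{sm:section} identifies the Region~C residual with the Region~A residual of case A6. Theorem~\ref{rc:thm:regionC} is stated under the Region~A inequalities, and the paper's proof simply quotes $F(y,0)\ge0$ as Proposition~\ref{a26:case6} at $q_1=0$.

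That proposition also shows what your direct route would cost. As a function of $t=\sqrt2q_3$, the function called $U(0,\cdot)$ in its proof is eight times your edge function, with endpoint values $\frac83u_0$ and $0$. Its sign needed the certificate $p_0>0$ followed by a minimum principle for $-\partial_{tt}-5\coth t\,\partial_t+6$, not merely endpoint values. Your expansion does make $U-P_{zz}(\cdot,0)-\frac13U$ elementary, so one differentiation of the quotient by $\cosh y$ removes the integral. You would still owe a sign proof for the resulting elementary function. Citing Proposition~\ref{a26:case6} instead closes the argument and gives exactly the paper's proof.
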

\begin{proof}
Put $P(y,z)=p(0,y,z)$, $R(y,z)=r_3(0,y,z)$ and
\[
F=P-P_{zz},\qquad G=R-R_{zz}.
\]
These are the two traces in \eqref{rc:eq:twofaces}.  The boundary pair
satisfies
\begin{align*}
(\partial_y-3\partial_z)P
&=3\coth y\,P-3\csch y\,R,\\
R_y&=-2\csch y\,P+2\coth y\,R.
\end{align*}
Since $1-\partial_z^2$ commutes with this system, the pair $(F,G)$
satisfies the same equations.  The trace $F(y,0)$ is the verified Region A interface.
The face estimates below imply that $F$ is bounded below and
$G/\sinh^2y\to0$ at infinity.  Integration of the second equation
gives $G=\mathcal TF$.  Lemma~\ref{ab:lem:two-two}, with
$m=3$ and $\theta=1$, therefore proves $F,G\geq0$, and
Lemma~\ref{rc:lem:facereduction} completes the proof.
\end{proof}
\subsection{Reduction of the remaining ten controls}
For every control, define the two face traces
\begin{align*}
F_\omega(y,z)&=\mathcal R_\omega[v]
\left(0,0,\frac y{\sqrt2},\frac z{\sqrt2}\right),\\
G_\omega(y,z)&=\mathcal R_\omega[v]
\left(0,\frac y{\sqrt2},0,\frac{y+z}{\sqrt2}\right)
\end{align*}
\paragraph{Regularity and infinity conditions for the face residuals.}
The following estimates supply the lower bounds and terminal
conditions used in the kernel comparison.  Each face residual is continuous on the closed quadrant and
smooth in its interior.
There are constants $C,c>0$ and an integer $m$, independent of the
control, such that
\begin{align}\label{rc:eq:facetails}
|F_\omega(y,z)|+|G_\omega(y,z)|
&\leq Ce^{-cz} &&(y,z\geq0),\notag\\
\left|F_\omega(y,z)
-\frac{1-(\omega_5-\omega_4)^2}{2\sqrt2}e^{-z}\right|
+|G_\omega(y,z)|
&\leq C(1+y)^m e^{-cy} &&(y\geq1,\ z\geq0).
\end{align}
For these estimates, expand the explicit formulas
\eqref{pc:eq:P}, \eqref{pc:eq:K}, and \eqref{pc:eq:valueCsingle}
after the substitution $r=\rho+t$ in each tail integral.
The profile satisfies
$U(r)=(2\sqrt2)^{-1}+O((1+r)e^{-2r})$, with the same
remainder bound after any fixed positive number of derivatives.
When $\rho\geq1$, factoring exponentials leaves denominators that
are powers of $1-e^{-2r}$, uniformly separated from zero.
The remaining integrands and their required derivatives have
integrable exponential bounds in $t$.
On the first face the leading value is
$(2\sqrt2)^{-1}e^{-z}$, independent of the lower three ranks;
its residual is the displayed leading term.  On the second face
the last gap is $(y+z)/\sqrt2$, and the leading value and residual
tend to zero.  The same estimates with $z\to\infty$ give the first
bound.  When $y$ stays bounded and $z\geq1$, cancel removable
hyperbolic quotients before differentiating: the integration
endpoints remain separated from zero.  On the remaining compact
set, continuity follows from Section~\ref{sm:section}.
Consequently every first trace has nonnegative lower limit at
infinity, and every companion has zero terminal value after
division by $\sinh^2y$.  These are the relevant infinity
conditions; the profile $U$ itself has a positive limit.
Set $D=\partial_y-3\partial_z$,
\begin{align*}
E_{1,\omega}&=DF_\omega-3\coth y\,F_\omega
+3\csch y\,G_\omega,\\
E_{2,\omega}&=(G_\omega)_y+2\csch y\,F_\omega
-2\coth y\,G_\omega.
\end{align*}
Permutation symmetry at the tied ranks, together with complementarity
of controls, gives the following trace identities.
\begin{align}
F_{00100}=F_{01000}=F_{01111},&\qquad
F_{00110}=F_{01010}=F_{01101},\label{rc:eq:Fclasses}\\
F_{00111}=F_{01011}=F_{01100},&\qquad
G_{00010}=G_{00100},\label{rc:eq:mixedclasses}\\
G_{01000}=G_{01111},&\qquad
G_{01010}=G_{01011}=G_{01100}=G_{01101},
\label{rc:eq:Gclasses}
\end{align}
Differentiation of the Region C formula gives the error cancellations
\begin{align}
E_{1,00100}&=-2E_{1,01000}=-2E_{1,01111},\notag\\
E_{1,00110}&=-2E_{1,01010}=-2E_{1,01101},\label{rc:eq:Ecancel}\\
E_{1,00111}&=-2E_{1,01011}=-2E_{1,01100}.\notag
\end{align}
In addition,
\[
E_{1,00010}=0,
\qquad
E_{2,00110}=E_{2,00111}=E_{2,01000}=E_{2,01111}=0.
\]
These are algebraic identities: after differentiation, the coefficients
of every derivative of $U$ and the remaining integral kernel vanish
separately.
\begin{proposition}[Three-control averaging]\label{rc:prop:averaging}
All ten remaining controls follow from the two inequalities
\begin{equation}\label{rc:eq:twotargets}
G_{00010}\geq0,\qquad G_{01010}\geq0.
\end{equation}
\end{proposition}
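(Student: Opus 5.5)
The plan is to group the ten controls into the three classes displayed in \eqref{rc:eq:Fclasses}: $\{00100,01000,01111\}$, $\{00110,01010,01101\}$ and $\{00111,01011,01100\}$. The sixteen representatives with $\omega_1=0$ consist of these nine, the control $00010$, the three equality controls, and the three treated controls $00000$, $00011$, $00001$. Within each class all three first traces $F_\omega$ coincide. By \eqref{rc:eq:Ecancel}, the weighted average
\[
\tfrac12E_{1,\omega_0}+\tfrac12E_{1,\omega_1}+\tfrac12E_{1,\omega_2}
\]
vanishes, where $\omega_0$ is the class member with coefficient $1$ in \eqref{rc:eq:Ecancel} and $\omega_1,\omega_2$ are the other two. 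I will normalize this as the average $\frac12 E_{1,\omega_0}+\frac14E_{1,\omega_1}+\frac14E_{1,\omega_2}$, which equals zero. The common trace $F$ of each class therefore satisfies the exact equation
\[
DF=3\coth y\,F-3\csch y\,\overline G,
\qquad
\overline G=\tfrac12G_{\omega_0}+\tfrac14G_{\omega_1}+\tfrac14G_{\omega_2}.
\]
The initial traces $F(y,0)$ lie on the A/C interface, so they are nonnegative by the assumed Region A inequalities.

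\textbf{Class $\{00110,01010,01101\}$.} By \eqref{rc:eq:Gclasses}, $G_{01010}=G_{01101}$, and $E_{2,00110}=0$ gives $G_{00110}=\mathcal TF$ via the integrating factor and the infinity conditions \eqref{rc:eq:facetails}. Hence
\[
\overline G=\tfrac12\mathcal TF+\tfrac12 G_{01010}\geq\tfrac12\mathcal TF
\]
under the hypothesis $G_{01010}\ge0$. Lemma~\ref{ab:lem:two-two} with $m=3$ and $\theta=1/2$ then yields $F\ge0$. Its hypotheses hold: $F$ is bounded below by \eqref{rc:eq:facetails}, and $\mathcal TF$ converges absolutely. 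This proves all six face traces of the class.

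\textbf{Class $\{00111,01011,01100\}$.} Here $E_{2,00111}=0$ gives $G_{00111}=\mathcal TF$, while $G_{01011}=G_{01100}=G_{01010}\ge0$. The same argument applies.

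\textbf{Class $\{00100,01000,01111\}$.} Now $E_{2,01000}=E_{2,01111}=0$ gives $G_{01000}=G_{01111}=\mathcal TF$, and $G_{00100}=G_{00010}\ge0$ by \eqref{rc:eq:mixedclasses} and the first target. Thus $\overline G\ge\frac12\mathcal TF$ again, and the lemma applies.

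\textbf{Control $00010$.} Since $E_{1,00010}=0$ and $G_{00010}\ge0$ is assumed, Lemma~\ref{paper:lem:onecomponent} with $m=3$ gives $F_{00010}\ge0$ from its nonnegative A/C initial trace.

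Finally, Lemma~\ref{rc:lem:facereduction} passes all face signs to Region C.

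The main obstacle is bookkeeping rather than analysis. I must check that the specific weighting in \eqref{rc:eq:Ecancel} produces exactly a convex combination of companions with total weight one; otherwise the coefficient of $\csch y$ would not be $3$. I must also identify which companion in each class is tied to $F$ through the vanishing $E_2$, so that $\theta>0$. A secondary point is verifying that the infinity estimates \eqref{rc:eq:facetails} supply the lower bound and the terminal condition $G/\sinh^2y\to0$ needed to identify $G=\mathcal TF$.
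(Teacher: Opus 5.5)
Your route is the paper's: split the ten controls into the three trace classes plus $00010$, and average the first hyperbolic equation over each class so the errors cancel. Then bound the averaged companion below by $\theta\mathcal T f$, apply Lemma~\ref{ab:lem:two-two} with $m=3$, and close $00010$ with Lemma~\ref{paper:lem:onecomponent}. The one step that fails is the weighting you yourself flagged as the main bookkeeping risk. The relations \eqref{rc:eq:Ecancel} say $E_{1,\omega_1}=E_{1,\omega_2}=-\tfrac12E_{1,\omega_0}$, so the combination that vanishes is the plain sum, i.e.\ the equal-weight average. Your first display, with weights $\tfrac12,\tfrac12,\tfrac12$, is correct. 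But ``normalizing'' it to weights $(\tfrac12,\tfrac14,\tfrac14)$ gives
\[
\tfrac12E_{1,\omega_0}+\tfrac14E_{1,\omega_1}+\tfrac14E_{1,\omega_2}=\tfrac14E_{1,\omega_0},
\]
which is not zero in general. With your $\overline G$ the class equation actually reads $DF=3\coth y\,F-3\csch y\,\overline G+\tfrac14E_{1,\omega_0}$. No sign of $E_{1,\omega_0}$ is available, so the differential inequality required by Lemma~\ref{ab:lem:two-two} is not established.

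The repair is immediate. Take $\overline G=\tfrac13(G_{\omega_0}+G_{\omega_1}+G_{\omega_2})$, which makes the averaged equation exact. Your companion identifications, from the vanishing $E_2$'s and the trace identities, then give $\overline G=\tfrac13G_{00010}+\tfrac23\mathcal Tf$ for the class of $00100$. They give $\overline G=\tfrac13\mathcal Tf+\tfrac23G_{01010}$ for the classes of $00110$ and $00111$. So $\theta=2/3,\,1/3,\,1/3$ rather than $1/2$. Since the lemma allows any $\theta\in[0,1]$, the rest of your argument goes through unchanged: the companion signs, the $00010$ step, and the face reduction. The corrected proof then coincides with the paper's.
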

\begin{proof}
Assume the two target signs.  Denote the three common first traces in
\eqref{rc:eq:Fclasses}--\eqref{rc:eq:mixedclasses} by
\[
f_1=F_{00100},\qquad f_2=F_{00110},\qquad f_3=F_{00111},
\]
and let $\overline G_i$ be the average of the three companions in
the corresponding class.  The cancellations \eqref{rc:eq:Ecancel} give
\[
(\partial_y-3\partial_z)f_i
=3\coth y\,f_i-3\csch y\,\overline G_i.
\]
The exact second equations and their zero normalized terminal values
give the remaining companions as $\mathcal Tf_i$.  Consequently
\begin{align}\label{rc:eq:averagedcompanions}
\overline G_1&=\frac{G_{00010}+2\mathcal Tf_1}{3},\notag\\
\overline G_2&=\frac{2G_{01010}+\mathcal Tf_2}{3},\\
\overline G_3&=\frac{2G_{01010}+\mathcal Tf_3}{3}.\notag
\end{align}
Thus $\overline G_i\geq\theta_i\mathcal Tf_i$, where
$(\theta_1,\theta_2,\theta_3)=(2/3,1/3,1/3)$.
The Region A verification and two-jet matching give $f_i(y,0)\geq0$;
\eqref{rc:eq:facetails} supplies the lower bounds and convergence.
Lemma~\ref{ab:lem:two-two}, with $m=3$, yields $f_i\geq0$.
The displayed companion identities then give every companion sign
in the three classes.  Finally, $E_{1,00010}=0$ and
$G_{00010}\geq0$ give $F_{00010}\geq0$ by
Lemma~\ref{paper:lem:onecomponent}.  These are all ten controls.
\end{proof}
\subsection{The final two traces and a quadratic reduction}
Write
\[
G_0(y,z):=G_{00010}(y,z),\qquad
G_1(y,z):=G_{01010}(y,z),\qquad
\rho=y+\frac z3 .
\]
These are the two residual traces left by the kernel comparison and
three-control averaging arguments.  Introduce
\[
q=e^{-2\rho},\qquad s=e^{-2y},\qquad x=e^{-2t};
\qquad 0<q\leq s\leq1.
\]
To distinguish functions from their coordinates, define explicitly
\[
\widehat G_j(q,s)
=G_j\!\left(-\frac12\log s,\frac32\log\frac{s}{q}\right),
\qquad 0<q\leq s\leq1.
\]
Using the profile representation \eqref{rc:eq:Uprofile}, put
\[
S(x)=x^6-3x^5-3x^4+12x^3\log x+3x^2+3x-1.
\]
The exact change of variables gives
\[
h\left(-\frac12\log x\right)
=\frac{\sqrt2\sqrt{x}\,S(x)}{2(x-1)^5(x+1)^2}.
\]
The numerator in \eqref{rc:eq:hprofile} is a positive multiple of
\[
\frac t{16}-\frac{\sinh(2t)}{64}-\frac{\sinh(4t)}{64}
+\frac{\sinh(6t)}{192}
=\int_0^t\sinh^4r\cosh^2r\,dr>0\qquad(t>0).
\]
Thus $h>0$, and the negative denominator in its displayed
$x$-representation gives $S(x)<0$ for $0<x<1$.
Substituting the integral representation of $U$ into the exact residual
representations,
reversing the order of integration, and multiplying by the positive
factor $s$ gives
\begin{equation}\label{rc:eq:Hrepresentation}
H_j(q,s):=s\widehat G_j(q,s)
=C(q)(s-1)^2+\int_0^q F_j(q,s,x)\dd x,
\end{equation}
where
\[
C(q)=-\frac{2\sqrt2q^{3/2}S(q)}{9(q-1)^8(q+1)}>0,
\]
\[
F_0=-\frac{\sqrt2S(x)P_0(q,s,x)}
{9q^2\sqrt{x}(x-1)^9(x+1)^2},\qquad
F_1=-\frac{\sqrt2S(x)P_1(q,s,x)}
{36q^2\sqrt{x}(x-1)^9(x+1)^2},
\]
and
\begin{align*}
P_0={}&4q^4x+2q^4+3q^3x^2+2q^3x+q^3\\
&+s\bigl(-6q^3x^2-4q^3x-2q^3-2qx^4-4qx^3-6qx^2\bigr)\\
&+s^2\bigl(qx^4+2qx^3+3qx^2+2x^4+4x^3\bigr),\\[2mm]
P_1={}&10q^4x+5q^4+3q^3x^2+2q^3x+q^3
+9q^2(x^3+x^2+x)\\
&+s\bigl(3q^3x^2+2q^3x+q^3-36q^2(x^3+x^2+x)
+qx^4+2qx^3+3qx^2\bigr)\\
&+s^2\bigl(9q^2(x^3+x^2+x)+qx^4+2qx^3+3qx^2
+5x^4+10x^3\bigr).
\end{align*}
Thus $H_0$ and $H_1$ are exactly quadratic in $s$.  No approximation of
$U$ or of an integral has been made in \eqref{rc:eq:Hrepresentation}.
\subsection{Both quadratics are concave}
Set
\[
J_j(q):=-q^2\partial_s^2H_j(q,s).
\]
Since $H_j$ is quadratic, $J_j$ is independent of $s$.  Exact
moving-endpoint differentiation of \eqref{rc:eq:Hrepresentation} gives
\begin{align}
J_0''(q)&=-\frac{2\sqrt2q^{3/2}B_0(q)}{3(q-1)^{10}},\label{rc:eq:J0}\\
J_1'''(q)&=\frac{\sqrt2\sqrt q\,B_1(q)}{2(q-1)^{11}},\label{rc:eq:J1}
\end{align}
where
\begin{align*}
B_0={}&6q^5-60q^4\log q+125q^4-120q^3\log q-80q^3
-60q^2+10q-1,\\
B_1={}&22q^6-300q^5\log q+763q^5-1260q^4\log q+615q^4\\
&\hspace{12mm}-840q^3\log q-1180q^3-250q^2+33q-3.
\end{align*}
Their signs require only finite differentiation:
\[
B_0^{(5)}(q)=\frac{720(q-1)^2}{q^2}>0,
\qquad B_0^{(k)}(1)=0\quad(0\leq k\leq4),
\]
and
\[
B_1^{(6)}(q)=
\frac{1440(q-1)(11q^2-14q+7)}{q^3}<0,
\qquad B_1^{(k)}(1)=0\quad(0\leq k\leq5).
\]
Successive integration backwards from $q=1$ gives
\[
B_0(q)<0,\qquad B_1(q)<0\qquad(0<q<1).
\]
Consequently the right sides of \eqref{rc:eq:J0}--\eqref{rc:eq:J1} are
positive.  The fixed-interval substitution $x=qv^2$ gives
\[
J_0(q)=\frac{8\sqrt2}{105}q^{7/2}
+O\bigl(q^{9/2}|\log q|\bigr),\qquad
J_1(q)=\frac{4\sqrt2}{35}q^{7/2}
+O\bigl(q^{9/2}|\log q|\bigr).
\]
Integrating \eqref{rc:eq:J0} twice and \eqref{rc:eq:J1} three times from zero
therefore proves $J_0,J_1>0$.  Hence
\begin{equation}\label{rc:eq:concavity}
\boxed{\partial_s^2H_0(q,s)<0,\qquad
\partial_s^2H_1(q,s)<0.}
\end{equation}
\subsection{The edge s = 1}
Put $E_j(q)=\widehat G_j(q,1)$ and
\[
K_j(q)=q^2E_j(q),\qquad L_j(q)=q^{-2}E_j(q),
\qquad K_j=q^4L_j.
\]
Five differentiations remove the integral:
\begin{align}
K_0^{(5)}(q)&=-\frac{3\sqrt2M_0(q)}
{4q^{3/2}(q-1)^{11}},\label{rc:eq:K0}\\
K_1^{(5)}(q)&=-\frac{\sqrt2M_1(q)}
{4q^{3/2}(q-1)^{11}},\label{rc:eq:K1}
\end{align}
where
\begin{align*}
M_0={}&q^8-11q^7+104q^6-540q^5\log q+1021q^5
-1320q^4\log q\\
&\quad-540q^3\log q-1021q^3-104q^2+11q-1,\\
M_1={}&q^8-11q^7+184q^6-1500q^5\log q+3261q^5
-4200q^4\log q\\
&\quad-1500q^3\log q-3261q^3-184q^2+11q-1.
\end{align*}
The two terminal signs have especially short monotonicity proofs.  Define
\[
D_0=\frac{M_0}{60q^3(9q^2+22q+9)},\qquad
D_1=\frac{M_1}{300q^3(5q^2+14q+5)}.
\]
Then $D_0(1)=D_1(1)=0$ and
\begin{align*}
D_0'&=\frac{(q-1)^6(27q^4+52q^3+162q^2+52q+27)}
{60q^4(9q^2+22q+9)^2}>0,\\
D_1'&=\frac{(q-1)^6(5q^4+12q^3+158q^2+12q+5)}
{100q^4(5q^2+14q+5)^2}>0.
\end{align*}
Thus $M_0,M_1<0$, and \eqref{rc:eq:K0}--\eqref{rc:eq:K1} imply
$K_0^{(5)},K_1^{(5)}<0$.
This sign propagates all the way back without estimating an integral.
Let $\cD=q\partial_q$, set
\[
Q_{j,1}=\cD L_j,\qquad
Q_{j,k+1}=(\cD+k)Q_{j,k}\quad(1\leq k\leq4).
\]
Since $K_j=q^4L_j$,
\[
Q_{j,5}=qK_j^{(5)}<0,
\qquad (q^kQ_{j,k})'=q^{k-1}Q_{j,k+1}.
\]
At zero, the exact fixed-interval expansions are
\[
L_0(q)=\frac{8\sqrt2}{35}q^{-1/2}
+O(q^{1/2}|\log q|),\qquad
L_1(q)=\frac{8\sqrt2}{105}q^{-1/2}
+O(q^{1/2}|\log q|).
\]
Therefore $q^kQ_{j,k}\to0$ for $1\leq k\leq4$.
Successively integrating from zero gives
\[
Q_{j,4}<0,\ Q_{j,3}<0,\ Q_{j,2}<0,\ Q_{j,1}<0.
\]
Thus both $L_j$ are decreasing.  At the full-collision endpoint,
permutation symmetry of the Hessian and the $k=2$ equality give
\[
L_0(1)=\frac{U(0)}3=\frac{15\pi^2}{512\sqrt2}>0,
\qquad L_1(1)=0.
\]
Indeed, the Hessian of the translation-invariant five-expert value has
$D^2u=a(I-\mathbf1\mathbf1^{\mathsf T}/5)$ there; the $k=2$
equality gives $a=5U(0)/3$, and the $k=1$ residual is therefore $U(0)/3$.
(The same limits follow directly from the one-integral formulas.)  Hence
\begin{equation}\label{rc:eq:s1}
\boxed{H_0(q,1)=\widehat G_0(q,1)>0,\qquad
H_1(q,1)=\widehat G_1(q,1)>0\quad(0<q<1).}
\end{equation}
\subsection{The edge s = q from Region A}
When $s=q$, the change of variables gives $z=0$.  With
$y=-\tfrac12\log q$, this edge is the point
\[
q^{\rm gap}=\left(0,\frac y{\sqrt2},0,\frac y{\sqrt2}\right)
\]
on the A/C interface.  The full two-jet matching proved in
Section~\ref{sm:section} identifies the two residual traces there:
\[
\widehat G_0(q,q)=R_{A5}(q^{\rm gap}),\qquad
\widehat G_1(q,q)=R_{A12}(q^{\rm gap}).
\]
Propositions~\ref{a26:case5} and \ref{ab:comb-positive} therefore give
\begin{equation}\label{rc:eq:sq}
H_0(q,q)\geq0,\qquad H_1(q,q)\geq0\qquad(0<q\leq1).
\end{equation}
This uses only the Region A verification and the preceding regularity
theorem; neither depends on the Region C inequalities.
\subsection{Proof of the Region C theorem}
For fixed $q$, concavity \eqref{rc:eq:concavity} places $H_j(q,s)$ above
the chord joining its values at $s=q$ and $s=1$.  Equations
\eqref{rc:eq:s1} and \eqref{rc:eq:sq} therefore imply
\[
H_j(q,s)\geq0\qquad(0<q\leq s\leq1).
\]
Since $s>0$, this proves the two desired inequalities:
\begin{equation}\label{rc:eq:final}
\boxed{G_{00010}(y,z)\geq0,\qquad
G_{01010}(y,z)\geq0\qquad(y,z\geq0).}
\end{equation}
\begin{proof}[Proof of Theorem~\ref{rc:thm:regionC}]
Equation \eqref{rc:eq:final} and Proposition~\ref{rc:prop:averaging} close the
ten controls in the averaging classes.  Proposition~\ref{rc:prop:00001}
closes $00001$, Proposition~\ref{rc:prop:00011} closes $00011$, and
$00000$ is immediate.  Together with the three equality controls in
Proposition~\ref{rc:prop:zeromodes}, this accounts for all sixteen
representatives with first bit zero.  Complementation accounts for the
remaining controls and proves every inequality in Region C.
The reductions use the already verified Region A residual on the A/C
interface.  The smoothness proof in Section~\ref{sm:section} proves equality of the
full two-jets there, so the regional residuals have the same trace.
This also supplies the $s=q$ endpoint signs used in the final quadratic
reduction.
\end{proof}
\begin{corollary}[The exact COMB optimality set]\label{paper:cor:comb-locus}
For ordered gaps $q\in\mathbb R_+^4$, the control $01010$ attains
the Hamiltonian maximum if and only if $q_2=q_4=0$.
At every other point its residual is strictly positive.
\end{corollary}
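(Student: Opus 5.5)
Since the Hamiltonian maximum is attained by $00101$ with $\mathcal R_{00101}=0$ everywhere, the control $01010$ is optimal at $q$ exactly when $\mathcal R_{01010}[v](q)=0$. The task is therefore to locate the zero set of the comb residual, region by region, using the sign certificates already proved. I use the strict versions of these certificates wherever the factorizations give them.

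\emph{The zeros lie in $\{q_2=q_4=0\}$.} First the inclusion. In Region $A$, Proposition~\ref{ab:comb-positive} writes both face traces as integrals against $-g>0$ of kernels $\rho(\rho^2-1)(M-\rho)\,h\,[\cdots]$. Here $h=\sinh(z/2)$, and the bracket is positive once $S,u>0$. So each face trace is strictly positive when $z>0$, i.e.\ $q_4>q_2$, and $Y>0$. The $a$-characteristic interpolation has strictly positive weights, so any interior point is a positive combination of two strictly positive endpoint values. Hence the residual is strictly positive in $A$ off $\{q_4=q_2\}$.

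The same argument applies in $C$. On the B- and C-faces the residual is $G_{01010}$ or one of the averaged classes. The concavity \eqref{rc:eq:concavity} together with the strict edge bound \eqref{rc:eq:s1} gives $H_1>0$ except possibly at $s=q$. Propagating through Lemma~\ref{rc:lem:facereduction} with positive interpolation weights, and through the strict form of Lemma~\ref{ab:lem:two-two} via the lower bound \eqref{paper:eq:onecomponentbound}, the only possible zeros are on the A/C edge.

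In $B$, the comparison table \eqref{bo:eq:directcomparisons} identifies B12 with A12 at $Tq$. The point $Tq$ lies on $\{q_4=q_2\}$ only when $q_2=q_4$ in the original variables. It remains to handle the interface $q_2=q_4$ itself. There I use the diagonal trace $(q_1,t,0,t)$ and the face $q_2=0$ from \eqref{ab:comb-factorizations}. On the diagonal $z=0$ makes $h=0$, so the first trace vanishes identically on $\{q_2=q_4=0\}$. For the $q_3=0$ face with $q_2=q_4=t>0$, I expect the factor $S^2h(M-\rho)+Su$ to stay positive since $u=\sinh Y>0$. So along $q_2=q_4>0$ the residual is strictly positive, using interpolation from the $q_3=0$ endpoint with positive weight.

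\emph{The residual vanishes on $\{q_2=q_4=0\}$.} Now the converse. For $q_2=q_4=0$ the second and third ranks tie and the fourth and fifth ranks tie. Exchanging bits within tied blocks maps $01010$ to $00101$. The symmetric $C^2$ extension (Lemma~\ref{sm:lem:ordering}) makes the Hessian quadratic form invariant under these swaps, so $\mathcal R_{01010}=\mathcal R_{00101}=0$ there. Equivalently, $h=0$ in the $q_2=0$ factorization, since $z=0$ and $Y$ arbitrary.

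The main obstacle is strictness at degenerate points: boundary faces where $Y=0$, and points where $q_1=0$ or the region C quadratic reaches the edge $s=q$. There the certificates were only proved as $\geq$, and continuity arguments lose strictness. I would handle these by evaluating the explicit face factorizations directly: for instance $u\to0$ but $h>0$ still leaves $Sh^2(M-\rho)>0$. At the full collision I would use the jet \eqref{sm:eq:fullcollisionjet}, checking $q^\top\!$-independent values.
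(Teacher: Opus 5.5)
Your argument is correct and is essentially the paper's proof: strictness in $A$ comes from the two face factorizations \eqref{ab:comb-factorizations} and the $a$-interpolation. One correction: only the $q_2=0$ kernel carries the factor $h$; the $q_3=0$ bracket is $S^2h(M-\rho)+Su\geq Su>0$, and this is why the residual is positive along $q_2=q_4>0$, as you later correctly use. The characterization then transfers to $B$ through $R_B(q)=R_A(Tq)$. In $C$ you obtain strict positivity of $G_{01010}$ for $z>0$ from concavity and the strict $s=1$ edge, then of $F_{01010}$ and the interior via the averaged equation and the characteristic interpolations of Lemma~\ref{rc:lem:facereduction}, with A/C interface points assigned to $A$.

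Your two tactical variants are both valid. The characteristic lower bound \eqref{paper:eq:onecomponentbound} with companion $(2G_{01010}+\mathcal T f)/3$ replaces the paper's zero-gradient contradiction at an interior minimum of $f$. Tie-swap invariance of the Hessian (mapping $01010$ to $00101$) replaces the paper's $h=0$-plus-continuity argument for vanishing on $q_2=q_4=0$.
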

\begin{proof}
Write $R=\mathcal R_{01010}[v]$. All regional inequalities have
already been proved, so $R\geq0$; optimality is equivalent to $R=0$.

In $A$, use the notation of Proposition~\ref{ab:comb-positive}.
If $z>0$, both integrals in \eqref{ab:comb-factorizations} are
strictly positive, since $g<0$ and their brackets are positive for
$1<\rho<M$. This includes $Y=0$: then $S=h>0$, $u=0$, and both
brackets reduce to $S^3(M-\rho)$, so their common continuous limit
is strictly positive. If $z=0$ and $Y>0$, the first face residual
is zero and the second is strictly positive. The interpolation
along $a$ in Proposition~\ref{ab:boundary-reduction} has second
weight $\sinh(\sqrt2q_2)/\sinh Y$; hence in this case $R=0$
exactly when $q_2=0$. The remaining case $Y=z=0$ has $R=0$ by
continuity along $q_2=q_4=0$. Thus in $A$ the zero set is precisely
$q_2=q_4=0$. In $B$, \eqref{bo:eq:directcomparisons} gives
$R_B(q)=R_A(Tq)$, where
$Tq=(q_1+(q_2-q_4)/2,q_4,q_3,q_2)$; the same characterization follows.

For $C$, put $f=F_{01010}$ and $G=G_{01010}$.
The strict $s=1$ sign in \eqref{rc:eq:s1}, the $A$ strictness just
proved on the $s=q$ edge, and concavity \eqref{rc:eq:concavity}
give $G(y,z)>0$ whenever $y+z>0$.
The trace identities and averaging equations
\eqref{rc:eq:Fclasses}--\eqref{rc:eq:Ecancel} give
\[
 (\partial_y-3\partial_z)f
 =3\coth y\,f-\operatorname{csch}y\,(2G+\mathcal Tf).
\]
Since $f\geq0$, an interior zero with $z>0$ would have zero
gradient, contradicting this identity and $G>0$.
On the singular edge, the trace definitions give
$f(0,z)=G(0,z)>0$ for $z>0$.

On $q_1=0$, set $Y=\sqrt2(q_2+q_3)$ and
$z=\sqrt2(q_4-q_2)$. For $Y>0$, the second characteristic in
Lemma~\ref{rc:lem:facereduction} gives exactly
\[
 R(0,q_2,q_3,q_4)
 =\frac{\sinh(\sqrt2q_3)}{\sinh Y}f(Y,z)
  +\frac{\sinh(\sqrt2q_2)}{\sinh Y}G(Y,z).
\]
If $q_4>0$, this is positive: use the second term when $q_2>0$,
and the first when $q_2=0$. For $Y=0$ the residual is instead
$f(0,\sqrt2q_4)>0$. If $q_1>0$ and $q_4-q_2>2q_1$, the
first characteristic of that lemma has a strictly positive
$q_1=0$ endpoint (its fourth gap is $q_4+q_1>0$), a nonnegative
$A/C$ endpoint, and positive interpolation weights. Thus $R>0$.
The $A/C$ endpoint itself has already been covered by $A$.
Since $q_4=0$ in $C$ forces $q_1=q_2=0$, these arguments cover
every point outside the asserted zero set; the zero set itself
is already contained in $A$.
\end{proof}
\begin{remark}
This is the same optimality set as in \cite[Theorem~2.2]{CD2026}:
in their decreasing rank convention it is
$x_1=x_2$ and $x_3=x_4$. Here it follows from strictness in the
regional verification above.
\end{remark}

\subsection{Completeness of the verification}
\label{paper:sec:coverage}
Table~\ref{paper:tab:coverage} records the proof for every complementary
pair.  Its controls have at most two ones, so there are
$1+5+\binom52=16$ distinct representatives.  For Region C, the complement
is used when the first bit is one.  In particular, case 10 is the
Region C equality $01110$.
\begin{table}[htbp]
\centering\small
\setlength{\tabcolsep}{5pt}
\begin{tabular}{ccccc}
\toprule
Case & Control & Region A & Region B & Region C\\
\midrule
1 & $00000$ & $v\geq0$ & $v\geq0$ & $v\geq0$\\
2 & $10000$ & \ref{a26:case2} & \eqref{bo:eq:directcomparisons} & \ref{rc:prop:averaging}\\
3 & $01000$ & \ref{a26:case3} & \ref{b34:comparison} & \ref{rc:prop:averaging}\\
4 & $00100$ & \ref{a26:case4} & \ref{b34:comparison} & \ref{rc:prop:averaging}\\
5 & $00010$ & \ref{a26:case5} & \ref{bo:prop:pairedcases} & \eqref{rc:eq:final}, \ref{rc:prop:averaging}\\
6 & $00001$ & \ref{a26:case6} & \ref{bo:prop:pairedcases} & \ref{rc:prop:00001}\\
7 & $11000$ & \ref{a716:prop:A7A8} & \ref{bo:prop:pairedcases} & \ref{rc:prop:averaging}\\
8 & $10100$ & \ref{a716:prop:A7A8} & \ref{bo:prop:pairedcases} & \ref{rc:prop:averaging}\\
9 & $10010$ & \ref{a716:prop:A7A8} & \ref{bo:prop:pairedcases} & \ref{rc:prop:averaging}\\
10 & $10001$ & \ref{a716:prop:A9A10} & \ref{bo:prop:pairedcases} & Equality\\
11 & $01100$ & \ref{a716:prop:A11A16} & \eqref{bo:eq:directcomparisons} & \ref{rc:prop:averaging}\\
12 & $01010$ & \ref{ab:comb-positive} & \eqref{bo:eq:directcomparisons} & \eqref{rc:eq:final}, \ref{rc:prop:averaging}\\
13 & $01001$ & Equality & \eqref{bo:eq:directcomparisons} & Equality\\
14 & $00110$ & \ref{a716:prop:A14} & Equality & \ref{rc:prop:averaging}\\
15 & $00101$ & Equality & Equality & Equality\\
16 & $00011$ & \ref{a716:prop:A11A16} & \eqref{bo:eq:directcomparisons} & \ref{rc:prop:00011}\\
\bottomrule
\end{tabular}
\caption{Coverage of all controls. Bare references denote propositions;
parenthesized references denote equations. Equality controls are established
in \eqref{ab:eq-directions} and Proposition~\ref{rc:prop:zeromodes}.}
\label{paper:tab:coverage}
\end{table}
The two regional interfaces are $q_4=q_2$ and $q_4-q_2=2q_1$;
their full two-jet matching was proved in Section~\ref{sm:section}.
That section also establishes continuous second derivatives on the closed
chamber and the four adjacent-rank smooth-fit identities.  Hence the
inequalities and the common equality $\mathcal R_{00101}=0$ pass to every
interface, every ordering wall, and every multiple collision by continuity.
Permutation symmetry then passes the PDE to every chamber.  There is no
additional control inequality at a tie, and there is no third
codimension-one B/C interface.  The conditions at infinity used by the
minimum principles are supplied by \eqref{ad:eq:uniform-tail} and
\eqref{rc:eq:facetails} and the corresponding Region B estimates.

\appendix
\section{Moving-endpoint differentiation}
\label{paper:app:mathematica}

There is a distinction between a function and its expression in new
coordinates.  With $q=e^{-2(y+z/3)}$ and $s=e^{-2y}$, the notation is
\[
 \widehat G_j(q,s)=G_j\left(-\tfrac12\log s,
                               \tfrac32\log(s/q)\right),
 \qquad H_j(q,s)=s\widehat G_j(q,s).
\]
In particular, differentiating in $q$ holds $s$ fixed, hence holds
$y$ fixed and changes $z$.  Differentiating $H_j(q,q)$ is a total
derivative along the edge, not a partial derivative of $H_j$.

The representation \eqref{rc:eq:Hrepresentation} can be manipulated
without asking a computer algebra system for an antiderivative.  For
\[
 Z(q)=a(q)+\int_0^q b(q,x)\,dx,
\]
store the pair $(a,b)$.  Leibniz' rule gives the exact update
\begin{equation}\label{paper:eq:pairderivative}
 (a,b)\longmapsto\left(a'(q)+b(q,q),\ \partial_q b(q,x)\right).
\end{equation}
The variables $q$ and $x$ must be treated as independent until the
endpoint substitution $x=q$ is made.  Repeated application automatically
differentiates all endpoint terms produced on earlier steps.

After multiplying the curvature by $-q^2$, the remaining kernels are
polynomials in $q$ of degree one for $j=0$ and degree two for $j=1$.
For $K_j=q^2\widehat G_j(q,1)$ they have degree four, and for
the edge $s=q$ no differentiation is needed: its signs are inherited
from Region A.  Two or three applications of
\eqref{paper:eq:pairderivative} for the curvature, and five for $K_j$,
remove the integral exactly.  This explains the derivative orders in
\eqref{rc:eq:J0}--\eqref{rc:eq:J1} and
\eqref{rc:eq:K0}--\eqref{rc:eq:K1}.

\section[Comparison with Calder and Drenska]
{Comparison with Calder and Drenska~\texorpdfstring{\cite{CD2026}}{}}
\label{paper:sec:CDcomparison}

Our contribution comprises both a probabilistic construction and an
independent analytic verification of the nonlinear PDE. The verification
is not a consequence of solving the linear equation for a proposed
control: it requires all the Hamiltonian inequalities and their passage
through interfaces and ordering collisions. The principal methodological
difference from~\cite{CD2026} lies in how these inequalities are reduced
and how the terminal signs are proved. Table~\ref{paper:tab:comparison}
summarizes the differences. References to sections and equations
of~\cite{CD2026} below refer to its first arXiv version.

The normalization and regional correspondence can be made explicit.
After reversal of rank order, their Regions I, II, and III are our
$B$, $A$, and $C$. Their distinguished control is our $00101$.
Moreover their scalar trace $e$ in~\cite[(2.8)]{CD2026} satisfies
\begin{equation}\label{paper:eq:profilecorrespondence}
 U(t)=\frac{e(t)}{\sqrt2},\qquad
 I(t)=\int_0^t\sinh^4r\cosh^2r\,dr
 =\frac t{16}-\frac{\sinh(2t)}{64}
             -\frac{\sinh(4t)}{64}+\frac{\sinh(6t)}{192}.
\end{equation}
Thus their $I$ is precisely the numerator bracket in
\eqref{pc:eq:Uintegral}. Both papers establish the same limiting value,
global $C^2$ regularity, and identification with the viscosity solution
of at most linear growth. The full-collision jet in
\eqref{sm:eq:fullcollisionjet} is likewise the jet
in~\cite[(2.18)]{CD2026}: writing $u_0=45\pi^2/(512\sqrt2)$,
\[
 \nabla u(0)=\tfrac15\mathbf1,\qquad
 D^2u(0)=\frac{5u_0}{3}
       \left(I-\tfrac15\mathbf1\mathbf1^{\mathsf T}\right).
\]
Calder and Drenska state the exact COMB optimality locus as a separate
theorem. The same locus, $q_2=q_4=0$, follows from strictness in our
regional proofs and is recorded in Corollary~\ref{paper:cor:comb-locus}.
Thus the lists of regional equality controls in
Sections~\ref{sec:ab-verification} and~\ref{rc:section} describe identities
throughout a region, not the absence of additional pointwise equalities
on its boundary. The distinction is the construction and
verification argument, not a different scalar profile or value.

\begin{table}[!htbp]
\centering
\caption{Differences in construction, smoothness, and verification.}
\label{paper:tab:comparison}
\vspace{0.5\baselineskip}
\begingroup
\small
\setlength{\tabcolsep}{5pt}
\renewcommand{\arraystretch}{1.15}
\begin{tabular}{@{}>{\raggedright\arraybackslash}p{.16\linewidth}
 >{\raggedright\arraybackslash}p{.39\linewidth}
 >{\raggedright\arraybackslash}p{.39\linewidth}@{}}
\toprule
Part of the proof & Present paper & Calder--Drenska~\cite{CD2026} \\
\midrule
Construction &
Reflected gap process and discounted local-time representation;
reflection determines the trace equations, solved by curvature
transport and Green inversion (Section~\ref{pc:sec:construction}). &
Analytic trace systems, truncated hyperbolic modes, and their
propagation (Section~3). \\
\addlinespace
Smoothness &
Cubic contact and derivative-level remainders are transported through
the propagation and companion operators before the ordering-wall
extension (Section~\ref{sm:section}). &
Direct differentiation of the final formulas, density bounds for
differentiated kernels, and explicit logarithmic endpoint estimates
(Section~4.1). \\
\addlinespace
Verification: reductions &
Equality-direction face reductions, positive companion-kernel
comparison without singular-boundary signs, and cancellation across
controls with identical traces
(Sections~\ref{paper:sec:comparison}--\ref{rc:section}). &
Multiaffine corner reductions and scalar Laplace-transform sign
criteria (Sections~4.2--4.4). \\
\addlinespace
Verification: terminal signs &
Global differential factorizations and specified endpoint data;
the final Region C traces are treated by concavity and explicit
edge-sign proofs, all given in the text. &
Rational bounds for logarithms on seven intervals, followed by
147 exact rational Bernstein certificates for 21 scalar signs
(Section~4.5). \\
\bottomrule
\end{tabular}
\endgroup
\end{table}

\paragraph{Construction.}
The reflected gap process gives the discounted local-time formula
\eqref{pc:eq:localtime}; its identification with the constructed
candidate is proved by It\^o's formula in~\eqref{pc:eq:itoverification}.
In particular, the coefficient $2/3$ in the collapsed system
\eqref{pc:eq:collapsed} is derived from the reflection mechanism.
This supplies a probabilistic interpretation of the boundary equations
and specifies their boundary data. The analytic solution is then
derived: the interface curvature obeys
\eqref{id:eq:curvaturetransport}, whose Green inverse produces the
density $g$, while the $m$--$2$ systems ($m=2,3$) reduce to
\eqref{pc:eq:curvaturetransport}. Boundedness and the original,
undifferentiated system fix the remaining homogeneous modes.
Calder and Drenska instead derive the trace systems analytically and
solve their Cauchy problems by the truncated modes
$\Phi_t(X)=\sinh(t-X)/\sinh t$ for $X\leq t$
\cite[Section~3.4]{CD2026}. The normalized propagation problems are
the same, and both constructions admit Green-kernel representations;
we do not claim different solution operators. The difference lies in
the reflected-process derivation of the equations and in solving
them through transported curvature. More explicitly, the kernel in
\eqref{pc:eq:compactGreen} is exactly
$\sinh r\,\Phi_r(t)\Phi_r(d)^m$ in
\cite[(3.30)]{CD2026}; their extension to bounded data likewise uses
integration by parts. This common analytic structure makes the
different nonlinear verification arguments especially relevant.

\paragraph{Smoothness.}
At the $A/C$ interface, our argument starts with matching normal
two-jets of the propagation data. Their difference has cubic contact,
which is preserved by the propagation and companion operators
(equations~\eqref{sm:eq:PminusB} and~\eqref{sm:eq:ACcubic}).
The $A/B$ calculation compares the full weighted regional values
in~\eqref{sm:eq:ABcubic}. Near collisions we also track
derivative-level remainders of the form
\[
 |\partial^\beta R|\leq
 C\rho^{3-|\beta|}(1+|\log\rho|)^m,\qquad |\beta|\leq2,
\]
as specified in~\eqref{sm:eq:E3class}. These estimates establish
regularity up to the closed chamber before the symmetric extension.
The emphasis differs from the direct differentiated-kernel and
finite-formula endpoint estimates in~\cite[Section~4.1]{CD2026}.

\paragraph{Verification: reducing the inequalities.}
There are initially $48$ regional inequalities: sixteen complementary
control representatives in each of $A,B,C$. Our first reduction uses
the linear equations associated with equality controls to propagate
residual signs from lower-dimensional faces. The residual traces then
satisfy coupled hyperbolic equations, whose structure can remove
boundary sign requirements and cancel errors across different controls.

In particular, Lemma~\ref{ab:lem:two-two} treats, for $m>0$,
\[
 F_y-mF_z\leq m\coth y\,F-m\csch y\,G,\qquad
 G\geq\theta\mathcal TF,\qquad 0\leq\theta\leq1.
\]
Under its lower-boundedness and integrability hypotheses,
$F(y,0)\geq0$ implies $F,G\geq0$. Positivity of the companion
kernel, its mass bound, and integration along characteristics give
a Gronwall inequality for the negative part of $F$ on $y\geq\varepsilon$.
Letting $\varepsilon\downarrow0$ uses continuity only: no sign or
boundary-derivative expansion at $y=0$ is needed. The same argument
handles the $2$--$2$ and $3$--$2$ systems and their averaged equations.
For the paired A/B cases, Lemma~\ref{bo:lem:paired} instead derives
and integrates a scalar equation for the common edge trace. These
arguments eliminate separate singular-boundary verification problems.

Cancellation between controls supplies a second reduction. With the
trace and error notation of Section~\ref{rc:section},
\[
 F_{00100}=F_{01000}=F_{01111},\qquad
 E_{1,00100}=-2E_{1,01000}=-2E_{1,01111}.
\]
The averaged first equation is consequently exact, without requiring
a sign for any of these errors separately. For this common trace $f$,
the averaged companion is
\[
 \overline G=\tfrac13G_{00010}+\tfrac23\mathcal Tf.
\]
Thus $G_{00010}\geq0$ gives the kernel lower bound with
$\theta=2/3$. The other two trace classes give $\theta=1/3$
once $G_{01010}\geq0$ is known.
Proposition~\ref{rc:prop:averaging} thereby reduces ten remaining
controls to two companion traces, $G_{00010}$ and $G_{01010}$.
This is a coupled comparison argument; it does not assume that the
averaged pair satisfies a complete homogeneous two-equation system.
The multiaffine corner reduction in~\cite[Section~4.2]{CD2026}
instead leads to eight one-variable generators. These counts concern
different stages: our two traces still depend on two variables and
require the following further reduction.

\paragraph{Verification: proving the terminal signs.}
Write $G_0=G_{00010}$ and $G_1=G_{01010}$. Using $\xi$ for the scalar
variable called $q$ in Section~\ref{rc:section}, put
\[
 \xi=e^{-2(y+z/3)},\qquad s=e^{-2y},\qquad
 H_j(\xi,s)=sG_j\!\left(-\tfrac12\log s,
                              \tfrac32\log(s/\xi)\right).
\]
The functions $H_j$ are exactly quadratic and concave in $s$.
Consequently, for $0<\xi<1$ and $\xi\leq s\leq1$,
\[
 H_j(\xi,s)\geq
 \frac{1-s}{1-\xi}H_j(\xi,\xi)
 +\frac{s-\xi}{1-\xi}H_j(\xi,1)\geq0.
\]
The edge $s=\xi$ inherits its signs from Region A, using the
full second-order matching already proved at the interface.
Concavity and the remaining edge $s=1$ are proved by finite
differentiation with specified endpoint data
(equations~\eqref{rc:eq:concavity} and~\eqref{rc:eq:s1}).
For instance, the function $B_0$ occurring in~\eqref{rc:eq:J0}
satisfies the global identity
\[
 B_0^{(5)}(\xi)=\frac{720(\xi-1)^2}{\xi^2}>0,\qquad
 B_0^{(k)}(1)=0\quad(0\leq k\leq4).
\]
Taylor's formula gives $B_0<0$ on $(0,1)$; the displayed derivative
identities and endpoint limits then give $J_0>0$ and
$\partial_s^2H_0<0$. The other concavity and edge signs are closed by
analogous explicit chains.

The relation to the terminal functions of~\cite{CD2026} can also be
made exact. Let $D_1,T_1,P_3,T_3$ denote their generators
in~\cite[(4.11)]{CD2026}. Reversing the rank order maps our companion
face \mbox{$q=(0,y/\sqrt2,0,(y+z)/\sqrt2)$} to their coordinates
\[
 (a_1,a_2,a_3,a_4)=(z/3,z/3,L,L),\qquad L=y+z/3.
\]
Their control table and normalization~\cite[(4.10)]{CD2026} give,
for $L>0$ (with the value at $L=0$ obtained by continuity),
\begin{align*}
 \bigl(G_0(L,0),G_1(L,0)\bigr)
 &=\frac{\cosh L}{\sqrt2}
       \bigl(T_1(L)+2D_1(L),D_1(L)\bigr),\\
 \bigl(G_0(0,3L),G_1(0,3L)\bigr)
 &=\frac{\cosh^3L}{\sqrt2}
       \bigl(T_3(L)+3P_3(L),P_3(L)\bigr).
\end{align*}
The difference in verification extends to the terminal one-dimensional
problems. Our hyperbolic comparison and control-averaging arguments
lead to two companion traces, whose quadratic structure reduces
positivity to concavity and edge inequalities. These are established
through global differential identities and endpoint data.
Calder--Drenska's corner reduction instead produces scalar generators
whose terminal signs are certified by intervalwise Bernstein-polynomial
bounds~\cite[Sections~4.2--4.5]{CD2026}. Thus the reduction techniques
determine different families of auxiliary inequalities and different
mechanisms for proving them. The edge identities above show that some
individual quantities nevertheless coincide up to positive factors
or combinations; the distinction concerns the terminal systems and
their proofs, not the inequivalence of every individual expression.

Both proofs use differential elimination and monotonicity. The
distinction at the final sign stage is that our global derivative
factorizations and endpoint arguments are supplied in the text;
no external archive of intervalwise polynomial coefficients is
needed to complete those arguments. Symbolic checks have helped
derive and audit the identities, so this is a claim about the
self-contained proofs, not about the absence of computational aids
during their development. In~\cite[Section~4.5]{CD2026},
the terminal rational-logarithmic expressions are instead enclosed
by rational functions on fixed intervals and certified using exact
Bernstein coefficients. Those certificates are exact, not numerical
sign samples. Symbolic computation can reproduce our algebraic and
differentiation identities, but it does not replace the comparison,
endpoint, or regularity arguments.

There are also differences in the presentation of the final formula.
Calder and Drenska give a finite Region III expression in their
single non-elementary profile $e$ and its derivatives
\cite[(2.16)--(2.17)]{CD2026}, and rederive the four-expert formula
within their analytic framework. Our Region C representation retains
single integrals, chosen to expose the equality directions and the
sign structure used in the verification. Thus the finite expression
in~\cite{CD2026} is not fully elementary, and the comparison above
does not assert that our integral representation is a different value
function.

\section*{Code availability}
An optional symbolic-check supplement is available in the
\href{https://github.com/ebayr/five-expert-verification}{\texttt{five-expert-verification}}
repository on GitHub.
It reproduces selected algebraic identities, residual reductions, and
differentiation formulas, with a guide matching the checks to the
manuscript. The proofs are self-contained and do not depend on running
the code; in particular, the supplement does not replace the analytic
comparison, endpoint, or regularity arguments.

\section*{Acknowledgments}
The authors thank Antonis Papapantoleon, Sotirios Sabanis, and
Alexandros Saplaouras for organizing the Stochastic Analysis session
of the 19th Panhellenic Conference on Mathematical Analysis, held
in Athens on 18--20 December 2025. A preliminary construction was
described in the conference abstract for Nikolaos Kolliopoulos's
five-expert talk~\cite{PCMA2025}; that abstract explicitly left the
nonlinear verification open.

Erhan Bayraktar and Nikolaos Kolliopoulos are supported in part by the
National Science Foundation (NSF) under grant DMS-2406232.
Ibrahim Ekren is supported in part by the National Science Foundation
(NSF) under grant DMS-2406240.

\bibliographystyle{plain}
\IfFileExists{references.bib}
  {\bibliography{references}}
  {\bibliography{ArXiv/references}}
\end{document}